\documentclass[a4paper,12pt,reqno]{amsart}  %%%%% When compiling with pdfLaTeX

\usepackage{amsmath}
\usepackage[all]{xy}
\usepackage{tikz-cd}

\newif\iffastcompile
\fastcompilefalse

\iffastcompile
  \RenewDocumentEnvironment{tikzcd}{O{} +b}
    {\hbox{\fbox{\strut tikz-cd omitted in fast mode}}}{}
\fi

\usepackage{amssymb}
\usepackage{amsthm}
\usepackage{amsfonts}
\usepackage{mathtools}
\usepackage{comment}
\usepackage{mathrsfs}
\usepackage{pifont}
\usepackage{cite}
\usepackage{enumerate}
\usepackage{here}
\usepackage{autobreak}
\usepackage{wrapfig}
\usepackage{graphicx}
\usepackage{lscape}
\usepackage[colorlinks]{hyperref}  %%%%%%%%%% pdfLaTeX ver
\usepackage{xcolor}
\hypersetup{
	bookmarksnumbered=true,
    colorlinks=true,
    citecolor=blue,
    linkcolor=purple,
    urlcolor=orange,
}
\usepackage{mleftright} % \mleft \mrightの使用
\usepackage{extpfeil} % use for \xtwoheadrightarrow
\usepackage{tikz}
\usetikzlibrary{arrows} % arrowの種類を増やす
\usetikzlibrary{cd, decorations.pathmorphing} %tikzcdの使用

\usepackage{pgfplots}
\pgfplotsset{compat=1.18}

\usepackage[capitalize, nameinlink]{cleveref}
\everymath{\displaystyle}
\numberwithin{equation}{section}

\newcommand{\todo}{\textcolor{red}{TODO}} %% TODO
\newcommand{\redtext}[1]{\textcolor{red}{#1}}
\newcommand{\memo}[1]{\redtext{[{#1}]}} %% \memo{#1}

\theoremstyle{plain}

\newtheorem{thm}{Theorem}[section]

\newtheorem{question}[thm]{Question}
\newtheorem{lem}[thm]{Lemma}
\crefname{lem}{Lemma}{Lemmas}
\newtheorem{prop}[thm]{Proposition}
\crefname{prop}{Proposition}{Propositions}
\newtheorem{cor}[thm]{Corollary}
\crefname{cor}{Corollary}{Corollaries}

\newtheorem*{claim*}{Claim}
\newtheorem*{thm*}{Theorem}

\newtheorem{introthm}{Theorem}[section]

\newtheorem{introprop}[introthm]{Proposition}

\theoremstyle{definition}

\newtheorem{dfn}[thm]{Definition}

\newtheorem{eg}[thm]{Example}
\crefname{eg}{Example}{Examples}
\newtheorem*{Ack}{Acknowledgement}

\newtheorem*{Out}{Outline of this paper}

\newtheorem*{AI}{AI Usage}

\theoremstyle{remark}
\newtheorem{rem}[thm]{Remark}

\usepackage{enumitem} %% [label={(M\arabic*)}] の使用

\DeclareMathOperator{\Aut}{Aut}

\DeclareMathOperator{\ord}{ord}

\DeclareMathOperator{\Ker}{Ker}

\DeclareMathOperator{\id}{id}

\DeclareMathOperator{\GL}{GL}

\DeclareMathOperator{\Image}{Im}

\DeclareMathOperator{\End}{End}

\DeclareMathOperator{\pr}{pr}
\DeclareMathOperator{\sgn}{sgn}

\DeclareMathOperator{\AGL}{AGL}
\DeclareMathOperator{\Eq}{Eq}

\newcommand{\twoheadlongrightarrow}{\relbar\joinrel\twoheadrightarrow}

\newcommand\dual{\raise0.9ex\hbox{$\scriptscriptstyle\vee$}}

\newcommand{\catname}[1]{\mathsf{#1}}
\newcommand{\Set}{\catname{Set}}

\newcommand{\Grp}{\catname{Grp}}

\newcommand{\GrpAut}{\catname{GrpAut}}

\newcommand{\op}{\mathrm{op}} % opposite symbol

\DeclareMathOperator{\Map}{Map} % set of maps
\DeclareMathOperator{\Inn}{Inn} % Inn group of quandles
\DeclareMathOperator{\Dis}{Dis} % Displacement group of quandles
\DeclareMathOperator{\Trans}{Trans} % transvection group of quandles
\DeclareMathOperator{\relInn}{Inn_{rel}} % relative inner automorphism group
\DeclareMathOperator{\relTrans}{Trans_{rel}} % relative transvection group
\DeclareMathOperator{\relAut}{Aut_{rel}} % relative automorphism group
\newcommand{\relInnmodTrans}{H} % relInn / relTrans

\newcommand{\Quandle}{\catname{Qnd}}% the cat of quandles
\DeclareMathOperator{\Conj}{Conj} % conjugacy quandle of a group
\DeclareMathOperator{\Fix}{Fix} % fixed element of a group auto
\DeclareMathOperator{\GAlex}{GAlex} % generalized Alexander quandle
\DeclareMathOperator{\Alex}{Alex} % Alexander quandle

\newcommand{\card}[1]{\lvert{#1}\rvert} % cardinal of set

\newcommand{\covfac}[1]{{#1}^{\mathrm{cov}}}

\newcommand{\FB}{\mathbb{F}}

\newcommand{\RB}{\mathbb{R}}

\newcommand{\ZZ}{\mathbb{Z}}

\newcommand{\EC}{\mathcal{E}}

\newcommand{\OO}{\mathscr{O}}

\author{Yuki Imamura}
\address[Y.Imamura]{Osaka Central Advanced Mathematical Institute, Osaka Metropolitan University, Osaka 558-8585, Japan}
\email{\href{mailto:u287972b@alumni.osaka-u.ac.jp}{u287972b@alumni.osaka-u.ac.jp}}

\author{Tomoki Yoshida}
\address[T.Yoshida]{Department~of~Mathematics, School~of~Science~and~Engineering, Waseda~University, Ohkubo~3-4-1, Shinjuku, Tokyo~169-8555, Japan}
\email{\href{mailto:tomoki_y@asagi.waseda.jp}{tomoki\_y@asagi.waseda.jp}}

\title{Relativization of Symmetries on Quandles}

\date{September 12, 2026}
\keywords{Quandles, Inner automorphisms, Transvection groups, Covering morphisms, Double transitivity}
\subjclass[2020]{20N02 (primary), 57K12, 08A35, 08A30, 20B20 (secondary).}

\begin{document}

\begin{abstract}
This paper introduces relative versions of the inner automorphism group and the transvection group associated with surjective quandle homomorphisms. 
By using the relative inner automorphism group, we define a notion of \emph{connectedness} for surjective homomorphisms.
We characterize connected homomorphisms algebraically as quotient maps, and use the relative transvection group to establish a maximal \emph{connected--covering} factorization for arbitrary surjections. 
We also introduce \emph{strongly connected homomorphisms} via the relative transvection group, and classify them in terms of perfect normal subgroups of inner automorphism groups.

A later part of this paper studies surjective homomorphisms for which the relative inner automorphism group acts $2$-transitively on each fiber. Under this assumption, we classify the possible quandle structures of the finite fibers.
\end{abstract}

\maketitle

\setcounter{tocdepth}{2} %table of contents subsection
\tableofcontents

\setcounter{section}{-1}
\section{Introduction}
\label{section: introduction}

Quandles were introduced independently by Joyce and Matveev as algebraic structures associated with knots and their diagrams (\cite{joyce_1982_a_classifying_invariant_of_knots_the_knot_quandle,matveev_1982_distributive_groupoids_in_knot_theory}). 
Since then, they have found applications in knot theory, low-dimensional topology, and the study of set-theoretic solutions to the Yang--Baxter equation.

Fundamental examples of quandles include conjugacy quandles arising from groups, Alexander quandles, and quandles associated with Riemannian symmetric spaces.
These examples illustrate the diversity of quandles, which is substantially broader than that of groups. Consequently, even in the finite case, a complete classification of finite quandles remains out of reach.

To navigate this structural complexity, one fruitful approach is to regard quandles as algebraic counterparts of Riemannian symmetric spaces, as in the work of Tamaru and others (e.g. \cite{tamaru_2013_twopoint_homogeneous_quandles_with_prime_cardinality,ishihara_tamaru_2016_flat_connected_finite_quandles}).
From this viewpoint, one investigates quandles satisfying conditions defined in terms of their canonical symmetries $s_x$ and the associated transformation groups, most notably the inner automorphism group $\Inn(P)=\langle s_x\mid x\in P \rangle$ and the transvection group $\Trans(P)=\langle s_x s_y^{-1} \mid x,y\in P \rangle$ (or displacement group).
Examples of such properties include connectedness \cite{hulpke_stanovsky_vojtv_2016_connected_quandles_and_transitive_groups},
double transitivity \cite{tamaru_2013_twopoint_homogeneous_quandles_with_prime_cardinality,wada_2015_twopoint_homogeneous_quandles_with_cardinality_of_prime_power,vendramin_2017_doubly_transitive_groups_and_cyclic_quandles},
flatness \cite{ishihara_tamaru_2016_flat_connected_finite_quandles,saito_sugawara_2025_homogeneous_quandles_with_abelian_inner_automorphism_groups}, 
the existence of antipodal subsets \cite{kubo_nagashiki_okuda_2022_a_commutativity_condition_for_subsets_in_quandlesa_generalization_of_antipodal_subsets}, 
and homogeneity \cite{furuki_tamaru_2024_homogeneous_quandles_with_abelian_inner_automorphism_groups_and_vertextransitive_graphs}.

The present paper pursues this approach in a relative setting. 
Instead of studying a single quandle in isolation, we focus on the ``vertical'' inner symmetries associated with a surjective quandle homomorphism.

Let $f\colon P\twoheadrightarrow Q$ be a surjective quandle homomorphism. 
To develop a relative theory, we single out the subgroup of the inner automorphisms of $P$ that become trivial after passing to the inner automorphisms of $Q$.
Although the assignment $P\mapsto\Inn(P)$ is not functorial on the entire category of quandles, it becomes functorial when restricted to the category of quandles and surjective homomorphisms (see \cref{proposition: functoriality of Inn for surjections}). 
Therefore, a surjective quandle homomorphism $f\colon P\twoheadrightarrow Q$ induces a canonical epimorphism $f_*\colon \Inn(P)\twoheadrightarrow\Inn(Q)$.
We define the \emph{relative inner automorphism group} $\relInn(f)$ of $f$ by 
\[
    \relInn(f)\coloneqq \Ker(f_*\colon \Inn(P)\twoheadrightarrow \Inn(Q)).
\]
The group $\relInn(f)$ consists precisely of those inner automorphisms of $P$ that preserve every fiber $f^{-1}(q)$ for $q\in Q$.

This naturally leads to a relative notion of connectedness.
We call $f$ a \emph{connected homomorphism} if the action of $\relInn(f)$ on each fiber of $f$ is transitive.
When $Q$ is the one-point terminal quandle $\{*\}$, this definition recovers the standard notion of connectedness for the quandle $P$.

Crucially, this relative connectedness is not determined solely by the intrinsic quandle structures of the fibers. Indeed, relative inner automorphisms can act transitively on a fiber even when the fiber itself is a trivial subquandle. Thus, our approach captures the ``extrinsic'' symmetries imposed by the global structure of $P$ relative to $Q$.

This perspective is reminiscent of the study of morphisms with connected fibers in topology and algebraic geometry, where relative connectedness is closely related to factorization phenomena.
A similar factorization can be obtained in the quandle setting.
%We prove that the relative connectedness of a surjective homomorphism can be characterized in terms of a quotient of normal subgroups of the inner automorphism group.
Using a characterization of relative connectedness in terms of quandle quotients, we establish that every surjective quandle homomorphism admits a canonical \emph{connected--rigid} factorization and, more broadly, a \emph{connected--covering} factorization.

Another interesting point of relativization is, for example, unlike in the absolute setting, the relative inner automorphism group and the relative transvection group need not have the same orbits on the fibers. This distinction leads to the notion of strongly connected homomorphisms and also explains the instability of connectedness under base change.

Taken together, these results establish a relative framework for the study of quandle homomorphisms, shifting the focus from absolute properties of individual quandles to structural properties of surjective morphisms.

\subsection{Results}
\label{subsection: results in intro}

Inspired by the theory of morphisms of schemes, we introduce and investigate connected quandle homomorphisms.
This notion is genuinely relative.
For a surjective homomorphism $f\colon P \twoheadrightarrow Q$, if every fiber of $f$ is connected as a subquandle of $P$, then $f$ turns out to be connected (\cref{remark: fiberwise connected is connected}).
However, the converse does not hold: a connected homomorphism may have disconnected, or even trivial, fibers (\cref{example: connected morphism whose fiber is trivial,example: doubly trans morph and triv and connected fiber}).
This is because the relative inner automorphism group can act on a fiber more transitively than the intrinsic inner automorphism group of the fiber itself.

\begin{comment}
We begin by establishing basic properties of connected homomorphisms.
First, we see that the notion is genuinely relative. 
If every fiber of $f$, regarded as a subquandle of $P$, is connected, then $f$ is connected.
The converse, however, need not hold.
A connected homomorphism may have disconnected, or even trivial fibers (\cref{example: connected morphism whose fiber is trivial,example: doubly trans morph and triv and connected fiber}) because the relative inner automorphism group may act on a fiber more transitively than the intrinsic inner automorphism group of the fiber itself.
Note that the degeneration phenomena do not occur under the condition of connectedness of the target quandle (\cref{proposition: if Q is connected all fibers are isomorphic}).
\end{comment}
%% この段落はoutlineでもいいかも
%We first show that the class of connected homomorphisms exhibits strong categorical closure properties; specifically, it is closed under composition (\cref{proposition: connectedness composition and 2 out of 3}), pullbacks along surjections (\cref{proposition: connectedness regarding pull-backs}), and products (\cref{proposition: product of connected homs is connected}).
%In particular, the absolute connectedness of a quandle is shown to lift along connected homomorphisms.
%Furthermore, generalizing the absolute setting, we establish that the relative connectedness of surjective homomorphisms descends along surjections (\cref{proposition: connectedness composition and 2 out of 3}).

A prominent feature of connected homomorphisms is their characterization as quotient maps by normal subgroups of the inner automorphism group. Given a normal subgroup $N$ of $\Inn(P)$ for a quandle $P$, the set $P/N$ of $N$-orbits has a canonical quandle structure induced by that of $P$, and we have the natural quotient quandle homomorphism $\pi_N\colon P \to P/N$. 
By construction, the relative inner automorphism group $\relInn(f)$ is a normal subgroup of $\Inn(P)$, naturally yielding such a quotient.
The following proposition establishes that connected homomorphisms are precisely characterized as the surjective homomorphisms arising in this manner.

\begin{introprop}[\cref{proposition:connected_iff_quotient_by_normal_subgroup_of_Inn}]
\label{introprop:connected_iff_quotient_by_normal_subgroup_of_Inn}
    A surjective quandle homomorphism $f\colon P \to Q$ is connected if and only if there is some normal subgroup $N \subseteq \Inn(P)$ such that $f$ is isomorphic to $\pi_N\colon P \to P/N$ (as quotients of $P$).
\end{introprop}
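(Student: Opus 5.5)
The argument splits into the two implications. Throughout, two surjections $f\colon P\to Q$ and $g\colon P\to Q'$ are isomorphic as quotients of $P$ when they have the same fibers, i.e.\ when there is a quandle isomorphism $\phi\colon Q\to Q'$ with $\phi\circ f=g$.

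\emph{Forward direction.} Assume $f$ is connected and take $N\coloneqq\relInn(f)=\Ker(f_*)$, which is normal in $\Inn(P)$. Every $\sigma\in\relInn(f)$ preserves each fiber of $f$, so each $N$-orbit lies inside a fiber. Connectedness says $N$ acts transitively on each fiber, so each fiber is a single $N$-orbit. Hence the partitions of $P$ into $f$-fibers and into $N$-orbits coincide. The induced bijection $\phi\colon P/N\to Q$, $[x]\mapsto f(x)$, is well defined and bijective. It is a homomorphism because $[x]\triangleleft[y]=[x\triangleleft y]$ is sent to $f(x\triangleleft y)=f(x)\triangleleft f(y)$. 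Its inverse is then automatically a homomorphism, so $\phi$ is a quandle isomorphism with $\phi\circ\pi_N=f$.

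\emph{Reverse direction.} It suffices to show that each $\pi_N$ is connected, since connectedness depends only on the fibers of $f$ and on $\relInn(f)$. Both of these are preserved under an isomorphism $\phi\colon P/N\cong Q$ compatible with the projections, because $f_*=\phi_*\circ(\pi_N)_*$ and $\phi_*$ is an isomorphism by the functoriality in \cref{proposition: functoriality of Inn for surjections}. So fix a normal subgroup $N\trianglelefteq\Inn(P)$. The key claim is $N\subseteq\relInn(\pi_N)$, which I would check in either of two ways.
\begin{itemize}
\item Directly: any $n\in N$ sends $x$ into its own orbit $Nx$, so $n$ preserves every fiber of $\pi_N$. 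By the paper's description of $\relInn$ as the inner automorphisms preserving every fiber, $n\in\relInn(\pi_N)$.
\item Via the formula for $(\pi_N)_*$: it sends $s_x\mapsto s_{[x]}$, and since $s_{[x]}([y])=[s_x(y)]$, the element $(\pi_N)_*(\sigma)$ acts on $P/N$ by $[y]\mapsto[\sigma(y)]$. For $\sigma=n\in N$ this is the identity, so $n\in\Ker((\pi_N)_*)$.
\end{itemize}
Given the claim, the fiber $\pi_N^{-1}([x])$ equals $Nx$. On it $N$, and hence the larger group $\relInn(\pi_N)$, acts transitively, so $\pi_N$ is connected.

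\emph{Main obstacle.} The only delicate point is making sure the identification ``$\relInn(f)$ equals the set of inner automorphisms preserving every fiber'' is legitimate. This requires $\Inn(Q)$ to act faithfully on $Q$, which holds since $\Inn(Q)\subseteq\Aut(Q)$. It also requires that $f_*(\sigma)$ acts by $f(y)\mapsto f(\sigma(y))$, which follows by induction on word length from $f_*(s_x)=s_{f(x)}$ and $f(s_x(y))=s_{f(x)}(f(y))$. The paper already asserts this identification in the introduction, so I would simply invoke it. Everything else is bookkeeping about partitions.
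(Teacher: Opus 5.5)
Your proposal is correct and follows essentially the same route as the paper. For the forward direction you take $N=\relInn(f)$ and use connectedness to make the induced map $P/\relInn(f)\to Q$ bijective; for the converse you use $N\subseteq\relInn(\pi_N)$, so that $N$ already acts transitively on each fiber $Nx$. Your remark that connectedness transfers along an isomorphism of quotients (via $f_*=\phi_*\circ(\pi_N)_*$) only makes explicit a point the paper leaves implicit.
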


From this \cref{introprop:connected_iff_quotient_by_normal_subgroup_of_Inn}, we find that our class of connected homomorphisms coincides with the class $\EC_{1}$ introduced by Even and Gran in \cite[Definition 1.5]{even_gran_2014_on_factorization_systems_for_surjective_quandle_homomorphisms}. 
Hence, from the present relative viewpoint, the factorization theorem given in \cite[Proposition 3.1]{even_gran_2014_on_factorization_systems_for_surjective_quandle_homomorphisms}, or essentially \cite[Theorem 8.1]{bunch_lofgren_rapp_yetter_2010_on_quotients_of_quandles}, can be restated as follows:

\begin{introthm}[quandle Stein factorization, \cref{theorem: quandle stein factorization}, {\cite[Theorem 8.1]{bunch_lofgren_rapp_yetter_2010_on_quotients_of_quandles}}]
%{\cite[Proposition 3.1]{even_gran_2014_on_factorization_systems_for_surjective_quandle_homomorphisms}},
%{\cite[Theorem 8.1]{bunch_lofgren_rapp_yetter_2010_on_quotients_of_quandles}}
\label{theorem in intro: quandle stein factorization}
    Let $f \colon P\to Q$ be a surjective quandle homomorphism.
    Then, $f$ has a factorization as $f=h\circ g$, where $g$ is connected and $h$ is rigid (i.e. $\relInn(h)$ is trivial); especially, $h$ is covering.
\end{introthm}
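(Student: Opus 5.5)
Let $N\coloneqq \relInn(f)$, which is a normal subgroup of $\Inn(P)$, and set $g\coloneqq \pi_N\colon P\to P/N$. Since every element of $N$ preserves each fiber of $f$, each $N$-orbit lies inside a single fiber. Hence $f$ factors uniquely as $f=h\circ g$ with $h\colon P/N\to Q$, $h([x])=f(x)$. This $h$ is a surjective quandle homomorphism because $g$ is surjective and the operation on $P/N$ is induced from $P$. By \cref{proposition:connected_iff_quotient_by_normal_subgroup_of_Inn}, $g$ is connected. It remains to show that $h$ is rigid, i.e.\ $\relInn(h)=1$, and then deduce that $h$ is covering.

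For rigidity, I would use the functoriality of $\Inn$ on surjections (\cref{proposition: functoriality of Inn for surjections}). It gives epimorphisms $g_*\colon\Inn(P)\to\Inn(P/N)$ and $h_*\colon \Inn(P/N)\to\Inn(Q)$ with $h_*\circ g_*=f_*$. Take $\beta\in\relInn(h)$. Since $g_*$ is surjective, $\beta=g_*(\alpha)$ for some $\alpha\in\Inn(P)$. Then $f_*(\alpha)=h_*(\beta)=1$, so $\alpha\in N$. Now $g_*(\alpha)$ acts on $P/N$ by $[x]\mapsto[\alpha x]=[x]$, because $\alpha\in N$ and the classes are $N$-orbits. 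An inner automorphism of $P/N$ is determined by its action on $P/N$, so $\beta=g_*(\alpha)=\id$. Hence $\relInn(h)$ is trivial.

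The step I expect to need the most care is the identification $g_*(\alpha)([x])=[\alpha(x)]$, i.e.\ that $g_*$ sends $s_x$ to $s_{[x]}$ and is compatible with the actions. This should be exactly how the map in \cref{proposition: functoriality of Inn for surjections} is built: $g_*(s_{x_1}^{\pm1}\cdots)=s_{g(x_1)}^{\pm1}\cdots$, well defined on the subgroup of $\Aut$ precisely because $g$ is surjective. I would recall or verify this, together with the fact that elements of $\Inn$ are honest permutations, so triviality of the action means triviality of the element.

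Finally, rigid implies covering. A covering here means that $h(a)=h(b)$ implies $s_a=s_b$. If $h(a)=h(b)$, then $h_*(s_as_b^{-1})=s_{h(a)}s_{h(b)}^{-1}=1$. Thus $s_as_b^{-1}\in\relInn(h)=1$, giving $s_a=s_b$, so $h$ is a covering.
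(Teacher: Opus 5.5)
Your proof is correct and follows essentially the same route as the paper. Both factor $f$ through $P/\relInn(f)$ and get connectedness of the projection from the quotient characterization. Both deduce rigidity of $h$ from $\Ker(g_*)=\relInn(f)=\Ker(f_*)$ together with $h_*\circ g_*=f_*$ and the surjectivity of $g_*$; your element-wise lifting argument just spells this out. The final step, rigid implies covering, is exactly the argument of \cref{lemma: rigid is covering}.
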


We also define the notion of a homogeneous homomorphism for a quandle surjection, using the relative automorphism group. We observe analogous quotient characterization and factorization results for homogeneous homomorphisms; see \cref{proposition:homogeneous_iff_quotient_by_subgroup_normalized_by_Inn,proposition: homogeneous-rigid_factorization} for the precise statements. From this relative perspective, the well-known characterization of homogeneous quandles as the quandles associated with quandle triplets can be reformulated as the assertion that every homogeneous quandle is a homogeneous quotient of a generalized Alexander quandle; see \cref{example: quandle_triplet_quandle_is_homogeneous_quotient_of_GAlex}.

We next introduce a relative version of the transvection group. 
There are two natural candidates for this relative notion: one is the kernel of the induced epimorphism $f_* \colon \Trans(P) \to \Trans(Q)$, and the other is the subgroup of $\Inn(P)$ generated by transvections associated with pairs of points in the same fiber.

Motivated by Eisermann's theory of quandle coverings (\cite{eisermann_2014_quandle_coverings_and_their_galois_correspondence}), we adopt the latter approach. Namely, for a surjective homomorphism $f\colon P\twoheadrightarrow Q$, we define the \emph{relative transvection group} of $f$ by 
\[
\relTrans(f)\coloneqq \langle\{s_xs_y^{-1} \mid x, y\in P \text{ such that } f(x)=f(y) \}\rangle.
\]
Recall that a surjective homomorphism $f\colon P\to Q$ is a \emph{covering homomorphism} if $f(x) = f(y)$ implies $s_x = s_y$ for all $x, y\in P$.
Therefore, by definition, $\relTrans(f)$ measures precisely the obstruction to $f$ being a covering homomorphism. 
Indeed, the vanishing of the relative transvection group characterizes covering homomorphisms (\cref{lemma: relTrans vanish iff covering}), while the kernel of the induced map between transvection groups does not (\cref{example: ker of trans does not characterize covering}).

We show that this relative transvection group is useful for detecting a canonical covering part of an arbitrary surjection. 
As the quotient of $P$ by $\relTrans(f)$ produces a canonical covering associated with every surjection, every surjective homomorphism $f$ admits a canonical decomposition into a connected homomorphism and a covering homomorphism that satisfies a universal property.

\begin{introthm}[\cref{proposition: universality of connected-covering decomposition,proposition: connected covering decomposition via quot of relTrans}]
\label{theorem in intro: maximal connected covering decomposition}
    Let $f\colon P\twoheadrightarrow Q$ be a surjective quandle homomorphism and $N=\relTrans(f)$. Then, $f$ decomposes as 
        \[\begin{tikzcd}[column sep=small]
        P \arrow[two heads]{rr}{f} \arrow[two heads]{rd}[swap]{\pi_N} & & Q\rlap{,} \\
        & P/N \arrow[two heads]{ru}[swap]{h_N}
    \end{tikzcd}\]
    where $\pi_N$ is connected and $h_N$ is covering,
    and this is universal among all factorizations with the second morphism a covering homomorphism.
    %Moreover, this factorization is universal among such factorizations.
    %Namely, if $f$ factors as $f=h\circ \pi$ for an arbitrary homomorphism $\pi\colon P \to R$ and a covering homomorphism $h\colon R\to Q$, then there exists a unique quandle homomorphism $u\colon P/N \to R$ such that $\pi = u\circ \pi_N$ and $h_N= h \circ u$.
\end{introthm}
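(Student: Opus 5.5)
We prove the theorem in three steps: first that $N=\relTrans(f)$ is normal in $\Inn(P)$ and lies in $\relInn(f)$, so $\pi_N$ exists and $f$ factors through it; then that $h_N$ is covering; and finally the universal property.

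\textbf{Step 1: normality and the factorization.} For $g\in\Inn(P)$ we have $g s_x s_y^{-1} g^{-1}=s_{g(x)}s_{g(y)}^{-1}$. Since $f(g(x))=f_*(g)(f(x))$, the points $g(x)$ and $g(y)$ lie in the same fiber whenever $x$ and $y$ do. Hence $N$ is normal in $\Inn(P)$. Moreover $f_*(s_xs_y^{-1})=s_{f(x)}s_{f(y)}^{-1}=\id$ when $f(x)=f(y)$, so $N\subseteq\relInn(f)$. The $N$-orbits are therefore contained in the fibers of $f$. By the quotient construction, $P/N$ is a quandle and $\pi_N$ is a surjective homomorphism. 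By \cref{proposition:connected_iff_quotient_by_normal_subgroup_of_Inn}, $\pi_N$ is connected. Because $N$-orbits lie in fibers, $f$ induces a well-defined surjective map $h_N\colon P/N\to Q$ with $f=h_N\circ\pi_N$. It is a homomorphism because $\pi_N$ is a surjective homomorphism.

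\textbf{Step 2: $h_N$ is covering.} Suppose $h_N([x])=h_N([y])$, that is, $f(x)=f(y)$. We must show $s_{[x]}=s_{[y]}$ on $P/N$. The quandle structure on $P/N$ is $[z]\triangleleft[x]=[z\triangleleft x]$, so we need $s_x(z)$ and $s_y(z)$ to lie in the same $N$-orbit for every $z$. This holds because $s_x(z)=(s_xs_y^{-1})(s_y(z))$ and $s_xs_y^{-1}\in N$. The same computation works for inverses. One could alternatively use \cref{lemma: relTrans vanish iff covering} applied to $h_N$, but the direct check is simpler.

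\textbf{Step 3: universality.} Let $f=h\circ\pi$ with $\pi\colon P\to R$ and $h\colon R\to Q$ covering. Since $f$ is surjective, $h$ is surjective; the definition of covering homomorphism presupposes this anyway. We need a unique $u\colon P/N\to R$ with $u\circ\pi_N=\pi$. It then follows automatically that $h\circ u=h_N$, since $h\circ u\circ\pi_N=f=h_N\circ\pi_N$ and $\pi_N$ is surjective; uniqueness of $u$ also follows from surjectivity of $\pi_N$.

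Existence of $u$ requires $\pi$ to be constant on $N$-orbits, and this is the main point. It suffices to show $\pi(n(z))=\pi(z)$ for each generator $n=s_xs_y^{-1}$ with $f(x)=f(y)$. We have
\[\pi(s_xs_y^{-1}(z))=s_{\pi(x)}s_{\pi(y)}^{-1}(\pi(z)),\]
using that $\pi$ is a homomorphism, so it intertwines $s_y^{-1}$ with $s_{\pi(y)}^{-1}$. Now $h(\pi(x))=f(x)=f(y)=h(\pi(y))$, and $h$ is covering, so $s_{\pi(x)}=s_{\pi(y)}$. Hence the right-hand side equals $\pi(z)$, and $u$ is well defined. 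It is a homomorphism because $\pi_N$ is a surjective homomorphism.

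If $\pi$ is not assumed surjective, nothing changes; if surjectivity is required, it is inherited by $u$. The only subtle point is to apply the covering property of $h$ to the images $\pi(x),\pi(y)$ rather than to $x,y$.
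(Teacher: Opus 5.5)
Your proposal is correct and follows essentially the same route as the paper. The steps match: normality of $\relTrans(f)$ and the inclusion $\relTrans(f)\subseteq\relInn(f)$ give the factorization; $\pi_N$ is connected by the quotient characterization; $h_N$ is covering because $s_xs_y^{-1}\in N$ acts trivially on $P/N$ (your orbit check is the unwound form of the paper's $s_{[x]}s_{[y]}^{-1}=(\pi_N)_*(s_xs_y^{-1})=\id_{P/N}$); and universality comes from applying the covering property of $h$ to $\pi(x),\pi(y)$.
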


%For a quandle surjection $f\colon P\twoheadrightarrow Q$, we refer to the covering homomorphism $h_N\colon P/\relTrans(f) \to Q$ established in \cref{theorem in intro: maximal connected covering decomposition} as the \emph{covering factor} of $f$, denoted by $\covfac{f}$.
%The 2-out-of-3 discussion for connected homomorphisms (\cref{proposition: connectedness composition and 2 out of 3}) shows that the connectedness of $f$ implies the connectedness of $\covfac{f}$. 

It is worth remarking that unlike in the absolute case, where the transitivity of $\Inn(P)$ is equivalent to that of $\Trans(P)$ (\cref{lemma: connectedness is also determined by trans}), this equivalence fails in the relative setting: $\relInn(f)$ may be transitive even when $\relTrans(f)$ is not (\cref{example:connected_but_not_strongly_connected}).
This leads to the notion of strong connectedness: a surjective quandle homomorphism $f$ is said to be \emph{strongly connected} if $\relTrans(f)$ acts transitively on every fiber.
Because $\relTrans(f)$ is a normal subgroup of $\relInn(f)$, strongly connected homomorphisms are connected.

Strongly connected homomorphisms also admit a quotient-theoretic characterization in terms of \emph{\(\Inn(P)\)-perfect} normal subgroups.

\begin{introthm}[\cref{thm:characterize_strongly_connected,cor:strongly_connected_hom_are_in_bijection_to_perfect_subgroup}]
\label{theorem in intro: quotient characterization of strongly connected hom}
    A surjective quandle homomorphism $f\colon P\twoheadrightarrow Q$ is strongly connected if and only if it is isomorphic to the quotient $\pi_N$ by a normal subgroup $N\subseteq \Inn(P)$ that is $\Inn(P)$-perfect (i.e.\ $[\Inn(P),N]=N$).
    Moreover, there is a one-to-one correspondence between strong connected quotients of $P$ and $\Inn(P)$-perfect normal subgroups of $\Inn(P)$.
\end{introthm}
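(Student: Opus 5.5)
The plan has two parts. The first identifies $\relTrans(\pi_N)$ with $[\Inn(P),N]$ for the quotient by any normal subgroup $N$. The second converts transitivity of $\relTrans(f)$ on fibers into the equality $\relTrans(f)$-orbits $=$ $\relInn(f)$-orbits.

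We use the characterization of connected homomorphisms (\cref{introprop:connected_iff_quotient_by_normal_subgroup_of_Inn}) and the standard identity $g s_x g^{-1}=s_{g(x)}$ for $g\in\Inn(P)$.

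\emph{Step 1: $\relTrans(\pi_N)=[\Inn(P),N]$ for every normal subgroup $N\subseteq\Inn(P)$.} Take $x,y$ in the same $N$-orbit, say $y=n(x)$ with $n\in N$. Then
\[
s_y s_x^{-1}=n s_x n^{-1}s_x^{-1}\in[N,\Inn(P)],
\]
which gives ``$\subseteq$''. For the converse, $[\Inn(P),N]$ is generated by the commutators $[s_x^{\pm1},n]$, because $[\Inn(P),N]$ is normal and the generators $s_x$ generate $\Inn(P)$. Now
\[
n s_x n^{-1} s_x^{-1}=s_{n(x)}s_x^{-1},
\]
and $n(x)$ and $x$ lie in the same $N$-orbit, so this element lies in $\relTrans(\pi_N)$. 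Inverses and the $s_x^{-1}$ variants are handled by conjugation, since $\relTrans(\pi_N)$ is normal in $\Inn(P)$.

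\emph{Step 2: the equivalence.} Suppose $f$ is strongly connected. Then $f$ is connected, so $f\cong\pi_M$ with $M=\relInn(f)$. In fact the proof of \cref{introprop:connected_iff_quotient_by_normal_subgroup_of_Inn} presumably shows that we may take $M=\relInn(f)$, since connectedness means exactly that the $\relInn(f)$-orbits are the fibers. Strong connectedness says that the fibers are also the orbits of $N:=\relTrans(f)$, so $f\cong\pi_N$ as well. By Step 1, $N=\relTrans(\pi_N)=[\Inn(P),N]$, so $N$ is $\Inn(P)$-perfect.

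Conversely, let $N$ be $\Inn(P)$-perfect and $f=\pi_N$. By Step 1, $\relTrans(\pi_N)=[\Inn(P),N]=N$. The fibers of $\pi_N$ are exactly the $N$-orbits, so $\relTrans(\pi_N)$ acts transitively on each fiber, and $\pi_N$ is strongly connected.

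\emph{Step 3: the bijection.} Send a strongly connected quotient $f$ to $\relTrans(f)$, and an $\Inn(P)$-perfect normal subgroup $N$ to $\pi_N$. Step 2 shows that $f\cong\pi_{\relTrans(f)}$, and that $\relTrans(\pi_N)=N$. So the two assignments are mutually inverse. Well-definedness on isomorphism classes of quotients holds because $\relTrans(f)$ depends only on the partition of $P$ into fibers.

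The main obstacle is the injectivity part of Step 3: distinct perfect subgroups must give distinct quotients. Two different normal subgroups can have the same orbits, and we need the $\relTrans(\pi_N)=N$ equality of Step 1 to rule this out for perfect ones. The generator bookkeeping in Step 1, including the inverses and the normality of $[\Inn(P),N]$, is where I expect the care to be needed.
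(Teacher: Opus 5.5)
Your proposal is correct and is essentially the paper's own argument. Step~1 is the paper's lemma $\relTrans(\pi_N)=[\Inn(P),N]$, Step~2 reproduces both directions of the theorem via $f\cong\pi_{\relTrans(f)}$, and Step~3 is the corollary's mutual-inverse argument; the detour through $\relInn(f)$ is unnecessary, and the generation claim in Step~1 is best justified by the identity $[gh,n]=g[h,n]g^{-1}[g,n]$ together with the normality of $\relTrans(\pi_N)$ in $\Inn(P)$, rather than the normality of $[\Inn(P),N]$.
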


The quotient 
\[
    H_f\coloneqq \relInn(f)/\relTrans(f)
\]
measures the residual relative symmetries that are not generated by relative transvections.
We show that $H_f$ is a central subgroup of $\Inn(P)/\relTrans(f)$; in particular, $H_f$ is always abelian.
More precisely, we prove the following.

\begin{introprop}[\cref{cor:central_extension_associated_with_surjective_homs}]
\label{proposition in intro: central extension sequence of H_f}
    For a surjective quandle homomorphism $f\colon P\to Q$, there is a short exact sequence 
    \[ 1 \longrightarrow H_f \longrightarrow \Inn(P)/\relTrans(f) \longrightarrow \Inn(Q) \longrightarrow 1, \]
    which is a central group extension.
\end{introprop}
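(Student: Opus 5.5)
The plan is to derive the sequence from the defining exact sequence $1\to\relInn(f)\to\Inn(P)\xrightarrow{f_*}\Inn(Q)\to 1$, which is exact because $f_*$ is an epimorphism (\cref{proposition: functoriality of Inn for surjections}) and $\relInn(f)=\Ker f_*$ by definition. Two facts are needed to pass to the quotient. First, $\relTrans(f)$ is normal in $\Inn(P)$ and contained in $\relInn(f)$; then the third isomorphism theorem gives $\Inn(P)/\relTrans(f)\twoheadrightarrow \Inn(P)/\relInn(f)\cong\Inn(Q)$ with kernel $\relInn(f)/\relTrans(f)=H_f$. Second, this kernel is central.

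For the containment, take a generator $s_xs_y^{-1}$ with $f(x)=f(y)$. Since $f_*(s_x)=s_{f(x)}$, we get $f_*(s_xs_y^{-1})=s_{f(x)}s_{f(y)}^{-1}=1$. For normality, use the quandle identity $g s_x g^{-1}=s_{g(x)}$ for $g\in\Inn(P)$ (indeed for $g\in\Aut(P)$). It gives $g(s_xs_y^{-1})g^{-1}=s_{g(x)}s_{g(y)}^{-1}$. Moreover $f(g(x))=f(g(y))$, because $f\circ g=f_*(g)\circ f$, which is the defining property of $f_*$. So conjugates of generators are again generators.

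The main step is centrality. Since $\Inn(P)$ is generated by the $s_z$, it suffices to show that $[s_z,h]\in\relTrans(f)$ for every $h\in\relInn(f)$ and $z\in P$. Compute
\[
s_z h s_z^{-1} h^{-1} = s_z\,(h s_z h^{-1})^{-1} = s_z s_{h(z)}^{-1}.
\]
Because $h\in\relInn(f)$ preserves fibers, $f(h(z))=f(z)$, so $s_zs_{h(z)}^{-1}$ is a generator of $\relTrans(f)$. Hence $[\Inn(P),\relInn(f)]\subseteq\relTrans(f)$, so $H_f$ commutes with every element of $\Inn(P)/\relTrans(f)$ and the extension is central. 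In particular $H_f$ is abelian.

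I do not expect any real obstacle. The only point requiring care is the description of $\relInn(f)$ as the fiber-preserving elements, i.e.\ $f\circ h=f$ for $h\in\Ker f_*$. This follows from $f\circ h=f_*(h)\circ f$ with $f_*(h)=\id$, which I take from the construction of $f_*$ in \cref{proposition: functoriality of Inn for surjections}.
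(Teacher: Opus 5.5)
Your proposal is correct and follows essentially the same route as the paper: the paper first proves $\relTrans(f)\subseteq\relInn(f)$ and $\relTrans(f)\trianglelefteq\Inn(P)$ (\cref{proposition: relTrans is normal in relInn}), then shows centrality via the identity $\varphi s_x\varphi^{-1}s_x^{-1}=s_{\varphi(x)}s_x^{-1}\in\relTrans(f)$ for $\varphi\in\relInn(f)$, which is just the inverse of your commutator computation. It then gets the exact sequence, as you do, by passing the defining sequence $1\to\relInn(f)\to\Inn(P)\to\Inn(Q)\to 1$ to the quotient by $\relTrans(f)$.
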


Moreover, when $f$ is connected, $H_f$ is canonically isomorphic to the $\Inn(P)$-relative abelianization of $\relInn(f)$: more precisely, $H_f\cong \relInn(f)/[\Inn(P), \relInn(f)]$ (\cref{proposition: Hf and its vanishing for connected homs}).

We further determine the obstruction to preserving connectedness under surjective base change.
Strongly connected homomorphisms are preserved and reflected by surjective base change and are closed under products, whereas connected homomorphisms need not be preserved under either operation; see \cref{subsection:pullbacks_for_connectd_and_strongly_connected_hom}.

In a later part of this paper, we explore a substantially stronger symmetry condition.
In the absolute setting, the study of \emph{doubly transitive} quandles (quandles whose inner automorphism group acts $2$-transitively) was initiated by Tamaru \cite{tamaru_2013_twopoint_homogeneous_quandles_with_prime_cardinality}.
This line of research is motivated by the classification of two-point homogeneous Riemannian manifolds, viewing quandles as algebraic counterparts of Riemannian symmetric spaces.
Given that a connected Riemannian manifold is two-point homogeneous if and only if it is isometric to $\RB^n$ or a rank-one symmetric space, doubly transitive quandles serve as natural candidates for quandle analogues of such spaces.
Subsequent works of Wada and Vendramin have established classification results for finite doubly transitive quandles \cite{vendramin_2017_doubly_transitive_groups_and_cyclic_quandles,
wada_2015_twopoint_homogeneous_quandles_with_cardinality_of_prime_power}.%; see \cref{theorem: classification of doubly transitive quandles} for the form used in this paper.

To extend this viewpoint to the relative setting, we say that a surjective quandle homomorphism $f$ is \emph{doubly transitive} if the action of $\relInn(f)$ on each fiber is $2$-transitive.
This condition is substantially stronger than connectedness and imposes severe restrictions on the intrinsic quandle structures of the fibers.
Nevertheless, a fiber of a doubly transitive homomorphism may not be a doubly transitive quandle as a subquandle.

Our classification theorem is as follows.

\begin{introthm}[\cref{theorem:classification_of_doubly_transitive_hom_with_finite_fibers}]
\label{theorem in intro: classification of fibers of doubly trans morph}
    Let $f\colon P\to Q$ be a doubly transitive homomorphism with finite fibers.
    Then each fiber $F_q$ is either a trivial quandle or a connected Alexander quandle of prime-power cardinality.
    %Moreover, for every quandle $F$ that is either a trivial quandle of prime power cardinality or a connected Alexander quandle of prime power cardinality, there exists a doubly transitive homomorphism $f\colon P\to Q$ and an element $x\in Q$ such that the fiber $f^{-1}(x)$ is isomorphic to $F$.
\end{introthm}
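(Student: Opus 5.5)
Fix $q\in Q$ and write $F=F_q=f^{-1}(q)$, a finite set on which $G\coloneqq\relInn(f)$ acts $2$-transitively. The plan is to show that the quandle structure of $F$ is rigidly controlled by $G$, and then to invoke the classification of finite doubly transitive quandles due to Vendramin and Wada.

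First observe that $G$ acts on $F$ by quandle automorphisms of the subquandle $F$. Each element of $G$ is an automorphism of $P$ preserving every fiber, so it preserves the operation restricted to $F$. Moreover, $G$ is normal in $\Inn(P)$, so for $x\in F$ the symmetry $s_x$ normalizes $G$. Since $s_x$ also maps $F$ to itself (as $f(x\ast y)=q\ast q=q$), its restriction $\sigma_x\coloneqq s_x|_F$ normalizes the image $\bar G\subseteq \mathrm{Sym}(F)$ of $G$.

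Second, use $2$-transitivity to split into cases. Let $x\in F$ and suppose $\sigma_x$ fixes some $y\neq x$ in $F$. For any pair $(x',y')$ of distinct points there is $g\in G$ with $g(x)=x'$ and $g(y)=y'$. Then
\[
\sigma_{x'}=g\sigma_x g^{-1},
\]
because $g$ is an automorphism of $P$, and hence $\sigma_{x'}$ fixes $y'$. So either every $\sigma_x$ is the identity on $F$, in which case $F$ is trivial, or every $\sigma_x$ fixes only $x$. In the latter case I would aim to show that $F$ is a connected quandle on which $\Aut(F)\supseteq\bar G$ acts $2$-transitively. Then $F$ is a homogeneous quandle whose automorphism group is $2$-transitive.

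Third, the classification. For a finite $2$-transitive permutation group containing (after normalizing) the symmetries with unique fixed points, Burnside's theorem gives two possibilities: the group is affine, or it is almost simple. I would try to rule out the almost simple case along the lines of Vendramin's argument. The key input is that $\sigma_x$ is a nontrivial element of the stabilizer of $x$ centralizing... or rather normalizing $\bar G$, and with $\sigma_{g x}=g\sigma_xg^{-1}$. This makes $\{\sigma_x\}$ a conjugacy-invariant family, and Vendramin's argument should transfer to show that the socle is elementary abelian of order $p^n$ acting regularly. In the affine case, identifying $F$ with $\FB_p^n$ and using that the $\sigma_x$ are conjugated into one another by translations, $F$ is the Alexander quandle $\Alex(\FB_p^n,\sigma_0)$. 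It is connected because $\sigma_0$ fixes only $0$, so $1-\sigma_0$ is invertible.

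The main obstacle is this third step. The existing classification theorems assume $\Inn(F)$, or $\Aut(F)$, is $2$-transitive, whereas here the $2$-transitive group is $\bar G$, which merely normalizes $\langle\sigma_x\rangle$. I would first check whether $\bar G\langle \sigma_x : x\in F\rangle\subseteq\Aut(F)$ is $2$-transitive; it is, since it contains $\bar G$. Then $F$ is a quandle with $2$-transitive automorphism group in which every symmetry fixes a unique point, and I would cite (or reprove) the version of the Vendramin–Wada classification phrased for $\Aut$ rather than $\Inn$. If that version is not available, I would redo the socle argument directly.
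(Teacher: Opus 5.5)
Your plan is essentially the paper's proof. It uses the same dichotomy for the symmetries on the fiber (all trivial, or each fixing only its own point) and the same $2$-transitive group $\bar G\,\Inn(F)=\langle\rho_q(\relInn(f)),\Inn(F)\rangle\le\Aut(F)$. It then applies Burnside's dichotomy and conjugates by translations in the affine case. Your derivation of connectedness from the invertibility of $1-\sigma_0$ is a valid substitute for the paper's argument, which instead shows that the non-trivial normal subgroup $\Inn(F)$ of the primitive group $\Aut(F)$ is transitive.

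The one point to sharpen is the almost simple case. The input that matters there is not that $\sigma_x$ normalizes $\bar G$. It is that $\sigma_{gx}=g\sigma_xg^{-1}$, applied to $g$ in the point stabilizer, places the non-trivial element $\sigma_x$ in $Z(G_x)$. This contradicts the fact that $Z(G_x)=1$ for almost simple $2$-transitive groups of degree at least $4$, which the paper takes from Vendramin's argument or from the list of such groups.
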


The proof of \cref{theorem in intro: classification of fibers of doubly trans morph} proceeds as follows.
For a point $q\in Q$, we consider the group $G\coloneqq \langle \rho_q(\relInn(f)), \Inn(F_q) \rangle$, which acts $2$-transitively on the fiber $F_q$.
Then, a group-theoretic analysis shows that the quandle $F_q$ is either trivial or connected (\cref{theorem: each fiber is trivial or connected as a subquandle}).
Next, using results from finite group theory, we show in \cref{proposition: sufficient cond for alex when doubly trans containing inns} that if a finite quandle has a $2$-transitive action of a certain group $G$ satisfying $\Inn(F_q)\subseteq G \subseteq \Aut(F_q)$, then it is an Alexander quandle of prime power order. 
Combining this proposition with the discussion surrounding Vendramin's classification result \cite{vendramin_2017_doubly_transitive_groups_and_cyclic_quandles}, we obtain the desired assertion when $\card{F_q} \geq 4$ (\cref{theorem: fiber of doubly transitive is connected Alexander quandle}).
%Note that, as trivial quandles are Alexander, both cases are Alexander.

We give an explicit example showing that both cases in this classification can actually occur as fibers of a doubly transitive homomorphism:

\begin{introprop}[\cref{example: doubly trans morph and triv and connected fiber}]
\label{proposition in intro: construct doubly trans morph of possible fibers}
    Let $\FB$ be a finite field, $\lambda\in \FB^\times$, and $L_\lambda$ the multiplication by $\lambda$.
    Then there exists a doubly transitive homomorphism having a fiber isomorphic to the Alexander quandle $\Alex((\FB,+), L_\lambda)$.
    Consequently, both trivial fibers and non-trivial connected Alexander fibers occur.
\end{introprop}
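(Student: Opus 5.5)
Given $\FB$ and $\lambda$, I will build an explicit surjection $f\colon P\to Q$ such that $\relInn(f)$ contains the full affine group $\AGL(1,\FB)=\{x\mapsto ax+b\}$ acting on one fiber. That group is $2$-transitive on $\FB$, so the fiber will be $2$-transitive.

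For the construction, take $Q$ to be a quandle with a point $q_0$ whose fiber is to be $\FB$. A convenient choice is the disjoint union of two trivial one-point pieces, $Q=\{q_0,q_1\}$ trivial. Set $P=\FB\sqcup Z$, where $\FB$ carries $\Alex((\FB,+),L_\lambda)$, i.e.\ $x\triangleleft y=\lambda x+(1-\lambda)y$. Let $Z$ be a set of points $z$, each acting on $\FB$ by an affine map $\phi_z\in\AGL(1,\FB)$, with the maps $\phi_z$ chosen to generate $\AGL(1,\FB)$. For instance take $Z=\{z_1,z_2\}$ with $\phi_{z_1}(x)=x+1$ and $\phi_{z_2}(x)=\mu x$, where $\mu$ generates $\FB^\times$. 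Points of $\FB$ act trivially on $Z$, and $Z$ is a trivial subquandle. Define $f(\FB)=q_0$ and $f(Z)=q_1$. Since $Q$ is trivial, $\Inn(Q)=1$, so $\relInn(f)=\Inn(P)$, which automatically preserves fibers.

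The key check, and the main obstacle, is the quandle axioms for $P$. Idempotency and bijectivity of each $s_x$ are clear. For self-distributivity, $s_p$ must be an automorphism of $P$ for every $p$. For $p\in Z$, $s_p$ fixes $Z$ pointwise and acts on $\FB$ by $\phi_p$. So I need $\phi_p$ to be an automorphism of $\Alex(\FB,L_\lambda)$, which holds for every affine map, since affine maps commute with $x\mapsto \lambda x+(1-\lambda)y$ in the required sense. I also need $s_p s_z s_p^{-1}=s_{s_p(z)}=s_z$, i.e.\ the $\phi$'s must commute, and $x+1$ and $\mu x$ do not. To fix this, I will instead let $Z$ be a conjugation-closed set: take $Z=\AGL(1,\FB)$ (or a generating conjugacy-closed subset) with $z\triangleleft w=zwz^{-1}$, i.e.\ $\Conj$. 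Then $Z$ is connected or not, and I will put a trivial-quandle structure on $Q$ only if $f$ still respects it. To keep $f$ a homomorphism, I take $Q=\{q_0\}\sqcup Q'$ with $Q'$ the image of $Z$ under the identity, so $f$ is the identity on $Z$ and collapses nothing except possibly $\FB$ to $q_0$. The action of $Z$ on $P$ is conjugation on $Z$ and $\phi_z$ on $\FB$, and $\FB$ acts trivially on $Z$. Self-distributivity then reduces to $\phi$ being a group homomorphism compatible with conjugation, plus each $\phi_z$ being an automorphism of the Alexander quandle. Both hold.

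With $Q=\{q_0\}\sqcup \Conj(Z)$, the map $f_*$ sends $s_z\mapsto s_{z}$ and $s_x\mapsto s_{q_0}$. Here $s_{q_0}$ is trivial on $Q$, since $q_0$ acts trivially and the points of $Z$ fix $q_0$. The kernel $\relInn(f)$ contains the elements $s_x$ for $x\in\FB$, which restricted to $\FB$ give $y\mapsto \lambda y+(1-\lambda)x$. I also need translations in the kernel. The transvection $s_x s_{x'}^{-1}$ for $x,x'\in\FB$ is the translation by $(1-\lambda)(x-x')$ when $\lambda\ne1$, so all translations lie in $\relInn(f)$. Together with the $s_x$, whose linear part is $\lambda$, this gives the group $\{y\mapsto \lambda^k y+b\}$, which is $2$-transitive only when $\lambda$ generates $\FB^\times$. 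For general $\lambda$ I therefore need kernel elements with arbitrary linear parts, and this is the genuinely delicate step. My plan is to take $Z=\FB^\times$ viewed as a trivial quandle in which $z$ acts on $\FB$ by multiplication by $z$ (these commute, so self-distributivity holds), and to map all of $Z$ to a single point $q_1$ of a trivial $Q=\{q_0,q_1\}$. Then $\Inn(Q)=1$, so $\relInn(f)=\Inn(P)$, which contains all dilations $y\mapsto zy$ and, via transvections, all translations (using the case $\lambda\neq1$). It therefore contains $\AGL(1,\FB)$, which is $2$-transitive on the fiber $\FB$. The fiber $Z$ is acted on trivially, which is harmless if $\card{\FB^\times}\le 1$. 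Otherwise I will replace $Z$ by a single point acting by a generator $\mu$ of $\FB^\times$; its fiber is a singleton, hence trivially $2$-transitive. For $\lambda=1$ the fiber $\FB$ is a trivial quandle, and I will add a second point acting by translation $y\mapsto y+1$ commuting-compatibly. This is consistent because translations and dilations only need to act as automorphisms of a trivial quandle, so the trivial-fiber case follows as well.
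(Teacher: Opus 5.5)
There is a genuine gap: the structure you end up with violates axiom (Q3) as soon as $\lambda\neq 1$ and $\card{\FB}>2$. You check that each extra point acts by an automorphism of $\Alex(\FB,L_\lambda)$, and that the extra actions commute among themselves. You never check (Q3) for a pair $(x,z)$ with $x\in\FB$ and $z$ an extra point.

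Since $s_x(z)=z$ (unavoidable when $z$ is a singleton fiber over a trivial $Q$), (Q3) demands $s_xs_z=s_zs_x$. On $\FB$, with $s_z(y)=\mu y$, this reads $\lambda\mu y+(1-\lambda)x=\mu\lambda y+\mu(1-\lambda)x$, i.e.\ $(1-\lambda)(1-\mu)x=0$, which is false for $x\neq 0$.

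No better choice of such parked points can repair this. A point fixed by all of $\Inn(P)$ acts by a central element of $\Inn(P)$, because $\varphi s_z\varphi^{-1}=s_{\varphi(z)}=s_z$. For $\lambda\neq1$, the restriction of $\Inn(P)$ to $\FB$ contains all translations (your transvections $s_xs_{x'}^{-1}$) and also $y\mapsto\lambda y$. The only permutation of $\FB$ commuting with all of these is the identity. So such points act trivially on $\FB$ and cannot supply the missing linear parts.

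The same objection kills your $\lambda=1$ variant: the dilation point and the translation point are both fixed by everything, so their symmetries would have to commute, which fails when $\mu\neq1$. It also kills your intermediate $\Conj(\AGL(1,\FB))$ attempt, where $\FB$ again acts trivially on $Z$.

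The missing idea is that the extra dilations must be moved by the $s_x$. A dilation by $\lambda_i$ has to come with all its conjugates, i.e.\ the dilations by $\lambda_i$ centered at every $a\in\FB$. These families then form non-singleton fibers, which must themselves be $2$-transitive.

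This is exactly the paper's construction. Take $Q$ trivial with $m$ points and $P=Q\times\FB$ with $s_{(i,a)}(j,b)=(j,\lambda_ib+(1-\lambda_i)a)$, where $\langle\lambda_i\rangle=\FB^\times$ and one $\lambda_i$ equals $\lambda$. Each $s_{(i,a)}$ acts on every fiber by the same affine map, so (Q3) reduces to the conjugation rule in $\AGL(1,\FB)$. Moreover, $\relInn(f)=\Inn(P)$ acts on each fiber through all of $\AGL(1,\FB)$:
products of the $s_{(i,0)}^{\pm1}$ give every dilation, and $s_{(j,c)}s_{(j,0)}^{-1}$ with $\lambda_j\neq1$ is translation by $(1-\lambda_j)c$.

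One point in your favour: when $\FB=\FB_2$, your translation trick alone is a valid example. Take points $0,1,z$ over a two-point trivial $Q$, with $s_0=s_1=\id$ and $s_z$ swapping $0$ and $1$. This degenerate case is not covered by the paper's argument, which needs some $\lambda_j\neq 1$, and none exists when $\FB^\times=1$.
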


\begin{Out}

In \cref{section: preliminaries on quandles}, we recall basic facts about quandles and introduce the examples and notation used later in the paper.
%In \cref{section: preliminaries on quandles}, we recall basic facts about quandles, with particular emphasis on properties defined in terms of the canonical symmetries of a quandle, such as inner automorphisms and transvections. 
We also recall classification results on doubly transitive and doubly homogeneous quandles.
In \cref{subsection: quandle_congruence_and_quotients}, we review the notion of quandle congruences and quandle quotients by group actions.

\cref{section: relative_inner_auto_group} is devoted to relative inner automorphism groups and connected homomorphisms. We establish basic properties of connected homomorphisms, including their behavior under composition and the characterization result (\cref{introprop:connected_iff_quotient_by_normal_subgroup_of_Inn}) in \cref{subsection: connected_homomorphisms}.
We also introduce the notion of rigid homomorphisms and see the quandle Stein factorization theorem (\cref{theorem in intro: quandle stein factorization}) in \cref{subsection: rigidity quotients of quandles and factorization}.
As a notion similar to connected homomorphisms, homogeneous homomorphisms are introduced and studied in \cref{subsection:relative_homogeneous}.

Subsequently, the relative version of the transvection group will be defined in \cref{section: relative transvection group}. 
We introduce strongly connected homomorphisms and characterize strongly connected quotients in terms of $\Inn(P)$-perfect normal subgroups in \cref{subsection: strongly connected homomorphisms}.
Then, \cref{subsection: covering factor of a surjection} contains the proof of \cref{theorem in intro: maximal connected covering decomposition} and proves their orthogonality to covering homomorphisms. The following subsections study the group $H_f$ and pullbacks for connected and strongly connected homomorphisms by determining the obstruction to preserving connectedness.

In turn, in \cref{section: doubly transitive homomorphisms}, we discuss doubly transitive homomorphisms of quandles.
After preparing group-theoretic facts in \cref{subsection: group theoretic preparation}, we show \cref{theorem in intro: classification of fibers of doubly trans morph} in \cref{subsection: classification of doubly transitive action on fibers}.
%Having developed the relative theory of connectedness, rigidity, and coverings, we now turn to a substantially different question in \cref{section: doubly transitive homomorphisms}: the structure of surjective quandle homomorphisms whose relative inner automorphism groups act doubly transitively on the fibers.
%First, in \cref{subsection: group theoretic preparation}, we prepare some group theoretical facts, especially on a 2-transitive group action on some sets, including Burnside's theorem. 
%With these preparations, \cref{subsection: classification of doubly transitive action on fibers} shows that every fiber of a doubly transitive morphism is either a trivial quandle or a connected Alexander quandle of prime-power order.
%, that is \cref{theorem in intro: classification of fibers of doubly trans morph}.

The last \cref{section: examples and counterexamples} is devoted to the construction of several explicit examples and counterexamples, arising from conjugacy quandles and generalized (and linear) Alexander quandles.
Its main purpose is to illustrate the scope of the notions introduced in this paper, to exhibit homomorphisms satisfying the various relative properties considered above, and to show that several natural converses fail.

\begin{comment}    
To this end, we study three classes of typical surjective homomorphisms: homomorphisms between conjugacy quandles induced by group surjections (\cref{subsection: example cases of conjugacy quandles}), homomorphisms between generalized Alexander quandles induced by suitable group homomorphisms (\cref{subsection: generalized alexander quandles}), and homomorphisms between linear Alexander quandles (\cref{subsection: linear alexander quandles in examples section}). 
In each case, we give explicit descriptions of the relative inner automorphism group and the relative transvection group in terms of elementary number-theoretic or group-theoretic data.
\end{comment}
\end{Out}

\begin{AI}
The authors used GPT-5.6 Sol (OpenAI) and Gemini 3.1 Pro (Google) throughout the preparation of this work for mathematical discussions and for English proofreading.
All arguments and proofs were independently verified by the authors, who take full responsibility for the contents of this paper.
\end{AI}

\begin{Ack}
We are grateful to Sakumi Sugawara and Jumpei Yasuda for useful conversations.
%Y.I was supported by Grant \todo.
This work was supported by JSPS KAKENHI Grant Number JP25K23333 and JP26KJ0276.
The second author acknowledges support from OpenAI through the ChatGPT for Academic Researchers program.
\end{Ack}

%\newpage

\section{Preliminaries on Quandles}
\label{section: preliminaries on quandles}

\subsection{Quandles}
\label{subsection: Quandles}
    In this section, we review the notion of quandles.
    The reader may consult the very good monograph \cite[Part II]{preprint_valeriy_mohamed_mahender_2025_yangbaxter_equation_and_related_algebraic_structures} for much more information on this topic.
    
\begin{dfn}\label{definition: quandle}
    A \emph{quandle} $(Q, s)$ is a pair of a set $Q$ and a map $s\colon Q \to \Map(Q, Q)$ such that 
    \begin{enumerate}
        \item \label{condition:Q1} $s_x(x) = x$ for any $x\in Q$, 
        \item \label{condition:Q2} $s_x$ is bijective for any $x\in Q$, and
        \item \label{condition:Q3} $s_x\circ s_y = s_{s_x(y)}\circ s_x$ for any $x,y\in Q$.
    \end{enumerate}
\end{dfn}

\begin{dfn}\label{definition: quandle homomorphism}
    A \emph{quandle homomorphism} $f\colon (Q, s) \to (Q', s')$ between quandles $(Q, s)$ and $(Q', s')$ is a map
    $f\colon Q \to Q'$ such that the following diagram commutes for each $x\in Q$:
    \[\begin{tikzcd}
        Q \arrow{r}{f} \arrow{d}[swap]{s_x} & Q' \arrow{d}{s'_{f(x)}} \\
        Q \arrow{r}[swap]{f} & Q' \rlap{.}
    \end{tikzcd}\]
    The category of quandles and their homomorphisms will be denoted by $\Quandle$.
\end{dfn}

The condition~\eqref{condition:Q3} says each symmetry map $s_x$ is a quandle homomorphism $Q\to Q$.

\begin{eg}
    Every set $X$ has the quandle structure by setting $s_x = \id$ for $x\in X$. We call it the \emph{trivial quandle structure} on $X$.
\end{eg}

Giving trivial structures, we have a canonical fully faithful functor $\iota\colon \Set \hookrightarrow \Quandle$.

\begin{dfn}
\label{definition: inner, aut, connected, homogeneous}
    Let $Q$ be a quandle.
    \begin{enumerate}
        \item The group of bijective endomorphisms of $Q$ is denoted by $\Aut(Q)$.
        \item The \emph{inner automorphism group} $\Inn(Q)$ of $Q$ is the subgroup generated by $\{s_x\}_{x\in Q}$, that is, 
        \[
        \Inn(Q) = \langle \{s_x \mid x\in Q\} \rangle.
        \]
        \item The \emph{transvection group} $\Trans(Q)$ of $Q$ is the subgroup generated by elements of the form of $s_x\circ s_y^{-1}$ for $x, y\in Q$, that is, 
        \[
        \Trans(Q) = \langle \{s_x \circ s_y^{-1} \mid x, y\in Q\} \rangle.
        \]
        We remark that some people refer to $\Trans(Q)$ as the displacement group.
        %\item The \emph{displacement group} $\Dis(Q)$ of $Q$ is the subgroup generated by elements of the form of $s_x\circ s_y$ for $x, y\in Q$, that is, 
        %\[ \Dis(Q) =\langle\{ s_x\circ s_y \mid x, y\in Q \}\rangle. \]
        \item A quandle $Q$ is called \emph{homogeneous} if the natural action of $\Aut(Q)$ on $Q$ is transitive.
        \item A quandle $Q$ is called \emph{connected} if the natural action of $\Inn(Q)$ on $Q$ is transitive.
    \end{enumerate}
\end{dfn}

\begin{rem}
    We do not think of the empty quandle as being connected or homogeneous.
\end{rem}

For a general quandle $Q$, we have inclusions
\[ \Trans(Q) \subseteq \Inn(Q) \subseteq \Aut(Q). \]
In what follows, these groups are considered to act on $Q$ from the left via $(f,x) \mapsto f(x)$.

In fact, connectedness can equivalently be characterized in terms of the action of the transvection group.

\begin{lem}[{\cite[Proposition 2.1]{hulpke_stanovsky_vojtv_2016_connected_quandles_and_transitive_groups}}]
\label{lemma: connectedness is also determined by trans}
    A quandle $Q$ is connected if and only if the action of $\Trans(Q)$ on $Q$ is transitive. 
\end{lem}
\begin{comment}
\begin{proof}
    The ``if'' part is trivial, since $\Trans(P)\subseteq \Inn(P)$.
    To show the converse, take any $x, y\in P$. Since $P$ is connected, there exists a $f\in \Inn(P)$ such that $f(x)=y$. We can write it as $f = s_{a_1}^{\epsilon_1}\circ\cdots\circ s_{a_m}^{\epsilon_m}$, 
    where $a_i\in P$ and $\epsilon_i\in\{1, -1\}$ for $1\le i\le m$.
    Set 
    \begin{equation*}
        g_m = 
        \begin{cases}
            s_{a_m}\circ s_x^{-1}& \text{ if } \epsilon_m = 1\\
            s_{s_{a_m}^{-1}(x)}\circ s_{a_m}^{-1} & \text{ if } \epsilon_m = -1
        \end{cases}
    \end{equation*}
    and
    \[
    f_m\coloneqq s_{a_1}^{\epsilon_1}\circ\cdots\circ s_{a_{m-1}}^{\epsilon_{m-1}} \circ g_m.
    \]
    Furthermore, define
    \begin{equation*}
        g_{m-1} = 
        \begin{cases}
            s_{a_{m-1}}\circ s_{g_m(x)}^{-1}& \text{ if } \epsilon_m = 1\\
            s_{s_{a_{m-1}}^{-1}\circ g_m(x)}\circ s_{a_{m-1}}^{-1} & \text{ if } \epsilon_m = -1
        \end{cases}
    \end{equation*}
    and
    \[
    f_{m-1}\coloneqq s_{a_1}^{\epsilon_1}\circ\cdots\circ s_{a_{m-2}}^{\epsilon_{m-2}} \circ g_{m-1}\circ g_m.
    \]
    Repeating this procedure, we get $f_1\in\Trans(P)$ such that $f_1(x) = y$. 
    \memo{Make a more detailed explanation! I think the proof in the ref is easier to understand.}
\end{proof}
\end{comment}

\begin{dfn}\label{definition: connected component}
    A \emph{connected component} of a quandle $Q$ is an orbit of the action $\Inn(Q)$ on $Q$.
    For a quandle $Q$, we write $\pi_0(Q)$ for the set of all connected components.
\end{dfn}

\begin{eg}
    Let $Q$ be the trivial quandle with $n$ elements.
    Then, $\card{\pi_0(Q)} = n$.
\end{eg}

\begin{prop}
\label{prop:adjunction_between_Quandle_and_Set}
    The assignment $Q \mapsto \pi_0(Q)$ forms a left adjoint to the inclusion $\iota\colon \Set \hookrightarrow \Quandle$:
    \[
\begin{tikzcd}
    \Quandle \arrow[bend left=20]{rr}{\pi_0} \arrow[phantom]{rr}{\perp} & & 
    \Set. \arrow[bend left=20,hook]{ll}{\iota}
    %\arrow[start anchor=north west, end anchor=north east, bend right=30]{ll}[swap]{({-})^\text{disc}}{\perp}
    %\arrow[start anchor=south west, end anchor=south east, bend left=30 {ll}{({-})^\text{indisc}}[swap]{\perp}
\end{tikzcd}
\]
\end{prop}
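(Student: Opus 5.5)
We need to prove that $\pi_0$ is left adjoint to $\iota$. Concretely, for every quandle $Q$ and every set $X$ we will exhibit a bijection
\[
\Quandle(Q,\iota X)\cong \Set(\pi_0(Q),X),
\]
natural in $Q$ and $X$. First we make $\pi_0$ a functor. A quandle homomorphism $f\colon Q\to Q'$ satisfies $f\circ s_x = s'_{f(x)}\circ f$, and hence also $f\circ s_x^{-1}=s'^{-1}_{f(x)}\circ f$. By induction on word length, for every $g\in\Inn(Q)$ there is some $g'\in\Inn(Q')$ with $f\circ g = g'\circ f$. So $f$ maps $\Inn(Q)$-orbits into $\Inn(Q')$-orbits, and induces a well-defined map $\pi_0(f)\colon\pi_0(Q)\to\pi_0(Q')$. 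Identities and composites are preserved because $\pi_0(f)$ is determined by $[x]\mapsto[f(x)]$.

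Next we define the unit $\eta_Q\colon Q\to\iota\pi_0(Q)$, $x\mapsto[x]$. It is a quandle homomorphism because $[s_x(y)]=[y]$ (as $s_x\in\Inn(Q)$), which matches the trivial structure on the target. The key step is the universal property. Let $\varphi\colon Q\to\iota X$ be a homomorphism. Since every symmetry of $\iota X$ is the identity, we get $\varphi\circ s_x=\varphi$ and $\varphi\circ s_x^{-1}=\varphi$ for all $x$. Hence $\varphi\circ g=\varphi$ for every $g\in\Inn(Q)$, so $\varphi$ is constant on connected components. It therefore factors uniquely as $\bar\varphi\circ\eta_Q$ with $\bar\varphi\colon\pi_0(Q)\to X$; uniqueness follows from the surjectivity of $\eta_Q$. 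Conversely, any map of sets $u\colon\pi_0(Q)\to X$ gives a homomorphism $\iota(u)\circ\eta_Q$, since every map into a trivial quandle that is constant on orbits is a homomorphism: both sides of the defining square reduce to $\varphi$.

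These two constructions are mutually inverse, and the bijection $\varphi\mapsto\bar\varphi$ is natural. In $X$ this is immediate by postcomposition. In $Q$, for $f\colon Q'\to Q$ we have $\overline{\varphi\circ f}=\bar\varphi\circ\pi_0(f)$, because both sides send $[x']\mapsto\varphi(f(x'))$. This proves the adjunction.

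No step is a serious obstacle. The only point that needs care is the invariance of $\varphi$ under inverses $s_x^{-1}$, not just under the generators $s_x$, so that it is invariant under all of $\Inn(Q)$. This follows directly from the bijectivity of $s_x$.
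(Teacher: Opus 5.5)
Your argument is correct and complete: the unit $x\mapsto[x]$ is a homomorphism into the trivial quandle $\iota\pi_0(Q)$. Every homomorphism $Q\to\iota X$ is invariant under all of $\Inn(Q)$ (you rightly check the inverses $s_x^{-1}$ as well), so it factors uniquely through the orbit set, and the naturality checks are as you state.

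The paper records this proposition without proof as a standard fact. Your verification is the standard one and agrees with how the paper later uses the unit, namely as the orbit quotient $Q\to Q/\Inn(Q)$.
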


In general, taking inner automorphism groups of quandles does not become a functor. However, $\Inn(-)$ behaves functorially for surjections.

\begin{prop}\label{proposition: functoriality of Inn for surjections}
Let $\pi\colon P \twoheadrightarrow Q$ be a surjective homomorphism of quandles.
Then, for every $\varphi\in \Inn(P)$, there exists a unique $\pi_*(\varphi) \in \Inn(Q)$ such that the square diagram
\[\begin{tikzcd}
    P \arrow{r}{\varphi} \arrow[two heads]{d}[swap]{\pi} & P \arrow[two heads]{d}{\pi} \\
    Q \arrow[dashed]{r}{\pi_*(\varphi)} & Q
\end{tikzcd}\]
commutes.
This assignment $\varphi\mapsto \pi_*(\varphi)$ gives rise to a surjective group homomorphism $\pi_*=\Inn(\pi) \colon \Inn(P) \twoheadrightarrow \Inn(Q)$ making the following diagram of maps commutative: 
\[\begin{tikzcd}
    P \arrow{r} \arrow[two heads]{d}[swap]{\pi} & \Inn(P) \arrow[dashed,two heads]{d}{\pi_\ast} \\
    Q \arrow{r} & \Inn(Q)\rlap{.}
\end{tikzcd}\]
\end{prop}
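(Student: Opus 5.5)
The plan is to build $\pi_*$ first on generators and then show that it is well defined on words. Put $\pi_*(s_x)\coloneqq s_{\pi(x)}$. Since $\pi$ is a homomorphism, $\pi\circ s_x = s_{\pi(x)}\circ\pi$. This is exactly the commutative square for $\varphi=s_x$. Inverting both sides gives $\pi\circ s_x^{-1}=s_{\pi(x)}^{-1}\circ\pi$.

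Now take an arbitrary $\varphi\in\Inn(P)$ and write it as a word $\varphi=s_{a_1}^{\epsilon_1}\circ\cdots\circ s_{a_m}^{\epsilon_m}$. Put $\psi\coloneqq s_{\pi(a_1)}^{\epsilon_1}\circ\cdots\circ s_{\pi(a_m)}^{\epsilon_m}\in\Inn(Q)$. Stacking the squares for the individual letters gives $\pi\circ\varphi=\psi\circ\pi$, so $\psi$ exists.

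Uniqueness is where surjectivity enters, and it also removes any dependence on the chosen word. Suppose $\psi,\psi'\in\Inn(Q)$ both satisfy $\pi\circ\varphi=\psi\circ\pi=\psi'\circ\pi$. Every $q\in Q$ has the form $q=\pi(p)$, so $\psi(q)=\pi(\varphi(p))=\psi'(q)$, hence $\psi=\psi'$. Thus $\pi_*(\varphi)$ is a well-defined element of $\Inn(Q)$, determined by $\varphi$ alone.

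It remains to check that $\pi_*$ is a group homomorphism, that it is surjective, and that the second diagram commutes.
\begin{enumerate}
\item For $\varphi,\varphi'\in\Inn(P)$, the element $\pi_*(\varphi)\circ\pi_*(\varphi')$ satisfies the defining square for $\varphi\circ\varphi'$. By uniqueness, $\pi_*(\varphi\circ\varphi')=\pi_*(\varphi)\circ\pi_*(\varphi')$, so $\pi_*$ is a homomorphism.
\item Its image contains every $s_q=s_{\pi(p)}=\pi_*(s_p)$, again using surjectivity of $\pi$. These elements generate $\Inn(Q)$, so $\pi_*$ is onto.
\item The second diagram says $\pi_*(s_x)=s_{\pi(x)}$, which holds by construction.
\end{enumerate}

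The only step needing any care is independence of the word representing $\varphi$. A relation among the $s_{a_i}$ in $\Inn(P)$ need not hold a priori among the $s_{\pi(a_i)}$ in $\Inn(Q)$. The uniqueness argument from surjectivity settles this: two words for the same $\varphi$ give two $\psi$'s that agree on all of $Q$. This is precisely why the construction fails for non-surjective maps.
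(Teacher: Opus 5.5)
Your proposal is correct and follows the paper's own argument, which also uses uniqueness from the surjectivity of $\pi$ and existence from $\pi_*(s_x)=s_{\pi(x)}$; you have simply written out the details the paper leaves implicit. One minor wording point: the square for $s_x^{-1}$ comes from composing $\pi\circ s_x=s_{\pi(x)}\circ\pi$ with $s_x^{-1}$ on the right and $s_{\pi(x)}^{-1}$ on the left, not from literally ``inverting both sides,'' since $\pi$ need not be invertible.
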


\begin{proof}
    The uniqueness follows from the surjectivity.
    The existence can be checked by the observation that $\pi_*(s_x)= s_{\pi(x)}$.
\end{proof}

Moreover, we obtain a functor $\Inn(-) \colon \Quandle^\mathrm{surj} \to \Grp^\mathrm{surj}$ from the category of quandles and surjective quandle homomorphisms to the category of groups and surjective group homomorphisms.

\begin{dfn}[{\cite[Definition 2.42]{eisermann_2014_quandle_coverings_and_their_galois_correspondence}}]
\label{definition: quandle covering}
    A quandle homomorphism $\pi\colon P\to Q$ is called \emph{covering} if $\pi$ is surjective and the following condition holds for all $x,y\in P$:
    \[
    \pi(x) = \pi(y) \text{ implies } s_x = s_y\in\Inn(P).
    \]
\end{dfn}

Note that covering homomorphisms are not closed under composition.

\begin{prop}[{\cite[Proposition 2.49]{eisermann_2014_quandle_coverings_and_their_galois_correspondence}}]
% \cite[Proposition 1.31]{darne_2026_nilpotent_quandles}
\label{proposition: covering hom is central extension}
        Let $\pi \colon P\to Q$ be a covering homomorphism. Then, $\pi_*\coloneqq \Inn(\pi) \colon \Inn(P) \to \Inn(Q)$ is a central extension.
\end{prop}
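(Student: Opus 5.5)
We need to show that the kernel $\Ker(\pi_*)$ is contained in the center of $\Inn(P)$. Surjectivity of $\pi_*$ is already given by \cref{proposition: functoriality of Inn for surjections}, so centrality of the kernel is the only issue.

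Since $\Inn(P)$ is generated by the symmetries $s_x$ ($x\in P$), it suffices to show that every $\varphi\in\Ker(\pi_*)$ commutes with each $s_x$. The key identity is the equivariance
\[
\varphi\circ s_x\circ\varphi^{-1}=s_{\varphi(x)}
\]
for any automorphism $\varphi$ of $P$. For a generator $s_a$ this is exactly axiom~\eqref{condition:Q3} in the form $s_a s_x s_a^{-1}=s_{s_a(x)}$. For $s_a^{-1}$ one applies the same axiom to the point $s_a^{-1}(x)$, which gives $s_a^{-1}s_x s_a=s_{s_a^{-1}(x)}$. Induction on word length then yields the identity for all $\varphi\in\Inn(P)$.

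Now take $\varphi\in\Ker(\pi_*)$, so $\pi\circ\varphi=\pi_*(\varphi)\circ\pi=\pi$. Then $\pi(\varphi(x))=\pi(x)$ for every $x\in P$. Because $\pi$ is covering, this forces $s_{\varphi(x)}=s_x$. Combining with the equivariance identity gives
\[
\varphi s_x\varphi^{-1}=s_{\varphi(x)}=s_x
\]
for all $x$. Hence $\varphi$ commutes with every generator of $\Inn(P)$, so $\varphi\in Z(\Inn(P))$.

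Therefore $\Ker(\pi_*)$ is central, and $\pi_*$ is a central extension. The only step needing care is the conjugation identity for inverse generators, and that is routine.
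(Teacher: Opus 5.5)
Your proof is correct and is essentially the paper's argument. The paper only cites Eisermann at this point, but it later recovers the statement as the special case $\relTrans(\pi)=1$ of its result that $H_f$ is central in $\Inn(P)/\relTrans(f)$, which rests on the same identity $\varphi s_x\varphi^{-1}s_x^{-1}=s_{\varphi(x)}s_x^{-1}$ for $\varphi\in\relInn(f)$. You do not need the induction on word length: the conjugation identity holds for any quandle automorphism $\varphi$, since $\varphi\circ s_x=s_{\varphi(x)}\circ\varphi$ is just the homomorphism property.
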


\subsection{Examples of Quandles}\label{subsection: examples of quandles}
In this subsection, we review some examples of quandles and fix notations that will be mainly used in \cref{section: examples and counterexamples}.

\begin{eg}[conjugacy quandle]
    Let $G$ be a group. For each $x\in G$, define a map $s_x\colon G\to G$ by 
    \[
    s_x(y)\coloneqq xyx^{-1} \qquad \text{ for } y\in G.
    \]
    Then, $(G, s)$ is a quandle. Indeed, the inverse of $s_x$ is given by 
    \[
    s_x^{-1}(z) = x^{-1}zx \qquad \text{ for } z\in G.
    \]
    We call this quandle the \emph{conjugacy quandle} of $G$ and denote it by $\Conj(G)$.
    If the group $G$ is abelian, the quandle structure of $\Conj(G)$ is trivial.
    Note that some authors instead use the map $s_x$ defined by $x\mapsto x^{-1}yx$.

    Moreover, we have a functor $\Conj\colon \Grp\to\Quandle$.
\end{eg}

\begin{eg}[quandle triplet, {\cite[Definition 3.1]{ishihara_tamaru_2016_flat_connected_finite_quandles}}]
\label{example: quandle triplet}
    Let $G$ be a group, $H$ a subgroup of $G$, and $\sigma$ a group automorphism of $G$.
    The triplet $(G, H, \sigma)$ is called a \emph{quandle triplet} if $H\subseteq \Fix(\sigma) \coloneqq \{g\in G \mid \sigma(g) = g\}$.

    For a quandle triplet $(G, H, \sigma)$, the set $G/H$ of cosets has a quandle structure given by 
    \[
    s_{[x]}([y]) = [x\cdot \sigma(x^{-1}y)] \qquad ([x],[y] \in G/H).
    \]
    For this quandle, we write $Q(G, H, \sigma)=(G/H,s)$.
\end{eg}

\begin{eg}[(generalized) Alexander quandle]
\label{example: generalized alex and alex}
    The quandle constructed from a quandle triplet of the form $(G, \{e_G\}, \sigma)$ is called a \emph{generalized Alexander quandle}. More precisely, for a group $G$ and a group automorphism $\sigma\in \Aut(G)$, the generalized Alexander quandle $\GAlex(G, \sigma)$ is $G$ as a set with the quandle structure
    \[
    s_g(h) \coloneqq g\cdot\sigma(g^{-1}h)\qquad (g, h\in G).
    \]
    If a quandle $Q$ is isomorphic to a quandle $\GAlex(A,\sigma)$ for some abelian group $A$, then $Q$ is called an \emph{Alexander quandle} and denoted by $\Alex(A, \sigma)$.
\end{eg}

A morphism $\Phi\colon (G,\sigma)\to(H,\tau)$ of groups equipped with automorphisms is a group homomorphism $\Phi\colon G\to H$ satisfying $\Phi\circ\sigma=\tau\circ\Phi$. Observe that we have a functor $\GAlex(-) \colon \GrpAut \to \Quandle$ from the category of groups with automorphisms to the category of quandles.

\begin{eg}[linear Alexander quandle]
\label{example: linear Alexander quandle}
    Let $n$ be a positive integer and $a$ be an integer coprime to $n$.
    The Alexander quandle associated with the abelian group $\ZZ/n\ZZ$ together with the automorphism $x\mapsto ax$ is called a \emph{linear Alexander quandle}.
    In other words, a linear Alexander quandle (of type $(n,a)$) is the set $\ZZ/n\ZZ$ equipped with the quandle structure given by
    \[ s_x(y) = (1-a)x + ay \]
    for $x,y \in \ZZ/n\ZZ$.
    Such a quandle is denoted by $\Lambda_{n,a}$.

    For example, $\Lambda_{n, 1}$ is just the trivial quandle with $n$ elements; $\Lambda_{n, -1}$ is exactly what is called the \emph{dihedral quandle} $R_n$ of order $n$.
\end{eg}

We consider many such examples in \cref{section: examples and counterexamples}.

\subsection{Doubly Transitive Quandles}
\label{subsection: Doubly transitive quandles}

We recollect the classification result of \emph{doubly transitive quandles}
to compare the results that we will show in \cref{section: doubly transitive homomorphisms}.

\begin{dfn}[{\cite[Definition 3.1]{tamaru_2013_twopoint_homogeneous_quandles_with_prime_cardinality}}]
    A quandle $Q$ is said to be \emph{doubly transitive} (or \emph{two-point homogeneous}) if for all $(x_1, x_2), (y_1, y_2)\in Q\times Q$ satisfying $x_1\neq x_2$ and $y_1 \neq y_2$, there exists $f\in\Inn(Q)$ such that $(f(x_1), f(x_2)) = (y_1, y_2)$.
    This means precisely that $\Inn(Q)$ acts $2$-transitively on $Q$.
\end{dfn}

\begin{dfn}[{\cite[p.694]{bonatto_2020_principal_and_doubly_homogeneous_quandles}}]
\label{definition: doubly homogeneous quandle}
    A quandle $Q$ is said to be \emph{doubly homogeneous} if $\Aut(Q)$ acts $2$-transitively on $Q$.
\end{dfn}

The classification of finite doubly transitive quandles was given in \cite{wada_2015_twopoint_homogeneous_quandles_with_cardinality_of_prime_power} and \cite{vendramin_2017_doubly_transitive_groups_and_cyclic_quandles}.
For a ring $R$ and an element $a\in R$, let $L_a$ denote the left multiplication operator $x\mapsto ax$.

\begin{thm}[{\cite{wada_2015_twopoint_homogeneous_quandles_with_cardinality_of_prime_power,vendramin_2017_doubly_transitive_groups_and_cyclic_quandles}}]
\label{theorem: classification of doubly transitive quandles}
    A finite quandle $Q$ is doubly transitive if and only if 
    $Q$ is isomorphic to the Alexander quandle $\Alex(\FB_{p^k}, L_\omega)$, where $\FB_{p^k}$ is (the additive group of) the finite field with $p^k$ elements and $\omega$ is a primitive element of $\FB_{p^k}$.
\end{thm}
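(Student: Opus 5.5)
The plan is to prove the two directions separately. The ``if'' direction is a direct computation. The ``only if'' direction reduces, through the structure of $2$-transitive groups, to the affine case, and there the quandle is read off from the action of a single symmetry $s_x$. Throughout I assume $\card{Q}\ge 3$. The case $p^k=2$ is degenerate: there $\omega=1$ and the quandle is trivial, so it has to be handled by convention.

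For the ``if'' direction, let $Q=\Alex(\FB_{p^k},L_\omega)$, so $s_x(y)=(1-\omega)x+\omega y$. I would check two things.
\begin{enumerate}
\item Each $s_x s_0^{-1}$ is the translation $y\mapsto y+(1-\omega)x$. Since $\omega\neq 1$, these give all translations of $\FB_{p^k}$.
\item $s_0=L_\omega$ generates all of $\FB_{p^k}^\times$, because $\omega$ is primitive.
\end{enumerate}
Hence $\Inn(Q)=\AGL_1(\FB_{p^k})$, which is (sharply) $2$-transitive.

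For the ``only if'' direction, let $G=\Inn(Q)$ act $2$-transitively on $Q$.
\begin{enumerate}
\item From $g s_x g^{-1}=s_{g(x)}$, the element $s_x$ is central in the stabilizer $G_x$.
\item $G_x$ is transitive on $Q\setminus\{x\}$, and the fixed-point set of $s_x$ is $G_x$-invariant. Since $s_x\neq\id$ (otherwise all $s_y$ are trivial, by conjugacy, and $G$ would be trivial), $s_x$ fixes only $x$. So $Z(G_x)\neq 1$ contains an element acting fixed-point-freely on $Q\setminus\{x\}$.
\item By Burnside's theorem, a finite $2$-transitive group has a unique minimal normal subgroup, which is either elementary abelian and regular, or nonabelian simple.
\end{enumerate}
I need to exclude the almost simple case, and I expect this to be the main obstacle. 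I would first go through the CFSG list of almost simple $2$-transitive groups and check that in none of them does a point stabilizer have nontrivial center acting semiregularly on the remaining points; this is essentially Vendramin's route. If I wanted to avoid CFSG, I would instead try to show that the conjugacy class $\{s_y\}$ of the central element $s_x$ of $G_x$ forces $G$ to be a Frobenius-type group, and then apply Frobenius' theorem.

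In the affine case, write $G=V\rtimes G_0$ with $V\cong(\ZZ/p)^k$ acting regularly, and identify $Q$ with $V$ with base point $0$. Let $t_v$ denote translation by $v$, which lies in $V\subseteq G$. Then $s_v=t_v s_0 t_v^{-1}$, so $s_v(w)=v+\sigma(w-v)$ with $\sigma\coloneqq s_0\in G_0\subseteq\GL(V)$, and therefore $Q\cong\Alex(V,\sigma)$. Since $\Inn(Q)$ is generated by the $s_v$, we get $G=V\rtimes\langle\sigma\rangle$, so $G_0=\langle\sigma\rangle$. As $G_0$ is transitive on $V\setminus\{0\}$, the element $\sigma$ is a cyclic permutation of $V\setminus\{0\}$. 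By Schur's lemma, $\End_{\langle\sigma\rangle}(V)$ is a field $\FB_{p^k}$, and $V$ is a one-dimensional vector space over it, with $\sigma=L_\omega$. Since $\langle\omega\rangle$ is transitive on $\FB_{p^k}^\times$, $\omega$ is a primitive element, which gives $Q\cong\Alex(\FB_{p^k},L_\omega)$.
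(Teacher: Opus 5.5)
Your argument is correct, but in its last step it takes a different route from the cited proof. The paper does not prove this statement itself; it quotes it from Wada and Vendramin. Your skeleton is nonetheless the one the paper reuses for fibers in \cref{section: doubly transitive homomorphisms}:
\begin{itemize}
\item Burnside's dichotomy for the $2$-transitive group;
\item the fact that $s_x$ is central in the point stabilizer, which rules out the almost simple case (\cref{lem:symmetry_map_at_x_is_in_the_center_of_stabilizer,lem:almost_simple_center_trivial});
\item conjugating $s_0$ by the regular translations to obtain an Alexander structure (\cref{proposition: sufficient cond for alex when doubly trans containing inns}).
\end{itemize}

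Two differences are worth recording. First, for the almost simple case you rely on the CFSG list. The paper credits Vendramin with a conceptual argument resting on $F(G)=1$, and mentions the table check only as an alternative. So calling the list check ``essentially Vendramin's route'' is not quite accurate, though the check itself is valid. Your CFSG-free ``Frobenius-type'' aside is not yet an argument, but nothing depends on it.

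Second, and more substantively, you finish the affine case directly. Because the acting group is exactly $\Inn(Q)$, each $s_v=t_{(1-\sigma)v}\circ\sigma$ lies in $V\rtimes\langle\sigma\rangle$. Hence $G_0=\langle\sigma\rangle$ is cyclic and regular on $V\setminus\{0\}$, which pins down $V\cong\FB_{p^k}$ and $\sigma=L_\omega$ with $\omega$ primitive. This replaces the appeal to Wada's classification in prime-power degree. It is also sharper than \cref{theorem: bonatto characterization of doubly homogeneous quandles}, which only gives $\Alex(\FB_{p^m}^n,L_\lambda)$ with no control on $n$ or $\lambda$. It is exactly the step that fails in the paper's relative setting, where $G=\langle\rho_q(\relInn(f)),\Inn(F_q)\rangle$ can be larger than $\Inn(F_q)$ and $G_0$ need not be cyclic.

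One small repair is needed: Schur's lemma alone only gives a division ring. The clean justification is that $V=\FB_p[\sigma]v$ for any $v\neq 0$, since the $\sigma$-orbit of $v$ is all of $V\setminus\{0\}$. Hence $\End_{\langle\sigma\rangle}(V)=\FB_p[\sigma]$ is a field of order $p^k$ over which $V$ is $1$-dimensional.

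Your caveat about $p^k=2$ is right. Note also that a one-element quandle is vacuously doubly transitive, so the statement really needs $\card{Q}\geq 3$.
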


\begin{thm}[{\cite{bonatto_2020_principal_and_doubly_homogeneous_quandles}}]
\label{theorem: bonatto characterization of doubly homogeneous quandles}
    Let $Q$ be a finite connected Alexander quandle.
    Then, the following are equivalent:
    \begin{itemize}
        \item $Q$ is a doubly homogeneous quandle.
        \item $Q\cong \Alex(\FB_{p^m}^n, L_\lambda)$, where $p$ is a prime number and $\lambda\in\FB_{p^m}^{\times}\setminus\{1\}$.
    \end{itemize}
\end{thm}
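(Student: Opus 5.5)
This is the theorem of Bonatto, so we may cite it. A self-contained plan follows, treating the two implications separately.

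For the direction from the explicit form to double homogeneity, let $Q=\Alex(V,L_\lambda)$ with $V=\FB_{p^m}^n$ and $\lambda\in\FB_{p^m}^\times\setminus\{1\}$. Connectedness holds because $1-\lambda$ is invertible, so $s_0$ has $0$ as its only fixed point and $\Inn(Q)$ contains all translations. I will use the group $G=\{x\mapsto Ax+b \mid A\in\GL_{\FB_{p^m}}(V),\ b\in V\}$. Each such map is a quandle automorphism, because $A$ commutes with the scalar $\lambda$ and translations are automorphisms of any Alexander quandle. The group $G$ acts $2$-transitively on $V$. Indeed, translations carry $(x_1,x_2)$ to $(0,x_2-x_1)$, and $\GL$ acts transitively on $V\setminus\{0\}$. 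Hence $\Aut(Q)\supseteq G$ is $2$-transitive.

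For the converse, let $Q=\Alex(A,\sigma)$ be finite, connected and doubly homogeneous. Connectedness means $1-\sigma$ is surjective, so $\Inn(Q)$ contains all translations $A$. A $2$-transitive group is primitive, and $A\trianglelefteq\Aut(Q)$ is a regular normal subgroup. A regular normal subgroup of a primitive group is characteristically simple; being abelian, it is elementary abelian, so $A\cong\FB_p^N$. I then identify the point stabilizer: $\Aut(Q)_0=\{\varphi\in\Aut(Q)\mid\varphi(0)=0\}$. I claim this equals the centralizer of $\sigma$ in $\Aut(A)$. Since $\varphi$ commutes with $s_0=\sigma$ and satisfies $\varphi\circ s_x=s_{\varphi(x)}\circ\varphi$, the map $\psi(x)=\varphi(x)$ satisfies $\psi((1-\sigma)x+\sigma y)=(1-\sigma)\psi(x)+\sigma\psi(y)$. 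Setting $y=0$ and $x=0$ gives additivity on the images of $1-\sigma$ and $\sigma$. Since $1-\sigma$ is bijective, $\psi$ is additive. Therefore $\Aut(Q)=A\rtimes C_{\Aut(A)}(\sigma)$, and double homogeneity says $C=C_{\Aut(A)}(\sigma)$ acts transitively on $A\setminus\{0\}$.

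The main obstacle is deducing from this transitivity that $\sigma$ is a scalar over some field $\FB_{p^m}$. My plan is to let $K=\FB_p[\sigma]\subseteq\End(A)$, a commutative algebra. The group $C$ commutes with $K$, so $C$ permutes the $K$-submodules and preserves $\Ker(g(\sigma))$ for every polynomial $g$. Transitivity on nonzero vectors forces each such kernel to be $0$ or $A$. Consequently the minimal polynomial of $\sigma$ is irreducible, so $K\cong\FB_{p^m}$ is a field and $A$ is a $K$-vector space $K^n$. Finally, $\lambda:=\sigma\in K^\times$, and $\lambda\neq1$ because $Q$ is connected and nontrivial. This gives $Q\cong\Alex(\FB_{p^m}^n,L_\lambda)$, as required. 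The singleton case $|A|=1$ is degenerate and is excluded or absorbed by convention.
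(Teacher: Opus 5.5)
Your proof is correct, but it takes a genuinely different route from the paper. The paper gives no argument of its own: it obtains the statement by combining Bonatto's Corollary~3.6 and Theorem~4.12.

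You instead argue from scratch. For the implication from the explicit form, you exhibit the $2$-transitive group of maps $x\mapsto Ax+b$ with $A\in\GL_{\FB_{p^m}}(V)$. For the converse, you first derive $\Aut(Q)=A\rtimes C_{\Aut(A)}(\sigma)$; your additivity computation, which uses that both $\sigma$ and $1-\sigma$ are bijective, is exactly what is needed. You then observe that transitivity of $C_{\Aut(A)}(\sigma)$ on $A\setminus\{0\}$ forces each $\Ker g(\sigma)$ to be $0$ or $A$. Hence the minimal polynomial of $\sigma$ is irreducible, and $\FB_p[\sigma]$ is a field over which $\sigma$ is a scalar.

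Your route buys self-containedness and an explicit description of $\Aut(Q)$. It also gives a sharper intrinsic criterion: a finite connected $\Alex(A,\sigma)$ is doubly homogeneous if and only if $A$ is elementary abelian and $\sigma$ has irreducible minimal polynomial over $\FB_p$. The citation buys brevity and places the result inside Bonatto's general treatment of doubly homogeneous quandles.

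Two cosmetic points:
\begin{itemize}
\item You use $A\trianglelefteq\Aut(Q)$ before justifying it. It follows from your semidirect decomposition, or from $A=\Trans(Q)$ being normal in $\Aut(Q)$ when $Q$ is connected, so the steps just need reordering.
\item The appeal to characteristically simple regular normal subgroups can be replaced by a shorter observation. A group acting transitively on $A\setminus\{0\}$ by automorphisms forces all nonzero elements to have the same order, and that order must be prime.
\end{itemize}
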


\begin{proof}
    It follows by combining \cite[Corollary 3.6 and Theorem 4.12]{bonatto_2020_principal_and_doubly_homogeneous_quandles}.
\end{proof}

\subsection{Quandle Congruences and Quotients}
\label{subsection: quandle_congruence_and_quotients} 

Here we recall the notion of quandle congruence.

\begin{dfn}
    An equivalence relation $\sim$ on a quandle $Q$ is called a \emph{quandle congruence} (or simply a \emph{congruence}) if for any $x,x',y,y'\in Q$ the following conditions hold:
\begin{enumerate}[label=(C\arabic*)]
\item If $x\sim x'$ and $y\sim y'$, then $s_x(y)\sim s_{x'}(y')$,
\item If $x\sim x'$ and $y\sim y'$, then $s_x^{-1}(y)\sim s_{x'}^{-1}(y')$.
\end{enumerate}
\end{dfn}

\begin{prop}
    Let $\sim$ be a quandle congruence on a quandle $Q$. Then the quotient set $Q/{\sim}$ admits a quandle structure defined by
    $s_{[x]}([y])\coloneqq [s_x(y)]$, and the natural surjective map $\pi \colon Q \twoheadrightarrow Q/{\sim}$ is a quandle homomorphism.
    Moreover, if a quandle homomorphism $f\colon Q\to Q'$ satisfies $x\sim y \Rightarrow f(x)=f(y)$, then there exists a unique quandle homomorphism $\overline{f}\colon Q/{\sim}\to Q'$ making the following diagram commutative:
    \[\begin{tikzcd}
        Q \arrow[two heads]{r}{\pi} \arrow{rd}[swap]{f} & Q/{\sim} \arrow[dashed]{d}{\overline{f}} \\
        & Q'\rlap{.}
    \end{tikzcd}\]
\end{prop}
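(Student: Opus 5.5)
We check three things in turn: that the operation on $Q/{\sim}$ is well defined, that it satisfies the quandle axioms, and that the universal property holds. Write $p\colon Q\to Q/{\sim}$ for the quotient map, to keep it apart from the map $\pi$ in the statement.

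\emph{Well-definedness.} For $[x],[y]\in Q/{\sim}$, set $s_{[x]}([y])\coloneqq[s_x(y)]$. This is independent of representatives precisely by (C1). In the same way, (C2) shows that $t_{[x]}([y])\coloneqq[s_x^{-1}(y)]$ is a well-defined map $Q/{\sim}\to Q/{\sim}$.

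\emph{Quandle axioms.} Each is checked on representatives.
\begin{itemize}
\item Axiom \eqref{condition:Q1}: $s_{[x]}([x])=[s_x(x)]=[x]$.
\item Axiom \eqref{condition:Q2}: we have $t_{[x]}\circ s_{[x]}([y])=[s_x^{-1}s_x(y)]=[y]$, and symmetrically $s_{[x]}\circ t_{[x]}=\id$. So $s_{[x]}$ is bijective with inverse $t_{[x]}$.
\item Axiom \eqref{condition:Q3}: $s_{[x]}s_{[y]}([z])=[s_xs_y(z)]=[s_{s_x(y)}s_x(z)]=s_{[s_x(y)]}s_{[x]}([z])$. Since $[s_x(y)]=s_{[x]}([y])$, this is the required identity.
\end{itemize}
The identity $p(s_x(y))=s_{p(x)}(p(y))$ holds by definition, so $p$ is a surjective quandle homomorphism.

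\emph{Universal property.} Since $f$ is constant on classes, $\overline f([x])\coloneqq f(x)$ is a well-defined map. It is a homomorphism because
\[
\overline f(s_{[x]}[y])=f(s_x(y))=s'_{f(x)}(f(y))=s'_{\overline f([x])}\bigl(\overline f([y])\bigr).
\]
Uniqueness follows from the surjectivity of $p$: any $g$ with $g\circ p=f$ must satisfy $g([x])=f(x)$.

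The only point needing care is \eqref{condition:Q2}. Bijectivity of $s_{[x]}$ is not automatic from (C1) alone, and it is here that (C2) is used, to define the inverse. Everything else is routine verification on representatives.
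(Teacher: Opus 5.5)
Your proof is correct and complete: (C1) makes $s_{[x]}$ well defined, (C2) supplies the inverse $t_{[x]}$, and the quandle axioms, the homomorphism property of the quotient map, and the universal property all follow by checking on representatives. The paper states this proposition without proof as a standard fact, and your argument is exactly the routine verification it relies on; your remark is also accurate that surjectivity of $s_{[x]}$ comes for free while injectivity genuinely needs (C2).
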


Surjective homomorphisms of quandles are sometimes called \emph{quandle quotients}.
The following is well-known.

\begin{prop}
    For any surjective homomorphism $f\colon Q\twoheadrightarrow R$, define a relation $\sim$ on $Q$ by
    \[ x \sim y \iff f(x)=f(y). \]
    Then $\sim$ is a quandle congruence, and there is an isomorphism $R\cong Q/{\sim}$ (as quandles quotients). This quandle congruence is called the \emph{kernel congruence} of $f$ and is denoted by $\Eq(f)$.

    This assignment gives a one-to-one correspondence (up to isomorphism) between quotient quandles of $Q$ and quandle congruences on $Q$.
\end{prop}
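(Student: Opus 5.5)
The statement has two parts: the kernel relation $\Eq(f)$ is a quandle congruence, and it induces an isomorphism $R\cong Q/{\sim}$ together with a bijection between quotients of $Q$ and congruences on $Q$. I will verify these directly from the definitions, in that order.

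\emph{Congruence.} First, $\sim$ is an equivalence relation, being the pullback of equality along $f$. For (C1), suppose $x\sim x'$ and $y\sim y'$. Since $f$ is a homomorphism,
\[
f(s_x(y))=s_{f(x)}(f(y))=s_{f(x')}(f(y'))=f(s_{x'}(y')).
\]
For (C2), observe that the homomorphism square $f\circ s_x=s_{f(x)}\circ f$ gives $f\circ s_x^{-1}=s_{f(x)}^{-1}\circ f$ once we compose with the inverses on both sides, using bijectivity of the symmetries \eqref{condition:Q2}. The same chain of equalities then applies with inverses.

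\emph{Isomorphism.} By the preceding proposition on quotients, $f$ factors as $f=\overline{f}\circ\pi$ with $\overline{f}\colon Q/{\sim}\to R$ a quandle homomorphism.
\begin{itemize}
\item $\overline{f}$ is surjective because $f$ is.
\item $\overline{f}$ is injective by the very definition of $\sim$.
\end{itemize}
A bijective quandle homomorphism is an isomorphism: the inverse map satisfies the homomorphism square, as one sees by conjugating $s_{\overline{f}([x])}$ by $\overline{f}^{-1}$. Compatibility with the quotient maps from $Q$ is exactly $f=\overline{f}\circ\pi$, so this is an isomorphism of quotients of $Q$.

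\emph{Bijection.} The two assignments are $f\mapsto\Eq(f)$ and $\sim\;\mapsto(\pi\colon Q\to Q/{\sim})$.
\begin{itemize}
\item Starting from a congruence: $\Eq(\pi)={\sim}$ tautologically, since $\pi(x)=\pi(y)$ exactly when $x\sim y$.
\item Starting from a quotient: $f\mapsto\Eq(f)\mapsto\pi$ returns a quotient isomorphic to $f$, by the isomorphism step above.
\item Well-definedness on isomorphism classes: if $f'=u\circ f$ with $u$ an isomorphism, then $\Eq(f')=\Eq(f)$.
\end{itemize}

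The main obstacle is modest. It is the bookkeeping of (C2), which requires checking that homomorphisms also intertwine the inverse symmetries, and the check that a bijective homomorphism is an isomorphism. Both follow formally from \eqref{condition:Q2}.
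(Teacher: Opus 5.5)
Your proof is correct and complete. (C1) and (C2) follow from the homomorphism square and its inverted form $f\circ s_x^{-1}=s_{f(x)}^{-1}\circ f$; the induced map $\overline{f}\colon Q/{\sim}\to R$ from the preceding proposition is a bijective homomorphism, hence an isomorphism of quotients; and the two round trips give the bijection. The paper gives no proof here, calling the statement well known, and your argument is the standard verification it takes for granted. It is the same factor-through-the-quotient-and-check-bijectivity pattern the paper uses later, e.g.\ in the proof of \cref{proposition:connected_iff_quotient_by_normal_subgroup_of_Inn}.
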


\begin{eg}
    The unit $Q\to \pi_0(Q)$ of the adjunction $\pi_0 \dashv \iota$ in \cref{prop:adjunction_between_Quandle_and_Set} is surjective. It is the quotient quandle corresponding to the quandle congruence induced by the orbit decomposition of the action of $\Inn(Q)$.
\end{eg}

For a quandle, one can also take a quotient by a group by constructing a quandle congruence from a group action.

\begin{prop}
\label{proposition:quandle_quotient_by_group}
    Let $Q$ be a quandle and let $G\subseteq \Aut(Q)$ be a subgroup. Define a relation $\sim_G$ on $Q$ by
    \[ x\sim_G y \iff \text{there exists } g\in G \text{ such that } y=g(x). \]
    Then the following are equivalent.
    \begin{enumerate}[label={(\roman*)}]
        \item \label{item:sim_N_is_congruence} $\sim_G$ is a quandle congruence.
        \item \label{item:sim_N_is_Inn(Q)-invariant} $\sim_G$ is \emph{$\Inn(Q)$-invariant}: namely, for all $\varphi\in \Inn(Q)$, $y\sim_G z$ implies $\varphi(y) \sim_G \varphi(z)$.
        \item \label{item:pointwise_normalized_by_Inn(Q)} For all $\varphi \in \Inn(Q)$ and $x\in Q$, we have $\varphi G \varphi^{-1}\cdot x \subseteq G\cdot x$.
    \end{enumerate}
    In this case, the corresponding quotient quandle will be denoted by $Q/G\coloneqq Q/{\sim_G}$.
\end{prop}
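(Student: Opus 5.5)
We prove the cycle of implications \ref{item:sim_N_is_congruence} $\Rightarrow$ \ref{item:sim_N_is_Inn(Q)-invariant} $\Rightarrow$ \ref{item:pointwise_normalized_by_Inn(Q)} $\Rightarrow$ \ref{item:sim_N_is_Inn(Q)-invariant} $\Rightarrow$ \ref{item:sim_N_is_congruence}. Throughout we use that $\sim_G$ is an equivalence relation, since $G$ is a group.

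For \ref{item:sim_N_is_congruence} $\Rightarrow$ \ref{item:sim_N_is_Inn(Q)-invariant}, take $x=x'$ in (C1) and (C2). If $y\sim_G z$, then $s_x(y)\sim_G s_x(z)$ and $s_x^{-1}(y)\sim_G s_x^{-1}(z)$. Invariance under the generators $s_x^{\pm1}$ then gives invariance under every $\varphi\in\Inn(Q)$, by induction on word length.

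For \ref{item:sim_N_is_Inn(Q)-invariant} $\Leftrightarrow$ \ref{item:pointwise_normalized_by_Inn(Q)}, we rewrite both conditions. Condition \ref{item:sim_N_is_Inn(Q)-invariant} says: for all $z$, $g\in G$ and $\varphi\in\Inn(Q)$, we have $\varphi g(z)\in G\cdot\varphi(z)$. Substitute $x=\varphi(z)$, which is possible because $\varphi$ is bijective. The condition becomes $\varphi g\varphi^{-1}(x)\in G\cdot x$ for all $x$, $g$, $\varphi$, which is exactly \ref{item:pointwise_normalized_by_Inn(Q)}. Each step is reversible.

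For \ref{item:sim_N_is_Inn(Q)-invariant} $\Rightarrow$ \ref{item:sim_N_is_congruence}, suppose $x\sim_G x'$ and $y\sim_G y'$. We split the comparison as
\[ s_x(y)\sim_G s_x(y') \quad\text{and}\quad s_x(y')\sim_G s_{x'}(y'). \]
The first relation is immediate from invariance under $s_x$. For the second, write $x'=g(x)$ with $g\in G\subseteq\Aut(Q)$. Since $g$ is an automorphism, $s_{g(x)}=g s_x g^{-1}$. Hence
\[ s_{x'}(y')=g\bigl(s_x(g^{-1}y')\bigr)\sim_G s_x(g^{-1}y'), \]
and $s_x(g^{-1}y')\sim_G s_x(y')$ by invariance under $s_x$, because $g^{-1}y'\sim_G y'$. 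Transitivity of $\sim_G$ gives (C1). The same argument with $s^{-1}$, using $s_{g(x)}^{-1}=g s_x^{-1}g^{-1}$, gives (C2).

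The only step needing care is this last one. The key observation is the identity $s_{g(x)}=gs_xg^{-1}$, which holds because $G$ consists of automorphisms; it lets us change the base point within a $G$-orbit. The remaining steps are routine.
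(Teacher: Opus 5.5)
Your proof is correct and follows essentially the same route as the paper. In both, the key step is the identity $s_{g(x)}=gs_xg^{-1}$ for $g\in G\subseteq\Aut(Q)$, which lets you move the base point within a $G$-orbit. The only difference is organizational: the paper derives (C1) and (C2) directly from (iii) via $s_{x'}(y')=gs_xg^{-1}hs_x^{-1}(s_x(y))$. You instead derive them from (ii) in two invariance steps, and obtain (ii)$\Leftrightarrow$(iii) as a pure reformulation through the substitution $x=\varphi(z)$.
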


\begin{proof}
    \ref{item:sim_N_is_congruence} $\Rightarrow$ \ref{item:sim_N_is_Inn(Q)-invariant}: trivial.

    \ref{item:sim_N_is_Inn(Q)-invariant} $\Rightarrow$ \ref{item:pointwise_normalized_by_Inn(Q)}: Let $\varphi \in\Inn(Q)$, $x\in Q$, and $g\in G$. By definition, we have $g \varphi^{-1}(x) \sim_G \varphi^{-1}(x)$. Since $\sim_G$ is $\Inn(Q)$-invariant, it follows that $\varphi g \varphi^{-1}(x) \sim_G \varphi\varphi^{-1}(x)=x$. This means $\varphi g \varphi^{-1}(x) \in G\cdot x$.

    \ref{item:pointwise_normalized_by_Inn(Q)} $\Rightarrow$ \ref{item:sim_N_is_congruence}: Let $x,x',y,y' \in Q$ satisfy $x \sim_G x'$ and $y \sim_G y'$. Take $g,h \in G$ such that $g(x)=x'$ and $h(y)=y'$. Then, since $g$ is a quandle automorphism, 
    \[ s_{x'}(y') = s_{g(x)}(h(y)) = g s_x g^{-1} h(y) = g s_xg^{-1}h s_x^{-1}(s_x(y)) \sim_G s_xg^{-1}h s_x^{-1}(s_x(y)). \]
    By the assumption, we have $s_xg^{-1}h s_x^{-1}(s_x(y)) \in G \cdot s_x(y)$. Hence $s_{x'}(y') \sim_G s_x(y)$. Replacing $s_x$ by $s_x^{-1}$ in the above argument gives $s_{x'}^{-1}(y') \sim_G s_x^{-1}(y)$, and thus $\sim_G$ is a congruence.
\end{proof}

\begin{rem}
\label{remark_of_proposition:quandle_quotient_by_group}
    In particular, if a subgroup $G\subseteq \Aut(Q)$ is \emph{normalized by $\Inn(Q)$} (i.e.\ $\varphi G \varphi^{-1} \subseteq G$ for all $\varphi \in \Inn(Q)$), then the orbit equivalence relation $\sim_G$ is a quandle congruence (as observed in {\cite[Lemma 1.20]{darne_2026_nilpotent_quandles}}).
    As a special case, if $N\trianglelefteq \Inn(Q)$ is a normal subgroup, $\sim_N$ is a congruence (cf.~{\cite[Theorem 6.1]{bunch_lofgren_rapp_yetter_2010_on_quotients_of_quandles}}).
\end{rem}

\begin{eg}
    For a quandle $Q$, we have $\pi_0(Q)=Q/\Inn(Q)$.
\end{eg}

\begin{rem} 
\label{remark: reformulation of the necessity direction} 
The preceding theorem provides a reformulation of the necessity direction in \cite[Theorem 6.1] {bunch_lofgren_rapp_yetter_2010_on_quotients_of_quandles}. Such a reformulation is needed because the necessity statement, as stated there, requires further qualification. The following example illustrates this point. 
\end{rem}

\begin{eg}
    There is a subgroup $N \subseteq \Inn(Q)$ such that the orbit relation $\sim_N$ is a quandle congruence but $N$ is not a normal subgroup.
    
    Let $R_3$ be the dihedral quandle with three elements.
    Consider two copies of $R_3$, say $A=R_3=\{a_0,a_1,a_2\}$ and $B=R_3=\{b_0,b_1,b_2\}$, and let $Q=A \sqcup B$ be their disjoint union, where the symmetry map is the usual dihedral quandle operation on each component and is trivial between distinct components.
    
    The symmetry maps of $R_3$ are precisely the three transpositions $(01)$, $(02)$, and $(12)$. Since these transpositions generate the full symmetric group on $\{0,1,2\}$, we obtain $\Inn(R_3)=\langle s_0,s_1,s_2 \rangle \cong  S_3$.
    As the inner automorphisms of $A$ and $B$ act independently, we have
    \[ \Inn(Q)\cong \Inn(R_3)\times\Inn(R_3)\cong S_3\times S_3. \]
    
    Now consider the diagonal subgroup
    \[ N= \{ (g,g) \mid g \in S_3 \} \subseteq \Inn(Q), \]
    which is a subgroup of $S_3\times S_3$.
    Then the relation $\sim_N$ induced by $N$-orbits is a quandle congruence. To see this, let $x,x',y,y'\in Q$ satisfy $x\sim_N x'$ and $y\sim_N y'$. We show that $s_x(y)\sim_N s_{x'}(y')$ (the verification for $s_x^{-1}$ is analogous).
    First suppose that $x\sim_N y$. Then we also have $x'\sim_N y'$.
    Let $g,g'\in S_3$ be the permutations induced by the symmetry maps $s_x$ and $s_{x'}$ on the respective copies of $R_3$. Since $(g,g), (g',g') \in N$, we have
    \[ s_x(y)=(g,g)(y)\sim_N y\sim_N y'\sim_N (g',g')(y')=s_{x'}(y'). \]
    Next, suppose that $x\not\sim_N y$. Then also $x'\not\sim_N y'$. Since the action between different components is trivial, we have
    \[ s_x(y)=y\sim_N y'=s_{x'}(y'). \]
    Thus, we see that $\sim_N$ is a quandle congruence.

    However, $N$ is not a normal subgroup of $\Inn(Q)$; indeed, for $\varphi= ((012),\id) \in \Inn(Q)$ and $n=((01),(01)) \in N$, we have
    \[ \varphi n \varphi^{-1} = ((012),\id)\circ ((01),(01)) \circ ((021),\id) = ((12),(01)), \]
    which is not in $N$.
    %This example shows that the normality of a subgroup of $\Inn(Q)$ is not necessary for its orbit equivalence relation to be a quandle congruence.
\end{eg}

\begin{comment}
%% 古いやつ
\begin{lem}[{\cite[Lemma 1.20]{darne_2026_nilpotent_quandles}}]
\label{lemma: quotient of quandle by subgroup of Aut(Q)}
    Let $Q$ be a quandle and $G$ be a subgroup of $\Aut(Q)$ such that $\Inn(Q)\subset N_{\Aut(A)}(G)$.
    Then, the $G$-orbits decomposition of $Q$ is compatible with the quandle law. 
    The quandle of $G$-orbits will be denoted by $Q/G$.
\end{lem}

\begin{prop}[{\cite[Theorem 6.1]{bunch_lofgren_rapp_yetter_2010_on_quotients_of_quandles}}]
    Let $Q$ be a quandle and let $N\subseteq \Inn(Q)$ be a subgroup. Define a relation $\sim_N$ on $Q$ by
    \[ x\sim_N y \iff \text{there exists } g\in N \text{ such that } y=g(x). \]
    Then $\sim_N$ is a quandle congruence if and only if $N\subseteq \Inn(Q)$ is a normal subgroup. In this case, the corresponding quotient quandle is denoted by $Q/N\coloneqq Q/{\sim_N}$.
\end{prop}
\end{comment}

\section{Relative Inner Automorphism Groups}
\label{section: relative_inner_auto_group}

We introduce a relativization of inner automorphism groups of quandles.

\subsection{Definition and First Properties}
\label{subsection: definition and first properties of relative inner}

\begin{dfn}
    Let $f\colon P \to Q$ be a homomorphism of quandles.
    The \emph{relative inner automorphism group} of $f$, denoted by $\relInn(f)$, is a subgroup of $\Inn(P)$ defined by
    \[
    \relInn(f) \coloneqq \Ker(\Inn(P) \stackrel{f_{*}}{\longrightarrow} \Inn(\Image(f))).
    \]
\end{dfn}

\begin{eg}
    \begin{enumerate}
        \item For an injective homomorphism $f$, we have $\relInn(f) = \{1\}$.
        \item When $f\colon P \to \{\ast\}$ is the unique homomorphism into the one-point quandle, we have $\relInn(f)=\Inn(P)$.
    \end{enumerate}
\end{eg}

We will investigate more examples of relative inner automorphism groups in \cref{section: examples and counterexamples}.

\begin{dfn}
\label{definition:relative_automorphism_group}
For a quandle homomorphism $f\colon P \to Q$, we define the \emph{relative automorphism group} of $f$ as
    \[
    \relAut(f) \coloneqq \{\varphi\in\Aut(P) \mid f\circ\varphi = f \}.
    \]
\end{dfn}

\begin{prop}
\label{proposition: relinn preserves fibers}
    Let $f\colon P \to Q$ be a quandle homomorphism and $f'\colon P \to Q'\coloneqq \Image(f)$ be the codomain restriction of $f$ to its image.
    Then, the following hold.
    \begin{enumerate}
        \item\label{item: rel inn depends only on the image} $\relInn(f) = \relInn(f')$ holds. 
        \item\label{item:relInn_is_normal_in_Inn}  $\relInn(f)$ is a normal subgroup of $\Inn(P)$.
        \item\label{item:relInn_is_Inn_cap_relAut} We have $\relInn(f) = \Inn(P)\cap \relAut(f)$. Namely, an inner automorphism $\varphi\in \Inn(P)$ belongs to $\relInn(f)$ if and only if it preserves every non-empty fiber of $f$.
    \end{enumerate}
\end{prop}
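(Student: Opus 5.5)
The plan is to prove the three items in order, each time working directly from the definitions and \cref{proposition: functoriality of Inn for surjections}.

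For \eqref{item: rel inn depends only on the image}, note that $f'\colon P\twoheadrightarrow Q'$ is surjective and $\Image(f')=\Image(f)=Q'$. The map $f_*$ in the definition of $\relInn(f)$ is, by construction, the map $f'_*\colon \Inn(P)\to\Inn(Q')$ supplied by \cref{proposition: functoriality of Inn for surjections} applied to $f'$. Hence the two kernels coincide, and the statement is essentially a matter of unwinding the definition. For \eqref{item:relInn_is_normal_in_Inn}, $\relInn(f)$ is the kernel of the group homomorphism $f'_*$, so it is normal in $\Inn(P)$.

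For \eqref{item:relInn_is_Inn_cap_relAut}, let $\varphi\in\Inn(P)$. By \cref{proposition: functoriality of Inn for surjections}, $f'_*(\varphi)$ is the unique element of $\Inn(Q')$ with $f'_*(\varphi)\circ f'=f'\circ\varphi$.

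First suppose $\varphi\in\relInn(f)$. Then $f'_*(\varphi)=\id_{Q'}$, so $f'\circ\varphi=f'$. Composing with the inclusion $Q'\hookrightarrow Q$ gives $f\circ\varphi=f$, hence $\varphi\in\relAut(f)$.

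Conversely, suppose $\varphi\in\Inn(P)$ satisfies $f\circ\varphi=f$. Then $f'\circ\varphi=f'$, so $\id_{Q'}$ makes the square commute. By uniqueness, which rests on the surjectivity of $f'$, we get $f'_*(\varphi)=\id$, and so $\varphi\in\relInn(f)$.

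It remains to note that $f\circ\varphi=f$ means exactly that $\varphi(x)\in f^{-1}(f(x))$ for every $x$. Since $\varphi$ is a bijection of $P$, this is equivalent to $\varphi$ mapping each nonempty fiber into itself. In fact $\varphi$ then maps each fiber onto itself, because $\varphi^{-1}$ also satisfies $f\circ\varphi^{-1}=f$. This gives the fiberwise reformulation.

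There is no real obstacle here. The only point requiring care is to use the codomain restriction $f'$ so that the uniqueness part of \cref{proposition: functoriality of Inn for surjections} applies, since $f$ itself need not be surjective.
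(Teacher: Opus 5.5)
Your proposal is correct and follows essentially the same route as the paper. Items (1) and (2) come from unwinding the definition and from the fact that a kernel is normal. Item (3) comes from the equivalence $f'_*(\varphi)=\id_{Q'} \iff f'\circ\varphi=f'$, which is exactly what the paper uses. You additionally spell out details the paper leaves tacit: the uniqueness from surjectivity of $f'$, the passage between $f$ and $f'$ through the injective inclusion $Q'\hookrightarrow Q$, and the fact that $\varphi$ maps each fiber onto itself.
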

\begin{proof}
    \eqref{item: rel inn depends only on the image}: By definition.
    \eqref{item:relInn_is_normal_in_Inn}: Since $\relInn(f)$ is defined as the kernel of a group homomorphism.
    \eqref{item:relInn_is_Inn_cap_relAut}: By the definition of $(f')_*$, an inner automorphism $\varphi\in \Inn(P)$ is in $\relInn(f)=\relInn(f')$ if and only if $(f')_*(\varphi)=\id_{Q'}$, i.e., $f'\circ \varphi=\id_{Q'} \circ f'=f'$.
    This shows the first assertion.
    The rest follows from this.
\end{proof}

\subsection{Connected Homomorphisms}
\label{subsection: connected_homomorphisms}

Let $\pi\colon P \twoheadrightarrow Q$ be a surjective quandle homomorphism and $q\in Q$ be an element. Then, by \cref{proposition: relinn preserves fibers}, every relative inner automorphism $\varphi\in \relInn(\pi)$ preserves the fiber $F_q\coloneqq \pi^{-1}(q)$. Hence, we obtain the action of $\relInn(\pi)$ on $\pi^{-1}(q)$ induced by the restriction map 
\begin{align*}
\rho_q\colon \relInn(\pi) \hookrightarrow\relAut(\pi) \to \Aut(F_q), \quad\varphi \mapsto \varphi\rvert_{F_q}.
\end{align*}
This enables us to define the notion of connected quandle homomorphisms.

\begin{dfn}
\label{definition: connectedness for morphism}
    Let $f \colon P\to Q$ be a quandle homomorphism.
    We call $f$ \emph{connected} if it is surjective and for each $q\in Q$, the action of $\relInn(f)$ on the fiber $f^{-1}(q)$ is transitive.
    In this case, we say that $P$ is \emph{$f$-connected}, and also that $P$ is \emph{connected over $Q$} when $f$ is clear.
\end{dfn}

\begin{eg}
    Every isomorphism between quandles is connected.
\end{eg}

\begin{eg}
    Consider the unique homomorphism $f\colon P\twoheadrightarrow\{\ast\}$ into the terminal quandle. Since $\Inn(\{\ast\})=\{\id_{\{\ast\}}\}$, we have $\relInn(f) = \Inn(P)$. Therefore, $f$ is connected if and only if $P$ is a connected quandle.
\end{eg}

\begin{eg}
    Let $\eta\colon Q\to \pi_0(Q)$ be the unit of the adjunction $\pi_0 \dashv \iota$ in \cref{prop:adjunction_between_Quandle_and_Set}. Then $\relInn(\eta)=\Inn(Q)$ since $\Inn(\pi_0(Q))=\{\id\}$, and the fibers of $\eta$ are the connected components of $Q$. Thus $\relInn(\eta)$ acts on each fiber transitively, and hence $\eta$ is connected.

    More precisely, we can see from \cref{prop:connected_hom_is_inverted_by_pi0} below that a surjective homomorphism $f\colon Q\to X$ to a trivial quandle $X$ is connected if and only if $X\cong \pi_0(Q)$.
\end{eg}

\begin{prop}
\label{prop:connected_hom_is_inverted_by_pi0}
    For a connected quandle homomorphism $f\colon P \to Q$, the map $\pi_0(f)\colon \pi_0(P) \to \pi_0(Q)$ is bijective.
\end{prop}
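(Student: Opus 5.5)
Since $f$ is surjective, $\pi_0(f)$ is clearly surjective: every component of $Q$ has the form $\Inn(Q)\cdot f(x)$, and $f$ maps $\Inn(P)\cdot x$ into it. So the plan is to prove injectivity of $\pi_0(f)$. First I would check that $\pi_0(f)$ is well defined. This follows from \cref{proposition: functoriality of Inn for surjections}: for $\varphi\in\Inn(P)$ we have $f(\varphi(x))=f_*(\varphi)(f(x))$, so $f$ sends $\Inn(P)$-orbits into $\Inn(Q)$-orbits.

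For injectivity, take $x,y\in P$ with $f(x)$ and $f(y)$ in the same connected component of $Q$. Then there is $\psi\in\Inn(Q)$ with $\psi(f(x))=f(y)$. Since $f_*\colon\Inn(P)\to\Inn(Q)$ is surjective (again \cref{proposition: functoriality of Inn for surjections}), we can lift $\psi$ to some $\varphi\in\Inn(P)$ with $f_*(\varphi)=\psi$. Concretely, write $\psi=s_{q_1}^{\epsilon_1}\cdots s_{q_m}^{\epsilon_m}$, choose preimages $p_i$ of the $q_i$, and set $\varphi=s_{p_1}^{\epsilon_1}\cdots s_{p_m}^{\epsilon_m}$. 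Then
\[
f(\varphi(x))=f_*(\varphi)(f(x))=\psi(f(x))=f(y),
\]
so $\varphi(x)$ and $y$ lie in the same fiber $f^{-1}(f(y))$.

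Now use connectedness of $f$: $\relInn(f)$ acts transitively on that fiber, so there is $\theta\in\relInn(f)\subseteq\Inn(P)$ with $\theta(\varphi(x))=y$. Hence $\theta\varphi\in\Inn(P)$ sends $x$ to $y$, and $x$ and $y$ lie in the same connected component of $P$. This proves injectivity.

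There is no real obstacle here. The only step that needs care is lifting $\psi$ along $f_*$, and the surjectivity of $f_*$ proved earlier supplies exactly this lift.
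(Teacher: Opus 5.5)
Your proof is correct and follows essentially the same argument as the paper. You lift an element of $\Inn(Q)$ moving $f(x)$ to $f(y)$ along the surjection $f_*$, then use the transitivity of $\relInn(f)$ on the fiber $f^{-1}(f(y))$ to conclude that $x$ and $y$ lie in the same connected component of $P$.
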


\begin{proof}
    Clearly $\pi_0(f)$ is surjective, as $f$ is so. For the injectivity, let $x,y \in P$ satisfying $[f(x)]=[f(y)]$. There is a $\varphi\in \Inn(Q)$ such that $\varphi(f(x))=f(y)$. Since $f_*\colon \Inn(P) \to \Inn(Q)$ is surjective, we can take $\psi\in \Inn(P)$ with $f_*(\psi)= \varphi$. Then $f(\psi(x))=f_*(\psi)(f(x)) = \varphi(f(x)) = f(y)$. By the connectedness of $f$, there is some $\xi\in\relInn(f)$ such that $\xi(\psi(x)) = y$. This means that $x$ and $y$ are in the same connected component, which shows that $\pi_0(f)$ is injective.
\end{proof}

Motivated by the role of varying fibers in geometric settings, we first note that surjective quandle homomorphisms admit neither variation nor degeneration of fiber type along an inner orbit of the target. 
More precisely, the fibers over points lying in the same connected component are mutually isomorphic.

\begin{prop}
\label{proposition: if Q is connected all fibers are isomorphic}
    Let $f \colon P\to Q$ be a surjective quandle homomorphism.
    Then, for any $p, q\in Q$ in the same $\Inn(Q)$-orbit, the fibers $F_p$ and $F_q$ are isomorphic to each other.
\end{prop}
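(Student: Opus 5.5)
The plan is to lift an inner automorphism of $Q$ carrying $p$ to $q$ to an inner automorphism of $P$, and check that it restricts to a quandle isomorphism $F_p\to F_q$.

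Since $p$ and $q$ lie in the same $\Inn(Q)$-orbit, choose $\varphi\in\Inn(Q)$ with $\varphi(p)=q$. By \cref{proposition: functoriality of Inn for surjections}, the induced map $f_*\colon\Inn(P)\to\Inn(Q)$ is surjective, so we may pick $\psi\in\Inn(P)$ with $f_*(\psi)=\varphi$. This means $f\circ\psi=\varphi\circ f$.

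Next, we check that $\psi$ maps fibers to fibers. If $x\in F_p$, then $f(\psi(x))=\varphi(f(x))=\varphi(p)=q$, so $\psi(F_p)\subseteq F_q$. Likewise $\psi^{-1}\in\Inn(P)$ satisfies $f_*(\psi^{-1})=\varphi^{-1}$, which gives $\psi^{-1}(F_q)\subseteq F_p$. Hence $\psi$ restricts to a bijection $F_p\to F_q$.

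It remains to see that this bijection is a quandle isomorphism. The fibers are subquandles of $P$: if $x,y\in F_p$, then $f(s_x(y))=s_{p}(p)=p$, and similarly $f(s_x^{-1}(y))=p$. Since $\psi$ is an automorphism of $P$, we have $\psi(s_x(y))=s_{\psi(x)}(\psi(y))$. So the restriction $\psi|_{F_p}\colon F_p\to F_q$ is a bijective homomorphism whose inverse is $\psi^{-1}|_{F_q}$, and therefore an isomorphism.

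No step is a genuine obstacle. The only point needing care is using surjectivity of $f_*$ (that is, surjectivity of $f$) to lift $\varphi$, and confirming that the fibers are closed under $s_x^{\pm1}$, so that "isomorphic" makes sense for the induced subquandle structures.
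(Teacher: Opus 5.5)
Your proposal is correct and follows essentially the same argument as the paper: you choose an element of $\Inn(Q)$ sending $p$ to $q$, lift it along the surjection $f_*\colon \Inn(P)\to\Inn(Q)$, and restrict the lift to $F_p$, using the inverse to get surjectivity onto $F_q$. Your extra check that fibers are closed under $s_x^{\pm1}$, so that they are subquandles of $P$, is a detail the paper leaves implicit.
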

\begin{proof}
    As $p,q\in Q$ is in the same orbit of the action $\Inn(Q)\curvearrowright Q$, there is $\psi\in\Inn(Q)$ such that $\psi(p) = q$.
    Since $f_*\colon \Inn(P) \to \Inn(Q)$ is surjective, one can take be an inner automorphism $\varphi\in\Inn(P)$ such that $f_{*}(\varphi) = \psi$.
    Then the image of $F_p$ under $\varphi$ is contained in the fiber $F_q$; indeed, for any $x\in F_p$, 
    \[
        f(\varphi(x)) = f_{*}(\varphi)(f(x)) = \psi(p) = q.
    \]
    Hence we obtain a homomorphism $\Phi\coloneqq \varphi\rvert_{F_{p}}\colon F_p \to F_q$. It is injective, as $\varphi\in\Aut(P)$.
    For any $y\in F_q$, putting $x\coloneqq \varphi^{-1}(y)$, we have
    \[
    f(x) = f(\varphi^{-1}(y)) = f_{*}(\varphi^{-1})(f(y)) = \psi^{-1}(q) = p,
    \]
    and $\Phi(x)=y$.
    Thus, $\Phi\colon F_p \to F_q$ is surjective, and hence we have $F_p \cong F_q$.
\end{proof}

%\todo The theorem above shows that a connected quandle homomorphism gives rise to a structure that can be viewed as a quandle fiber bundle. Therefore, we can also call connected homomorphisms \emph{quandle fiber bundles}.

Note that a surjective homomorphism to a connected quandle can be represented as a dynamical extension by \cite{andruskiewitsch_grana_2003_from_racks_to_pointed_hopf_algebras}.
%which will be referred to in \cref{subsection: prelim on Quandle extensions}.

\begin{rem}
\label{remark: fiberwise connected is connected}
The connectedness of a surjective quandle homomorphism $f\colon P\twoheadrightarrow Q$ can be verified fiberwise.
That is, if each fiber $F_q \coloneqq f^{-1}(q)$, seen as a subquandle of $P$, is connected, then $f$ is connected; see \cref{proposition:strongly_connected_can_be_verified_fiberwise} below.
However, the converse is not true in general: there is a connected homomorphism with a non-connected fiber; see \cref{example: connected morphism whose fiber is trivial}.
\end{rem}

\begin{prop}
\label{proposition: connectedness composition and 2 out of 3}
    Let $f \colon P\to Q$ and $g\colon Q \to R$ be surjective quandle homomorphisms, and put $h\coloneqq g\circ f$. Then, the following hold.
    \begin{enumerate}
        \item\label{item: inclusion of relInn} The inclusion $\relInn(f)\subseteq \relInn(h)$ holds. And we have an exact sequence
        \[\begin{tikzcd}[column sep=small]
            1 \arrow{r} & \relInn(f) \arrow{r} & \relInn(h) \arrow{r}{f_*} & \relInn(g) \arrow{r} & 1.
        \end{tikzcd}\]
        \item\label{item: composition is conn} If $f$ and $g$ are connected, then $h$ is connected.
        \item\label{item: g is conn} If $h$ is connected, then $g$ is connected.
    \end{enumerate}
\end{prop}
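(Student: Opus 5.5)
The plan is to use functoriality of $\Inn(-)$ on surjections (\cref{proposition: functoriality of Inn for surjections}). Since $h=g\circ f$ and the induced maps are unique, we have $h_*=g_*\circ f_*$ as maps $\Inn(P)\to\Inn(R)$.

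For \eqref{item: inclusion of relInn}, the inclusion $\relInn(f)=\Ker f_*\subseteq\Ker(g_*f_*)=\relInn(h)$ is immediate. If $\varphi\in\relInn(h)$, then $g_*(f_*(\varphi))=h_*(\varphi)=\id$, so $f_*$ restricts to a homomorphism $\relInn(h)\to\relInn(g)$. Its kernel is $\relInn(h)\cap\Ker f_*=\relInn(f)$. For surjectivity, take $\psi\in\relInn(g)$. Since $f_*$ is surjective, there is $\varphi\in\Inn(P)$ with $f_*(\varphi)=\psi$, and then $h_*(\varphi)=g_*(\psi)=\id$, so $\varphi\in\relInn(h)$. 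This gives exactness.

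For \eqref{item: composition is conn}, let $x,y\in P$ with $h(x)=h(y)$. Then $f(x),f(y)$ lie in the same $g$-fiber. By connectedness of $g$ there is $\psi\in\relInn(g)$ with $\psi(f(x))=f(y)$, and by the surjectivity in \eqref{item: inclusion of relInn} we can lift it to $\varphi\in\relInn(h)$ with $f_*(\varphi)=\psi$. Then $f(\varphi(x))=\psi(f(x))=f(y)$, so $\varphi(x)$ and $y$ lie in the same $f$-fiber. Connectedness of $f$ gives $\xi\in\relInn(f)\subseteq\relInn(h)$ with $\xi\varphi(x)=y$, and $\xi\varphi\in\relInn(h)$. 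Hence $h$ is connected; surjectivity of $h$ is clear.

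For \eqref{item: g is conn}, let $a,b\in Q$ with $g(a)=g(b)$. Choose preimages $x,y\in P$ under the surjection $f$. Then $h(x)=h(y)$, so connectedness of $h$ gives $\varphi\in\relInn(h)$ with $\varphi(x)=y$. Then $f_*(\varphi)\in\relInn(g)$ by \eqref{item: inclusion of relInn}, and $f_*(\varphi)(a)=f(\varphi(x))=b$.

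The only step needing care is the surjectivity $\relInn(h)\to\relInn(g)$, which is exactly the lifting used in \eqref{item: composition is conn}. It rests on the surjectivity of $f_*$ and the identity $h_*=g_*f_*$; everything else is formal.
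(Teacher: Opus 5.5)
Your proposal is correct and follows essentially the same argument as the paper. Parts (2) and (3) are the same lift-and-push-forward arguments, using $h_*=g_*\circ f_*$ and the surjectivity of $f_*$. For (1), you check the kernel and surjectivity of $f_*\colon \relInn(h)\to\relInn(g)$ directly, where the paper simply cites the third isomorphism theorem.
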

\begin{proof}
\eqref{item: inclusion of relInn}: The inclusion is straightforward.
The exact sequence follows by applying the third isomorphism theorem for groups.

\eqref{item: composition is conn}: %Note $h$ is surjective, as $f,g$ are so.
Let $r\in R$ and $x, y\in h^{-1}(r)$.
Since $f(x), f(y)\in g^{-1}(r)$ and $g$ is connected, there exists $\varphi\in\relInn(g)$ such that $\varphi(f(x)) = f(y)$. 
Then, by the surjectivity of $f$, there exists $\psi\in\Inn(P)$ such that $f_{*}(\psi) = \varphi$.
In this situation, we have 
\[
g_{*}\circ f_{*}(\psi) = g_{*}(\varphi) = \id_{R},
\]
so $\psi$ is actually in $\relInn(h)$.
Moreover, as $f(\psi(x)) = \varphi(f(x)) = f(y)$, $\psi(x)$ and $y$ are in the same fiber of $f$.
The connectedness of $f$ implies that there exists $\Phi\in\relInn(f)\subseteq\relInn(h)$ such that $\Phi(\psi(x)) = y$. This shows that $h$ is connected.

\eqref{item: g is conn}: Here we do not need the assumption of $g$ being surjective, since it follows from the surjectivity of $h=g\circ f$.
Let $r\in R$ and $x, y\in g^{-1}(r)$. 
As $f$ is surjective, there exist $a, b\in P$ such that $f(a) = x$ and $f(b) = y$.
Since $a, b$ are in the same fiber of $h$ and $h$ is connected, we can take $\varphi\in\relInn(h)$ such that $\varphi(a) = b$.

Then, $\psi\coloneqq f_{*}(\varphi)$ belongs to $\relInn(g)$ as
\[
g_{*}(\psi) = g_{*}\circ f_{*}(\varphi) = h_*(\varphi) = \id_{R}.
\]
Additionally, we have 
\[ \psi(x)= \psi(f(a)) = f (\varphi(a)) = f(b) = y. \]
This completes the proof.
\end{proof}

\begin{rem}
    Informally, \cref{proposition: connectedness composition and 2 out of 3}~\eqref{item: g is conn} says that connectedness descends along surjective homomorphisms: namely, given surjections $h,g$ over a quandle $R$ and a surjective homomorphism $f$ \textit{over} $R$ from $h$ to $g$, if $h$ is connected, then so is $g$.
    As a special case with $R=\{\ast\}$, we deduce that the image of a surjective homomorphism from a connected quandle is connected.
\end{rem}

\begin{cor}
    Let $f\colon P\to Q$ be a surjective quandle homomorphism. If $f$ is connected and $Q$ is connected, then $P$ is connected.
\end{cor}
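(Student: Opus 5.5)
The plan is to apply \cref{proposition: connectedness composition and 2 out of 3}~\eqref{item: composition is conn} with $R=\{\ast\}$. Let $g\colon Q\to\{\ast\}$ be the unique homomorphism into the terminal quandle. It is surjective, since $Q$ is non-empty by our convention that connected quandles are non-empty. By the example computing $\relInn$ for maps to the terminal quandle, $\relInn(g)=\Inn(Q)$, so $g$ is connected precisely because $Q$ is a connected quandle.

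Since $f$ is surjective and connected and $g$ is connected, the cited proposition gives that $h\coloneqq g\circ f\colon P\to\{\ast\}$ is connected. The same example shows that $\relInn(h)=\Inn(P)$, and $h$ has the single fiber $P$. Connectedness of $h$ then says exactly that $\Inn(P)$ acts transitively on $P$, that is, $P$ is connected.

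There is no genuine obstacle. The only points to check are the surjectivity of $g$, which uses non-emptiness of $Q$, and the identification of connectedness of a map to $\{\ast\}$ with absolute connectedness of its source. Both have already been recorded in the examples preceding \cref{prop:connected_hom_is_inverted_by_pi0}.

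As an independent check, one can also argue through \cref{prop:connected_hom_is_inverted_by_pi0}. Since $f$ is connected, $\pi_0(f)$ is a bijection, and $\pi_0(Q)$ is a single point because $Q$ is connected. Hence $\pi_0(P)$ is a single point, and $P$ is connected.
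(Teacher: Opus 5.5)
Your proposal is correct and follows the paper's own proof: it likewise notes that $Q$ being connected means $Q\to\{\ast\}$ is connected, and then applies the composition part of the two-out-of-three proposition to $P\to Q\to\{\ast\}$. Your alternative check via the bijectivity of $\pi_0(f)$ is also valid.
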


\begin{proof}
The fact that $Q$ is a connected quandle is equivalent to saying that $Q\to \{\ast\}$ is connected.
So the statement follows from \eqref{item: composition is conn} of \cref{proposition: connectedness composition and 2 out of 3}.
\end{proof}

\begin{comment} %% なんかいらないなと思って消しちゃった
\begin{eg}
\begin{enumerate}
    \item Let $Q$ be a connected quandle, $P$ be a non-connected subquandle of $Q$, and $R=\{*\}$ the quandle of one point. 
    Let $\iota\colon P\to Q$ be the inclusion and $f \colon Q\to R$ the quandle homomorphism.
    Then, $f$ is connected as $Q$ is connected. However, the composition $f\circ \iota$ is not connected.
    Indeed, because $\relInn(f\circ\iota) = \Inn(P)$ and $P$ is not connected by the assumption, we have the claim.
    \item Consider the morphisms
    \begin{align*}
        \{*\} \stackrel{f}{\longrightarrow} \{a, b\} \stackrel{g}{\longrightarrow} \{*\}.
    \end{align*}
    Then, $g\circ f$ is connected, and $g$ is not connected.
\end{enumerate}
\end{eg}
\end{comment}

Finally, we characterize connected homomorphisms in terms of quandle quotients. 
%More precisely, we show that a surjective homomorphism is connected if and only if it arises as the quotient by a quandle congruence induced by a normal subgroup of the inner automorphism group.
Recall from \cref{proposition:quandle_quotient_by_group,remark_of_proposition:quandle_quotient_by_group} that we can take a quotient of a quandle $Q$ by a normal subgroup $N \subseteq \Inn(Q)$, summarizing as:

\begin{prop}
\label{prop:basic_property_of_quotient_by_normal_subgroup}
Let $Q$ be a quandle and let $N\subseteq \Inn(Q)$ be a normal subgroup.
\begin{enumerate}
    \item\label{item:N_is_contained_in_relInn_of_projection}
    For the quotient map $\pi_N\colon Q\twoheadrightarrow Q/N$, we have $N \subseteq \relInn(\pi_N)=\Ker(\Inn(\pi_N))$.
    \item\label{item:factoring_condition_through_quotient_by_normal_subgroup}
    A quandle homomorphism $f\colon Q\to R$ factors through $Q/N$ as in the diagram below if and only if $f\circ \varphi = f$ holds for all $\varphi\in N$, or equivalently if and only if $N\subseteq \relInn(f)$.
    \[\begin{tikzcd}[column sep=small]
        Q \arrow{rr}{f} \arrow[two heads]{rd}[swap]{\pi_N} & & R \\
        & Q/N \arrow[dashed]{ru}[swap]{\overline{f}} &
    \end{tikzcd}\]
\end{enumerate}
\end{prop}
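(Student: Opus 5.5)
For part \eqref{item:N_is_contained_in_relInn_of_projection}, I would use the characterization of \cref{proposition: relinn preserves fibers}\eqref{item:relInn_is_Inn_cap_relAut}: an inner automorphism of $Q$ lies in $\relInn(\pi_N)$ exactly when it preserves every fiber of $\pi_N$. The fibers of $\pi_N$ are by construction the $N$-orbits $N\cdot x$ for $x\in Q$. For $\varphi\in N$ we have $\varphi(x)\in N\cdot x$, so $\pi_N\circ\varphi=\pi_N$. Since $N\subseteq\Inn(Q)$, this gives $N\subseteq \Inn(Q)\cap\relAut(\pi_N)=\relInn(\pi_N)$. The equality $\relInn(\pi_N)=\Ker(\Inn(\pi_N))$ is just the definition, using that $\pi_N$ is surjective and \cref{proposition: functoriality of Inn for surjections}.

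For part \eqref{item:factoring_condition_through_quotient_by_normal_subgroup}, I would first note that the quotient $Q/N$ is the quotient by the congruence $\sim_N$ (\cref{remark_of_proposition:quandle_quotient_by_group}). The universal property of quotients by congruences then says $f$ factors through $\pi_N$ iff $x\sim_N y$ implies $f(x)=f(y)$. Since $x\sim_N y$ means $y=\varphi(x)$ for some $\varphi\in N$, this condition is literally the statement that $f(\varphi(x))=f(x)$ for all $\varphi\in N$ and all $x$, i.e.\ $f\circ\varphi=f$ for all $\varphi\in N$. Uniqueness of $\overline f$ follows from the surjectivity of $\pi_N$.

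It remains to check that ``$f\circ\varphi=f$ for all $\varphi\in N$'' is equivalent to $N\subseteq\relInn(f)$, and this is the only step needing care, because $f$ need not be surjective. By definition $\relInn(f)$ is the kernel of $\Inn(Q)\to\Inn(\Image f)$. By \cref{proposition: relinn preserves fibers}\eqref{item: rel inn depends only on the image} we may replace $f$ by its corestriction $f'\colon Q\to\Image(f)$, which does not change the condition $f\circ\varphi=f$. Then \cref{proposition: relinn preserves fibers}\eqref{item:relInn_is_Inn_cap_relAut} says that $\varphi\in\Inn(Q)$ lies in $\relInn(f')$ iff $f'\circ\varphi=f'$. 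Since $N\subseteq\Inn(Q)$, the two conditions coincide elementwise, which finishes the proof. I expect no genuine obstacle; the main point is to route the non-surjective case through the image as above.
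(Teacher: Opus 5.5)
Your proposal is correct and follows essentially the same route as the paper. Part (1) uses the fact that each $\varphi\in N$ fixes every $N$-orbit; the paper phrases this directly as $(\pi_N)_*(\varphi)([x])=[\varphi(x)]=[x]$ rather than through $\Inn(Q)\cap\relAut(\pi_N)$. Part (2) uses the universal property of the congruence $\sim_N$ together with part (3) of \cref{proposition: relinn preserves fibers}, and your extra detour through the corestriction to $\Image(f)$ is harmless but already built into that proposition.
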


\begin{proof}
    \eqref{item:N_is_contained_in_relInn_of_projection}: For $\varphi\in N$, let $\psi=(\pi_N)_*(\varphi)$. Then for any $[x]\in Q/N$, we have $\psi([x])=[\varphi(x)]$. Since $\varphi\in N$ implies $x \sim_N \varphi(x)$, it follows that $\psi([x])=[x]$, and hence $\psi=\id$. Therefore $N \subseteq \Ker((\pi_N)_*)$.
    \eqref{item:factoring_condition_through_quotient_by_normal_subgroup}: The fact that $f$ factors through $Q/N$ is equivalent to the condition that $f$ equalizes the quandle congruence $\sim_N$. This is equivalent to $f(N\cdot x)=\{f(x)\}$ for all $x\in Q$, which in turn is equivalent to $f(\varphi(x)) = f(x)$ for all $x\in Q$ and $\varphi\in N$. The other equivalent condition follows from \cref{proposition: relinn preserves fibers}.
\end{proof}

Then we can prove that connected homomorphisms are precisely those arising as quotients by normal subgroups of $\Inn(Q)$.

\begin{prop}
\label{proposition:connected_iff_quotient_by_normal_subgroup_of_Inn}
    A surjective quandle homomorphism $f\colon P \to Q$ is connected if and only if there is some normal subgroup $N \subseteq \Inn(P)$ such that $f$ is isomorphic to $\pi_N\colon P \to P/N$ (as quotients of $P$).
\end{prop}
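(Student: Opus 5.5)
For the ``only if'' direction, I will take $N\coloneqq \relInn(f)$, which is normal in $\Inn(P)$ by \cref{proposition: relinn preserves fibers}\eqref{item:relInn_is_normal_in_Inn}. By \cref{remark_of_proposition:quandle_quotient_by_group} the orbit relation $\sim_N$ is a congruence, so the quotient $\pi_N\colon P\to P/N$ exists. The claim is that $\sim_N$ coincides with the kernel congruence $\Eq(f)$. Once this is shown, the correspondence between congruences and quotients gives $f\cong\pi_N$ as quotients of $P$.

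To prove $\sim_N=\Eq(f)$, I will check both inclusions.
\begin{itemize}
\item If $y=\varphi(x)$ with $\varphi\in\relInn(f)$, then $f(y)=f(x)$ because $\varphi$ preserves every fiber (\cref{proposition: relinn preserves fibers}\eqref{item:relInn_is_Inn_cap_relAut}).
\item Conversely, if $f(x)=f(y)$, then connectedness of $f$ gives $\varphi\in\relInn(f)$ with $\varphi(x)=y$, so $x\sim_N y$.
\end{itemize}

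For the ``if'' direction, suppose $f\cong \pi_N$ for some normal $N\subseteq\Inn(P)$. Then there is an isomorphism $u\colon P/N\to Q$ with $f=u\circ\pi_N$. It therefore suffices to show that $\pi_N$ is connected and that connectedness is invariant under composing with an isomorphism of the target.

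\begin{itemize}
\item For $\pi_N$: by \cref{prop:basic_property_of_quotient_by_normal_subgroup}\eqref{item:N_is_contained_in_relInn_of_projection} we have $N\subseteq\relInn(\pi_N)$. Each fiber of $\pi_N$ is a single $N$-orbit, so already $N$ acts transitively on it, and hence so does $\relInn(\pi_N)$.
\item For the transfer along $u$: $\relInn(u\circ\pi_N)=\Ker(u_*\circ(\pi_N)_*)=\relInn(\pi_N)$, since $u_*$ is an isomorphism. The fibers of $f$ are exactly the fibers of $\pi_N$, just relabeled by $u$. Alternatively, one can invoke \cref{proposition: connectedness composition and 2 out of 3}\eqref{item: composition is conn}, using that isomorphisms are connected.
\end{itemize}

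I expect no serious obstacle here. The only point needing care is the bookkeeping of ``isomorphic as quotients'': it must be identified with equality of kernel congruences, which is exactly the correspondence between quotient quandles and congruences recalled earlier.
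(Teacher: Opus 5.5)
Your proof is correct and follows essentially the same approach as the paper. For ($\Rightarrow$) the paper also takes $N=\relInn(f)$ and shows that the induced map $P/\relInn(f)\to Q$ is bijective, which is exactly your equality $\sim_N=\Eq(f)$; for ($\Leftarrow$) it uses $N\subseteq\relInn(\pi_N)$ just as you do, and your explicit check that connectedness survives composing with a target isomorphism is a detail the paper leaves implicit.
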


\begin{proof}
    ($\Rightarrow$): Any homomorphism $f\colon P\to Q$ factors through the quotient $P/{\relInn(f)}$ by \cref{prop:basic_property_of_quotient_by_normal_subgroup}~\eqref{item:factoring_condition_through_quotient_by_normal_subgroup}.
    If $f$ is connected, the induced homomorphism $\overline{f}\colon P/{\relInn(f)} \to Q$ is not only surjective, but also injective by the connectedness. Thus it is an isomorphism, and $f$ is isomorphic to $\pi_{\relInn(f)}$.
    
    ($\Leftarrow$): We prove that the projection $\pi_N\colon P \to P/N$ is connected. Take any $x,y \in P$ with $\pi_N(x)=\pi_N(y)$, which means that $x$ and $y$ are in the same $N$-orbit. Then there is a $g \in N\subseteq \Inn(P)$ such that $g(x)=y$. By \cref{prop:basic_property_of_quotient_by_normal_subgroup}~\eqref{item:N_is_contained_in_relInn_of_projection} we have $g\in \relInn(\pi_N)$, and hence $\pi_N$ is connected.
\end{proof}

\begin{rem}
\label{rem:after:proposition:connected_iff_quotient_by_normal_subgroup_of_Inn}
    From \cref{proposition:connected_iff_quotient_by_normal_subgroup_of_Inn}, we see that the class of connected homomorphisms is equal to the class $\mathcal{E}_1$ introduced in {\cite[Definition 1.5]{even_gran_2014_on_factorization_systems_for_surjective_quandle_homomorphisms}}.
\end{rem}

For a normal subgroup $N\trianglelefteq\Inn(P)$, the group $\Inn(P/N)$ need not be isomorphic to $\Inn(P)/N$ in general. However, when $N=\relInn(f)$, no additional inner automorphisms become trivial in the quotient (see \cref{example: Inner of P/relInn is InnP/relInn}).

\subsection{Rigidity and Factorization}
\label{subsection: rigidity quotients of quandles and factorization}

Complementary to connected homomorphisms, \cite{bunch_lofgren_rapp_yetter_2010_on_quotients_of_quandles} and \cite{even_gran_2014_on_factorization_systems_for_surjective_quandle_homomorphisms} consider the following notion of a rigid homomorphism.

\begin{dfn}[{\cite[Definition 4.1]{bunch_lofgren_rapp_yetter_2010_on_quotients_of_quandles}}]
\label{definition: rigidity of morphism}
    A surjective quandle homomorphism $f \colon P\to Q$ is called \emph{rigid} if the induced group homomorphism $f_*\colon \Inn(P)\to\Inn(Q)$ is an isomorphism, or equivalently if
    $\relInn(f)$ is trivial.
\end{dfn}

\begin{lem}
\label{lemma: rigid is covering}
    Any rigid homomorphism is covering.
\end{lem}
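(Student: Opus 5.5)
The plan is to use the characterization of $\relInn(f)$ in \cref{proposition: relinn preserves fibers}~\eqref{item:relInn_is_Inn_cap_relAut} together with the quandle axiom~\eqref{condition:Q3}. Let $f\colon P\to Q$ be rigid, so $f$ is surjective and $\relInn(f)=\{1\}$. To show that $f$ is covering, take $x,y\in P$ with $f(x)=f(y)$; we must prove $s_x=s_y$. The natural candidate to test is the element $\varphi\coloneqq s_x s_y^{-1}\in\Inn(P)$, and the goal is to show that $\varphi\in\relInn(f)$.

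The key step is to compute $f_*(\varphi)$ using \cref{proposition: functoriality of Inn for surjections}. Since $f_*(s_z)=s_{f(z)}$ for every $z\in P$ and $f_*$ is a group homomorphism, we get
\[
f_*(s_xs_y^{-1}) = s_{f(x)}s_{f(y)}^{-1} = s_{f(x)}s_{f(x)}^{-1}=\id_Q .
\]
Hence $\varphi\in\Ker(f_*)=\relInn(f)$. Equivalently, one can check directly that $f\circ s_x\circ s_y^{-1}=s_{f(x)}\circ s_{f(y)}^{-1}\circ f=f$, which shows that $\varphi$ preserves every fiber.

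Rigidity then gives $\varphi=\id_P$, that is, $s_x=s_y$, and together with the surjectivity of $f$ this is exactly the covering condition of \cref{definition: quandle covering}. There is no genuine obstacle; the only point needing care is that $f_*$ is well defined as a homomorphism, which uses surjectivity of $f$ and is already provided by \cref{proposition: functoriality of Inn for surjections}. In terms of the later notation, the argument shows that $\relTrans(f)\subseteq\relInn(f)$, so that the vanishing of $\relInn(f)$ forces the vanishing of the relative transvections.
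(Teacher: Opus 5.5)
Your proof is correct and follows the paper's argument exactly: for $f(x)=f(y)$ one gets $f_*(s_xs_y^{-1})=s_{f(x)}s_{f(y)}^{-1}=\id_Q$, so rigidity forces $s_x=s_y$. Your closing remark, $\relTrans(f)\subseteq\relInn(f)$, is also the paper's alternative route via \cref{lemma: relTrans vanish iff covering}.
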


\begin{proof}
    Let $f\colon P\to Q$ be a rigid homomorphism. 
    For $x,y\in P$ satisfying $f(x) = f(y)$, $f_*(s_xs_y^{-1}) = s_{f(x)}s_{f(y)}^{-1} =\id_Q$.
    Thus, the assumption implies that $s_x = s_y$, which completes the proof.
    Alternatively, it follows from the fact that $\relTrans(f)\subset \relInn(f) = \{1\}$ (see \cref{section: relative transvection group} for the definition of the former group) and \cref{lemma: relTrans vanish iff covering}.
    %If $f$ is a rigid homomorphism, then we have $\relTrans(f)\subset \relInn(f) = \{1\}$. Thus, the statement follows from \cref{lemma: relTrans vanish iff covering}.
    %\memo{future citation}
\end{proof}

\begin{lem}
    Any connected and rigid homomorphism is an isomorphism.
\end{lem}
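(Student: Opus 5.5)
The plan is to prove injectivity of $f$ directly from the two hypotheses; surjectivity is already part of the definition of a connected homomorphism. Let $f\colon P\to Q$ be connected and rigid, and take $x,y\in P$ with $f(x)=f(y)$. Connectedness means that $\relInn(f)$ acts transitively on the fiber $f^{-1}(f(x))$, so there is some $\varphi\in\relInn(f)$ with $\varphi(x)=y$. By \cref{definition: rigidity of morphism}, rigidity says $\relInn(f)=\{1\}$. Hence $\varphi=\id_P$ and $y=x$, so $f$ is injective.

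Next we check that the inverse map is a quandle homomorphism. A bijective quandle homomorphism $f$ satisfies $f\circ s_x=s_{f(x)}\circ f$. It follows that $f^{-1}\circ s_{f(x)}=s_x\circ f^{-1}$ for every $x$. Because $f$ is surjective, every element of $Q$ can be written as $f(x)$, so this identity covers all symmetries of $Q$. Therefore $f^{-1}$ is a homomorphism, and $f$ is an isomorphism.

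An alternative route goes through \cref{proposition:connected_iff_quotient_by_normal_subgroup_of_Inn}: the map $f$ is isomorphic to $\pi_{\relInn(f)}=\pi_{\{1\}}$, and quotienting by the trivial group is the identity on $P$. No step is a real obstacle. The only point needing care is that "isomorphism" requires the inverse to be a homomorphism, which the routine verification above settles.
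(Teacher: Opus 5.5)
Your proof is correct and follows the paper's argument: the trivial group $\relInn(f)$ can only act transitively on a fiber if every fiber is a singleton, which gives injectivity, and surjectivity is built into the definition of a connected homomorphism. Your extra check that $f^{-1}$ is again a quandle homomorphism is a routine detail that the paper leaves implicit.
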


\begin{proof}
    Suppose that a homomorphism $f\colon P \to Q$ is connected and rigid. Then it is surjective and, for each $q\in Q$, the group $\relInn(f)$ acts on $f^{-1}(q)$ transitively. Since $\relInn(f)=\{\id_P\}$, the set $f^{-1}(q)$ must be the one-point set, which implies that $f$ is injective.
\end{proof}

\begin{prop}
\label{proposition: properties of rigid hom}
    Let $f\colon P\to Q$, $g\colon Q\to R$ be surjective quandle homomorphisms and put $h\coloneqq g\circ f$. Then, the following hold.
    \begin{enumerate}
        \item\label{item:rigid_satisfy_two-out-of-three} If two out of $f$, $g$, and $h$ are rigid, then the other is. In particular, rigid morphisms are closed under composition.
        \item\label{item:rigid_is_closed_under_surjections} Rigid homomorphisms are closed under pullback along surjective homomorphisms. That is, for any surjective quandle homomorphism $k\colon Q'\to Q$, if $f$ is rigid, then the pull-back $P\times_{Q}Q' \to Q'$ of $f$ is also rigid.
        \item\label{item:rigid_is_closed_under_products} If $f$ and $f'\colon P' \to Q'$ are rigid, then the product $f\times f'\colon P\times P' \to Q\times Q'$ is also rigid.
    \end{enumerate}
\end{prop}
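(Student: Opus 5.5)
For all three parts, rigidity means that $f_*\colon \Inn(P)\to\Inn(Q)$ is an isomorphism, and we use functoriality of $\Inn(-)$ on surjections (\cref{proposition: functoriality of Inn for surjections}), which gives $h_*=g_*\circ f_*$.

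For \eqref{item:rigid_satisfy_two-out-of-three}, all three maps $f_*$, $g_*$, $h_*$ are surjective group homomorphisms with $h_*=g_*\circ f_*$. If two of them are bijective, so is the third, by the usual two-out-of-three property of isomorphisms. One can also read this off the exact sequence $1\to\relInn(f)\to\relInn(h)\to\relInn(g)\to 1$ of \cref{proposition: connectedness composition and 2 out of 3}: if two of the three kernels are trivial, the third is.

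For \eqref{item:rigid_is_closed_under_surjections}, set $P'\coloneqq P\times_Q Q'=\{(x,y)\mid f(x)=k(y)\}$. Its symmetries are $s_{(x,y)}=s_x\times s_y$ restricted to $P'$. Let $f'\colon P'\to Q'$ and $k'\colon P'\to P$ be the projections; both are surjective because $f$ and $k$ are. Take $\varphi\in\relInn(f')$. Then $\varphi$ is a word in the $s_{(x_i,y_i)}^{\pm1}$, so it acts as $(\alpha,\beta)$ with $\alpha=k'_*(\varphi)\in\Inn(P)$ (the same word in the $s_{x_i}^{\pm1}$) and $\beta=f'_*(\varphi)$, which is the same word in the $s_{y_i}^{\pm1}$. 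Since $\varphi\in\relInn(f')$, $\beta=\id$. Applying $f$ gives $f_*(\alpha)=k_*(\beta)=\id$, because $f(x_i)=k(y_i)$ means the two words map to the same element of $\Inn(Q)$. Hence $\alpha\in\relInn(f)=\{1\}$, and $\varphi=(\alpha,\beta)=\id$ on $P'$.

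The main point to get right is that an element of $\Inn(P')$ really acts componentwise as $(k'_*\varphi, f'_*\varphi)$. This holds on generators and extends to words, and it makes $\Inn(P')$ embed into $\Inn(P)\times\Inn(Q')$.

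For \eqref{item:rigid_is_closed_under_products}, the same argument applies with $Q=\{\ast\}$-free components. Here $\Inn(P\times P')$ acts componentwise, and $\varphi\in\relInn(f\times f')$ has components $(\alpha,\alpha')$ with $\alpha\in\Inn(P)$ and $\alpha'\in\Inn(P')$. Projecting to $Q$, using that $P'$ is nonempty (surjectivity), gives $f_*(\alpha)=\id$, so $\alpha=\id$. Symmetrically $\alpha'=\id$, hence $\varphi=\id$. Alternatively, one could deduce \eqref{item:rigid_is_closed_under_products} from \eqref{item:rigid_satisfy_two-out-of-three} and \eqref{item:rigid_is_closed_under_surjections}: write $f\times f'=(f\times\id)\circ(\id\times f')$, and note that $f\times\id_{Q'}$ is the pullback of $f$ along the surjective projection $Q\times Q'\to Q$.
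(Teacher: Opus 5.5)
Your proposal is correct and matches the paper's proof essentially step for step. Part (1) is two-out-of-three for $h_*=g_*\circ f_*$. For (2) and (3), you show that the projections of a relative inner automorphism to each factor are trivial, using $f_*\circ k'_*=k_*\circ f'_*$ (resp.\ $f_*\circ(p_1)_*=(q_1)_*\circ F_*$) and rigidity; the only difference is that where the paper concludes $\varphi=\id$ from the universal property of the pullback or product, you use the equivalent componentwise description of $\Inn(P\times_Q Q')$.
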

\begin{proof}
    \eqref{item:rigid_satisfy_two-out-of-three}: By the functoriality of $\Inn(-)$, we have $h_*= g_* \circ f_*$. The statement follows from the fact that if two out of $f_*$, $g_*$, and $h_*$ are isomorphisms, then the other is.

    \eqref{item:rigid_is_closed_under_surjections}: We write $\Tilde{f}\colon R\coloneqq P\times_{Q}Q'\to Q'$ for the pull-back and $p\colon R\to P$ for the first projection. 
    Since $p$ and $\Tilde{f}$ is surjective by the surjectivity of $k$ and $f$, the functoriality of $\Inn(-)$ shows that $f_*\circ p_* = k_*\circ\Tilde{f}_*$.
    Let $\alpha\in\relInn(\Tilde{f})$. Then,
    \[
        f_*\circ p_*(\alpha) = k_*\circ\Tilde{f}_*(\alpha)=\id_Q.
    \]
    The rigidity of $f$ implies that $p_*(\alpha) = \id_P$. 
    Thus, we have $p\circ\alpha = p_*(\alpha)\circ p =p$. 
    Also, $\Tilde{f}\circ\alpha = \Tilde{f}_*(\alpha)\circ\Tilde{f} = \Tilde{f}$ holds by the assumption. 
    Thus, by the universality of pullback, we conclude $\alpha=\id_R$, which completes the proof.

    \eqref{item:rigid_is_closed_under_products}: Put $F\coloneqq f\times f'$, and let $p_i$ be the $i$-th projection from $P\times P'$ for $i=1,2$ and $q_j$ be the $j$-th projection from $Q\times Q'$ for $j=1,2$. 
    Let $\beta\in\relInn(F)$. 
    As $f_*((p_1)_*(\beta)) = (q_1)_*(F_*(\beta)) = \id_Q$ and $f$ is rigid, it follows that $(p_1)_*(\beta)=\id_P$. 
    Similarly, we have $(p_2)_*(\beta)=\id_{P'}$. 
    These observations mean $p_i\circ \beta=p_i$ for $i=1,2$. Thus, by the universality of product, we obtain $\beta=\id_{P\times P'}$. Thus $F=f\times f'$ is rigid.
\end{proof}

From the viewpoint of morphism properties, we can restate \cite[Proposition 3.1]{even_gran_2014_on_factorization_systems_for_surjective_quandle_homomorphisms}, or essentially \cite[Theorem 8.1]{bunch_lofgren_rapp_yetter_2010_on_quotients_of_quandles}, as follows:

\begin{thm}[Quandle Stein Factorization {\cite[Theorem 8.1]{bunch_lofgren_rapp_yetter_2010_on_quotients_of_quandles}}]
% also cited in {\cite[Proposition 3.1]{even_gran_2014_on_factorization_systems_for_surjective_quandle_homomorphisms}}
\label{theorem: quandle stein factorization}
    Let $f \colon P\to Q$ be a surjective quandle homomorphism.
    Then, $f$ has a factorization as $f=h\circ g$, where $g$ is connected and $h$ is rigid; especially, $h$ is covering.
\end{thm}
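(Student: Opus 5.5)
The plan is to take the single quotient by the relative inner automorphism group, without any iteration, and to show that the induced map to $Q$ is automatically rigid. Set $N\coloneqq \relInn(f)$. This is a normal subgroup of $\Inn(P)$ by \cref{proposition: relinn preserves fibers}~\eqref{item:relInn_is_normal_in_Inn}. Let $g\coloneqq \pi_N\colon P\twoheadrightarrow P/N$ be the quotient map. By \cref{proposition:connected_iff_quotient_by_normal_subgroup_of_Inn}, $g$ is connected. By \cref{prop:basic_property_of_quotient_by_normal_subgroup}~\eqref{item:factoring_condition_through_quotient_by_normal_subgroup}, since trivially $N\subseteq \relInn(f)$, the map $f$ factors as $f=h\circ g$ for a unique homomorphism $h\colon P/N\to Q$. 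Moreover $h$ is surjective because $f$ is.

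The remaining step is to prove that $h$ is rigid, i.e.\ that $\relInn(h)=\Ker(h_*)$ is trivial. This is done by a diagram chase with the functor $\Inn(-)$ on surjections. By \cref{proposition: functoriality of Inn for surjections}, $f_*=h_*\circ g_*$, and $g_*\colon \Inn(P)\to\Inn(P/N)$ is surjective. Surjectivity of $g_*$ gives
\[
\Ker(h_*) = g_*\bigl(g_*^{-1}(\Ker h_*)\bigr) = g_*\bigl(\Ker(h_*\circ g_*)\bigr) = g_*(\Ker f_*) = g_*(N).
\]
Finally, \cref{prop:basic_property_of_quotient_by_normal_subgroup}~\eqref{item:N_is_contained_in_relInn_of_projection} gives $N\subseteq \relInn(\pi_N)=\Ker(g_*)$, so $g_*(N)=\{1\}$. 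Hence $\relInn(h)$ is trivial and $h$ is rigid. By \cref{lemma: rigid is covering}, $h$ is then also covering.

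I expect the main obstacle to be the worry that one quotient does not suffice. A priori, passing to $P/\relInn(f)$ might create new inner automorphisms that fix the fibers of $h$, which would force a transfinite iteration of quotients. The displayed identity is exactly what rules this out. It relies only on the surjectivity of $g_*$ (so that kernels push forward onto kernels) and on the fact that $N$ acts trivially on its own orbit quotient.
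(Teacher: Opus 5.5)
Your proposal is correct and follows the paper's proof. Both take the quotient by $N=\relInn(f)$ and get connectedness of $\pi_N$ from the quotient characterization. Both then deduce rigidity of $h$ from $f_*=h_*\circ g_*$, the surjectivity of $g_*$, and $N\subseteq\Ker(g_*)$. The paper phrases the last step as $\Ker(g_*)=\Ker(f_*)$, while you compute $\Ker(h_*)=g_*(\Ker f_*)=g_*(N)=1$; this is the same argument.
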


\begin{proof}
    Taking $N=\relInn(f)$, we obtain a factorization of $f$ as
    \[\begin{tikzcd}[column sep=small]
        P \arrow{rr}{f} \arrow{rd}[swap]{g} & & Q\rlap{.} \\
        & P/N \arrow{ru}[swap]{h} &
    \end{tikzcd}\]
    By \cref{proposition:connected_iff_quotient_by_normal_subgroup_of_Inn}, the projection $g$ is connected.
    It is clear from the construction that for every $\varphi \in N$, $g\circ \varphi=g$: that is, we have $N \subseteq \Ker(g_*)$.
    Since $h_*\circ g_* = f_*$ by functoriality, we also have $\Ker(g_*) \subseteq \Ker(f_*)=N$. Thus $N=\Ker(g_*)$, and from this we conclude that $h_*=\Inn(h)$ is an isomorphism.
\end{proof}

\begin{cor}
\label{example: Inner of P/relInn is InnP/relInn}
    Let $f\colon P\to Q$ be a surjective quandle homomorphism.
    Then, we have the following isomorphism:
    \[
        \Inn(P/\relInn(f)) \cong \Inn(P)/\relInn(f).
    \]
\end{cor}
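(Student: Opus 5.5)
The plan is to read the isomorphism off the proof of \cref{theorem: quandle stein factorization}. Put $N \coloneqq \relInn(f)$, and factor $f = h\circ g$ with $g = \pi_N\colon P\twoheadrightarrow P/N$ and $h\colon P/N\to Q$ the induced map. The induced map exists by \cref{prop:basic_property_of_quotient_by_normal_subgroup}~\eqref{item:factoring_condition_through_quotient_by_normal_subgroup}, since $N\subseteq\relInn(f)$ holds trivially.

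First, by \cref{proposition: functoriality of Inn for surjections}, $g$ induces a surjective group homomorphism $g_*\colon \Inn(P)\twoheadrightarrow \Inn(P/N)$. The first isomorphism theorem then gives $\Inn(P/N)\cong \Inn(P)/\Ker(g_*)$, and $\Ker(g_*) = \relInn(g)$ by definition. So everything reduces to proving $\relInn(\pi_N) = N$.

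One inclusion, $N\subseteq \relInn(\pi_N)$, is \cref{prop:basic_property_of_quotient_by_normal_subgroup}~\eqref{item:N_is_contained_in_relInn_of_projection}. For the other, I use functoriality of $\Inn(-)$ on surjections: $f_* = h_*\circ g_*$. If $g_*(\varphi)=\id$, then $f_*(\varphi)=\id$, so $\Ker(g_*)\subseteq\Ker(f_*) = N$. Hence $\relInn(\pi_N)=N$, and we obtain $\Inn(P/\relInn(f))\cong \Inn(P)/\relInn(f)$. The isomorphism is induced by $(\pi_N)_*$ and is given explicitly by $s_x \mapsto s_{[x]}$.

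I expect no real obstacle. The only point needing care is checking that $h$ is well defined and surjective, so that functoriality applies to both factors. Alternatively, one could cite the conclusion of \cref{theorem: quandle stein factorization} that $h_*$ is an isomorphism and compose it with $f_*\colon \Inn(P)/N\xrightarrow{\ \sim\ }\Inn(Q)$; this gives the same identification.
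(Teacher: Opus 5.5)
Your proof is correct and follows the paper's route. The paper reads this corollary off the proof of \cref{theorem: quandle stein factorization}, which establishes $\Ker((\pi_N)_*)=\relInn(f)$ by your two inclusions: $N\subseteq\Ker(g_*)$ from the construction, and $\Ker(g_*)\subseteq\Ker(f_*)=N$ via $f_*=h_*\circ g_*$. The first isomorphism theorem for the surjection $(\pi_N)_*$ then gives the stated isomorphism.
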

\begin{comment}Proof:
    As the relative inner automorphism group is a normal subgroup of $\Inn(P)$, the surjection $\pi\colon P \to P/\relInn(f)$ is well-defined.
    Thus, we have the induced group surjective homomorphism $\pi_*\colon \Inn(P) \twoheadrightarrow \Inn(P/\relInn(f))$.
    We need to show that the kernel $\Ker(\pi_*)$ coincides with the relative inner automorphism group $\relInn(f)$.
    The inclusion $\relInn(f)\subset \Ker(\pi_*)$ is easy because every $\varphi\in\relInn(f)$ preserves $N$-orbits. 
    For the converse, let $g\in\Ker(\pi_*)$. 
    For any $p\in P$, there exists a relative inner automorphism $\varphi_p\in \relInn(f)$ such that $g(p) = \varphi_p(p)$ since $[g(p)] = [p]$ in $P/\relInn(f)$. 
    On the other hand, we have
    \[
        f(p) = f(\varphi_p(p)) = f\circ g(p) = f_*(g)(f(p)).
    \]
    Therefore, we have the equality $f_*(g) = \id_Q$ by the surjectivity of $f$.
\end{comment}

In fact, the connected--rigid factorization enjoys the stronger property of forming an orthogonal factorization system, which was observed by Even and Gran~\cite{even_gran_2014_on_factorization_systems_for_surjective_quandle_homomorphisms}. As noted in \cref{rem:after:proposition:connected_iff_quotient_by_normal_subgroup_of_Inn}, the class of connected homomorphisms coincides with the class $\mathcal{E}_1$ in the sense of \cite[Definition 1.5]{even_gran_2014_on_factorization_systems_for_surjective_quandle_homomorphisms}.

\begin{prop}[{\cite[Proposition 3.2]{even_gran_2014_on_factorization_systems_for_surjective_quandle_homomorphisms}}]
\label{prop:connected-rigid_forms_OFS_for_surjection}
    The pair $(\{\mathrm{connected}\}, \{\mathrm{rigid}\})$ of classes of surjections is an orthogonal factorization system for the surjections in $\Quandle$.
\end{prop}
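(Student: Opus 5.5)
To prove \cref{prop:connected-rigid_forms_OFS_for_surjection}, we must verify two things: every surjection factors as a connected homomorphism followed by a rigid one, and every connected surjection is left orthogonal to every rigid surjection. Closure under isomorphisms is immediate from the definitions.

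The existence of factorizations is exactly \cref{theorem: quandle stein factorization}: write $f=h\circ g$ with $g=\pi_{\relInn(f)}$, which is connected, and $h$ rigid.

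The main step is orthogonality. Consider a commutative square of surjections $v\circ e = m\circ u$, where $e\colon A\twoheadrightarrow B$ is connected, $m\colon C\twoheadrightarrow D$ is rigid, $u\colon A\to C$ and $v\colon B\to D$. We must produce a unique diagonal $d\colon B\to C$ with $d\circ e=u$ and $m\circ d=v$.

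\begin{itemize}
\item \emph{Uniqueness.} Since $e$ is surjective, $d$ is determined by the equation $d\circ e=u$.
\item \emph{Existence.} By \cref{proposition:connected_iff_quotient_by_normal_subgroup_of_Inn}, we may assume $e=\pi_N$ for some normal subgroup $N\trianglelefteq\Inn(A)$, and even take $N=\relInn(e)$. By \cref{prop:basic_property_of_quotient_by_normal_subgroup}~\eqref{item:factoring_condition_through_quotient_by_normal_subgroup}, $u$ factors through $\pi_N$ as soon as $u\circ\varphi=u$ for every $\varphi\in N$.
\end{itemize}

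Checking this last condition is the only delicate point, because $u$ need not be surjective and $\Inn(-)$ is functorial only for surjections. If the square consists of surjections, so that $u$ is surjective, the argument is direct. Given $\varphi\in N$, functoriality gives
\[
m_*(u_*(\varphi)) = v_*(e_*(\varphi)) = v_*(\id_B)=\id_D .
\]
Rigidity of $m$ then forces $u_*(\varphi)=\id_C$, that is, $u\circ\varphi=u$.

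If $u$ is not assumed surjective, I would argue pointwise instead. Write $\varphi=s_{a_1}^{\epsilon_1}\cdots s_{a_k}^{\epsilon_k}$. Then
\[
u\circ\varphi=\psi\circ u, \qquad \psi\coloneqq s_{u(a_1)}^{\epsilon_1}\cdots s_{u(a_k)}^{\epsilon_k}\in\Inn(C).
\]
Since $m$ is a surjective homomorphism, $m\circ\psi=m_*(\psi)\circ m$ with
\[
m_*(\psi)= s_{v e(a_1)}^{\epsilon_1}\cdots s_{v e(a_k)}^{\epsilon_k}=v_*(e_*(\varphi))=\id_D,
\]
where the last step uses the surjectivity of $v$ (as a surjection in the class) together with $e_*(\varphi)=\id_B$. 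Hence $\psi\in\relInn(m)=\{1\}$, so $u\circ\varphi=u$.

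This gives a map $d\colon B\to C$ with $d\circ e=u$. Then $m\circ d\circ e=v\circ e$, and cancelling the surjection $e$ yields $m\circ d=v$.
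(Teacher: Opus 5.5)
Your argument is correct, and it goes further than the paper, which gives no proof of this proposition. The paper only cites Even--Gran; the existence of factorizations is available separately from \cref{theorem: quandle stein factorization}.

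Your orthogonality step is self-contained and uses only tools already established here: functoriality of $\Inn$ on surjections, \cref{proposition:connected_iff_quotient_by_normal_subgroup_of_Inn}, and \cref{prop:basic_property_of_quotient_by_normal_subgroup}. For $\varphi\in\relInn(e)$, you use injectivity of $m_*$ together with $m_*u_*=v_*e_*$ (or your pointwise substitute when $u$ is not onto) to get $u\circ\varphi=u$. So $u$ is constant on the $\relInn(e)$-orbits, which are exactly the fibres of $e$ because $e$ is connected.

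A further benefit of your route is that it pinpoints where surjectivity is used, namely in applying $v_*$. That hypothesis is essential. Work in $\GL(2,\FB_3)$ with $\sigma=\mathrm{diag}(1,-1)$ and $\rho\colon(x,y)\mapsto(-y,x)$, so that $\sigma\rho\sigma^{-1}=\rho^{-1}$ and $\rho^2=-I$.
\begin{itemize}
\item The map $u\colon R_4\to\Conj(\GL(2,\FB_3))$, $j\mapsto\sigma\rho^j$, is a homomorphism, because $(\sigma\rho^i)\sigma\rho^j(\sigma\rho^i)^{-1}=\sigma\rho^{2i-j}$.
\item The map $m=\Conj(\GL(2,\FB_3)\twoheadrightarrow\mathrm{PGL}(2,\FB_3))$ is rigid, since its kernel $\{\pm I\}$ is central and $\mathrm{PGL}(2,\FB_3)\cong S_4$ is centreless (cf.\ \cref{example: non-trivial rigid hom}).
\item The map $v\colon R_2\to\Conj(\mathrm{PGL}(2,\FB_3))$, $i\mapsto[\sigma\rho^i]$, is well defined and a homomorphism, because $[\sigma]$ and $[\sigma\rho]$ commute.
\end{itemize}
Let $e\colon R_4\to R_2$ be the connected reduction map of \cref{example:connected_but_not_strongly_connected}. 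Then $m\circ u=v\circ e$, but $u(0)=\sigma\neq-\sigma=u(2)$, so no diagonal exists.

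Hence orthogonality can fail once the bottom map $v$ is not surjective. The restriction to surjections in the statement is therefore genuinely needed, which is exactly the point your argument makes visible.
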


\begin{rem}
    By \cref{cor:connected_is_stable_under_rigid_base_change} below, this factorization system in fact satisfies the \textit{Frobenius condition} (see \cite[\href{https://ncatlab.org/nlab/show/Frobenius+reciprocity\#InWeakFactorizationSystems}{Frobenius reciprocity}]{nLab}).
\end{rem}

\begin{cor}
    Connected homomorphisms are closed under pushout along surjective homomorphisms.
\end{cor}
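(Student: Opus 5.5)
We have to show: given a connected surjection $f\colon P\to Q$ and a surjection $g\colon P\to P'$, the pushout $f'\colon P'\to Q'$ of $f$ along $g$, taken in $\Quandle$, is connected. The plan is to describe this pushout concretely as a quotient and then apply \cref{proposition:connected_iff_quotient_by_normal_subgroup_of_Inn} together with \cref{proposition: connectedness composition and 2 out of 3}~\eqref{item: g is conn}.

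First we identify the pushout. Since $f$ and $g$ are surjective, a pushout of surjections is the quotient of $P$ by the congruence $\Eq(f)\vee\Eq(g)$, the smallest congruence containing both kernel congruences. Concretely, $Q'=P/(\Eq(f)\vee\Eq(g))$, and $f'\colon P'\to Q'$ and $g'\colon Q\to Q'$ are the induced surjections. Both are surjective: they are factors of the surjection $P\to Q'$. The universal property is immediate from the correspondence between quotients and congruences.

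Next we apply the 2-out-of-3 result. The composite $h\coloneqq g'\circ f=f'\circ g\colon P\to Q'$ is a surjection, and $f'$ is its second factor. By \cref{proposition: connectedness composition and 2 out of 3}~\eqref{item: g is conn}, it suffices to show that $h$ is connected. That statement applies because $g$ is surjective: $h=f'\circ g$ with $g$ surjective.

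The main step is showing $h$ is connected. By \cref{proposition:connected_iff_quotient_by_normal_subgroup_of_Inn}, $f=\pi_N$ with $N=\relInn(f)\trianglelefteq\Inn(P)$. Let $N'=\relInn(g)\trianglelefteq \Inn(P)$ and consider $M\coloneqq \relInn(h)$. Since $h$ factors through both $f$ and $g$, \cref{proposition: connectedness composition and 2 out of 3}~\eqref{item: inclusion of relInn} gives $N\subseteq M$.

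This alone does not make the $M$-orbits equal to the fibers of $h$. Instead we compare congruences. The orbit relation $\sim_M$ is a congruence by \cref{remark_of_proposition:quandle_quotient_by_group}. It contains $\Eq(f)$, because $N\subseteq M$ and $f=\pi_N$. It is contained in $\Eq(h)$, because $M$ preserves the fibers of $h$. The point is that $\sim_M$ need not contain $\Eq(g)$ when $g$ is not connected. So this route is delicate, and it is where I expect the main obstacle.

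To handle it, use the factorization $h=f'\circ g$ differently. Connectedness of $h$ means: for $x,y$ with $h(x)=h(y)$, there is $\varphi\in M$ with $\varphi(x)=y$. The join congruence $\Eq(f)\vee\Eq(g)$ is generated by chains $x=x_0,x_1,\dots,x_n=y$ in which consecutive elements are related alternately by $\Eq(f)$ or $\Eq(g)$; compatibility with the operations is automatic, since the join of congruences on a quandle is the transitive closure of their union. This is the standard fact for congruences in varieties, and I would cite or check it.

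Steps of type $\Eq(f)$ are realized by elements of $N\subseteq M$. For a step $x_i\sim_g x_{i+1}$, the pair $(x_i,x_{i+1})$ needs no automorphism at all if we work downstairs in $P'$. So I would instead prove connectedness of $f'$ directly, working in $P'$. For $u,v\in P'$ in the same fiber of $f'$, lift a chain as above and map it to $P'$. Then the $\Eq(g)$-steps become equalities, and each $\Eq(f)$-step is realized by some $\psi_i\in N$. Its image $g_*(\psi_i)\in\Inn(P')$ lies in $\relInn(f')$, because $h_*(\psi_i)=g'_*f_*(\psi_i)=\id$, and $g_*(\psi_i)$ sends $g(x_i)$ to $g(x_{i+1})$. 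Composing these $g_*(\psi_i)$ gives an element of $\relInn(f')$ sending $u$ to $v$, so $f'$ is connected. This direct argument replaces the detour through $h$. The only nontrivial input is the chain description of the join congruence, which is the step to verify carefully.
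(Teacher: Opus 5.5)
Your final argument is correct, but it is not the paper's route. The paper deduces the corollary formally from \cref{prop:connected-rigid_forms_OFS_for_surjection}: the left class of any orthogonal factorization system is stable under pushout. One factors $f'=m\circ e$ with $e$ connected and $m$ rigid, lifts against $m$ using $f\perp m$, and uses the pushout property to see that $m$ is invertible.

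You instead do three things. You compute the pushout as $P/(\Eq(f)\vee\Eq(g))$. You use that the join of two congruences is the transitive closure of their union; this holds in any algebra, and here it follows from (C1)--(C2) by varying one argument at a time. Then you transport the connecting elements $\psi_i\in\relInn(f)$ along $g_*$, using $f'_*g_*(\psi_i)=g'_*f_*(\psi_i)=\id$.

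This is elementary and does not need the Even--Gran theorem at all. It also proves more. The element you build lies in $g_*(N)$ for $N=\relInn(f)$, which is a normal subgroup of $\Inn(P')$ contained in $\relInn(f')$. So the pushout of $\pi_N$ along $g$ is, up to isomorphism, exactly $\pi_{g_*(N)}$. The formal route buys brevity at the cost of this explicit description.

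Two small points. First, drop the detour through $h=f'\circ g$ entirely. $h$ is not connected in general (for $f=\id_P$ one has $h=g$), so that route cannot work.

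Second, chains of length two suffice, because $\Eq(f)={\sim_N}$ permutes with every congruence $R$. If $y=n(x)$ with $n\in N$ and $(y,z)\in R$, then $(x,n^{-1}(z))\in R$, since congruences are invariant under inner automorphisms, and $n^{-1}(z)\sim_N z$. Hence lifts $x,y$ of $u,v$ can be joined as $(x,w)\in\Eq(g)$, $w\sim_N y$. A single $g_*(\psi)$ with $\psi(w)=y$ then already carries $u$ to $v$.
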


\begin{proof}
    This follows formally from \cref{prop:connected-rigid_forms_OFS_for_surjection}.
\end{proof}

\subsection{Homogeneous Homomorphisms}
\label{subsection:relative_homogeneous}

Similarly to connectedness, we introduce the relative notion of homogeneity, using relative automorphism groups (\cref{definition:relative_automorphism_group}).

\begin{dfn}
\label{definition: homogeneity for morphism}
    Let $f \colon P\to Q$ be a quandle homomorphism.
    We say that $f$ is \emph{homogeneous} if it is surjective and for each $q\in Q$, the action of $\relAut(f)$ on the fiber $f^{-1}(q)$ is transitive.
\end{dfn}

Since $\relInn(f)\subseteq \relAut(f)$, if $f$ connected, then it is homogeneous.

\begin{eg}
    Consider the unique homomorphism $f\colon P\twoheadrightarrow\{\ast\}$ into the terminal quandle. Since $\Aut(\{\ast\})=\{\id_{\{\ast\}}\}$ and hence $\relAut(f) = \Aut(P)$, $f$ is homogeneous if and only if $P$ is a homogeneous quandle.
\end{eg}

Analogously to \cref{proposition:connected_iff_quotient_by_normal_subgroup_of_Inn}, we can characterize homogeneous homomorphisms in view of group quotients.

\begin{lem}
\label{lemma:relAut_is_normalized_by_Inn}
    For a surjective homomorphism $f\colon P \twoheadrightarrow Q$, the relative automorphism group $\relAut(f)\subseteq \Aut(P)$ is normalized by $\Inn(P)$
\end{lem}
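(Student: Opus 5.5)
It suffices to check generators, since the normalizer of $\relAut(f)$ in $\Aut(P)$ is a subgroup. So it is enough to show that for each $x\in P$ and each $\varphi\in\relAut(f)$, both $s_x\varphi s_x^{-1}$ and $s_x^{-1}\varphi s_x$ lie in $\relAut(f)$. Conjugates of automorphisms are again automorphisms of $P$, so the only thing to verify is the condition $f\circ(\text{conjugate}) = f$.

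The key step is the compatibility of $f$ with symmetries, $f\circ s_x = s_{f(x)}\circ f$, which gives $f\circ s_x^{\pm1} = s_{f(x)}^{\pm1}\circ f$. Equivalently, $f\circ s_x^{\pm1} = f_*(s_x^{\pm1})\circ f$ by \cref{proposition: functoriality of Inn for surjections}. Using this together with $f\circ\varphi = f$, I compute
\[
f\circ s_x\varphi s_x^{-1} = s_{f(x)}\circ f\circ\varphi\circ s_x^{-1} = s_{f(x)}\circ f\circ s_x^{-1} = s_{f(x)}s_{f(x)}^{-1}\circ f = f,
\]
and the computation with $s_x^{-1}$ in place of $s_x$ is symmetric.

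More generally, the same argument works for any $\psi\in\Inn(P)$ directly: $f\circ\psi\varphi\psi^{-1} = f_*(\psi)\circ f\circ\psi^{-1} = f_*(\psi)f_*(\psi)^{-1}\circ f = f$. This gives $\psi\,\relAut(f)\,\psi^{-1}\subseteq\relAut(f)$ without reducing to generators. Surjectivity of $f$ is only needed so that $f_*(\psi)$ is defined; with the generator version one does not even need it.

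There is no real obstacle here. The only point to take care with is to state the normalizing condition as an inclusion for all $\psi\in\Inn(P)$, which then gives equality by applying it to $\psi^{-1}$.
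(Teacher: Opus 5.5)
Your proof is correct and is essentially the paper's argument. The paper checks that $s_x\varphi s_x^{-1}$ and $s_x^{-1}\varphi s_x$ satisfy $f\circ(-)=f$ using the homomorphism identity $f s_x = s_{f(x)} f$, exactly as you do; your one-line variant with $f_*(\psi)$ for arbitrary $\psi\in\Inn(P)$ is equally valid.
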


\begin{proof}
    For any $x\in P$ and $\varphi\in \relAut(f)$, we have
    \begin{align*}
        fs_x\varphi s_x^{-1} &= s_{f(x)} f \varphi s_x^{-1} = s_{f(x)} f s_x^{-1} = fs_xs_x^{-1}= f, \text{ and}\\
        fs_x^{-1}\varphi s_x &= s_{f(x)}^{-1} f \varphi s_x = s_{f(x)}^{-1} f s_x = fs_x^{-1}s_x= f.
    \end{align*}
    Hence, $s_x\varphi s_x^{-1}$ and $s_x^{-1}\varphi s_x$ is in $\relAut(f)$, which shows that $\relAut(f)$ is normalized by $\Inn(P)$.
\end{proof}

\begin{prop}
\label{proposition:homogeneous_iff_quotient_by_subgroup_normalized_by_Inn}
    A surjective quandle homomorphism $f\colon P \to Q$ is homogeneous if and only if there is some subgroup $G \subseteq \Aut(P)$ normalized by $\Inn(P)$ such that $f$ is isomorphic to $\pi_G\colon P \to P/G$ (as quotients of $P$).
\end{prop}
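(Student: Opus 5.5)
The argument parallels the proof of \cref{proposition:connected_iff_quotient_by_normal_subgroup_of_Inn}, with $\relAut(f)$ replacing $\relInn(f)$ and \cref{lemma:relAut_is_normalized_by_Inn} replacing normality.

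For ($\Rightarrow$), suppose $f$ is homogeneous and set $G\coloneqq\relAut(f)$. By \cref{lemma:relAut_is_normalized_by_Inn}, $G$ is normalized by $\Inn(P)$. Hence, by \cref{remark_of_proposition:quandle_quotient_by_group}, $\sim_G$ is a quandle congruence and the quotient $\pi_G\colon P\to P/G$ exists.

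Next I compare $\sim_G$ with the kernel congruence $\Eq(f)$. Every $\varphi\in G$ satisfies $f\circ\varphi=f$, so $x\sim_G y$ implies $f(x)=f(y)$. Conversely, if $f(x)=f(y)$, then $x,y$ lie in the same fiber, and homogeneity gives $\varphi\in G$ with $\varphi(x)=y$, so $x\sim_G y$. Thus $\sim_G$ equals $\Eq(f)$. The induced map $\overline f\colon P/G\to Q$ is therefore a bijective homomorphism. Its inverse is also a homomorphism because $Q\cong P/\Eq(f)$ as quotients. Hence $f\cong\pi_G$.

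For ($\Leftarrow$), let $G\subseteq\Aut(P)$ be normalized by $\Inn(P)$ with $f\cong\pi_G$. Homogeneity is invariant under isomorphism of quotients: if $f=u\circ\pi_G$ with $u$ an isomorphism, then $\relAut(f)=\relAut(\pi_G)$ and the fibers correspond. So it suffices to treat $\pi_G$. Every $g\in G$ is an automorphism of $P$ and satisfies $\pi_G\circ g=\pi_G$, since $g(x)\sim_G x$. Therefore $G\subseteq\relAut(\pi_G)$. The fibers of $\pi_G$ are exactly the $G$-orbits, so $\relAut(\pi_G)$ acts transitively on each fiber, and $\pi_G$ is homogeneous.

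There is no real obstacle. The only point needing care is that $P/G$ is well defined, which is where the hypothesis that $G$ is normalized by $\Inn(P)$, together with \cref{lemma:relAut_is_normalized_by_Inn}, enters.
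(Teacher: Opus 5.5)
Your proof is correct and takes essentially the same approach as the paper. In ($\Rightarrow$) you take $G=\relAut(f)$, use \cref{lemma:relAut_is_normalized_by_Inn} to form $P/G$, and use homogeneity to make the induced map $P/G\to Q$ bijective; in ($\Leftarrow$) you note $G\subseteq\relAut(\pi_G)$, so $\relAut(\pi_G)$ acts transitively on the $G$-orbits, which are the fibers. Your extra remarks, that the bijective homomorphism is an isomorphism and that homogeneity transfers along an isomorphism of quotients, are routine details the paper leaves implicit.
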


\begin{proof}
    ($\Rightarrow$): By \cref{proposition:quandle_quotient_by_group} and \cref{lemma:relAut_is_normalized_by_Inn}, any homomorphism $f\colon P\to Q$ factors through the quotient $P/{\relAut(f)}$.
    If $f$ is homogeneous, the induced homomorphism $\overline{f}\colon P/{\relAut(f)} \to Q$ is not only surjective, but also injective by the homogeneity. Thus it is an isomorphism, and $f$ is isomorphic to $\pi_{\relAut(f)}$.
    
    ($\Leftarrow$): We prove that the quotient map $\pi_G\colon P \to P/G$ is homogeneous. 
    Note that for all $g\in G$, we have $\pi_G \circ g= \pi_G$, and so $G\subseteq \relAut(\pi_G)$.
    Take any $x,y \in P$ with $\pi_G(x)=\pi_G(y)$, which means that $x$ and $y$ are in the same $G$-orbit. Then there is $g \in G$ such that $g(x)=y$. Since $G \subseteq \relAut(\pi_G)$, this shows that $\pi_G$ is homogeneous.
\end{proof}

\begin{eg}
\label{example: quandle_triplet_quandle_is_homogeneous_quotient_of_GAlex}
    Let $(G,H,\sigma)$ be a quandle triplet (see \cref{example: quandle triplet}), and consider the associated quandle $Q(G,H,\sigma)$. It is easy to verify that the canonical surjection
    \[ f\colon \GAlex(G,\sigma) \twoheadrightarrow Q(G,H,\sigma),\quad x \mapsto [x] \]
    is a quandle homomorphism.
    For each element $h \in H$, let $R_h\colon G \to G$ denote the right multiplication map $x\mapsto xh$ of $h$. Then it becomes a quandle automorphism $R_h\colon \GAlex(G,\sigma)\to \GAlex(G,\sigma)$; indeed, since $H\subseteq \Fix(\sigma)$, we calculate
    \begin{align*}
        s_{R_h(x)}(R_h(y)) &= s_{xh}(yh)= xh \cdot\sigma((xh)^{-1}yh) = xh\cdot h^{-1}\sigma(x^{-1}y)h \\
        &= x\cdot \sigma(x^{-1}y)h=R_h(s_x(y)) 
    \end{align*}
    for all $x,y \in G$. Moreover, for all $x\in G$,
    \[ f\circ R_h(x) = f(xh) = [xh] = [x] = f(x), \]
    so $R_h \in \relAut(f)$.

    Let $x,y\in G$ such that $f(x)=f(y)$. By the definition of $Q(G,H,\sigma)$, there is an element $h \in H$ such that $x=yh = R_h(y)$. Since $R_h \in \relAut(f)$, this shows that $\relAut(f)$ acts transitively on the fibers of $f$, and hence $f$ is homogeneous.

    Since $R_{hh'}=R_{h'}\circ R_h$, we have an injective group homomorphism $H^\op \hookrightarrow \Aut(\GAlex(G,\sigma))$ from the opposite group of $H$. Through it, we regard $H^\op$ as a subgroup of $\Aut(\GAlex(G,\sigma))$. 
    In view of \cref{proposition:homogeneous_iff_quotient_by_subgroup_normalized_by_Inn}, we can identify $Q(G,H,\sigma)$ with the quotient by the subgroup $H^\op\subseteq \Aut(\GAlex(G,\sigma))$. For this, we first show that $H^\op$ is normalized by $\Inn(\GAlex(G,\sigma))$; indeed, since $s_x=s_{xh}$ for $h \in H$, we have
    \[ s_x R_h s_x^{-1} = s_xs_{xh}^{-1} R_h = R_h \text{ and } s_x^{-1}R_h s_x = s_x^{-1}s_{xh} R_h=R_h \]
    for all $x\in G$. Consequently, we can take the quotient by $H^\op\subseteq \Aut(\GAlex(G,\sigma))$, and the canonical surjection $\GAlex(G,\sigma)/H^\op \to Q(G,H,\sigma)$ is readily seen to be an isomorphism.
\end{eg}

It is known that every homogeneous quandle is isomorphic to the quandle associated with a quandle triplet (see \cite[Theorem 7.1]{joyce_1982_a_classifying_invariant_of_knots_the_knot_quandle}).
The above example shows that every homogeneous quandle is a homogeneous quotient of a generalized Alexander quandle.
Note that, as the following example shows, not every homogeneous quotient of a generalized Alexander quandle arises from a quandle triplet.

\begin{eg}
\label{example: homogeneous_quotient_of_GAlex_which_does_not_arise_from_quandle_triplet}
    Consider the linear Alexander quandle $P=\Lambda_{4,-1}=\Alex(\ZZ/4\ZZ, -\id)$ of order $4$, or the dihedral quandle $R_4$. The symmetry map $s_x$ is given by $s_x(y)=2x-y$.
    Let $K=\langle \sigma \rangle= \{\id,-\id\} \subseteq \Aut(P)$, where $\sigma=-\id$ is regarded as an automorphism of $\Lambda_{4,-1}$.

    For every $x,y\in \ZZ/4\ZZ$, we have
    \[ s_x\sigma s_x^{-1}(y) = s_x(-(2x-y)) = 4x-y = -y = \sigma(y), \]
    and similarly $s_x^{-1}\sigma s_x=\sigma$.
    Hence $K$ is normalized by $\Inn(P)$.
    By \cref{proposition:homogeneous_iff_quotient_by_subgroup_normalized_by_Inn}, the canonical quotient map
    \[ f\colon P\twoheadrightarrow Q=P/K \]
    is homogeneous.
    Note that the quotient $Q=P/K$ consists of three equivalence classes: $[0]=\{0\}$, $[1]=\{1,3\}$, and $[2]=\{2\}$.

    However, $Q$ is not isomorphic to a quandle associated with any quandle triplet.
    Indeed, if $(G,H,\sigma)$ is a quandle triplet, then every fiber of the canonical projection
    \[ \GAlex(G,\sigma)\twoheadrightarrow Q(G,H,\sigma) \]
    has the same cardinality $\lvert H\rvert$, since the fibers are precisely the right cosets of $H$.
    On the other hand, the fibers of $f$ do not all have the same cardinality: $\vert f^{-1}([0])\rvert = \lvert f^{-1}([2])\rvert=1$ and $\lvert f^{-1}([1])\rvert =2$.
    Therefore, $Q$ cannot arise from a quandle triplet.

    We can show directly that $Q$ is not homogeneous.
    The quandle structure on $Q$ is $s_{[0]}=\id=s_{[2]}$ and $s_{[1]}=(02)$.
    If $\psi\in \Aut(Q)$ is a quandle automorphism, then $s_{\psi([0])}= \psi\circ s_{[0]}\circ \psi^{-1}=\id$ forces $\psi([0])\in\{[0],[2]\}$. Similarly, $\psi([2])\in\{[0],[2]\}$, so necessarily $\psi([1])=[1]$. Since $\psi$ is bijective, it follows that $\psi$ is either the identity or the transposition $(02)$. Therefore, we have
    \[ \Aut(Q)=\Inn(Q)=\{\id,(02)\}. \]
    Since $\Aut(Q)$ does not act transitively on $Q$, the quandle $Q$ is not homogeneous.
\end{eg}

\begin{rem}
    This example shows that, unlike connectedness,homogeneity does not satisfy the right cancellation property. Homogeneous homomorphisms are not even closed under composition; see \cref{example: composition of homogeneous is not homogeneous}.
    On the other hand, they are stable under products and pullbacks.
\end{rem}

\begin{prop}
\label{proposition:product_of_homogeneous_homs_is_homogeneous}
    Let $f \colon P\to Q$ and $f'\colon P' \to Q'$ be homogeneous surjective quandle homomorphisms.
    Then the product $f\times f'\colon P\times P'\to Q\times Q'$ is also homogeneous.
\end{prop}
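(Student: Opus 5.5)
The plan is to transport relative automorphisms coordinatewise: build, from a relative automorphism of $f$ and one of $f'$, a relative automorphism of $f\times f'$. First I will recall that the product quandle $P\times P'$ carries the componentwise structure $s_{(x,x')}=s_x\times s_{x'}$. Consequently, for $\varphi\in\Aut(P)$ and $\varphi'\in\Aut(P')$, the map $\varphi\times\varphi'$ is a bijection of $P\times P'$. It is also a quandle homomorphism, since
\[
(\varphi\times\varphi')\circ s_{(x,x')} = s_{(\varphi(x),\varphi'(x'))}\circ(\varphi\times\varphi')
\]
holds componentwise. Hence $\varphi\times\varphi'\in\Aut(P\times P')$.

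Next I will check that this construction restricts to relative automorphism groups. If $\varphi\in\relAut(f)$ and $\varphi'\in\relAut(f')$, then
\[
(f\times f')\circ(\varphi\times\varphi')=(f\circ\varphi)\times(f'\circ\varphi')=f\times f'.
\]
So $\varphi\times\varphi'\in\relAut(f\times f')$, which gives an inclusion $\relAut(f)\times\relAut(f')\subseteq\relAut(f\times f')$.

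Finally, I will verify surjectivity and fiberwise transitivity. Surjectivity of $f\times f'$ is immediate from that of $f$ and $f'$. Take two points $(x,x'),(y,y')$ in the same fiber of $f\times f'$. Then $f(x)=f(y)$ and $f'(x')=f'(y')$. By homogeneity of $f$ and $f'$ there are $\varphi\in\relAut(f)$ with $\varphi(x)=y$ and $\varphi'\in\relAut(f')$ with $\varphi'(x')=y'$. The element $\varphi\times\varphi'$ then lies in $\relAut(f\times f')$ and sends $(x,x')$ to $(y,y')$.

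Every step is a direct check, so there is no real obstacle. The only point needing care is confirming that $\varphi\times\varphi'$ is a quandle automorphism. This is where we rely on arbitrary elements of $\Aut$ being allowed; by contrast, for $\Inn$ the product of inner automorphisms need not be inner, which is why the analogous statement for connectedness fails.
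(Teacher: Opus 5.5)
Your proposal is correct and follows essentially the same argument as the paper: given $k\in\relAut(f)$ and $k'\in\relAut(f')$ with $k(x)=y$ and $k'(x')=y'$, the product $k\times k'$ lies in $\relAut(f\times f')$ and sends $(x,x')$ to $(y,y')$. The paper treats the empty-quandle case separately, whereas your argument covers it vacuously. You also check explicitly that $k\times k'$ is a quandle automorphism, a step the paper leaves implicit.
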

\begin{proof}
    First, as $f$ is surjective, $P$ is an empty quandle if and only if $Q$ is an empty quandle. In this case, the product $f\times f'$ is the identity $\id_{\empty}$ of the empty quandle, which is clearly homogeneous.

    Thus, we may assume that all quandles are non-empty.
    Let $(x,x'), (y,y')\in P\times P'$ satisfy $(f(x),f'(x'))= (f(y),f'(y'))$. As $f(x)=f(y)$, the homogeneity of $f$ implies that there is $k \in \relAut(f)$ such that $k(x)=y$. Similarly, there is $k'\in \relAut(f')$ with $k'(x')=y'$. Then, we have $k\times k' \in \relAut(f\times f')$, and $(k\times k')(x,x') = (k(x),k'(x')) = (y,y')$. This completes the proof.
\end{proof}

\begin{prop}
\label{proposition: homogeneousness_is_closed_under_any_pullback}
    Let $f\colon P\twoheadrightarrow Q$ be surjective quandle homomorphism and $g\colon Q' \to Q$ be an arbitrary homomorphism.
    If $f$ is homogeneous, then the pull-back of $f$ along $g$ is also homogeneous.
\end{prop}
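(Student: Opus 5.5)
The plan is to build relative automorphisms of the pullback directly from those of $f$. Write $R \coloneqq P\times_Q Q' = \{(x,q')\in P\times Q' \mid f(x)=g(q')\}$, with the componentwise quandle structure $s_{(x,q')} = s_x\times s_{q'}$ restricted to $R$. Let $\tilde f\colon R\to Q'$ be the second projection. First I check surjectivity: given $q'\in Q'$, the surjectivity of $f$ gives $x\in P$ with $f(x)=g(q')$, so $(x,q')\in R$ and $\tilde f(x,q')=q'$. This is the only point where surjectivity of $f$ is used; no surjectivity of $g$ is needed. If $Q'$ is empty, then $R$ is empty and $\tilde f$ is the identity of the empty quandle, which is trivially homogeneous.

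Next, for each $k\in\relAut(f)$ I define $\tilde k\colon R\to R$ by $\tilde k(x,q') = (k(x),q')$. This is well defined since $f(k(x))=f(x)=g(q')$. It is a quandle homomorphism because $k$ is one and the second coordinate is untouched:
\[
\tilde k(s_{(x,q')}(y,r')) = (k s_x(y), s_{q'}(r')) = (s_{k(x)}k(y), s_{q'}(r')) = s_{\tilde k(x,q')}(\tilde k(y,r')).
\]
It is bijective with inverse $\widetilde{k^{-1}}$, and $\tilde f\circ\tilde k=\tilde f$ holds by construction. Hence $\tilde k\in\relAut(\tilde f)$. Equivalently, $\tilde k$ is the map induced by the universal property of the pullback from $k\circ p$ and $\tilde f$, where $p\colon R\to P$ is the first projection.

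Finally, for transitivity, take $(x,q'),(y,q')$ in the same fiber $\tilde f^{-1}(q')$. Then $f(x)=g(q')=f(y)$. By homogeneity of $f$ there is $k\in\relAut(f)$ with $k(x)=y$, and so $\tilde k(x,q')=(y,q')$. Thus $\relAut(\tilde f)$ acts transitively on every fiber, and $\tilde f$ is homogeneous.

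There is no serious obstacle here. The only care needed is in verifying that $\tilde k$ is an automorphism of the subquandle $R$, that is, that it preserves $R$ and that its inverse also lies in $\relAut(\tilde f)$. The automorphism group is essential to this argument: the corresponding statement for $\relInn$ would fail, because $\tilde k$ need not be inner in $R$.
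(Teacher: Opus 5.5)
Your proof is correct and is essentially the paper's argument. The paper obtains the lift $k'$ of $k\in\relAut(f)$ from the universal property of the pullback, with $g'k'=kg'$ and $f'k'=f'$, and uses that universality again to conclude $k'(x)=y$; your explicit map $\tilde k(x,q')=(k(x),q')$ is exactly that induced map written out in the concrete model, as you yourself note.
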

\begin{proof}
    Let
    \[ \begin{tikzcd}
        P' \arrow{r}{g'} \arrow{d}[swap]{f'} & P \arrow{d}{f} \\
        Q' \arrow{r}[swap]{g} & Q \arrow[phantom]{lu}[very near end]{\lrcorner}
    \end{tikzcd} \]
    be a pullback diagram of $f$ along $g$. Remark that $f'$ is surjective since $f$ is so. 
    Take $x,y \in P'$ such that $f'(x)=f'(y)$. Since $f(g'(x))=g(f'(x))=g(f'(y))= f(g'(y))$ and $f$ is homogeneous, there is $k \in \relAut(f)$ such that $k(g'(x))=g'(y)$.
    From the universality of pullback, we get a homomorphism $k'\colon P' \to P'$ satisfying
    \[\begin{tikzcd}
        P' \arrow{r}{g'} \arrow[dashed]{d}[swap]{k'} \arrow[bend right=40]{dd}[swap]{f'} & P \arrow{d}{k} \arrow[bend left=40]{dd}{f} \\
        P' \arrow{r}{g'} \arrow{d}[swap]{f'} & P \arrow{d}{f} \arrow[phantom]{lu}[very near end]{\lrcorner} \\
        Q' \arrow{r}[swap]{g} & Q \arrow[phantom]{lu}[very near end]{\lrcorner}
    \end{tikzcd}
    \quad = \quad
    \begin{tikzcd}
        P' \arrow{r}{g'} \arrow{d}[swap]{f'} & P \arrow{d}{f} \\
        Q' \arrow{r}[swap]{g} & Q\rlap{.} \arrow[phantom]{lu}[very near end]{\lrcorner}
    \end{tikzcd}\]
    Since $k$ is an isomorphism, so is $k'$, and hence $k' \in \relAut(f')$.
    As $f'(k'(x)) = f'(x)=f'(y)$ and $g'(k'(x)) = k(g'(x)) = g'(y)$, we conclude $k'(x)=y$, which completes the proof.
\end{proof}

\begin{cor}
\label{cor:fibers_of_homogeneous_hom_are_homogeneous}
    For a homogeneous homomorphism $f\colon P \twoheadrightarrow Q$, every fiber $f^{-1}(q)$ is homogeneous.
\end{cor}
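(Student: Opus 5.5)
The plan is to realize each fiber as the pullback of $f$ along the inclusion of a one-point subquandle, and then use \cref{proposition: homogeneousness_is_closed_under_any_pullback} to pass homogeneity from $f$ to the pulled-back map.

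Fix $q\in Q$ and let $g\colon\{\ast\}\to Q$ be the map $\ast\mapsto q$. This is a quandle homomorphism, because $q\in Q$ is idempotent ($s_q(q)=q$) and the one-point set is the terminal quandle. The pullback $P\times_Q\{\ast\}$ is the subquandle $\{(x,\ast)\mid f(x)=q\}$ of $P\times\{\ast\}$. Since the product quandle has componentwise operations, this subquandle is isomorphic to the fiber $F_q=f^{-1}(q)$ with its structure as a subquandle of $P$. The fiber is nonempty because $f$ is surjective. The pulled-back map $f'$ is therefore the unique map $F_q\to\{\ast\}$.

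By \cref{proposition: homogeneousness_is_closed_under_any_pullback}, the map $f'\colon F_q\to\{\ast\}$ is homogeneous. The earlier example on maps to the terminal quandle says that a map $P\to\{\ast\}$ is homogeneous if and only if $P$ is a homogeneous quandle. Hence $F_q$ is homogeneous.

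The only step that needs any care is identifying the pullback in $\Quandle$ with the fiber as a subquandle. This holds because limits in $\Quandle$ are computed on underlying sets with componentwise structure. One can also bypass the pullback entirely. For $k\in\relAut(f)$ we have $f\circ k=f$, so $k$ preserves $F_q$, and the restriction $k|_{F_q}$ is an automorphism of $F_q$ whose inverse is $k^{-1}|_{F_q}$. Since $\relAut(f)$ acts transitively on $F_q$ by the definition of a homogeneous homomorphism, $\Aut(F_q)$ acts transitively on $F_q$ as well.
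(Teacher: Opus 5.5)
Your proposal is correct and follows the paper's intended argument: the corollary is stated immediately after \cref{proposition: homogeneousness_is_closed_under_any_pullback} and follows by pulling $f$ back along $\{\ast\}\to Q$, $\ast\mapsto q$, then using that a surjection onto the terminal quandle is homogeneous exactly when its (non-empty) domain is a homogeneous quandle. Your alternative of restricting elements of $\relAut(f)$ to $F_q$ is also valid, and is what the pullback proposition's proof amounts to in this special case.
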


The converse of \cref{cor:fibers_of_homogeneous_hom_are_homogeneous} does not hold in general; see \cref{example: fiber-wise homogeneous but not homogeneous}.

\begin{prop}
\label{proposition: homogeneous-rigid_factorization}
    Let $f\colon P\twoheadrightarrow Q$ be a surjective quandle homomorphism and $G=\relAut(f)$. Then, $f$ factors as 
    \[\begin{tikzcd}[column sep=small]
        P \arrow[two heads]{rr}{f} \arrow[two heads]{rd}[swap]{\pi_G} & & Q\rlap{,} \\
        & P/G \arrow[two heads]{ru}[swap]{h_G}
    \end{tikzcd}\]
    where $\pi_G$ is homogeneous and $h_G$ is rigid.
\end{prop}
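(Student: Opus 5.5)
The plan follows the proof of \cref{theorem: quandle stein factorization}, with $\relAut(f)$ in place of $\relInn(f)$. By \cref{lemma:relAut_is_normalized_by_Inn}, $G=\relAut(f)$ is normalized by $\Inn(P)$. Hence, by \cref{proposition:quandle_quotient_by_group} and \cref{remark_of_proposition:quandle_quotient_by_group}, the orbit relation $\sim_G$ is a quandle congruence and the quotient $\pi_G\colon P\twoheadrightarrow P/G$ is defined. By \cref{proposition:homogeneous_iff_quotient_by_subgroup_normalized_by_Inn} ($\Leftarrow$ direction), $\pi_G$ is homogeneous. Every $g\in G$ satisfies $f\circ g=f$, so $f$ is constant on $G$-orbits and factors uniquely as $f=h_G\circ\pi_G$ through the quotient. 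Moreover, $h_G$ is surjective because $f$ is.

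It remains to show that $h_G$ is rigid, that is, $\relInn(h_G)$ is trivial. Since $(h_G)_*\circ(\pi_G)_*=f_*$ by functoriality and $(\pi_G)_*$ is surjective, it suffices to show $\Ker(f_*)\subseteq\Ker((\pi_G)_*)$. So take $\varphi\in\Inn(P)$ with $f_*(\varphi)=\id_Q$. Then $f\circ\varphi=f$, so $\varphi\in\relInn(f)=\Inn(P)\cap\relAut(f)\subseteq G$ by \cref{proposition: relinn preserves fibers}. Consequently $\varphi(x)\sim_G x$ for every $x$, hence $\pi_G\circ\varphi=\pi_G$ and $(\pi_G)_*(\varphi)=\id$.

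Now let $\psi\in\relInn(h_G)$ and write $\psi=(\pi_G)_*(\varphi)$. Then $f_*(\varphi)=(h_G)_*(\psi)=\id_Q$, so $\varphi\in\Ker(f_*)$, and the previous paragraph gives $\psi=\id$. Thus $h_G$ is rigid.

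No step is a serious obstacle. The only point needing care is the inclusion $\relInn(f)\subseteq\relAut(f)$, which lets elements of $\Ker(f_*)$ act trivially on $P/G$. This is exactly \cref{proposition: relinn preserves fibers}\eqref{item:relInn_is_Inn_cap_relAut}.
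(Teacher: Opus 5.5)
Your proposal is correct and follows essentially the same route as the paper. You normalize $G=\relAut(f)$ by $\Inn(P)$, take the quotient, and get homogeneity from \cref{proposition:homogeneous_iff_quotient_by_subgroup_normalized_by_Inn}. For rigidity you lift an element of $\relInn(h_G)$ through the surjection $(\pi_G)_*$ and use $\relInn(f)\subseteq\relAut(f)=G$ to conclude that it acts trivially on $P/G$.
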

\begin{proof}
    By \cref{lemma:relAut_is_normalized_by_Inn}, $G$ is normalized by $\Inn(P)$.
    Therefore, we can take the quotient $P/G$ and the homomorphism $f$ factors through it.
    The quotient map $\pi_G$ is homogeneous by \cref{proposition:homogeneous_iff_quotient_by_subgroup_normalized_by_Inn}.
    It remains to show that the induced homomorphism $h_G\colon P/G \to Q$ is rigid.
    Let $\varphi\in \Inn(P/G)$ with $(h_G)_*(\varphi)=\id$. By the surjectivity of $(\pi_G)_*$, we can take $\psi\in \Inn(P)$ such that $(\pi_G)_*(\psi)=\varphi$. Then we have $\psi\in \relInn(f)\subseteq \relAut(f)=G$, and so $\varphi=(\pi_G)_*(\psi)=\id$. Thus, it follows that $\relInn(h_G)=\{\id\}$, and $h_G$ is rigid.
\end{proof}

\begin{prop}
    The homogeneous--rigid factorization of a surjective homomorphism $f$ in \cref{proposition: homogeneous-rigid_factorization} is universal among all factorizations through a homogeneous homomorphism:
    that is, for any factorization $f=h\circ \pi$ with $\pi\colon P \to R$ homogeneous, there exists a unique quandle homomorphism $u\colon R \to P/G$, where $G=\relAut(f)$, such that $\pi_G = u\circ \pi$ and $h= h_G \circ u$.
    \[\begin{tikzcd}
        P \arrow[rr,"f" description,rounded corners,
		      to path={(\tikztostart)
			-- ([yshift=2ex]\tikztostart.north)
			-- ([yshift=2ex]\tikztotarget.north)\tikztonodes
			-- (\tikztotarget)}]
        \arrow{r}{\pi_G} \arrow{rd}[swap]{\pi} & P/G \arrow{r}{h_G} & Q \\
        & R \arrow{ru}[swap]{h} \arrow[dashed]{u}{u} & 
    \end{tikzcd}\]
\end{prop}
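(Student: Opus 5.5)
The plan is to build $u$ from the universal property of the quotient $\pi\colon P\twoheadrightarrow R$, viewed as $P/\Eq(\pi)$. To do this it suffices to show that $\pi_G$ is constant on the fibers of $\pi$, where $G=\relAut(f)$. Once $u$ exists, both compatibilities and uniqueness should follow from the surjectivity of $\pi$.

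\emph{Step 1: $\pi_G$ equalizes $\Eq(\pi)$.} Take $x,y\in P$ with $\pi(x)=\pi(y)$. Since $\pi$ is homogeneous, there is some $k\in\relAut(\pi)$ with $k(x)=y$. Then
\[
f\circ k=h\circ\pi\circ k=h\circ\pi=f,
\]
so $k\in\relAut(f)=G$. Hence $x\sim_G y$, that is, $\pi_G(x)=\pi_G(y)$. This is the only place where homogeneity of $\pi$ is used, and it is the key observation: $\relAut(\pi)\subseteq\relAut(f)$ whenever $\pi$ is a left factor of $f$.

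\emph{Step 2: existence and compatibilities.} Recall that $\Eq(\pi)$ is a quandle congruence with $R\cong P/\Eq(\pi)$ as quotients of $P$. By Step 1 and the universal property of quotients by congruences recalled in \cref{subsection: quandle_congruence_and_quotients}, there is a unique quandle homomorphism $u\colon R\to P/G$ with $u\circ\pi=\pi_G$. For the second compatibility, compute
\[
h_G\circ u\circ\pi=h_G\circ\pi_G=f=h\circ\pi .
\]
Since $\pi$ is surjective, it is right-cancellable, so $h_G\circ u=h$.

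\emph{Step 3: uniqueness.} Any $u'$ with $u'\circ\pi=\pi_G$ agrees with $u$ on $\pi(P)=R$, so $u'=u$.

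There is no real obstacle here. The only point needing care is Step 1, where one must remember that membership in $G=\relAut(f)$ requires an automorphism of $P$ over $Q$. This holds because $k$ is an automorphism of $P$ satisfying $f\circ k=f$.
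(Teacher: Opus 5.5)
Your proof is correct and is essentially the paper's argument: in both, the key point is the inclusion $\relAut(\pi)\subseteq\relAut(f)=G$, and surjectivity of $\pi$ then gives the second compatibility $h=h_G\circ u$ and the uniqueness of $u$. The paper first rewrites $R$ as $P/K$ for some $K\subseteq\relAut(\pi)$ using the quotient characterization of homogeneous maps, whereas you use the transitivity of $\relAut(\pi)$ on the fibers of $\pi$ directly; this difference is only cosmetic.
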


\begin{proof}
    Suppose that $f= h\circ \pi$, where $\pi\colon P\to R$ is a homogeneous homomorphism and $h\colon R\to Q$ is a homomorphism.
    By \cref{proposition:homogeneous_iff_quotient_by_subgroup_normalized_by_Inn}, there is a subgroup $K\subseteq \Aut(P)$ normalized by $\Inn(P)$ such that $R\cong P/K$ as quotients of $P$.
    Then $K\subseteq \relAut(\pi) \subseteq \relAut(f)=G$, and thus $\pi_G$ factors uniquely through $\pi$ as $\pi_G=u\circ \pi$. 
    By observing $h_G\circ u\circ \pi = h_G\circ \pi_G = f = h\circ \pi$, we see $h= h_G \circ u$ by the surjectivity of $\pi$.
\end{proof}

\section{Relative Transvection Group}
\label{section: relative transvection group}

We also introduce a relativization of transvection groups of quandles.

\subsection{Definition}
\label{subsection: definition of relative transvection group}

\begin{dfn}
\label{definition: relative transvection group}
    Let $f\colon P \to Q$ be a homomorphism of quandles.
    We define the \emph{relative transvection group} of $f$, denoted by $\relTrans(f)$, as 
    \[
    \relTrans(f) \coloneqq \langle \{ s_xs_y^{-1} \mid x,y\in P \text{ such that }f(x) = f(y) \}\rangle \subseteq \Inn(P). 
    \]
\end{dfn}

\begin{rem}
    The group $\relTrans(f)$ is also considered in \cite{bonatto_stanovsky_2021_commutator_theory_for_racks_and_quandles} as the relative displacement group $\Dis_\alpha$ where $\alpha$ is the congruence corresponding to the surjection $f$.
\end{rem}

\begin{eg}
    If $f$ is the unique homomorphism $P\to \{\ast\}$ into the terminal quandle, then $\relTrans(f)=\Trans(P)$.

    If $f$ is an injection, then $\relTrans(f) = \relInn(f) = \{\id\}$.
\end{eg}

\begin{prop}
\label{proposition: relTrans is normal in relInn}
    The relative transvection group $\relTrans(f)$ is a normal subgroup of $\Inn(P)$ and of $\relInn(f)$.
\end{prop}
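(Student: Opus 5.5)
The plan is to establish three facts: containment $\relTrans(f)\subseteq\relInn(f)$, normality in $\Inn(P)$, and then normality in $\relInn(f)$, which is immediate from the first two.

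\emph{Containment.} Each generator $s_xs_y^{-1}$ with $f(x)=f(y)$ lies in $\relInn(f)$. Indeed, using the functoriality of \cref{proposition: functoriality of Inn for surjections} applied to the codomain restriction $f'\colon P\to\Image(f)$, we get
\[
f'_*(s_xs_y^{-1})=s_{f(x)}s_{f(y)}^{-1}=\id .
\]
By \cref{proposition: relinn preserves fibers}~\eqref{item: rel inn depends only on the image} this element lies in $\relInn(f)$. Since $\relInn(f)$ is a subgroup, it contains the group generated by these elements, so $\relTrans(f)\subseteq\relInn(f)$.

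\emph{Normality in $\Inn(P)$.} Because $\Inn(P)$ is generated by the $s_z$, it suffices to show that conjugation by $s_z$ and by $s_z^{-1}$ sends each generator to a generator. The key identity is axiom~\eqref{condition:Q3}, rewritten as $\varphi s_x\varphi^{-1}=s_{\varphi(x)}$. This holds for every quandle automorphism $\varphi$, since $\varphi$ is a homomorphism. Applying it to $\varphi\in\{s_z,s_z^{-1}\}$ gives
\[
\varphi\, s_xs_y^{-1}\,\varphi^{-1}=s_{\varphi(x)}s_{\varphi(y)}^{-1}.
\]
It remains to check that $f(\varphi(x))=f(\varphi(y))$. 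This follows from $f\circ s_z^{\pm1}=s_{f(z)}^{\pm1}\circ f$ together with $f(x)=f(y)$. Hence the generating set is stable under conjugation by the generators of $\Inn(P)$ and their inverses. Therefore $\varphi\,\relTrans(f)\,\varphi^{-1}\subseteq\relTrans(f)$ for all $\varphi\in\Inn(P)$, which is normality.

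\emph{Normality in $\relInn(f)$.} A subgroup normal in $\Inn(P)$ and contained in $\relInn(f)$ is normal in $\relInn(f)$.

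There is no real obstacle. The only point needing care is the conjugation by $s_z^{-1}$, where one uses that $f$ also intertwines the inverses $s_z^{-1}$ and $s_{f(z)}^{-1}$. The statement is also meaningful for non-surjective $f$, which is why the containment step uses the codomain restriction $f'$ rather than $f$ itself.
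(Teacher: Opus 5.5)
Your proof is correct and follows essentially the same argument as the paper: first the containment $\relTrans(f)\subseteq\relInn(f)$ via $f_*(s_xs_y^{-1})=s_{f(x)}s_{f(y)}^{-1}=\id_{\Image(f)}$, then normality from $\varphi s_xs_y^{-1}\varphi^{-1}=s_{\varphi(x)}s_{\varphi(y)}^{-1}$ with $f(\varphi(x))=f(\varphi(y))$. The only cosmetic difference is that you check conjugation by the generators $s_z^{\pm1}$, whereas the paper conjugates by an arbitrary $\varphi\in\Inn(P)$ directly, using $f(\varphi(x))=f_*(\varphi)(f(x))$.
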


\begin{proof}
    Let $x, y\in P$ such that $f(x) = f(y)$.
    Then, 
    \[ f_*(s_xs_y^{-1}) = f_*(s_x)f_*(s_y)^{-1} = s_{f(x)}s_{f(y)}^{-1} = \id_{\Image(f)}. \]
    Therefore, we have $\relTrans(f) \subseteq \relInn(f)$.

    For any $\varphi\in\Inn(P)$, as $f(\varphi(x)) = f_*(\varphi)(f(x)) = f_*(\varphi)(f(y)) = f(\varphi(y))$, 
    we have
    \[ \varphi(s_xs_y^{-1})\varphi^{-1} 
        = (\varphi s_x \varphi^{-1})(\varphi s_y^{-1}\varphi^{-1}) = s_{\varphi(x)}s_{\varphi(y)}^{-1}\in\relTrans(f). \]
    Thus, $\relTrans(f)\unlhd \Inn(P)$; in particular, $\relTrans(f)\unlhd\relInn(f)$.
\end{proof}

\begin{eg}
\label{example:connected_but_not_strongly_connected}
    It is worth noting that the connectedness of a homomorphism does \textit{not}, in general, imply the transitivity of action of its relative transvection group on the fibers, though it is true for quandles (\cref{lemma: connectedness is also determined by trans}).
    
    For example, let $f \colon R_4\to R_2$ be the quandle homomorphism between dihedral quandles, sending $x$ to $(x \mod 2)$.
    Then, $f$ is connected (see \cref{example: relinn of morph between lin alexes}), but, as the symmetries of $R_4$ are $s_0=s_2=-\id$ and $s_1=s_3=(x\mapsto 2-x)$, the relative transvection group $\relTrans(f)$ becomes trivial, and does not act transitively on fibers, which contain two elements.
\end{eg}

\subsection{Strongly Connected Homomorphisms}
\label{subsection: strongly connected homomorphisms}

Unlike in the absolute case, transitivity of the relative inner group and transitivity of the relative transvection group are no longer equivalent in the relative setting. This leads us to the following notion.

\begin{dfn}
\label{def:strongly_connected_homomorphism}
    A surjective quandle homomorphism $f \colon P \twoheadrightarrow Q$ is said to be \emph{strongly connected} if its relative transvection group $\relTrans(f)$ acts transitively on every fiber of $f$.
\end{dfn}

Since $\relTrans(f) \subseteq \relInn(f)$, strongly connected homomorphisms are connected.
\cref{example:connected_but_not_strongly_connected} above says that $R_4\to R_2$ is connected, but not strongly connected. 

\begin{eg}
    Consider the unique homomorphism $f\colon P\twoheadrightarrow\{\ast\}$ into the terminal quandle. Since $\Inn(\{\ast\})=\{\id_{\{\ast\}}\}$, we have $\relTrans(f) = \Trans(P)$. In view of \cref{lemma: connectedness is also determined by trans}, $f$ is strongly connected if and only if $f$ is connected, and if and only if $P$ is a connected quandle.
\end{eg}

\begin{lem}
\label[lem]{lemma: relative symmetry groups under a surjective factorization}
Suppose that $f=h\circ u$, where $u\colon P\twoheadrightarrow R$ and $h\colon R\twoheadrightarrow Q$ are surjective. Then
\[ u_*(\relInn(f))=\relInn(h), \qquad u_*(\relTrans(f))=\relTrans(h). \]
\end{lem}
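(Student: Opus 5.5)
The plan is to prove each equality by a pair of inclusions, using the functoriality of $\Inn(-)$ on surjections (\cref{proposition: functoriality of Inn for surjections}). That result gives $f_* = h_*\circ u_*$ and $u_*(s_x)=s_{u(x)}$.

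\emph{The first equality.} For $\subseteq$, take $\varphi\in\relInn(f)$. Then $h_*(u_*(\varphi)) = f_*(\varphi)=\id_Q$, so $u_*(\varphi)\in\relInn(h)$. This is exactly the map appearing in the exact sequence of \cref{proposition: connectedness composition and 2 out of 3}\eqref{item: inclusion of relInn}, whose surjectivity is the statement we want, so I could simply cite it. To be self-contained, I would argue $\supseteq$ directly. Given $\psi\in\relInn(h)$, surjectivity of $u_*$ gives $\varphi\in\Inn(P)$ with $u_*(\varphi)=\psi$. Then $f_*(\varphi)=h_*(\psi)=\id_Q$, so $\varphi\in\relInn(f)$. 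Hence $\psi\in u_*(\relInn(f))$.

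\emph{The second equality, inclusion $\subseteq$.} Since $u_*$ is a group homomorphism, it suffices to check generators. For $x,y\in P$ with $f(x)=f(y)$ we have
\[ u_*(s_xs_y^{-1}) = s_{u(x)}s_{u(y)}^{-1}, \]
and $h(u(x))=h(u(y))$. So this element is a generator of $\relTrans(h)$.

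\emph{The second equality, inclusion $\supseteq$.} A generator of $\relTrans(h)$ has the form $s_as_b^{-1}$ with $a,b\in R$ and $h(a)=h(b)$. By surjectivity of $u$, choose $x,y\in P$ with $u(x)=a$ and $u(y)=b$. Then $f(x)=h(a)=h(b)=f(y)$, so $s_xs_y^{-1}\in\relTrans(f)$, and its image under $u_*$ is $s_as_b^{-1}$. Thus every generator of $\relTrans(h)$ lies in the subgroup $u_*(\relTrans(f))$, since the image of a subgroup is a subgroup. Therefore $\relTrans(h)\subseteq u_*(\relTrans(f))$.

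I expect no step to be a real obstacle. The only point needing care is the lifting in the $\supseteq$ directions: for $\relInn$ the lift comes from surjectivity of $u_*$ followed by a kernel check, while for $\relTrans$ it comes from lifting points through $u$ rather than group elements.
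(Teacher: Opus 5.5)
Your proposal is correct and follows essentially the same route as the paper: the first equality comes from $h_*\circ u_*=f_*$ together with the surjectivity of $u_*$, and the second from the fact that generators of $\relTrans(f)$ map to generators of $\relTrans(h)$, while generators of $\relTrans(h)$ lift because $u$ is surjective on points. Your write-up spells out the inclusion checks the paper leaves implicit, but the argument is the same.
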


\begin{proof}
The first equality follows from $h_*\circ u_*=f_*$ and the surjectivity of $u_*$. For the second, every generator $s_xs_y^{-1}$ of $\relTrans(f)$ maps to a generator of $\relTrans(h)$, and every generator downstairs lifts to one upstairs because $u$ is surjective.
\end{proof}

\begin{prop}
\label[prop]{lemma: composition of strongly connected homomorphisms}
    A composite of strongly connected homomorphisms is strongly connected.
\end{prop}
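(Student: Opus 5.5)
We mimic the proof of \cref{proposition: connectedness composition and 2 out of 3}~\eqref{item: composition is conn}, replacing $\relInn$ by $\relTrans$ throughout. Let $f\colon P\twoheadrightarrow Q$ and $g\colon Q\twoheadrightarrow R$ be strongly connected, and put $h\coloneqq g\circ f$, which is surjective. First we note the inclusion $\relTrans(f)\subseteq\relTrans(h)$. It is immediate from the definition: a generator $s_xs_y^{-1}$ with $f(x)=f(y)$ also satisfies $h(x)=h(y)$.

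Next we fix $r\in R$ and $x,y\in h^{-1}(r)$. Then $f(x),f(y)\in g^{-1}(r)$. Since $g$ is strongly connected, there is $\varphi\in\relTrans(g)$ with $\varphi(f(x))=f(y)$. We now lift $\varphi$ to $\relTrans(h)$. By \cref{lemma: relative symmetry groups under a surjective factorization}, applied to the factorization $h=g\circ f$ with $u=f$, we have $f_*(\relTrans(h))=\relTrans(g)$. So we may choose $\psi\in\relTrans(h)$ with $f_*(\psi)=\varphi$.

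Concretely, the lifting works generator by generator. Each generator $s_as_b^{-1}$ of $\relTrans(g)$, with $g(a)=g(b)$, is the image under $f_*$ of $s_{a'}s_{b'}^{-1}$ for any preimages $a'\in f^{-1}(a)$ and $b'\in f^{-1}(b)$. Such a lift lies in $\relTrans(h)$ because $h(a')=g(a)=g(b)=h(b')$. This lifting step is the only point needing care, since elements of $\relTrans(g)$ must lift to $\relTrans(h)$ and not merely to $\Inn(P)$. It is handled exactly by that lemma.

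Then $f(\psi(x))=f_*(\psi)(f(x))=\varphi(f(x))=f(y)$, so $\psi(x)$ and $y$ lie in the same fiber of $f$. Strong connectedness of $f$ gives $\xi\in\relTrans(f)\subseteq\relTrans(h)$ with $\xi(\psi(x))=y$. Hence $\xi\psi\in\relTrans(h)$ sends $x$ to $y$, so $\relTrans(h)$ is transitive on every fiber of $h$, and $h$ is strongly connected.
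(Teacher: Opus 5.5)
Your proof is correct and follows essentially the same route as the paper. In both, you take $\varphi\in\relTrans(g)$ moving $f(x)$ to $f(y)$, lift it to $\relTrans(h)$ via the lemma on relative symmetry groups under a surjective factorization, and then finish inside the $f$-fiber using strong connectedness of $f$ and $\relTrans(f)\subseteq\relTrans(h)$. Your generator-by-generator description of the lift simply unpacks that lemma's proof.
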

\begin{proof}
Let $f\colon P\twoheadrightarrow Q$ and $g\colon Q\twoheadrightarrow R$ be strongly connected, and put $h\coloneqq g\circ f$. Let $x,y\in P$ satisfy $h(x)=h(y)$. Since $g(f(x))=g(f(y))$, we can take $\psi\in\relTrans(g)$ such that $\psi(f(x))=f(y)$. By \cref{lemma: relative symmetry groups under a surjective factorization}, there is $\varphi\in\relTrans(h)$ with $f_*(\varphi)=\psi$. Since $f(\varphi(x))=f_*(\varphi)(f(x))=\psi(f(x))=f(y)$, there is $\xi\in\relTrans(f)$ such that $\xi(\varphi(x))=y$. Since $\relTrans(f)\subseteq\relTrans(h)$, the assertion follows.
\end{proof}

\begin{prop}
\label{proposition:strongly_connected_can_be_verified_fiberwise}
    Let $\pi\colon P\twoheadrightarrow Q$ be a surjective quandle homomorphism. 
    Assume that each fiber $F_q \coloneqq \pi^{-1}(q)$, seen as a subquandle of $P$, is connected.
    Then, $\pi$ is strongly connected (and hence connected).
\end{prop}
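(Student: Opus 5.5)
The plan is to fix $q\in Q$ and compare the intrinsic transvection group of the fiber with the restriction of $\relTrans(\pi)$ to that fiber. Since each fiber $F_q$ is a connected subquandle, \cref{lemma: connectedness is also determined by trans} shows that $\Trans(F_q)$ acts transitively on $F_q$. It therefore suffices to realize each generator of $\Trans(F_q)$ as the restriction to $F_q$ of an element of $\relTrans(\pi)$.

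Let $x,y\in F_q$. Because $F_q$ is a subquandle of $P$, the symmetry of $F_q$ at $x$ is the restriction $s_x|_{F_q}$ of the symmetry $s_x$ of $P$. Its inverse on $F_q$ is $s_x^{-1}|_{F_q}$, since $s_x$ preserves $F_q$ (we have $\pi(s_x(z))=s_q(q)=q$ for $z\in F_q$). Hence the generator $s_x|_{F_q}\circ (s_y|_{F_q})^{-1}$ of $\Trans(F_q)$ is exactly $(s_xs_y^{-1})|_{F_q}$. Here $s_xs_y^{-1}\in\relTrans(\pi)$ because $\pi(x)=\pi(y)=q$.

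By \cref{proposition: relTrans is normal in relInn}, $\relTrans(\pi)\subseteq\relInn(\pi)$, so every element of $\relTrans(\pi)$ preserves $F_q$. The restriction map $\rho_q$ is therefore a group homomorphism on $\relTrans(\pi)$, and its image is a subgroup of $\Aut(F_q)$. By the previous step, this image contains all generators of $\Trans(F_q)$, hence contains $\Trans(F_q)$ itself. Since $\Trans(F_q)$ acts transitively on $F_q$, so does $\relTrans(\pi)$.

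As $q$ was arbitrary, $\pi$ is strongly connected. It is then connected because $\relTrans(\pi)\subseteq\relInn(\pi)$. No step is a real obstacle. The only point needing care is that the inverse symmetries of the subquandle agree with the restrictions of the inverse symmetries of $P$, which follows from fiber preservation.
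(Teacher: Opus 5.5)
Your proposal is correct and matches the paper's proof: the paper also shows $\Trans(F_q)\subseteq\rho_q(\relTrans(\pi))$ by restricting the generators $s_xs_y^{-1}$ with $x,y\in F_q$, and then applies \cref{lemma: connectedness is also determined by trans} to get transitivity on each fiber. Your extra check that $s_x$ and $s_x^{-1}$ preserve $F_q$ (so the subquandle's inverse symmetries are restrictions) just makes explicit a step the paper leaves implicit.
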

\begin{proof}
    Fix a point $q\in Q$ and consider its fiber $F_q$.
    We claim that $\Trans(F_q) \subseteq \rho_q(\relTrans(\pi))$, since $s_x s_y^{-1} \in \relTrans(\pi)$ for any $x, y\in F_q$ and thus
    \[
    \Trans(F_q) = \langle \{\rho_q( s_xs_y^{-1}) \mid x, y\in F_q\} \rangle \subset \rho_q(\relTrans(\pi)).
    \]
    
    If $F_q$ is connected, then $\Trans(F_q)$ acts transitively on $F_q$ by \cref{lemma: connectedness is also determined by trans}.
    Therefore, $\rho_q(\relTrans(\pi))$ also acts transitively on $F_q$, which means that $\pi$ is strongly connected.
\end{proof}

\begin{lem}
\label{lem:commutator_inn_rel_trans_rel}
Let $f \colon P \twoheadrightarrow Q$ be a surjective quandle homomorphism.
\begin{enumerate}
    \item\label{item:commutator_of_Inn_and_relInn_is_in_relTrans}
    We have the inclusion $[\Inn(P), \relInn(f)] \subseteq \relTrans(f)$.
    \item\label{item:the_equality_holds_for_connected_hom}
    If $f$ is connected, then the equality $[\Inn(P), \relInn(f)] = \relTrans(f)$ holds.
\end{enumerate}
\end{lem}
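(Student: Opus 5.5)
For part \eqref{item:commutator_of_Inn_and_relInn_is_in_relTrans}, I will first reduce to generators. Both $\Inn(P)$ and $\relInn(f)$ are normal in $\Inn(P)$, and $\relTrans(f)$ is normal in $\Inn(P)$ by \cref{proposition: relTrans is normal in relInn}. It therefore suffices to show that $[s_x,\varphi]\in\relTrans(f)$ for every $x\in P$ and $\varphi\in\relInn(f)$. Indeed, the commutator identities $[ab,c]=a[b,c]a^{-1}[a,c]$ and $[a^{-1},c]=a^{-1}[a,c]^{-1}a$ then give the claim for arbitrary $\psi\in\Inn(P)$, using normality of $\relTrans(f)$ in $\Inn(P)$. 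For a generator, using $\varphi s_x\varphi^{-1}=s_{\varphi(x)}$ (since $\varphi$ is an automorphism), I compute
\[ s_x\varphi s_x^{-1}\varphi^{-1} = s_x\,(\varphi s_x\varphi^{-1})^{-1} = s_x s_{\varphi(x)}^{-1}. \]
Since $\varphi\in\relInn(f)$ preserves fibers by \cref{proposition: relinn preserves fibers}, we have $f(\varphi(x))=f(x)$, so this is a generator of $\relTrans(f)$. This proves part \eqref{item:commutator_of_Inn_and_relInn_is_in_relTrans}.

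For part \eqref{item:the_equality_holds_for_connected_hom}, it remains to prove $\relTrans(f)\subseteq[\Inn(P),\relInn(f)]$, and it suffices to treat a generator $s_xs_y^{-1}$ with $f(x)=f(y)$. By connectedness of $f$ there is $\varphi\in\relInn(f)$ with $\varphi(x)=y$. The same computation as above gives
\[ s_xs_y^{-1}=s_xs_{\varphi(x)}^{-1}=s_x\varphi s_x^{-1}\varphi^{-1}=[s_x,\varphi]\in[\Inn(P),\relInn(f)]. \]
Hence the reverse inclusion holds, and together with part \eqref{item:commutator_of_Inn_and_relInn_is_in_relTrans} we get equality.

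The only mild obstacle is the reduction in part \eqref{item:commutator_of_Inn_and_relInn_is_in_relTrans} from generators $s_x^{\pm1}$ of $\Inn(P)$ to arbitrary elements. I will handle it with the commutator identities above together with the normality of $\relTrans(f)$ in $\Inn(P)$. The case $s_x^{-1}$ follows either from the inverse identity or by the same calculation with $s_x^{-1}\varphi s_x\varphi^{-1}=s_x^{-1}s_{\varphi(x)}$, which is the inverse of $s_{\varphi(x)}^{-1}s_x$ and hence lies in $\relTrans(f)$.
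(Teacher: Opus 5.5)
Your proof is correct and follows the paper's argument. In both, the commutator of a generator $s_x$ with $\varphi\in\relInn(f)$ is computed to be the same-fiber transvection $s_xs_{\varphi(x)}^{-1}$, and in the connected case each generator of $\relTrans(f)$ is realized as such a commutator; your reduction from generators to arbitrary elements of $\Inn(P)$ via the commutator identities spells out a step the paper attributes only to the normality of $\relTrans(f)$.
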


\begin{proof}
%% \cref{prop:H_f_is_contained_in_the_center_of_Inn/relTrans}と同じことしてる。こっちのほうがわかりやすいかも
    \eqref{item:commutator_of_Inn_and_relInn_is_in_relTrans}: Let $\varphi \in \relInn(f)$ and let $s_x \in \Inn(P)$ be a generator. The commutator of these elements is given by
    \[
    [\varphi, s_x] = \varphi s_x \varphi^{-1} s_x^{-1} = s_{\varphi(x)} s_x^{-1}.
    \]
    Since $\varphi \in \relInn(f)$ satisfies $f\circ \varphi= f$, we have $f(\varphi(x)) = f(x)$. Hence, $[\varphi, s_x]=s_{\varphi(x)} s_x^{-1}$ belongs to $\relTrans(f)$. Because $\relTrans(f)$ is a normal subgroup of $\Inn(P)$, it contains all such commutators, establishing $[\Inn(P), \relInn(f)] \subseteq \relTrans(f)$.
    
    \eqref{item:the_equality_holds_for_connected_hom}: If $f$ is connected, $\relInn(f)$ acts transitively on each fiber. Thus, for any pair $x, y \in P$ in the same fiber, there exists some $\varphi \in \relInn(f)$ such that $\varphi(x) = y$. Consequently, any generator $s_y s_x^{-1}$ of $\relTrans(f)$ can be realized as the commutator $[\phi, s_x] \in [\relInn(f), \Inn(P)] = [\Inn(P), \relInn(f)]$, proving the equality.
\end{proof}

\begin{lem}
\label{lem:relTrans_of_quotient_map}
    Let $P$ be a quandle and $N$ be a normal subgroup of $\Inn(P)$. 
    For the quotient homomorphism $\pi_N \colon P \twoheadrightarrow P/N$, we have $\relTrans(\pi_N) = [\Inn(P), N]$.
\end{lem}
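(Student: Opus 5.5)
The plan is to derive the equality from \cref{lem:commutator_inn_rel_trans_rel} together with \cref{prop:basic_property_of_quotient_by_normal_subgroup}. It is cleaner, though, to prove the two inclusions directly by a commutator computation, since $N$ may be strictly smaller than $\relInn(\pi_N)$.

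For $[\Inn(P),N]\subseteq\relTrans(\pi_N)$, I would take $\varphi\in N$ and a generator $s_x$ of $\Inn(P)$. Then
\[ [\varphi,s_x]=\varphi s_x\varphi^{-1}s_x^{-1}=s_{\varphi(x)}s_x^{-1}. \]
Since $\varphi(x)\sim_N x$, this is a generator of $\relTrans(\pi_N)$. The group $[\Inn(P),N]$ is generated by the elements $\psi[\varphi,s_x^{\pm1}]\psi^{-1}$. This reduction uses the identities $[\varphi,ab]=[\varphi,a]\,a[\varphi,b]a^{-1}$ and $[\varphi,s_x^{-1}]=s_x^{-1}[\varphi,s_x]^{-1}s_x$. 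Because $\relTrans(\pi_N)$ is normal in $\Inn(P)$ by \cref{proposition: relTrans is normal in relInn}, all of these generators lie in $\relTrans(\pi_N)$. Alternatively, $N\subseteq\relInn(\pi_N)$ by \cref{prop:basic_property_of_quotient_by_normal_subgroup}, so $[\Inn(P),N]\subseteq[\Inn(P),\relInn(\pi_N)]\subseteq\relTrans(\pi_N)$ by \cref{lem:commutator_inn_rel_trans_rel}~\eqref{item:commutator_of_Inn_and_relInn_is_in_relTrans}.

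For $\relTrans(\pi_N)\subseteq[\Inn(P),N]$, I would take a generator $s_ys_x^{-1}$ with $\pi_N(x)=\pi_N(y)$. By the definition of $\sim_N$ there is $\varphi\in N$ with $\varphi(x)=y$. Then
\[ s_ys_x^{-1}=s_{\varphi(x)}s_x^{-1}=\varphi s_x\varphi^{-1}s_x^{-1}=[\varphi,s_x]\in[N,\Inn(P)]=[\Inn(P),N]. \]
As $[\Inn(P),N]$ is a subgroup, it contains the subgroup generated by these elements.

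The only subtle point is the first inclusion. There one has to make sure that the commutator subgroup is generated, up to conjugation, by commutators with generators $s_x$; normality of $\relTrans(\pi_N)$ handles this. The shortcut via \cref{lem:commutator_inn_rel_trans_rel} avoids the issue entirely. The key observation for the second inclusion is that the orbit relation of $N$ itself, not just of $\relInn(\pi_N)$, defines the fibers.
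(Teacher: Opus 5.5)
Your proof is correct and follows the paper's argument. The paper's proof rests on the same identity $s_{n(x)}s_x^{-1}=n s_x n^{-1}s_x^{-1}=[n,s_x]$ for $n\in N$: it shows the generators of $\relTrans(\pi_N)$ are exactly these commutators, and then asserts in one line that they generate $[\Inn(P),N]$ because the $s_x$ generate $\Inn(P)$. Your two-inclusion write-up, using the commutator identities together with normality of $\relTrans(\pi_N)$ (or alternatively \cref{lem:commutator_inn_rel_trans_rel}), simply spells out that generation step.
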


\begin{proof}
    By definition, two elements $x, y \in P$ lie in the same fiber of $\pi_N$ if and only if there exists $n \in N$ such that $y = n(x)$. Therefore, $\relTrans(\pi_N)$ is generated by elements of the form
    \[
    s_y s_x^{-1} = s_{n(x)} s_x^{-1} = n s_x n^{-1} s_x^{-1} = [n, s_x].
    \]
    Since $\Inn(P)$ is generated by the elements $s_x$, the normal subgroup generated by these commutators is exactly $[\Inn(P), N]$. Hence, we obtain $\relTrans(\pi_N) = [\Inn(P), N]$.
\end{proof}

Among connected homomorphisms, strongly connected homomorphisms correspond exactly to perfect normal subgroups of relative inner automorphism groups.
We say that a normal subgroup $N$ of a group $G$ is \emph{$G$-perfect} if $[G,N]=N$.

\begin{thm}
\label{thm:characterize_strongly_connected}
    A surjective quandle homomorphism $f\colon P\twoheadrightarrow Q$ is strongly connected if and only if it is isomorphic to the quotient $\pi_N$ by an $\Inn(P)$-perfect subgroup $N\subseteq \Inn(P)$.
\end{thm}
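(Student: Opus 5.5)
For the forward direction, I will take $N\coloneqq\relTrans(f)$. Assume $f$ is strongly connected. Then $f$ is connected, so \cref{lem:commutator_inn_rel_trans_rel}~\eqref{item:the_equality_holds_for_connected_hom} gives $\relTrans(f)=[\Inn(P),\relInn(f)]$. By \cref{proposition: relTrans is normal in relInn}, $N$ is normal in $\Inn(P)$, so the quotient $\pi_N\colon P\to P/N$ exists. Since $N\subseteq\relInn(f)$, \cref{prop:basic_property_of_quotient_by_normal_subgroup}~\eqref{item:factoring_condition_through_quotient_by_normal_subgroup} factors $f$ as $\overline{f}\circ\pi_N$. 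The map $\overline{f}$ is surjective. It is also injective: two points in the same fiber of $f$ are joined by an element of $\relTrans(f)=N$, by strong connectedness. Hence $f\cong\pi_N$ as quotients of $P$.

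It remains to show that $N$ is $\Inn(P)$-perfect. Here \cref{lem:relTrans_of_quotient_map} is the key input. Since $f\cong\pi_N$, the relative transvection groups agree (both are defined by the same fibers of $P$), so
\[
N=\relTrans(f)=\relTrans(\pi_N)=[\Inn(P),N].
\]
Equivalently, one can check directly that $\relTrans(f)\subseteq[\Inn(P),\relTrans(f)]$. Each generator $s_ys_x^{-1}$ with $f(x)=f(y)$ equals $[n,s_x]$ for some $n\in\relTrans(f)$ with $n(x)=y$, and such an $n$ exists by strong connectedness. The reverse inclusion $[\Inn(P),N]\subseteq N$ holds by normality.

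For the converse, let $N\trianglelefteq\Inn(P)$ with $[\Inn(P),N]=N$, and assume $f\cong\pi_N$. Relative transvection groups depend only on the kernel congruence, so it suffices to treat $\pi_N$. By \cref{lem:relTrans_of_quotient_map}, $\relTrans(\pi_N)=[\Inn(P),N]=N$. The fibers of $\pi_N$ are exactly the $N$-orbits, so $\relTrans(\pi_N)=N$ acts transitively on each fiber, and $\pi_N$ is strongly connected.

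I expect no real obstacle in this argument. The only points needing care are bookkeeping: the relative groups are invariant under isomorphism of quotients, since they depend only on $\Eq(f)$. One should also observe that the theorem's phrase "perfect subgroup" implicitly requires normality, so that $\pi_N$ is defined via \cref{remark_of_proposition:quandle_quotient_by_group}.
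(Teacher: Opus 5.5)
Your proof is correct and follows the paper's argument. In both directions the key input is \cref{lem:relTrans_of_quotient_map}: with $N=\relTrans(f)$ it gives $N=\relTrans(\pi_N)=[\Inn(P),N]$, and conversely it gives $\relTrans(\pi_N)=N$, which acts transitively on the $N$-orbits. You additionally justify $f\cong\pi_{\relTrans(f)}$, which the paper simply asserts, and give a direct commutator check; your appeal to \cref{lem:commutator_inn_rel_trans_rel} is never used and can be dropped.
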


\begin{proof}
    ($\Rightarrow$): When $f$ is strongly connected, it is isomorphic to the quotient $\pi_N$, where $N=\relTrans(f)$.
    By \cref{lem:relTrans_of_quotient_map}, we have $\relTrans(\pi_N) = [\Inn(P), N]$.
    Therefore, $N=\relTrans(f)=\relTrans(\pi_N)=[\Inn(P),N]$, which shows that $N=\relTrans(f)$ is $\Inn(P)$-perfect.

    ($\Leftarrow$): Suppose that $N$ is $\Inn(P)$-perfect, i.e., $N = [\Inn(P), N]$. Then, by \cref{lem:relTrans_of_quotient_map}, it follows that $\relTrans(\pi_N) = N$, which trivially acts transitively on the orbits of $\pi_N$. Therefore $\pi_N$ is strongly connected.
\end{proof}

\begin{cor}
\label{cor:strongly_connected_hom_are_in_bijection_to_perfect_subgroup}
    For any quandle $P$, there is a canonical one-to-one correspondence between:
    \begin{enumerate}[label={(\roman*)}]
        \item Isomorphism classes of strongly connected quotients of $P$.
        \item $\Inn(P)$-perfect normal subgroups of $\Inn(P)$.
    \end{enumerate}
    The correspondence is given by mapping a surjection $f$ to its relative transvection group $\relTrans(f)$, and mapping a perfect normal subgroup $N$ to the quotient map $\pi_N \colon P \twoheadrightarrow P/N$.
\end{cor}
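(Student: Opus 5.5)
We will show that the two assignments in the statement are well defined and mutually inverse, using \cref{thm:characterize_strongly_connected}, \cref{lem:relTrans_of_quotient_map} and the quotient description of connected maps in \cref{proposition:connected_iff_quotient_by_normal_subgroup_of_Inn}.

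\emph{Well-definedness.} First, if $f$ and $f'$ are isomorphic quotients of $P$, they have the same fibers. The generators $s_xs_y^{-1}$ with $f(x)=f(y)$ depend only on this fiber partition, so $\relTrans(f)=\relTrans(f')$ and the map from (i) to (ii) is defined on isomorphism classes. Its value lies in (ii): $\relTrans(f)$ is normal in $\Inn(P)$ by \cref{proposition: relTrans is normal in relInn}. Moreover, the proof of \cref{thm:characterize_strongly_connected} ($\Rightarrow$) shows $[\Inn(P),\relTrans(f)]=\relTrans(f)$ whenever $f$ is strongly connected. In the other direction, for an $\Inn(P)$-perfect normal $N$, the map $\pi_N$ is strongly connected by \cref{thm:characterize_strongly_connected} ($\Leftarrow$).

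\emph{Composite (ii) to (i) to (ii).} Start with $N$ and form $\pi_N$. By \cref{lem:relTrans_of_quotient_map}, $\relTrans(\pi_N)=[\Inn(P),N]$, and this equals $N$ by perfectness. So we recover $N$.

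\emph{Composite (i) to (ii) to (i).} Start with a strongly connected $f$ and set $N=\relTrans(f)$. We must show $f\cong\pi_N$ as quotients of $P$, i.e.\ that the fibers of $f$ are exactly the $N$-orbits.
\begin{itemize}
\item Each $N$-orbit lies in a fiber of $f$, since $N\subseteq\relInn(f)$ preserves fibers by \cref{proposition: relinn preserves fibers}.
\item Conversely, two points in the same fiber lie in one $N$-orbit, because $f$ is strongly connected.
\end{itemize}
Hence $\Eq(f)=\sim_N$. By the correspondence between quotients and congruences, $f$ is isomorphic to $\pi_N$.

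No step here is a serious obstacle. The only point requiring care is the last one, namely identifying the kernel congruence of $f$ with the orbit relation of $\relTrans(f)$; this is exactly where strong connectedness is used, rather than mere connectedness.
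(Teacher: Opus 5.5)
Your proof is correct and follows the paper's argument. The paper likewise identifies a strongly connected $f$ with $\pi_{\relTrans(f)}$ because the fibers are exactly the $\relTrans(f)$-orbits, and recovers $N$ from $\relTrans(\pi_N)=[\Inn(P),N]=N$ via \cref{lem:relTrans_of_quotient_map}; your well-definedness checks only spell out what the paper leaves implicit.
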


\begin{proof}
    By definition, for a strongly connected homomorphism $f$, we have $f\cong \pi_{\relTrans(f)}$. Conversely, for an $\Inn(P)$-perfect normal subgroup $N\subseteq \Inn(P)$, it follows from \cref{lem:relTrans_of_quotient_map} that $\relTrans(\pi_N)=[\Inn(P),N]=N$.
\end{proof}

\subsection{Covering Factor of a Surjection}
\label{subsection: covering factor of a surjection}

We observe that the vanishing of relative transvection groups characterizes covering homomorphisms.

\begin{lem}
\label{lemma: relTrans vanish iff covering}
    A surjective quandle homomorphism $\pi\colon P\to Q$ is covering if and only if $\relTrans(\pi)=1$.
\end{lem}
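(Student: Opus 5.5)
The plan is to compare the generating set of $\relTrans(\pi)$ with the covering condition of \cref{definition: quandle covering}. By \cref{definition: relative transvection group}, $\relTrans(\pi)$ is the subgroup of $\Inn(P)$ generated by the elements $s_xs_y^{-1}$ with $\pi(x)=\pi(y)$. A subgroup generated by a set is trivial exactly when every generator is the identity. So the statement reduces to showing that the following two conditions are equivalent:
\[
\text{(a) } s_xs_y^{-1}=\id_P \text{ whenever } \pi(x)=\pi(y), \qquad \text{(b) } s_x=s_y \text{ whenever } \pi(x)=\pi(y).
\]

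For ($\Rightarrow$), assume $\pi$ is covering. Then for every pair $x,y$ in the same fiber we have $s_x=s_y$, so each generator $s_xs_y^{-1}$ equals $\id_P$, and hence $\relTrans(\pi)=1$.

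For ($\Leftarrow$), assume $\relTrans(\pi)=1$ and take $x,y\in P$ with $\pi(x)=\pi(y)$. The element $s_xs_y^{-1}$ lies in $\relTrans(\pi)$, so it is the identity. Multiplying on the right by $s_y$ gives $s_x=s_y$. Since $\pi$ is surjective by hypothesis, $\pi$ is covering.

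I expect no real obstacle. The only point to check is that both notions are stated for surjective $\pi$, so surjectivity is available in both directions and does not need separate treatment.
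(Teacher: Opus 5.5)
Your proof is correct. It is the same argument as the paper's, which simply says the lemma ``follows immediately from the definition of covering homomorphisms''; your reduction to the generators $s_xs_y^{-1}$ of $\relTrans(\pi)$ writes that observation out in full.
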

\begin{proof}
    It follows immediately from the definition of covering homomorphisms.
\end{proof}

\begin{rem}
    Let $f\colon P\to Q$ be a surjective quandle homomorphism. Then $\Inn(f)$ induces a group homomorphism $\Trans(P) \twoheadrightarrow \Trans(Q)$, for which we here write $\Trans(f)$ temporarily. It is easy to see that $\relTrans(f) \subseteq \Ker(\Trans(f))$. Hence if $\Ker(\Trans(f))$ is trivial, then $f$ is covering. However, the converse does not hold in general; see \cref{example: ker of trans does not characterize covering}.
    This is one of the motivations for our definition of the relative transvection group.
\end{rem}

\begin{prop}
\label{proposition: connected covering decomposition via quot of relTrans}
    Let $f\colon P\twoheadrightarrow Q$ be a surjective quandle homomorphism and $N=\relTrans(f)$. Then, $f$ factors as 
        \[\begin{tikzcd}[column sep=small]
        P \arrow[two heads]{rr}{f} \arrow[two heads]{rd}[swap]{\pi_N} & & Q\rlap{,} \\
        & P/N \arrow[two heads]{ru}[swap]{h_N}
    \end{tikzcd}\]
    where $\pi_N$ is connected and $h_N$ is covering. 
\end{prop}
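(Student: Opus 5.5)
Since $N=\relTrans(f)$ is a normal subgroup of $\Inn(P)$ by \cref{proposition: relTrans is normal in relInn}, the quotient $\pi_N\colon P\twoheadrightarrow P/N$ is defined. I will get the factorization from \cref{prop:basic_property_of_quotient_by_normal_subgroup}~\eqref{item:factoring_condition_through_quotient_by_normal_subgroup}: $f$ factors through $P/N$ because $N=\relTrans(f)\subseteq\relInn(f)$, as shown in the proof of \cref{proposition: relTrans is normal in relInn}. This yields $h_N\colon P/N\to Q$ with $f=h_N\circ\pi_N$. Moreover, $h_N$ is surjective because $f$ is.

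Connectedness of $\pi_N$ is immediate from \cref{proposition:connected_iff_quotient_by_normal_subgroup_of_Inn} ($\Leftarrow$), since $\pi_N$ is a quotient by a normal subgroup of $\Inn(P)$.

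The main step is to show that $h_N$ is covering. By \cref{lemma: relTrans vanish iff covering}, it suffices to show $\relTrans(h_N)=1$. Here I apply \cref{lemma: relative symmetry groups under a surjective factorization} to the surjective factorization $f=h_N\circ\pi_N$, which gives
\[ \relTrans(h_N)=(\pi_N)_*(\relTrans(f))=(\pi_N)_*(N). \]
By \cref{prop:basic_property_of_quotient_by_normal_subgroup}~\eqref{item:N_is_contained_in_relInn_of_projection}, $N\subseteq\Ker((\pi_N)_*)$, so $(\pi_N)_*(N)$ is trivial. Hence $\relTrans(h_N)=1$ and $h_N$ is covering.

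Alternatively, one can check the covering condition directly. If $h_N([x])=h_N([y])$, then $f(x)=f(y)$, so $s_xs_y^{-1}\in N$. Therefore $s_{[x]}s_{[y]}^{-1}=(\pi_N)_*(s_xs_y^{-1})=\id$, that is, $s_{[x]}=s_{[y]}$.

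I expect no real obstacle. The only point needing care is recording that $\relTrans(f)\subseteq\relInn(f)$, so that the factorization through $P/N$ exists.
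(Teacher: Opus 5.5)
Your proof is correct and matches the paper's. The factorization via $\relTrans(f)\subseteq\relInn(f)$ and the connectedness of $\pi_N$ are argued identically, and your ``alternative'' check that $s_{[x]}s_{[y]}^{-1}=(\pi_N)_*(s_xs_y^{-1})=\id$ is exactly the paper's covering argument. Your primary route, $\relTrans(h_N)=(\pi_N)_*(N)=1$ via the surjective-factorization lemma and the vanishing criterion for coverings, is the same computation repackaged, and it is valid because both lemmas are proved earlier and do not depend on this proposition.
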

\begin{proof}
    Since $\relTrans(f) \subseteq \relInn(f)$, the homomorphism $f$ factors through the quotient $P/N$ by \cref{prop:basic_property_of_quotient_by_normal_subgroup}. 
    The quotient map $\pi_N$ is connected by \cref{proposition:connected_iff_quotient_by_normal_subgroup_of_Inn}.
    
    It remains to show that the induced homomorphism $h_N\colon P/N \to Q$ is a covering.
    Let $[x], [y]\in P/N$ satisfy $h_N([x]) = h_N([y])$. 
    Then $f(x) = f(y)$, and hence
    \[
    s_xs_y^{-1}\in\relTrans(f) =N.
    \]
    Therefore, we have 
    \[
        s_{[x]}s_{[y]}^{-1} = (\pi_N)_*(s_xs_y^{-1}) = \id_{P/N}.
    \]
    It follows that $s_{[x]} = s_{[y]}$. Hence $h_N$ is a covering homomorphism.
\end{proof}

In particular, \cref{proposition: connected covering decomposition via quot of relTrans} together with \cref{theorem: quandle stein factorization} exhibits the non-uniqueness of $(\mathrm{connected}, \mathrm{covering})$-factorizations.

Nevertheless, the following proposition shows that this factorization is universal among all factorizations through covering homomorphisms.

\begin{prop}
\label{proposition: universality of connected-covering decomposition}
    Let $f\colon P\twoheadrightarrow Q$ be a surjective quandle homomorphism and $N=\relTrans(f)$.
    Suppose that $f$ factors as $f=h\circ \pi$, where 
    $\pi\colon P\twoheadrightarrow R$ is a general quandle homomorphism and 
    $h\colon R\twoheadrightarrow Q$ is a covering homomorphism.
    Then, there exists a unique quandle homomorphism $u\colon P/N \to R$ such that $\pi = u\circ \pi_N$ and $h_N= h \circ u$.
    \[\begin{tikzcd}
        P \arrow[rr,"f" description,rounded corners,
		      to path={(\tikztostart)
			-- ([yshift=2ex]\tikztostart.north)
			-- ([yshift=2ex]\tikztotarget.north)\tikztonodes
			-- (\tikztotarget)}]
        \arrow{r}{\pi_N} \arrow{rd}[swap]{\pi} & P/N \arrow{r}{h_N} \arrow[dashed]{d}{u} & Q \\
        & R \arrow{ru}[swap]{h} & 
    \end{tikzcd}\]
\end{prop}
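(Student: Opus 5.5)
The plan is to use the factoring criterion of \cref{prop:basic_property_of_quotient_by_normal_subgroup}~\eqref{item:factoring_condition_through_quotient_by_normal_subgroup}. Once we know $N=\relTrans(f)\subseteq\relInn(\pi)$, the map $\pi$ factors uniquely as $\pi=u\circ\pi_N$. So the key step is to show that every generator $s_xs_y^{-1}$ of $N$, with $f(x)=f(y)$, lies in $\relInn(\pi)$.

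For such $x,y$, compute $\pi_*(s_xs_y^{-1})=s_{\pi(x)}s_{\pi(y)}^{-1}\in\Inn(R)$ using \cref{proposition: functoriality of Inn for surjections}; note $\pi$ is surjective by hypothesis. Since $h(\pi(x))=f(x)=f(y)=h(\pi(y))$ and $h$ is covering, we get $s_{\pi(x)}=s_{\pi(y)}$. Hence $\pi_*(s_xs_y^{-1})=\id_R$, so the generator lies in $\Ker(\pi_*)=\relInn(\pi)$. As $\relInn(\pi)$ is a subgroup, $N\subseteq\relInn(\pi)$. Equivalently, $\relTrans(f)$ maps onto $\relTrans(h)=1$ by \cref{lemma: relative symmetry groups under a surjective factorization} and \cref{lemma: relTrans vanish iff covering}. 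This gives $u\colon P/N\to R$ with $\pi=u\circ\pi_N$.

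For the second identity, compute $h\circ u\circ\pi_N=h\circ\pi=f=h_N\circ\pi_N$. Since $\pi_N$ is surjective, $h\circ u=h_N$. Uniqueness of $u$ likewise follows from surjectivity of $\pi_N$: any $u'$ with $\pi=u'\circ\pi_N$ agrees with $u$ on every class $[x]$.

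There is no real obstacle. The only point needing care is that the generators of $N$ are killed by $\pi_*$, which is exactly where the covering property of $h$ enters.
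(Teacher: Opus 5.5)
Your proposal is correct and matches the paper's proof: the covering property of $h$ gives $s_{\pi(x)}=s_{\pi(y)}$ whenever $f(x)=f(y)$, so every generator of $\relTrans(f)$ is absorbed by $\pi$. The factoring criterion then yields $u$, and both $h\circ u=h_N$ and the uniqueness of $u$ follow from the surjectivity of $\pi_N$. The only cosmetic difference is that the paper checks $\pi\circ s_x s_y^{-1}=s_{\pi(x)}s_{\pi(y)}^{-1}\circ\pi=\pi$ directly instead of passing through $\pi_*$, so it does not need the surjectivity of $\pi$ at that step.
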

\begin{proof}
    Suppose that $f= h\circ \pi$, where $\pi\colon P\to R$ is a homomorphism and $h\colon R\to Q$ is covering.
    Let $x, y\in P$ satisfy $f(x) = f(y)$.
    Since $h\circ \pi(x) = f(x) = f(y) = h\circ \pi(y)$ and $h$ is covering, we have $s_{\pi(x)} =s_{\pi(y)}$.
    Therefore, the calculation
    \[
        \pi\circ s_x\circ s_y^{-1} = s_{\pi(x)}\circ s_{\pi(y)}^{-1}\circ\pi = \pi 
    \]
    implies that $\pi\colon P\to R$ factors as $\pi=u\circ \pi_N$ for a unique homomorphism $u\colon P/N \to R$ by \cref{prop:basic_property_of_quotient_by_normal_subgroup}.
    Then, observing $h_N\circ \pi_N = f = h\circ \pi = h\circ u\circ \pi_N$, we see $h_N= h \circ u$ by the surjectivity of $\pi_N$.
\end{proof}

\begin{dfn}
\label{definition: maximal covering factor of surjection}
    For a quandle surjection $f\colon P\to Q$, we call the factorization constructed in \cref{proposition: connected covering decomposition via quot of relTrans} the \emph{maximal covering factorization} of $f$.
    We write $\covfac{f}$ for the induced covering homomorphism $P/\relTrans(f) \to Q$, and
    call it the \emph{(maximal) covering factor} of $f$.
\end{dfn}

We can prove that strongly connected homomorphisms are orthogonal to \emph{pre-covering} homomorphisms.

\begin{dfn}
\label{definition: pre-covering homomorphism}
    Let $f\colon P\to Q$ be a quandle homomorphism.
    We call $f$ \emph{pre-covering} if the condition $f(x)=f(y)$ implies $s_x=s_y$ for all $x,y \in P$.
\end{dfn}

In other words, pre-covering means covering without surjectivity.

\begin{prop}
\label{prop:orthogonal_property_between_strongly_connected_and_covering}
    The class of strongly connected homomorphisms is left orthogonal to the class of pre-covering homomorphisms.
    That is, for any commutative square
    \[\begin{tikzcd}
        A \arrow{r}{u} \arrow[two heads]{d}[swap]{e} & C \arrow{d}{m} \\
        B \arrow{r}[swap]{v} & D
    \end{tikzcd}\]
    where $e$ is strongly connected and $m$ is pre-covering, there exists a unique diagonal filler $w \colon B \to C$ such that $w \circ e = u$ and $m \circ w = v$.
\end{prop}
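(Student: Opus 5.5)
Since $e$ is surjective, uniqueness of the filler is immediate: any $w$ with $w\circ e=u$ is determined on $B=e(A)$. So the work is in existence. Because $e$ is strongly connected, \cref{thm:characterize_strongly_connected} lets me replace $e$ by the quotient $\pi_N\colon A\twoheadrightarrow A/N$ with $N=\relTrans(e)$. By \cref{prop:basic_property_of_quotient_by_normal_subgroup}~\eqref{item:factoring_condition_through_quotient_by_normal_subgroup}, $u$ factors through $A/N$, i.e.\ through $e$, as soon as $u\circ\varphi=u$ for every $\varphi\in N$. It suffices to check this on the generators $s_xs_y^{-1}$ of $N$, where $e(x)=e(y)$.

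For such a generator I will use that $u$ is a quandle homomorphism: $u\circ s_x s_y^{-1}=s_{u(x)}s_{u(y)}^{-1}\circ u$. From $e(x)=e(y)$ and commutativity of the square we get $m(u(x))=v(e(x))=v(e(y))=m(u(y))$. Since $m$ is pre-covering, this forces $s_{u(x)}=s_{u(y)}$, so $u\circ s_xs_y^{-1}=u$. The set of $\varphi\in\Inn(A)$ with $u\circ\varphi=u$ is a subgroup, so it contains all of $N=\relTrans(e)$.

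Hence $u=w\circ\pi_N$ for a unique homomorphism $w$, which, transported through the isomorphism $e\cong\pi_N$, gives $w\colon B\to C$ with $w\circ e=u$. For the lower triangle I compute $m\circ w\circ e=m\circ u=v\circ e$, and surjectivity of $e$ gives $m\circ w=v$.

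I expect no real obstacle here. The one point needing care is that strong connectedness is used exactly to identify the fibers of $e$ with the $\relTrans(e)$-orbits, so that invariance of $u$ under $\relTrans(e)$ suffices. I could equally argue directly without \cref{thm:characterize_strongly_connected}: given $e(a)=e(a')$, pick $\varphi\in\relTrans(e)$ with $\varphi(a)=a'$, and then $u(a')=u(\varphi(a))=u(a)$. In this form it is clear that $w([a])\coloneqq u(a)$ is well defined, and it is a homomorphism because $e$ is a surjective homomorphism.
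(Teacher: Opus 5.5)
Your proof is correct and is essentially the paper's argument: the key step, that each generator satisfies $u\circ s_xs_y^{-1}=s_{u(x)}s_{u(y)}^{-1}\circ u=u$ because $m$ is pre-covering, is the same, and your closing direct version (pick $\varphi\in\relTrans(e)$ with $\varphi(a)=a'$) is exactly how the paper proves that $w$ is well defined. Your main route only repackages this through $e\cong\pi_{\relTrans(e)}$ and \cref{prop:basic_property_of_quotient_by_normal_subgroup}, which gives you the homomorphism property of $w$ for free instead of the paper's direct computation.
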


\begin{proof}
    Since $e$ is surjective, the diagonal map $w \colon B \to C$, if it exists, must be uniquely defined by $w(b) \coloneqq u(a)$ for any $a \in e^{-1}(b)$. To prove that $w$ is well-defined, let $a_1, a_2 \in A$ such that $e(a_1) = e(a_2)$. We must show that $u(a_1) = u(a_2)$. 
    
    Because $e$ is strongly connected, $\relTrans(e)$ acts transitively on the fibers of $e$. Thus, there exists an automorphism $\varphi \in \relTrans(e)$ such that $\varphi(a_1) = a_2$. By the definition of the relative transvection group, $\varphi$ can be expressed as a product of canonical generators, say $\textstyle \varphi = \prod_{i=1}^k (s_{x_i} s_{y_i}^{-1})^{\varepsilon_i}$, where $e(x_i) = e(y_i)$ and $\varepsilon_i \in \{\pm 1\}$ for each $i$. Then, we obtain
    \[
    u(a_2) = u(\varphi(a_1)) = \left( \prod_{i=1}^k (s_{u(x_i)} s_{u(y_i)}^{-1})^{\varepsilon_i} \right) (u(a_1)).
    \]
    From the commutativity of the square, $e(x_i) = e(y_i)$ implies $v(e(x_i)) = v(e(y_i))$, which in turn implies $m(u(x_i)) = m(u(y_i))$. Because $m$ is a pre-covering homomorphism, it follows that $s_{u(x_i)} = s_{u(y_i)}$ in $\Inn(C)$.
    Consequently, we have $s_{u(x_i)} s_{u(y_i)}^{-1} =\id$ and hence
    \[
    u(a_2) = u(a_1),
    \]
    confirming that $w$ is perfectly well-defined and satisfies $w\circ e=u$.
    This map $w$ is indeed a quandle homomorphism: for $b_1,b_2\in B$ and their respective lifts $a_1,a_2\in A$,
    \begin{align*}
        w(s_{b_1}(b_2)) &= w(s_{e(a_1)}(e(a_2))) = w(e(s_{a_1}(a_2))) = u(s_{a_1}(a_2)) \\
        &= s_{u(a_1)}(u(a_2)) = s_{w(b_1)}(w(b_2)).
    \end{align*}
    
    We have $m\circ w \circ e = m\circ u = v \circ e$. The surjectivity of $e$ implies $m\circ w = v$; hence $w$ is the desired filler.
    The uniqueness also follows directly from the surjectivity of $e$.
\end{proof}

\begin{rem}
\label{rem:strongly_connected_and_covering_do_not_form_OFS}
    While the classes of strongly connected homomorphisms and covering homomorphisms satisfy the orthogonality property, they do not form an orthogonal factorization system, in contrast to the $(\text{connected}, \text{rigid})$ factorization system. 
    
    This failure can be immediately deduced from the fact that quandle (pre-)covering homomorphisms are generally not closed under composition. In any valid orthogonal factorization system, both the left and right classes must be closed under composition. The lack of this closure property for covering homomorphisms implies that not every surjective homomorphism admits a (strongly connected, covering) factorization.
\end{rem}

\subsection{Relative Inner Automorphisms modulo Transvections}
\label{subsection: Relative Inner Automorphisms modulo Transvections}

\begin{dfn}
    For a surjective quandle homomorphism $f \colon P\twoheadrightarrow Q$, we write 
\[
\relInnmodTrans_f\coloneqq \relInn(f)/\relTrans(f).
\]
\end{dfn}

The group $\relInn(f)$ consists of inner symmetries of $P$ that induce identities on the base $Q$. Among them, $\relTrans(f)$ is generated by elementary vertical transvections $s_xs_y^{-1}$ with $x$ and $y$ lying in the same fiber. Thus, the quotient $H_f$ measures the residual relative symmetry which is not generated by vertical transvections.
%For this reason, we call $H_f$ the \memo{relative holonomy group???} of $f$.

\begin{eg}
    In the absolute case, with $Q=\{\ast\}$, the group $H_f$ is given by $H_f=\Inn(P)/\Trans(P)$. It is shown in \cite[Proposition 2.1 (ii)]{hulpke_stanovsky_vojtv_2016_connected_quandles_and_transitive_groups} that this group is cyclic, generated by $s_x$ for some $x\in P$.
    In general, however, the quotient group $H_f$ need not be cyclic. 
    Indeed, if $f\colon \Conj(G) \to \Conj(H)$ is the quandle homomorphism between conjugacy quandles induced by a group homomorphism $\phi\colon G\to H$, then we shall see in \cref{proposition: properties of morphism of conjugacy quandles} that $H_f \cong Z(H)/\phi(Z(G))$, which is not necessarily cyclic.
\end{eg}

Nevertheless, as the following proposition shows,
$H_f$ is always abelian.

\begin{prop}
\label{prop:H_f_is_contained_in_the_center_of_Inn/relTrans}
Let $f\colon P \to Q$ be a surjective quandle homomorphism.
    Then, the group $H_f = \relInn(f)/\relTrans(f)$ is contained in the center of $\Inn(P)/\relTrans(f)$. In particular, $H_f$ is always abelian.
\end{prop}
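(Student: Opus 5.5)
The plan is to reduce the centrality claim to the commutator inclusion already proved in \cref{lem:commutator_inn_rel_trans_rel}~\eqref{item:commutator_of_Inn_and_relInn_is_in_relTrans}. By \cref{proposition: relTrans is normal in relInn}, $\relTrans(f)$ is normal in $\Inn(P)$, so $\Inn(P)/\relTrans(f)$ is a group. Since $\relTrans(f)\subseteq \relInn(f)$ and $\relInn(f)$ is normal in $\Inn(P)$ by \cref{proposition: relinn preserves fibers}, the quotient $H_f=\relInn(f)/\relTrans(f)$ is naturally a normal subgroup of $\Inn(P)/\relTrans(f)$.

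To show that $H_f$ is central, take $\varphi\in\relInn(f)$ and $\psi\in\Inn(P)$. It suffices to show that the commutator $[\psi,\varphi]$ lies in $\relTrans(f)$, because then the classes $\overline{\psi}$ and $\overline{\varphi}$ commute in the quotient. This is exactly the inclusion $[\Inn(P),\relInn(f)]\subseteq\relTrans(f)$ from \cref{lem:commutator_inn_rel_trans_rel}~\eqref{item:commutator_of_Inn_and_relInn_is_in_relTrans}.

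For a self-contained argument, first check generators. For $\varphi\in\relInn(f)$ and $x\in P$ we have $\varphi s_x\varphi^{-1}s_x^{-1}=s_{\varphi(x)}s_x^{-1}$, and this lies in $\relTrans(f)$ because $f(\varphi(x))=f(x)$. Hence $\overline{\varphi}$ commutes with each $\overline{s_x}$. The centralizer of $\overline{\varphi}$ in $\Inn(P)/\relTrans(f)$ is a subgroup, so it contains the subgroup generated by all the $\overline{s_x}$, which is the whole quotient.

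Abelianness follows at once: $H_f$ is contained in the center of $\Inn(P)/\relTrans(f)$, so any two of its elements commute.

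The only point needing care is passing from commuting with generators to commuting with every element of $\Inn(P)$. This is immediate from the centralizer argument, so I expect no real obstacle.
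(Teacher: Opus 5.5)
Your proposal is correct and matches the paper's proof: the paper likewise computes $\varphi s_x\varphi^{-1}s_x^{-1}=s_{\varphi(x)}s_x^{-1}\in\relTrans(f)$ for $\varphi\in\relInn(f)$, and concludes that the class of $\varphi$ commutes with every generator $[s_x]$ and is therefore central in $\Inn(P)/\relTrans(f)$. Citing \cref{lem:commutator_inn_rel_trans_rel} instead is also fine, since that lemma comes earlier in the paper and rests on the same computation.
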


\begin{proof}
    Take any $\varphi\in \relInn(f)$ and $x\in P$.
    Using the fact that $\varphi$ is a quandle homomorphism, we have $\varphi s_x \varphi^{-1} s_x^{-1} = s_{\varphi(x)} s_x^{-1}$, which is an element of $\relTrans(f)$ since $f(\varphi(x))=f(x)$.
    Hence, in the quotient group $\Inn(P)/\relTrans(f)$, we have $[\varphi] [s_x] [\varphi]^{-1} [s_x]^{-1} = 1$. Because this holds for all generators $[s_x]$ of $\Inn(P)/\relTrans(f)$, it follows that $[\varphi]$ is in the center of $\Inn(P)/\relTrans(f)$.
\end{proof}

\begin{cor}
\label{cor:central_extension_associated_with_surjective_homs}
    For a surjective quandle homomorphism $f\colon P \to Q$, there is a short exact sequence
    \[ 1 \longrightarrow \relInn(f)/\relTrans(f) \longrightarrow \Inn(P)/\relTrans(f) \longrightarrow \Inn(Q) \longrightarrow 1, \]
    which is a central group extension.
\end{cor}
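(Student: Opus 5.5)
The plan is to obtain the sequence by passing the exact sequence
\[ 1 \longrightarrow \relInn(f) \longrightarrow \Inn(P) \xrightarrow{f_*} \Inn(Q) \longrightarrow 1 \]
to the quotient by $\relTrans(f)$. This sequence is exact by the definition $\relInn(f)=\Ker(f_*)$ together with the surjectivity of $f_*$ from \cref{proposition: functoriality of Inn for surjections}. By \cref{proposition: relTrans is normal in relInn}, $\relTrans(f)$ is a normal subgroup of $\Inn(P)$ and lies inside $\relInn(f)$, so it is contained in $\Ker(f_*)$. Hence $f_*$ factors through a surjective homomorphism
\[ \overline{f_*}\colon \Inn(P)/\relTrans(f)\twoheadrightarrow \Inn(Q). \]

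Next I identify the kernel. By the third isomorphism theorem, $\Ker(\overline{f_*})$ is the image of $\Ker(f_*)=\relInn(f)$ in the quotient, namely $\relInn(f)/\relTrans(f)=H_f$. The inclusion $H_f\hookrightarrow \Inn(P)/\relTrans(f)$ is injective because $\relTrans(f)\subseteq\relInn(f)$. This gives exactness at all three places.

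Centrality is exactly \cref{prop:H_f_is_contained_in_the_center_of_Inn/relTrans}: the image of $H_f$ lies in the center of $\Inn(P)/\relTrans(f)$. So the extension is central.

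There is no real obstacle. The only points needing care are the containment $\relTrans(f)\subseteq\relInn(f)$ and the normality of $\relTrans(f)$, both already established in \cref{proposition: relTrans is normal in relInn}. Once these are in hand, the argument is routine bookkeeping with the isomorphism theorems.
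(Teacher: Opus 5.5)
Your proposal is correct and takes essentially the same route as the paper, which states this corollary without a separate proof. There it is obtained by passing the exact sequence $1\to\relInn(f)\to\Inn(P)\xrightarrow{f_*}\Inn(Q)\to 1$ to the quotient by the normal subgroup $\relTrans(f)\subseteq\relInn(f)$, with centrality supplied by \cref{prop:H_f_is_contained_in_the_center_of_Inn/relTrans}.
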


If $f$ is a covering homomorphism (i.e.~$\relTrans(f)=\{\id\}$), then it recovers \cref{proposition: covering hom is central extension}.

By \cref{lem:commutator_inn_rel_trans_rel}, the following group-theoretic description of $H_f$ holds for a connected homomorphism $f$.
\begin{prop}
\label{proposition: Hf and its vanishing for connected homs}
Let $f\colon P \to Q$ be a connected homomorphism.
Then, the natural homomorphism $\relInn(f)/[\Inn(P), \relInn(f)] \twoheadrightarrow H_f$ is an isomorphism. 
In particular, $H_f=1$ if and only if $\relInn(f)$ is $\Inn(P)$-perfect.
\end{prop}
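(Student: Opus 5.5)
The plan is to reduce everything to \cref{lem:commutator_inn_rel_trans_rel}. Since $f$ is connected, part~\eqref{item:the_equality_holds_for_connected_hom} of that lemma gives the equality $[\Inn(P),\relInn(f)]=\relTrans(f)$ of subgroups of $\relInn(f)$.

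First I would observe that $[\Inn(P),\relInn(f)]$ is a normal subgroup of $\Inn(P)$, hence of $\relInn(f)$, because $\relInn(f)\trianglelefteq\Inn(P)$. So the quotient $\relInn(f)/[\Inn(P),\relInn(f)]$ makes sense. By part~\eqref{item:commutator_of_Inn_and_relInn_is_in_relTrans}, which holds for any surjection, the inclusion $[\Inn(P),\relInn(f)]\subseteq\relTrans(f)$ induces the natural surjection
\[\relInn(f)/[\Inn(P),\relInn(f)]\twoheadrightarrow \relInn(f)/\relTrans(f)=H_f.\]
Its kernel is $\relTrans(f)/[\Inn(P),\relInn(f)]$. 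For connected $f$ this kernel is trivial by the equality above, so the map is an isomorphism.

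For the last assertion, $H_f=1$ means $\relTrans(f)=\relInn(f)$. By the isomorphism this is equivalent to $[\Inn(P),\relInn(f)]=\relInn(f)$, which is precisely the definition of $\relInn(f)$ being $\Inn(P)$-perfect.

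There is no real obstacle. The only point needing care is that the quotient map is well defined, which rests on the inclusion in part~\eqref{item:commutator_of_Inn_and_relInn_is_in_relTrans} and on normality. All the substantive work, namely writing $s_ys_x^{-1}=[\varphi,s_x]$ for some $\varphi\in\relInn(f)$ with $\varphi(x)=y$, which uses connectedness, was already done in \cref{lem:commutator_inn_rel_trans_rel}.
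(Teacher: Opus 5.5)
Your proposal is correct and is the same argument the paper intends: the paper states this proposition as a direct consequence of \cref{lem:commutator_inn_rel_trans_rel}. Part~(1) gives the natural surjection, and part~(2), valid for connected $f$, gives $[\Inn(P),\relInn(f)]=\relTrans(f)$, which makes that surjection an isomorphism and yields the perfectness criterion.
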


\subsection{Pullbacks for Connected and Strongly Connected Homomorphisms}
\label{subsection:pullbacks_for_connectd_and_strongly_connected_hom}

In general, connected homomorphisms are not preserved by pullbacks; the homomorphism $R_4\to R_2$, $x\mapsto (x \mod 2$), between the dihedral quandles is connected, while the pullback of it along itself is not connected (see \cref{example: pullback of connected is not necessarily connected}).
%In this subsection, we determine the obstruction and show that connectedness is preserved by rigid pullbacks. On the other hand, strongly connectedness is stable under pullback along surjections.
In this subsection, we determine the obstruction to the preservation of connectedness under pullback and show, in particular, that connectedness is preserved under pullback along rigid homomorphisms.
On the other hand, strongly connected homomorphisms are preserved under pullback along surjective homomorphisms.

Let
\begin{equation}
    \begin{tikzcd}
        P' \arrow[two heads]{r}{g'} \arrow[two heads]{d}[swap]{f'} & P \arrow[two heads]{d}{f} \\
        Q' \arrow[two heads]{r}[swap]{g} & Q \arrow[phantom]{lu}[very near end]{\lrcorner}
    \end{tikzcd}\label{diagram:pullback_square_of_surjectives}
\end{equation}
be a pullback square of surjective quandle homomorphisms.

\begin{prop}
\label{prop:base_change_exact_sequence_for_inn_rel}
    For the pullback diagram \eqref{diagram:pullback_square_of_surjectives}, there exists a exact sequence
    \[
    1 \longrightarrow \relInn(f') \xlongrightarrow{g'_*} \relInn(f) \xlongrightarrow{\delta_{f,g}} \frac{\relInn(g)}{f'_*(\relInn(g'))} \longrightarrow 1.
    \]
\end{prop}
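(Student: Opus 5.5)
The plan is to construct $\delta_{f,g}$ by lifting along $g'_*$ and pushing forward along $f'_*$, and then to check exactness at each of the three places. Throughout I use that in the square \eqref{diagram:pullback_square_of_surjectives} all four maps are surjective. By \cref{proposition: functoriality of Inn for surjections}, the induced maps $f_*,g_*,f'_*,g'_*$ are therefore surjective group homomorphisms, and functoriality gives $f_*\circ g'_*=g_*\circ f'_*$.

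\textbf{Construction of $\delta_{f,g}$ and well-definedness.} I expect this to be the main point needing care. Let $\varphi\in\relInn(f)$ and choose $\psi\in\Inn(P')$ with $g'_*(\psi)=\varphi$. Then
\[ g_*(f'_*(\psi))=f_*(g'_*(\psi))=f_*(\varphi)=\id_Q, \]
so $f'_*(\psi)\in\relInn(g)$. Any two lifts $\psi$ differ by an element of $\Ker(g'_*)=\relInn(g')$, so $f'_*(\psi)$ is well defined modulo $f'_*(\relInn(g'))$. This subgroup is normal in $\Inn(Q')$, because $\relInn(g')\trianglelefteq\Inn(P')$ and $f'_*$ is surjective. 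In particular it is normal in $\relInn(g)$; it is contained there since $g_*f'_*=f_*g'_*$ kills $\relInn(g')$. Hence $\delta_{f,g}(\varphi)\coloneqq[f'_*(\psi)]$ is well defined. It is a homomorphism because a product of lifts is a lift of the product.

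\textbf{Left end.} For $\alpha\in\relInn(f')$ we have $f_*g'_*(\alpha)=g_*f'_*(\alpha)=\id$, so $g'_*$ maps $\relInn(f')$ into $\relInn(f)$. If moreover $g'_*(\alpha)=\id$, then $g'\circ\alpha=g'$ and $f'\circ\alpha=f'$. By the universal property of the pullback $\alpha=\id_{P'}$, exactly as in \cref{proposition: properties of rigid hom}\eqref{item:rigid_is_closed_under_surjections}. So $g'_*$ is injective on $\relInn(f')$.

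\textbf{Exactness in the middle.} If $\varphi=g'_*(\alpha)$ with $\alpha\in\relInn(f')$, we may take $\psi=\alpha$ as the lift, and then $\delta_{f,g}(\varphi)=[f'_*(\alpha)]=[\id]$. Conversely, suppose $\delta_{f,g}(\varphi)$ is trivial. Then $f'_*(\psi)=f'_*(\chi)$ for some $\chi\in\relInn(g')$. The element $\psi\chi^{-1}$ still lifts $\varphi$, since $g'_*(\chi)=\id$, and it lies in $\Ker(f'_*)=\relInn(f')$.

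\textbf{Surjectivity of $\delta_{f,g}$.} Given $\theta\in\relInn(g)$, choose $\psi\in\Inn(P')$ with $f'_*(\psi)=\theta$, and set $\varphi\coloneqq g'_*(\psi)$. Then $f_*(\varphi)=g_*(\theta)=\id$, so $\varphi\in\relInn(f)$, and by construction $\delta_{f,g}(\varphi)=[\theta]$.
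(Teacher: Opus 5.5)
Your proof is correct and follows the paper's: the paper applies the snake lemma to the map from $1\to\relInn(g')\to\Inn(P')\to\Inn(P)\to 1$ to $1\to\relInn(g)\to\Inn(Q')\to\Inn(Q)\to 1$ given by $f'_*$ and $f_*$, and then shows $\relInn(f')\cap\relInn(g')=1$ using the universal property of the pullback, just as in your injectivity step. Your explicit construction of $\delta_{f,g}$ is the snake lemma unrolled by hand, and it also checks the normality of $f'_*(\relInn(g'))$ in $\relInn(g)$, which the non-abelian snake lemma tacitly needs.
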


\begin{proof}
    Consider the following two short exact sequences and homomorphisms between them:
    \[\begin{tikzcd}
        1 \arrow{r} & \relInn(g') \arrow{r} \arrow{d}{f'_*} & \Inn(P') \arrow{r}{g'_*} \arrow[two heads]{d}{f'_*} & \Inn(P) \arrow{r} \arrow[two heads]{d}{f_*} & 1 \\
        1 \arrow{r} & \relInn(g) \arrow{r} & \Inn(Q') \arrow{r}[swap]{g_*} & \Inn(Q) \arrow{r} & 1\rlap{.}
    \end{tikzcd}\]
    By applying the snake lemma to this, we obtain an exact sequence
    \[ 1 \longrightarrow \relInn(f')\cap \relInn(g') \longrightarrow \relInn(f') \xlongrightarrow{g'_*} \relInn(f) \xlongrightarrow{\delta_{f,g}} \frac{\relInn(g)}{f'_*(\relInn(g'))} \longrightarrow 1. \]
    It remains to prove that $\relInn(f')\cap \relInn(g')$ is in fact trivial.
    If we take $\varphi' \in \relInn(f')\cap \relInn(g')$, then it satisfies $f'\circ \varphi'=f'$ and $g'\circ \varphi'=g'$ by definition. Since the identity also satisfies the same equations, the universal property of pullback forces $\varphi'=\id$. Hence we have $\relInn(f')\cap \relInn(g')=1$.
\end{proof}

\begin{cor}
\label{cor:connectedness_for_pullback}
    In the pullback diagram~\eqref{diagram:pullback_square_of_surjectives}, $f'$ is connected if and only if $\Ker(\delta_{f,g})$ acts transitively on every fiber of $f$.
\end{cor}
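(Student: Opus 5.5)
The plan is to reduce everything to the exact sequence of \cref{prop:base_change_exact_sequence_for_inn_rel}. Exactness at $\relInn(f)$ says that $\Ker(\delta_{f,g})=g'_*(\relInn(f'))$, and exactness at $\relInn(f')$ says that $g'_*$ restricted to $\relInn(f')$ is injective. So $\relInn(f')$ is identified, via $g'_*$, with $\Ker(\delta_{f,g})\subseteq\relInn(f)$. The latter preserves every fiber of $f$ by \cref{proposition: relinn preserves fibers}, so the statement makes sense. It then remains to compare the action of $\relInn(f')$ on fibers of $f'$ with the action of $\Ker(\delta_{f,g})$ on fibers of $f$.

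Concretely, I will realize $P'$ as the set-theoretic pullback $\{(x,q')\in P\times Q' \mid f(x)=g(q')\}$, with $g'$ and $f'$ the two projections. For $q'\in Q'$, the fiber $f'^{-1}(q')=\{(x,q')\mid x\in f^{-1}(g(q'))\}$ is mapped by $g'$ bijectively onto the fiber $f^{-1}(g(q'))$. Since $g$ is surjective, every fiber of $f$ arises in this way (as $f^{-1}(q)$ for $q=g(q')$).

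The key step is equivariance. For $\varphi'\in\relInn(f')$ and a point $(x,q')$ of the fiber, $\varphi'$ fixes the second coordinate because $f'\circ\varphi'=f'$. Moreover, by the defining square of $g'_*$ in \cref{proposition: functoriality of Inn for surjections},
\[
g'(\varphi'(x,q'))=g'_*(\varphi')(g'(x,q'))=g'_*(\varphi')(x).
\]
Hence $\varphi'(x,q')=(g'_*(\varphi')(x),q')$. Thus the bijection $f'^{-1}(q')\to f^{-1}(g(q'))$ intertwines the action of $\relInn(f')$ with that of $g'_*(\relInn(f'))=\Ker(\delta_{f,g})$.

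Consequently, $\relInn(f')$ is transitive on $f'^{-1}(q')$ if and only if $\Ker(\delta_{f,g})$ is transitive on $f^{-1}(g(q'))$. Quantifying over all $q'$ and using the surjectivity of $g$ gives the claimed equivalence. The only delicate point is the equivariance computation above, and in particular that $\varphi'$ does not move the $Q'$-coordinate. This is exactly the condition $\varphi'\in\relInn(f')$, so I expect no real obstacle once the exact sequence is available.
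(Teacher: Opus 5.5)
Your proposal is correct and takes the same route as the paper: it identifies $\relInn(f')$ with $\Ker(\delta_{f,g})$ via $g'_*$ using the exact sequence, and transports transitivity along the bijections $f'^{-1}(q')\cong f^{-1}(g(q'))$ induced by $g'$, using the surjectivity of $g$. Your explicit computation $\varphi'(x,q')=(g'_*(\varphi')(x),q')$ spells out the equivariance step that the paper asserts in a single line.
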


\begin{proof}
    For each $q' \in Q'$, the restriction of $g'$ induces an isomorphism $f'^{-1}(q') \cong f^{-1}(g(q'))$.
    Under this isomorphism, the action of $\relInn(f')$ precisely agrees with the action of $\Ker(\delta_{f,g}) \subseteq \relInn(f)$. Therefore, since $g$ is surjective, we conclude that the connectedness of $f'$ is equivalent to the transitivity of $\Ker(\delta_{f,g})$ on each fiber of $f$.
\end{proof}

\begin{prop}
\label{prop:base_change_trans_rel}
    Consider the pullback diagram \eqref{diagram:pullback_square_of_surjectives} of surjective quandle homomorphisms. Then, the restriction of $g'_*$ induces an isomorphism of the relative transvection groups:
    \[
    g'_* \colon \relTrans(f') \xrightarrow{\sim} \relTrans(f).
    \]
\end{prop}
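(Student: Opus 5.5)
We need to show three things: $g'_*$ maps $\relTrans(f')$ into $\relTrans(f)$, it is onto, and it is injective.

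\emph{Image lands in $\relTrans(f)$.} The pullback $P'$ is the set $\{(x,q')\in P\times Q' \mid f(x)=g(q')\}$, with $g'$ and $f'$ the two projections. Since $g'$ is a quandle homomorphism, we have $g'_*(s_a)=s_{g'(a)}$. Take a generator $s_as_b^{-1}$ of $\relTrans(f')$, so $f'(a)=f'(b)$. Then
\[ f(g'(a))=g(f'(a))=g(f'(b))=f(g'(b)), \]
so $g'_*(s_as_b^{-1})=s_{g'(a)}s_{g'(b)}^{-1}$ is a generator of $\relTrans(f)$. Hence $g'_*$ restricts to a homomorphism $\relTrans(f')\to\relTrans(f)$.

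\emph{Surjectivity.} Take a generator $s_xs_y^{-1}$ of $\relTrans(f)$, so $f(x)=f(y)=q$. Because $g$ is surjective, choose $q'\in Q'$ with $g(q')=q$. Then $a=(x,q')$ and $b=(y,q')$ lie in $P'$ and satisfy $f'(a)=f'(b)$. Their transvection $s_as_b^{-1}$ lies in $\relTrans(f')$ and maps to $s_xs_y^{-1}$. Since the image of a homomorphism is a subgroup containing all generators of $\relTrans(f)$, the map is onto.

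\emph{Injectivity.} This is the step needing care. A short route is to use \cref{proposition: relTrans is normal in relInn}, which gives $\relTrans(f')\subseteq\relInn(f')$. By \cref{prop:base_change_exact_sequence_for_inn_rel}, $g'_*$ is injective on $\relInn(f')$, so its restriction to the subgroup $\relTrans(f')$ is injective as well.

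We can also see injectivity directly. Let $\varphi'\in\relTrans(f')$ with $g'_*(\varphi')=\id_P$. Then $g'\circ\varphi'=g'$, and $f'\circ\varphi'=f'$ because $\varphi'\in\relInn(f')$. The universal property of the pullback then forces $\varphi'=\id$.

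Combining the three steps, $g'_*\colon\relTrans(f')\to\relTrans(f)$ is an isomorphism.
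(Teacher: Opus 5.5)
Your proof is correct and takes essentially the same approach as the paper: injectivity is inherited from the injectivity of $g'_*$ on $\relInn(f')$ established in \cref{prop:base_change_exact_sequence_for_inn_rel}, and surjectivity comes from lifting each generator $s_xs_y^{-1}$ to $s_{(x,q')}s_{(y,q')}^{-1}$ for a chosen $q'\in g^{-1}(f(x))$. Your explicit check that generators of $\relTrans(f')$ map to generators of $\relTrans(f)$ spells out the step the paper calls immediate.
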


\begin{proof}
    As seen in \cref{prop:base_change_exact_sequence_for_inn_rel}, $g'_*\colon \relInn(f') \to \relInn(f)$ is injective, and it is immediate to see that it induces an injective homomorphism $g'_* \colon \relTrans(f') \hookrightarrow \relTrans(f)$.
    It remains to show that every canonical generator of $\relTrans(f)$ lifts to $\relTrans(f')$. Let $s_x s_y^{-1} \in \relTrans(f)$ be a generator, where $x, y \in P$ lie in the same fiber, i.e., $f(x) = f(y) = q$. Since $g$ is surjective, there exists an element $q' \in Q'$ such that $g(q') = q$. 
    This allows us to take the elements $x'$ and $y'$ in the pullback $P'$ satisfying $(g'(x'),f'(x'))=(x, q')$ and $(g'(y'),f'(y'))=(y, q')$. We then have
    \[
    g'_*(s_{x'}s_{y'}^{-1}) = s_{g'(x')} s_{g'(y')}^{-1} = s_x s_y ^{-1}.
    \]
    Since $f'(x') = q' = f'(y')$, the element $s_{x'} s_{y'}^{-1}$ belongs to $\relTrans(f')$. Therefore, $g'_* \colon \relTrans(f') \to \relTrans(f)$ is surjective, and hence an isomorphism.
\end{proof}

\begin{cor}
\label{cor:connected_is_stable_under_rigid_base_change}
    Let $f \colon P \twoheadrightarrow Q$ and $g \colon Q' \twoheadrightarrow Q$ be surjective quandle homomorphisms, and suppose that $H_g = 1$ (this holds if $g$ is rigid, in particular). Then the pullback $f' \colon P \times_Q Q' \twoheadrightarrow Q'$ of $f$ along $g$ is connected if and only if $f$ is connected.
\end{cor}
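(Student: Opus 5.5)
By \cref{cor:connectedness_for_pullback}, $f'$ is connected if and only if $\Ker(\delta_{f,g})$ acts transitively on every fiber of $f$. So the statement reduces to the group-theoretic claim that $H_g=1$ forces $\delta_{f,g}$ to be trivial, i.e.\ $\Ker(\delta_{f,g})=\relInn(f)$. Once this holds, $\Ker(\delta_{f,g})$ is transitive on the fibers of $f$ exactly when $\relInn(f)$ is, that is, exactly when $f$ is connected.

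The target of $\delta_{f,g}$ is $\relInn(g)/f'_*(\relInn(g'))$, so it suffices to show $\relInn(g)=f'_*(\relInn(g'))$. The hypothesis $H_g=1$ says $\relInn(g)=\relTrans(g)$. So I only need $\relTrans(g)\subseteq f'_*(\relInn(g'))$, and in fact I will show $f'_*(\relTrans(g'))=\relTrans(g)$.

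The pullback square is symmetric in the roles of $(f,f')$ and $(g,g')$: the map $g'\colon P'\to P$ is the base change of $g$ along $f$. Applying \cref{prop:base_change_trans_rel} with the roles swapped, $f'_*$ restricts to an isomorphism $\relTrans(g')\xrightarrow{\sim}\relTrans(g)$. Hence
\[ \relTrans(g)=f'_*(\relTrans(g'))\subseteq f'_*(\relInn(g')), \]
using \cref{proposition: relTrans is normal in relInn}. This gives $\relInn(g)=f'_*(\relInn(g'))$, so $\delta_{f,g}$ is trivial.

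The only point needing care is this symmetry: \cref{prop:base_change_trans_rel} must apply to $g'$ viewed as the pullback of $g$ along the surjection $f$. All maps in \eqref{diagram:pullback_square_of_surjectives} are surjective, so this is legitimate. Finally, for the parenthetical remark, if $g$ is rigid then $\relInn(g)=1$, and hence $H_g=1$.
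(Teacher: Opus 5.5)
Your proposal is correct and follows the paper's proof essentially step by step. Swapping the roles of $f$ and $g$ in \cref{prop:base_change_trans_rel} gives $\relTrans(g)=f'_*(\relTrans(g'))\subseteq f'_*(\relInn(g'))$, so $H_g=1$ makes the target of $\delta_{f,g}$ trivial and \cref{cor:connectedness_for_pullback} concludes; your check that the swapped square is again a pullback of surjections is the justification the paper uses implicitly.
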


\begin{proof}
    Interchanging $f$ and $g$, we see from \cref{prop:base_change_trans_rel} that $f'_*$ restricts to an isomorphism $\relTrans(g') \cong \relTrans(g)$.
    In particular, we have $\relTrans(g) \subseteq f'_*(\relInn(g'))$ and hence a surjection
    \[ H_g= \dfrac{\relInn(g)}{\relTrans(g)} \twoheadlongrightarrow \dfrac{\relInn(g)}{f'_*(\relInn(g'))}. \]
    Thus, when $H_g=1$, it follows from \cref{prop:base_change_exact_sequence_for_inn_rel} that the obstruction map $\delta_{f,g}$ is trivial and thus $\Ker(\delta_{f,g}) = \relInn(f)$.
    Therefore, by \cref{cor:connectedness_for_pullback}, $f'$ is connected if and only if $f$ is connected.
\end{proof}

\begin{prop}
\label{prop:strongly_connected_is_stable_under_surjective_pullback}
    Let $f \colon P \twoheadrightarrow Q$ and $g \colon Q' \twoheadrightarrow Q$ be surjective quandle homomorphisms. Then, the pullback $f' \colon P \times_Q Q' \twoheadrightarrow Q'$ is strongly connected if and only if $f$ is strongly connected.
\end{prop}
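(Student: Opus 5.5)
The plan is to reduce the statement to \cref{prop:base_change_trans_rel} together with the fiber identification already used in \cref{cor:connectedness_for_pullback}. Write $P'=P\times_Q Q'$ with projections $g'\colon P'\to P$ and $f'\colon P'\to Q'$, as in \eqref{diagram:pullback_square_of_surjectives}. Both projections are surjective because $f$ and $g$ are.

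First, for each $q'\in Q'$, the restriction of $g'$ gives a bijection
\[
\gamma_{q'}\colon f'^{-1}(q')\to f^{-1}(g(q')), \qquad (x,q')\mapsto x.
\]
Next I will show that this bijection intertwines the action of $\relTrans(f')$ with that of $\relTrans(f)$, via the isomorphism $g'_*\colon \relTrans(f')\xrightarrow{\sim}\relTrans(f)$ of \cref{prop:base_change_trans_rel}. For $\varphi'\in\relTrans(f')\subseteq\relInn(f')$ and $x'\in f'^{-1}(q')$, we have $\varphi'(x')\in f'^{-1}(q')$ and
\[
g'(\varphi'(x'))=g'_*(\varphi')(g'(x')).
\]
This is just the defining square of $g'_*$ from \cref{proposition: functoriality of Inn for surjections}. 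Hence
\[
\gamma_{q'}\circ\varphi'|_{f'^{-1}(q')}=g'_*(\varphi')|_{f^{-1}(g(q'))}\circ\gamma_{q'} .
\]

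The conclusion then follows directly. Since $g'_*$ maps $\relTrans(f')$ onto $\relTrans(f)$, the $\relTrans(f')$-orbits in $f'^{-1}(q')$ correspond bijectively under $\gamma_{q'}$ to the $\relTrans(f)$-orbits in $f^{-1}(g(q'))$. So $\relTrans(f')$ is transitive on $f'^{-1}(q')$ if and only if $\relTrans(f)$ is transitive on $f^{-1}(g(q'))$.

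Finally, because $g$ is surjective, every fiber of $f$ has the form $f^{-1}(g(q'))$ for some $q'$. Therefore $f'$ is strongly connected exactly when $f$ is.

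The only real content is the surjectivity half of \cref{prop:base_change_trans_rel}. That lemma is what prevents the obstruction $\delta_{f,g}$ that appears in the merely connected case. Here it is already available, so I expect no genuine obstacle beyond checking the intertwining identity.
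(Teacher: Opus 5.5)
Your proposal is correct and follows essentially the same argument as the paper. Both use the isomorphism $g'_*\colon \relTrans(f')\xrightarrow{\sim}\relTrans(f)$ from \cref{prop:base_change_trans_rel}, the fiber bijection $f'^{-1}(q')\cong f^{-1}(g(q'))$ under which the two actions agree, and the surjectivity of $g$ to reach every fiber of $f$. Your version simply spells out the intertwining identity $\gamma_{q'}\circ\varphi'|_{f'^{-1}(q')}=g'_*(\varphi')|_{f^{-1}(g(q'))}\circ\gamma_{q'}$, which the paper leaves implicit.
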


\begin{proof}
    Let $P' = P \times_Q Q'$ be the pullback. 
    By \cref{prop:base_change_trans_rel}, the restriction of the projection map $g' \colon P' \twoheadrightarrow P$ induces a canonical isomorphism $g'_* \colon \relTrans(f') \xrightarrow{\sim} \relTrans(f)$. 
    Furthermore, for each $q' \in Q'$, the projection $g'$ restricts to an isomorphism $f'^{-1}(q') \cong f^{-1}(g(q'))$.
    Under this isomorphism, the action of $\relTrans(f')$ precisely agrees with the action of $\relTrans(f)$. Therefore, the surjectivity of $g$ shows that the strong connectedness of $f'$ is equivalent to the strong connectedness of $f$.
\end{proof}

Here, we also observe the stable property of strongly connectedness under product.

\begin{prop}
\label{prop:strongly_connected_is_stable_under_product}
    Let $f \colon P \twoheadrightarrow Q$ and $f' \colon P' \twoheadrightarrow Q'$ be strongly connected homomorphisms. Then the product $f \times f' \colon P \times P' \twoheadrightarrow Q \times Q'$ is strongly connected.
\end{prop}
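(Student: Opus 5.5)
The plan is to factor $f\times f'$ as the composite
\[ P\times P' \xrightarrow{\;f\times \id_{P'}\;} Q\times P' \xrightarrow{\;\id_Q\times f'\;} Q\times Q' \]
and then combine two facts proved earlier: composites of strongly connected homomorphisms are strongly connected (\cref{lemma: composition of strongly connected homomorphisms}), and strong connectedness is stable under pullback along surjections (\cref{prop:strongly_connected_is_stable_under_surjective_pullback}).

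First, I observe that $f\times\id_{P'}$ is the pullback of $f$ along the projection $Q\times P'\to Q$. Indeed, the square with vertices $P\times P'$, $P$, $Q\times P'$, $Q$ is a pullback in $\Quandle$, because limits are computed on underlying sets and the quandle structure there is componentwise. The projection $Q\times P'\to Q$ is surjective provided $P'$ is nonempty. Hence \cref{prop:strongly_connected_is_stable_under_surjective_pullback} shows that $f\times\id_{P'}$ is strongly connected. By the same argument, $\id_Q\times f'$ is the pullback of $f'$ along the surjective projection $Q\times Q'\to Q'$ (assuming $Q$ is nonempty), so it is strongly connected as well.

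Second, the composite of these two maps is $f\times f'$, so \cref{lemma: composition of strongly connected homomorphisms} gives the claim.

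The only obstacle is the degenerate case in which some quandle is empty, where the projections fail to be surjective. I treat it separately, as in \cref{proposition:product_of_homogeneous_homs_is_homogeneous}. If $P$ or $P'$ is empty, then by surjectivity the product $f\times f'$ is the identity of the empty quandle, and this is vacuously strongly connected because there are no fibers. Alternatively, one could argue directly: for $(x,x')$ and $(y,y')$ in the same fiber, choose $\varphi\in\relTrans(f)$ and $\varphi'\in\relTrans(f')$ moving $x\mapsto y$ and $x'\mapsto y'$. Then check that $\varphi\times\id$ lies in $\relTrans(f\times f)$, using the identity $s_{(a,c)}s_{(b,c)}^{-1}=s_as_b^{-1}\times\id$, and similarly for $\id\times\varphi'$. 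This matches the lifting in \cref{prop:base_change_trans_rel}.
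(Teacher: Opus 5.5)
Your argument is correct, but your main route differs from the paper's. The paper argues directly, which is exactly the alternative you sketch at the end (where $\relTrans(f\times f)$ should read $\relTrans(f\times f')$). For $(x,x'),(y,y')$ in a fiber, it writes $\varphi\in\relTrans(f)$ with $\varphi(x)=y$ as a word $\prod_i (s_{u_i}s_{v_i}^{-1})^{\varepsilon_i}$. It then observes that $\prod_i (s_{(u_i,x')}s_{(v_i,x')}^{-1})^{\varepsilon_i}=\varphi\times\id$ lies in $\relTrans(f\times f')$, does the same in the second coordinate, and composes.

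Your main argument instead factors $f\times f'$ as $(\id_Q\times f')\circ(f\times\id_{P'})$ and identifies each factor as a base change along a projection. It then invokes \cref{prop:strongly_connected_is_stable_under_surjective_pullback} and \cref{lemma: composition of strongly connected homomorphisms}. Your identifications of the pullbacks are right. Your treatment of the empty case is exactly what is needed, since $Q\times P'\to Q$ and $Q\times Q'\to Q'$ are surjective once $P$ and $P'$ are nonempty.

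What your route buys is structure. It is the general fact that a class of surjections closed under composition and surjective base change is closed under products. It also isolates base-change stability as the key input, which is precisely what ordinary connected homomorphisms lack (\cref{example: pullback of connected is not necessarily connected}).

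What it costs is reliance on the heavier base-change result (\cref{prop:base_change_trans_rel}) and a separate degenerate case. The paper's element-level proof is self-contained and handles emptiness automatically, since the auxiliary point $x'$ comes from the fiber itself.

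Underneath, the two proofs do the same computation. Lifting a generator $s_us_v^{-1}$ along the projection in \cref{prop:base_change_trans_rel} is precisely the identity $s_{(u,x')}s_{(v,x')}^{-1}=s_us_v^{-1}\times\id$.
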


\begin{proof}
    We must show that the relative transvection group $\relTrans(f \times f')$ acts transitively on every fiber of $f \times f'$. The fiber over a point $(q, q') \in Q \times Q'$ is precisely the product $f^{-1}(q) \times f'^{-1}(q')$. 
    Let $(x, x')$ and $(y, y')$ be two arbitrary elements in this fiber. 

    First, since $f$ is strongly connected, we can take an automorphism $\varphi \in \relTrans(f)$ such that $\varphi(x) = y$.
    Write $\textstyle \varphi=\prod_{i=1}^k (s_{u_i}s_{v_i}^{-1})^{\varepsilon_i}$, where $f(u_i)=f(v_i)$ and $\varepsilon_i\in \{\pm 1\}$ for each $i$.
    We then define $\Phi\in \relTrans(f\times f')$ as
    \[ \Phi= \prod_{i=1}^k (s_{(u_i,x')}s_{(v_i,x')}^{-1})^{\varepsilon_i}. \]
    Then
    \[ \Phi = \prod_{i=1}^k (s_{u_i}s_{v_i}^{-1} \times s_{x'}s_{x'}^{-1})^{\varepsilon_i} = \prod_{i=1}^k (s_{u_i}s_{v_i}^{-1})^{\varepsilon_i} \times \id =\varphi\times \id. \]
    Hence we obtain $\Phi(x,x')=(\varphi\times \id)(x,x')=(\varphi(x),x')= (y,x')$.

    Symmetrically, we can take $\Phi' \in \relTrans(f\times f')$ such that $\Phi'(y,x')=(y,y')$. Thus, $\Phi'\circ\Phi(x,x')=(y,y')$, which proves that $f \times f'$ is strongly connected.
\end{proof}

\begin{rem}
The analogous assertion for connected homomorphisms is false in general.
\end{rem}

\begin{prop}
\label{proposition: connectedness of projection from product}
    Let $P$, $Q$ be non-empty quandles.
    The projection $\pr_2\colon P\times Q \to Q$ is connected if and only if $P$ is connected.
\end{prop}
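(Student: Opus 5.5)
The plan is to compute $\relInn(\pr_2)$ explicitly and compare its action on a fiber $P\times\{q\}$ with the action of $\Inn(P)$ on $P$. The symmetries of the product are $s_{(x,y)}=s_x\times s_y$. Hence every element of $\Inn(P\times Q)$ has the form $\alpha\times\beta$, with $\alpha\in\Inn(P)$ and $\beta\in\Inn(Q)$. By \cref{proposition: functoriality of Inn for surjections} we have $(\pr_2)_*(\alpha\times\beta)=\beta$, and similarly $(\pr_1)_*(\alpha\times\beta)=\alpha$. Consequently
\[ \relInn(\pr_2)=\{\alpha\times\id_Q \mid \alpha\times\id_Q\in\Inn(P\times Q)\}. \]
Its action on the fiber $P\times\{q\}\cong P$ is through the first component, and that component lies in $\Inn(P)$.

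For ($\Rightarrow$): suppose $\pr_2$ is connected and fix $q\in Q$, which exists since $Q\neq\emptyset$. Given $x,x'\in P$, connectedness provides $\alpha\times\id\in\relInn(\pr_2)$ with $\alpha(x)=x'$. Since $\alpha=(\pr_1)_*(\alpha\times\id)\in\Inn(P)$, the action of $\Inn(P)$ on $P$ is transitive, so $P$ is connected.

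For ($\Leftarrow$): we need enough elements of the form $\alpha\times\id$ in $\Inn(P\times Q)$. The natural source is transvections. For $x,x'\in P$ and any $y\in Q$,
\[ s_{(x,y)}s_{(x',y)}^{-1}=s_xs_{x'}^{-1}\times\id_Q. \]
These elements lie in $\relTrans(\pr_2)\subseteq\relInn(\pr_2)$, and they generate the subgroup $\Trans(P)\times\{\id\}$. If $P$ is connected, \cref{lemma: connectedness is also determined by trans} says that $\Trans(P)$ acts transitively on $P$. Therefore $\relTrans(\pr_2)$ acts transitively on each fiber $P\times\{q\}$, and $\pr_2$ is connected; in fact it is strongly connected. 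Alternatively, each fiber $P\times\{q\}$ is a subquandle isomorphic to $P$, since $s_{(x,q)}(x',q)=(s_x x',q)$ and $s_q(q)=q$. This lets us invoke \cref{proposition:strongly_connected_can_be_verified_fiberwise} directly.

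The only subtle point is the ($\Rightarrow$) direction: one must make sure that the first component of a relative inner automorphism is genuinely inner on $P$, and not merely some automorphism. Using the surjection $(\pr_1)_*$, available because $Q$ is nonempty so that $\pr_1$ is surjective, settles this cleanly.
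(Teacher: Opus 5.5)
Your proof is correct, and your forward direction is the paper's argument: fix $q\in Q$, take a relative inner automorphism sending $(x,q)$ to $(x',q)$, and push it into $\Inn(P)$ via $(\pr_1)_*$, which is available because $Q\neq\emptyset$.

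The converse takes a different route. The paper computes nothing. It notes that $P\to\{\ast\}$ is strongly connected when $P$ is connected, and that $\pr_2\colon P\times Q\to Q$ is its pullback along the surjection $Q\to\{\ast\}$. Then \cref{prop:strongly_connected_is_stable_under_surjective_pullback} makes $\pr_2$ strongly connected, hence connected.

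You instead check directly that $s_{(x,y)}s_{(x',y)}^{-1}=s_xs_{x'}^{-1}\times\id_Q$. Hence $\relTrans(\pr_2)$ contains (indeed equals) $\Trans(P)\times\{\id_Q\}$, and \cref{lemma: connectedness is also determined by trans} finishes. Your quicker alternative also works: each fiber $P\times\{q\}$ is a subquandle isomorphic to $P$, so \cref{proposition:strongly_connected_can_be_verified_fiberwise} applies.

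Your computation is exactly the generator-lifting step inside \cref{prop:base_change_trans_rel}, specialized to this pullback, so the two arguments have the same content. Yours is self-contained and gives an explicit description of $\relTrans(\pr_2)$. The paper's is shorter given its machinery, and it presents the statement as an instance of base-change stability of strong connectedness.
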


\begin{proof}
    ($\Rightarrow$): As $Q$ is non-empty, we take an element $q\in Q$. Let $x, y \in P$ be any two elements. Considering $(x,q),(y,q) \in P\times Q$, by the connectedness, we get $\Phi\in \relInn(\pr_2)$ such that $\Phi(x,q)=(y,q)$. Let $\varphi\coloneqq (\pr_1)_*(\Phi)\in \Inn(P)$, where $\pr_1$ is the first projection. Then we have $\varphi(x)=y$, which proves $P$ is connected.

    ($\Leftarrow$): When $P$ is connected, the unique homomorphism $P\to \{\ast\}$ is strongly connected. Then by \cref{prop:strongly_connected_is_stable_under_surjective_pullback}, the pullback $\pr_2\colon P\times Q \to Q$ along the surjection $Q\to \{\ast\}$ is also strongly connected, and hence connected.
\end{proof}

\section{Doubly Transitive Homomorphisms}
\label{section: doubly transitive homomorphisms}

In this section, following the studies of doubly transitive quandles by Tamaru~\cite{tamaru_2013_twopoint_homogeneous_quandles_with_prime_cardinality}, Wada~\cite{wada_2015_twopoint_homogeneous_quandles_with_cardinality_of_prime_power}, and Vendramin~\cite{vendramin_2017_doubly_transitive_groups_and_cyclic_quandles}, we discuss the surjective quandle homomorphisms whose relative inner automorphism group acts $2$-transitively on every fiber.
To see this we first introduce the relative version of doubly-transitivity.

\begin{dfn}
\label{definition: doubly transitive quandle homomorphism}
    A quandle homomorphism $f\colon P \to Q$ is called \emph{doubly transitive} if it is surjective and for each $q\in Q$, the action of $\relInn(f)$ on the fiber $F_q=f^{-1}(q)$ is $2$-transitive.
\end{dfn}

\begin{eg}
    Let $P$ be a doubly transitive quandle and $Q$ be a trivial quandle.
    Then the relative inner automorphism group of the projection $P\times Q \to Q$ acts doubly transitively on each fiber.
\end{eg}

Since doubly transitive group actions are transitive, we have the following.

\begin{lem}
    If a quandle homomorphism $f\colon P \to Q$ is doubly transitive, then it is connected.
\end{lem}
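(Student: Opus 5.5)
This is immediate from the definitions, and I would argue it directly, unpacking both notions fiber by fiber. Let $f\colon P\to Q$ be doubly transitive. First, surjectivity is part of \cref{definition: doubly transitive quandle homomorphism}, so the surjectivity requirement in \cref{definition: connectedness for morphism} is already met. It remains to show that for each $q\in Q$ the action of $\relInn(f)$ on $F_q=f^{-1}(q)$ is transitive. This action is well defined by \cref{proposition: relinn preserves fibers}.

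Fix $q\in Q$ and let $x,y\in F_q$. I would split into two cases.

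\emph{Case $x=y$.} The identity in $\relInn(f)$ sends $x$ to $y$.

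\emph{Case $x\neq y$.} The ordered pairs $(x,y)$ and $(y,x)$ both consist of distinct points of $F_q$. By $2$-transitivity there is $\varphi\in\relInn(f)$ with $(\varphi(x),\varphi(y))=(y,x)$, and in particular $\varphi(x)=y$.

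Hence $\relInn(f)$ acts transitively on every fiber, and $f$ is connected.

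The only point needing care is a degenerate one: a fiber with fewer than two points. Such a fiber is still nonempty, because $f$ is surjective. If it has one point, transitivity is trivial. If it has at least two points, the argument above applies. I expect no real obstacle here.
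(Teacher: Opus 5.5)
Your proof is correct and follows the paper's approach. The paper justifies the lemma in one line, observing that a $2$-transitive action is transitive; you unpack that fact fiber by fiber, using the swap $(x,y)\mapsto(y,x)$ and handling the one-point fibers separately.
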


\begin{rem}
    For the case of connectedness, fiber-wise connectedness implies the connectedness of homomorphisms (\cref{proposition:strongly_connected_can_be_verified_fiberwise}).
    On the other hand, it does not hold for doubly transitive morphisms; see \cref{example: fiber-wise doubly trans and not doubly trans}.
\end{rem} 

As above, we write the restriction map $\rho_q\colon \relInn(f) \to \Aut(F_q)$, $\varphi\mapsto \varphi\rvert_{F_q}$, via which $\relInn(f)$ acts on the fiber $F_q$.

The purpose of this section is to establish the following theorem.

\begin{thm*}[= \cref{theorem:classification_of_doubly_transitive_hom_with_finite_fibers}]
%\label{theorem: main theorem for doubly trans morph}
    Let $f\colon P\to Q$ be a doubly transitive homomorphism with finite fibers.
    Then each fiber is a trivial quandle or a connected Alexander quandle of prime power cardinality.
\end{thm*}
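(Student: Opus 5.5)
The plan is to work one fiber at a time. Fix $q\in Q$, write $F=F_q$, $n=\card{F}$, and set
\[ G\coloneqq\langle \rho_q(\relInn(f)),\Inn(F)\rangle\subseteq\Aut(F). \]
Two facts drive everything. First, $G$ contains $\rho_q(\relInn(f))$, which is $2$-transitive on $F$, so $G$ is $2$-transitive. Second, for $x\in F$ we have $s_x\in\Inn(P)$ and $s_x$ preserves $F$; hence conjugation by $s_x$ normalizes $\relInn(f)$. Therefore $\Inn(F)$ normalizes $\rho_q(\relInn(f))$, and $N\coloneqq\rho_q(\relInn(f))$ is a normal $2$-transitive subgroup of $G$. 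Each $\varphi\in\relInn(f)$ is a quandle automorphism of $P$ preserving $F$, so $N\subseteq\Aut(F)$ and $\varphi s_x\varphi^{-1}=s_{\varphi(x)}$ on $F$. The cases $n\le 2$ are trivial, since any quandle of order at most $2$ is trivial; so I assume $n\ge3$.

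\textbf{Step 1: $F$ is trivial or connected.} The map $x\mapsto s_x|_F$ is $N$-equivariant from $F$ into $\Inn(F)$, with $N$ acting on the target by conjugation. The fibers of this map form an $N$-invariant partition of $F$. A $2$-transitive action is primitive, so this partition is either discrete or the whole of $F$.

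If it is the whole of $F$, then all $s_x|_F$ are equal to a single $s$, and $s$ fixes every $x\in F$. So $s=\id$ and $F$ is trivial.

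Otherwise $x\mapsto s_x|_F$ is injective. The orbits of $\Inn(F)$ then form a $G$-invariant partition, because $N$ normalizes $\Inn(F)$. Primitivity of $G$ again forces this partition to be trivial. If every orbit is a singleton, $\Inn(F)$ acts trivially, so each $s_x|_F=\id$; by injectivity this gives $n=1$, a contradiction. Hence $F$ is connected.

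\textbf{Step 2: the connected case is Alexander of prime-power order.} Assume $F$ is connected. I will show that a finite quandle $F$ carrying a $2$-transitive group $G$ with $\Inn(F)\trianglelefteq G\subseteq\Aut(F)$ is Alexander of prime-power order. Here $\Inn(F)$ is normal in $G$ because $\Aut(F)$ normalizes $\Inn(F)$.

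By Burnside's theorem, a finite $2$-transitive group has a unique minimal normal subgroup $T$, which is either elementary abelian and regular, or nonabelian simple. Since $\Inn(F)$ is a nontrivial normal subgroup of $G$, it contains $T$. The same holds for $\Trans(F)$, which is characteristic in $\Inn(F)$ and transitive by \cref{lemma: connectedness is also determined by trans}. I would then rule out the almost-simple case: the stabilizer of $x$ contains the element $s_x$, which centralizes ...

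The main obstacle is this step. In the almost-simple case, $\Inn(F)/\Trans(F)$ is cyclic, and the permutations $s_x$ form a conjugacy class that is closed under conjugation and has exactly $n$ elements, one fixing each point. I would try to play this against the classification of $2$-transitive almost simple groups, or better, quote Vendramin's analysis in \cite{vendramin_2017_doubly_transitive_groups_and_cyclic_quandles}: a conjugacy class of size $n$ in a $2$-transitive group, with each member fixing exactly its own point, forces the affine case except for small $n$, where $n=3$ is handled by hand.

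In the affine case $T\cong\FB_p^k$ acts regularly. I would show $\Trans(F)=T$, for instance by arguing that $\Trans(F)$ is abelian. Then $F$ is principal over an abelian group, and the standard identification of a connected quandle whose transvection group is abelian and regular gives $F\cong\Alex(T,\sigma)$ with $\sigma$ equal to conjugation by $s_x$.

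Combining Steps 1 and 2, and noting that Alexander quandles of order $3$ are $R_3$, every fiber is trivial or a connected Alexander quandle of order $p^k$. The trivial case needs no order restriction beyond that claimed.
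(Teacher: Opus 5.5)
Your overall architecture is the paper's: the same group $G=\langle\rho_q(\relInn(f)),\Inn(F)\rangle$, a trivial/connected dichotomy, then Burnside's dichotomy for $G$. Step~1 is correct. Arguing via primitivity of $N$ on the partition $x\sim y\iff s_x|_F=s_y|_F$, and then primitivity of $G$ on the $\Inn(F)$-orbits, is a clean alternative to the paper's $G_x$-equivariance of $s_x$ on $F\setminus\{x\}$.

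The gap is Step~2, where neither branch is actually carried out. In the affine branch, ``show $\Trans(F)=T$, for instance by arguing that $\Trans(F)$ is abelian'' is the entire substance of the step, and you give no argument for it. The missing idea is that $T\subseteq G\subseteq\Aut(F)$, so the translations are quandle automorphisms. Fix $0\in F$, write $T_x$ for the unique element of $T$ with $T_x(0)=x$, and identify $F$ with the elementary abelian group $V\coloneqq T$ via $t\mapsto t(0)$. Then $T_xs_0T_x^{-1}=s_{T_x(0)}=s_x$. Since $s_0\in G_0$ normalizes $T$, regularity gives $s_0T_vs_0^{-1}=T_{s_0(v)}$, i.e.\ $s_0$ is additive on $V$. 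Hence $s_x(y)=(1-s_0)(x)+s_0(y)$, so $F\cong\Alex(V,s_0)$ with $\card{F}=p^k$ directly. This is exactly the paper's argument. It also gives $s_xs_0^{-1}=T_{x-s_0(x)}$, hence $\Trans(F)\subseteq T$, if you prefer your route through the transvection group.

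In the almost simple branch your sentence breaks off. The statement you propose to quote (``forces the affine case except for small $n$'') is too vague to be checked. What is needed is exactly two facts:
\begin{enumerate}
\item[(a)] $s_x\in Z(G_x)$, since $gs_xg^{-1}=s_{g(x)}=s_x$ for every $g\in G_x\subseteq\Aut(F)$.
\item[(b)] For a finite almost simple group acting faithfully and $2$-transitively on at least $4$ points, the point stabilizer has trivial center.
\end{enumerate}
Fact (b) is the paper's lemma on almost simple groups. In the $3$-transitive subcase it follows by applying center-triviality of faithful $2$-transitive groups to $G_x$ acting on $F\setminus\{x\}$. Otherwise it rests on Vendramin's argument using triviality of the Fitting subgroup of $G$, or on the classification of almost simple $2$-transitive groups. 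Together, (a) and (b) force $s_x=\id$, contradicting connectedness. There is also no small-$n$ exception to handle: every $2$-transitive group of degree at most $4$ is affine, so the almost simple case only arises for $n\geq 5$.
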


%We will see in \cref{example: doubly trans morph and triv and connected fiber} that every quandle appearing in the above classification actually arises as a fiber of a doubly transitive homomorphism:
We will see in \cref{example: doubly trans morph and triv and connected fiber} that both cases in the above classification can actually occur as fibers of a doubly transitive homomorphism:

\begin{eg}
    Conversely, let $\FB$ be a finite field, and $L_c$ the left multiplication of $c$.
    For every $c\in\FB^\times$, there exists a doubly transitive quandle homomorphism having a fiber isomorphic to 
    \[
    \Alex(\FB, L_c), \qquad s_x(y) = (1-c)x+cy.
    \]
    In particular, the construction realizes both trivial fibers, corresponding to $c=1$, and non-trivial connected Alexander fibers, corresponding to $c\neq1$.
\end{eg}

\Cref{example: doubly trans morph and triv and connected fiber} provides examples of doubly transitive homomorphisms in which trivial fibers (when they occur) have prime power cardinality. The authors do not know whether every finite trivial quandle occurs as a fiber:

\begin{question}
    For any positive integer $n$, does there exist a doubly transitive quandle homomorphism $f\colon P\to Q$ having a fiber isomorphic to the trivial quandle of order $n$?
\end{question}

\subsection{Group-Theoretic Preparations}
\label{subsection: group theoretic preparation}

We prepare some group-theoretic results that will be used later.

\begin{lem}
\label{lemma:doubly-transitive_induces_transitive_action_of_stabilizer}
%% see [Tamaru2013, Prop.3.3]
    Let $G$ be a group acting on a set $X$ and $G_x$ be the stabilizer at any $x\in X$. Assume $\lvert X \rvert \geq 3$. If the action $G \curvearrowright X$ is doubly transitive, then $G_x\curvearrowright X\setminus \{x\}$ is transitive.
\end{lem}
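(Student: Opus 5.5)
The plan is to use the definition of $2$-transitivity directly, applied to pairs that share the fixed point $x$ as their first coordinate. Fix $x\in X$ and take two points $y,z\in X\setminus\{x\}$. We need some $g\in G_x$ with $g(y)=z$.

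Since $y\neq x$ and $z\neq x$, the ordered pairs $(x,y)$ and $(x,z)$ both consist of distinct elements. By double transitivity of $G\curvearrowright X$, there is $g\in G$ with
\[
(g(x),g(y))=(x,z).
\]
The first coordinate gives $g(x)=x$, so $g\in G_x$. The second gives $g(y)=z$.

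Because $g$ fixes $x$ and is a bijection, it also preserves $X\setminus\{x\}$. So $G_x$ genuinely acts on $X\setminus\{x\}$, and this action is transitive.

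There is no real obstacle; the argument is a single application of the definition. The hypothesis $\lvert X\rvert\geq 3$ is only needed to make the conclusion non-degenerate. When $\lvert X\rvert\le 2$, the set $X\setminus\{x\}$ has at most one point, and the statement holds trivially provided it is non-empty.
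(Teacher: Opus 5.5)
Your proposal is correct and is essentially the paper's own proof: apply double transitivity to the pairs $(x,y)$ and $(x,z)$ to get $g$ with $g(x)=x$ and $g(y)=z$. Your extra remark that $G_x$ preserves $X\setminus\{x\}$, so that the restricted action makes sense, is a harmless addition the paper leaves implicit.
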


\begin{proof}
    If we take $y,y'\in X\setminus\{x\}$, then double transitivity implies that there exists $g\in G$ such that $g\cdot (x,y)=(x,y')$. This means that $g\in G_x$ and $g\cdot y=y'$.
\end{proof}

\begin{lem}
\label{lemma: commute bijection is id or no fixed point}
Let $G$ be a group that acts on a set $X$ transitively, and $f \colon X\to X$ be a bijection.
Assume that for any $g\in G$ and any $x\in X$, $f(g\cdot x) = g\cdot f(x)$.
If $f$ has a fixed point, then $f=\id$.
\end{lem}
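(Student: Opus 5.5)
Suppose $f$ has a fixed point $x_0\in X$, so $f(x_0)=x_0$. The plan is to move this fixed point around by $G$ and use transitivity to cover all of $X$.

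Take an arbitrary $y\in X$. By transitivity of the action there is $g\in G$ with $g\cdot x_0 = y$. The equivariance hypothesis $f(g\cdot x)=g\cdot f(x)$ then gives
\[
f(y) = f(g\cdot x_0) = g\cdot f(x_0) = g\cdot x_0 = y .
\]
Since $y$ was arbitrary, $f$ fixes every point of $X$, that is, $f=\id$. In other words, the fixed-point set of an equivariant map is $G$-invariant, and a nonempty $G$-invariant subset of a transitive $G$-set is the whole set.

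There is no real obstacle here. Bijectivity of $f$ is not even needed for this step; it only matters for how the lemma will be applied later, presumably to elements of the centralizer of the $2$-transitive group in the symmetric group on the fiber, such as restrictions of $s_x$ for $x$ in the fiber that commute with $\rho_q(\relInn(f))$ under suitable hypotheses. The one point to check is that equivariance is used in the stated direction $f(g\cdot x)=g\cdot f(x)$, which is exactly what the computation above uses.
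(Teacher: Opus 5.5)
Your argument is correct and is exactly the paper's proof: transport the fixed point by an element $g$ with $g\cdot x_0=y$ and use equivariance to get $f(y)=y$. One small correction to your side remark: the paper does not apply the lemma to a centralizer of the $2$-transitive group. It applies it to $s_x$ restricted to $F_q\setminus\{x\}$, which is equivariant for the stabilizer $H_x$, and $H_x$ acts transitively on $F_q\setminus\{x\}$ by double transitivity.
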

\begin{proof}
    Let $a\in X$ be a fixed point of $f$. As the action is transitive, for any $x\in X$, there exists $g\in G$ such that $x = g\cdot a$. Then, we have
    \[
    f(x) = f(g\cdot a) = g\cdot f(a) = g\cdot a =x.
    \]
    This means $f= \id$.
\end{proof}

\begin{lem}[{\cite[Theorem 1.7]{book_cameron_1999_permutation_groups}}]
\label{lemma: normal subgrp of primitive action acts transitively}
Let $G$ be a group that acts faithfully and primitively on a set $X$, and $H\mathrel{\triangleleft} G$ be a non-trivial normal subgroup. Then, the induced action $H\curvearrowright X$ is transitive.
\end{lem}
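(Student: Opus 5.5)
The plan is to use the standard block argument: the orbits of a normal subgroup form a system of imprimitivity, and primitivity together with faithfulness then leaves only one possibility.

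\textbf{Step 1: the $H$-orbits form a $G$-invariant partition.} The orbits $\{H\cdot x\}_{x\in X}$ partition $X$. For $g\in G$ and $x\in X$, normality of $H$ gives
\[ g\cdot(H\cdot x) = (gHg^{-1})\cdot(g\cdot x) = H\cdot(g\cdot x). \]
So each $g\in G$ maps $H$-orbits onto $H$-orbits, and the partition into $H$-orbits is $G$-invariant. This is the only place normality is used.

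\textbf{Step 2: apply primitivity.} By definition, a primitive action is transitive and admits no $G$-invariant partition other than the partition into singletons and the partition $\{X\}$. So either every $H$-orbit is a singleton, or there is a single $H$-orbit equal to $X$.

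\textbf{Step 3: exclude the singleton case.} If every $H$-orbit is a singleton, then each $h\in H$ fixes every point of $X$. Faithfulness of the action then forces $h=1$, so $H$ is trivial. This contradicts the hypothesis, so $H\cdot x = X$ for any $x$, and $H$ acts transitively.

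I do not expect a real obstacle here. The only points needing care are to state the convention that primitivity includes transitivity (so $X$ is non-empty) and to note explicitly that faithfulness is exactly what rules out the case where $H$ fixes every point.
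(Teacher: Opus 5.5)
Your proposal is correct and follows essentially the same block-system argument as the paper: the $H$-orbits are permuted by $G$ by normality, primitivity leaves only the singleton partition or $\{X\}$, and the singleton case is excluded. You also state explicitly that faithfulness is what rules out the singleton case, a step the paper's proof leaves implicit.
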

\begin{proof}
    Let $\OO\coloneqq \{Hx \mid x\in X\}$ be the orbit decomposition of the action $H\curvearrowright X$.
    Note that there is the induced action $G\curvearrowright\OO$ because
    \[
    g(Hx) = gHg^{-1}\cdot gx = H(gx)
    \]
    by the normality.
    This action is transitive and thus $|Hx| = |Hy|$ for any $x, y\in X$, and it shows that $\OO$ forms a block system of $X$ with $G$-action.
    The primitivity of $G\curvearrowright X$ shows that either $|\OO| = |X|$ or $|\OO| = 1$ holds. 
    However, the former contradicts the non-triviality of $H$. This completes the proof.
\end{proof}

\begin{thm}[Burnside; see {\cite[Theorem 7.2E]{book_dixon_mortimer_1996_permutation_groups}} or {\cite[Theorem 4.3]{cameron_1981_finite_permutation_groups_and_finite_simple_groups}}]
    \label{theorem:Burnside's_theorem}
    % Cameron1981, Theorem 4.3:
    %% Let $G$ be a finite doubly transitive group and $N$ be a minimal normal subgroup of $G$. Then $N$ is either a regular elementary abelian group or a non-regular simple group.
    % DM96, Theorem 7.2 E:
    Let $G \leq Sym(X)$ be a finite $2$-transitive group. Then $\operatorname{soc}(G)$ is either (i) primitive and simple; or (ii) regular and elementary abelian.
\end{thm}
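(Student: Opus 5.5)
Since \cref{theorem:Burnside's_theorem} is a classical result that the paper only quotes, I would follow the standard route: work with a minimal normal subgroup $N$ of $G$, show that it is the whole socle, and then split according to whether $N$ is abelian. Throughout, I use that a $2$-transitive group is primitive: any nontrivial block containing two points would, by $2$-transitivity, contain every point. So \cref{lemma: normal subgrp of primitive action acts transitively} applies to every nontrivial normal subgroup of $G$. In particular $N$ is transitive on $X$. Being minimal normal, $N\cong T^k$ for some simple group $T$. I will also use \cref{lemma:doubly-transitive_induces_transitive_action_of_stabilizer}: for $x\in X$, the stabilizer $G_x$ is transitive on $X\setminus\{x\}$ (when $\lvert X\rvert\ge 3$; the cases $\lvert X\rvert\le 2$ are checked by hand).

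\emph{Abelian case.} If $T$ is abelian, then $N$ is elementary abelian. A transitive abelian permutation group is regular, since all point stabilizers are conjugate and hence equal, so their common value is trivial. If $M$ is another minimal normal subgroup, then $[M,N]\subseteq M\cap N=1$, so $M\subseteq C_{\mathrm{Sym}(X)}(N)$. The centralizer of a regular abelian group $N$ is $N$ itself. Hence $M\subseteq N$, so $M=N$, and $\operatorname{soc}(G)=N$ is regular and elementary abelian.

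\emph{Nonabelian case: $N$ is not regular and is the whole socle.} Suppose $N$ were regular. Identify $X$ with $N$ via $n\mapsto n\cdot x$. Then $G_x$ acts on $N$ by conjugation, by automorphisms, and $2$-transitivity makes this action transitive on $N\setminus\{1\}$. So all nonidentity elements of $N$ have the same prime order $p$, which makes $N$ a $p$-group and hence solvable, contradicting $T$ nonabelian. Thus $N$ is nonregular. If there were a second minimal normal subgroup $M\neq N$, it would centralize $N$. The centralizer of a transitive group is semiregular, so $M$ would be a transitive semiregular, hence regular, minimal normal subgroup. The argument just given then forces $M$ to be abelian, and the abelian case gives $N\subseteq C(M)=M$, which is a contradiction. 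Hence $\operatorname{soc}(G)=N$, and $N$ is primitive because it is transitive and its point stabilizers are maximal. (Alternatively, one can view primitivity as part of the conclusion to be deduced once simplicity is known.)

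\emph{Main obstacle: $k=1$.} The hardest step is showing $N=T^k$ with $k=1$ when $N$ is nonregular. My first attempt is the following. Write $N=T_1\times\dots\times T_k$; conjugation by $G$ permutes the factors $T_i$ transitively. If $k\ge2$, the O'Nan--Scott analysis of primitive groups with nonregular nonabelian socle puts $G$ in product action, diagonal type, or twisted wreath type. In each case I would exhibit a $G_x$-invariant partition of $X\setminus\{x\}$ into at least two parts, contradicting the transitivity of $G_x$ on $X\setminus\{x\}$:
\begin{itemize}
\item in product action, partition by Hamming distance from $x$;
\item in diagonal type, partition by the number of coordinates in which a point agrees with $x$ modulo the diagonal.
\end{itemize}
A more self-contained route I would try first is to count. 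Let $x\in X$. Since $N_x\trianglelefteq G_x$ and $G_x$ is transitive on $X\setminus\{x\}$, all $N_x$-orbits on $X\setminus\{x\}$ have equal size. Comparing this with the orbits of $T_1$ (which are blocks for $N$) should show that $T_1$ is already transitive, and then that $C_N(T_1)=T_2\times\dots\times T_k$ acts semiregularly. That would force $k=1$ by the same ``equal orders'' trick used above.
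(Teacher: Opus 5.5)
The paper does not prove this theorem; it only cites it. Judged on its own, your argument is sound up to the nonabelian case. The primitivity of $G$, the abelian case (regular, elementary abelian, unique), the non-regularity of a nonabelian minimal normal $N$ via the conjugation action of $G_x$ on $N\setminus\{1\}$, and $\operatorname{soc}(G)=N$ are all correctly argued.

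The first gap is the step $k=1$, which is the real content of the theorem. Your self-contained route rests on the claim that comparing $N_x$-orbits with $T_1$-orbits ``should show'' that $T_1$ is transitive, but you give no argument. The claim fails for primitive groups of product type, so it would have to be extracted from $2$-transitivity by a step you do not supply. Even granting it, you only reach $k\le 2$ with $T_1$ and $T_2=C_{\mathrm{Sym}(X)}(T_1)$ both regular. The ``equal orders'' trick then does not apply verbatim, because $T_1$ is not normal in $G$: $G_x$ interchanges $T_1$ and $T_2$. The O'Nan--Scott route is legitimate in principle, though much heavier than the statement, and twisted wreath type needs no treatment since its socle is regular. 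However, your diagonal-type partition is trivial for $k=2$. There $X\cong T$, $T\times T$ acts by $t\mapsto atb^{-1}$, and the quantity you propose to count takes the same value at every point other than $x$. So the configuration $N=T\times T$ on $X\cong T$ is left open by both routes. The missing idea is that $G_x$ then acts on $X\cong T$ through automorphisms of $T$ and inversion, both of which preserve element orders. A nonabelian simple group has elements of two different prime orders, so $G_x$ cannot be transitive on $X\setminus\{x\}$.

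The second gap is the primitivity of $N$ in case (i). Saying that $N$ is primitive ``because it is transitive and its point stabilizers are maximal'' merely restates the definition, and your fallback of deducing it once simplicity is known is never carried out. This is not automatic: the socle of a primitive almost simple group can be imprimitive. For example, $\mathrm{PGL}(2,7)$ is primitive on the $21$ cosets of $D_{16}$, while $\mathrm{PSL}(2,7)$ acts there as on the cosets of $D_8<S_4$. You need an argument that uses $2$-transitivity. For instance, since $N_x\trianglelefteq G_x$ and $G_x$ is transitive on $X\setminus\{x\}$, all $N_x$-orbits on $X\setminus\{x\}$ have a common length, which exceeds $1$ because $N_x\neq 1$. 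By Wielandt's theorem such a $\tfrac32$-transitive group is either primitive or a Frobenius group. A nonabelian simple group cannot be a Frobenius group, since the Frobenius kernel would be a proper nontrivial normal subgroup.
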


\begin{lem}
\label{lem:faithful_and_2-transitive_action_implies_center-trivial}
    Let $G$ be a group acting faithfully and $2$-transitively on a set $X$ with $\card{X}\geq 3$.
    Then the center $Z(G)$ is trivial.
\end{lem}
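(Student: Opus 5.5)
The plan is to show directly that every central element fixes a point, and then to apply \cref{lemma: commute bijection is id or no fixed point} to conclude that it is the identity. Let $z\in Z(G)$ act on $X$ by the bijection $x\mapsto z\cdot x$. Since $z$ commutes with every $g\in G$, this bijection satisfies $z\cdot(g\cdot x)=g\cdot(z\cdot x)$. The action of $G$ is transitive because it is $2$-transitive. So by \cref{lemma: commute bijection is id or no fixed point}, either $z$ acts as the identity or $z$ has no fixed point on $X$. In the first case faithfulness gives $z=1$. It therefore suffices to rule out the case where a central element acts fixed-point-freely.

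Suppose, then, that $z$ has no fixed point. Fix $x\in X$ and put $y\coloneqq z\cdot x\neq x$. Because $\card{X}\geq 3$, we can choose $w\in X\setminus\{x,y\}$. By \cref{lemma:doubly-transitive_induces_transitive_action_of_stabilizer}, the stabilizer $G_x$ acts transitively on $X\setminus\{x\}$, so there is $g\in G_x$ with $g\cdot y=w$. Since $z$ is central,
\[
w = g\cdot y = g z\cdot x = z g\cdot x = z\cdot x = y,
\]
which contradicts the choice $w\neq y$. Hence $z$ cannot be fixed-point-free, and so $z$ acts trivially.

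Combining the two steps with faithfulness gives $Z(G)=1$. The only delicate point is the case analysis: one must isolate the fixed-point-free case and use $\card{X}\geq 3$ to find a third point $w$. This is exactly where the hypothesis is needed, since for $\card{X}=2$ the group $\ZZ/2$ acting on two points is a counterexample. The rest is routine.
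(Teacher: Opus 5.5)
Your proof is correct and is essentially the paper's argument: pick a point $x$ moved by the central element, choose a third point outside $\{x, z\cdot x\}$, and use transitivity of $G_x$ on $X\setminus\{x\}$ to reach a contradiction. The preliminary reduction via \cref{lemma: commute bijection is id or no fixed point} is harmless but unnecessary, since your second step only needs one point with $z\cdot x\neq x$, and faithfulness already supplies such a point when $z\neq 1$.
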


\begin{proof}
    Take $g\in Z(G)$ from the center and assume that $g\neq 1$.
    Since the action of $G$ is faithful, there is an element $x\in X$ such that $y\coloneqq gx\neq x$.
    By \cref{lemma:doubly-transitive_induces_transitive_action_of_stabilizer}, the action $G_x \curvearrowright X\setminus \{x\}$ is transitive.
    If we take $z \in X\setminus \{x,y\}$, then there is $h\in G_x$ such that $hy=z$. However, since $g$ is a central element, we have $z=hy=hgx=ghx=gx=y$, which is a contradiction.
\end{proof}

We say that a group $G$ is \emph{almost simple} if there exists a non-abelian simple group $S$ such that $S \leq G \leq \Aut(S)$.

\begin{lem}
\label{lem:almost_simple_center_trivial}
    Let $G$ be a finite almost simple group acting faithfully and doubly transitively on a finite set $X$ with $\card{X}\geq 4$. Then for any $x \in X$, the center of the point stabilizer $Z(G_x)$ is trivial.
\end{lem}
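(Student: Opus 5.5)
We would reduce the claim to a statement about fixed points and then exclude the remaining configuration using the structure of almost simple $2$-transitive groups. Let $z\in Z(G_x)$ with $z\neq 1$; we aim for a contradiction.

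\emph{Step 1: $z$ fixes only $x$.} Since $\card{X}\geq 4\geq 3$, \cref{lemma:doubly-transitive_induces_transitive_action_of_stabilizer} shows that $G_x$ acts transitively on $X\setminus\{x\}$. The element $z$ fixes $x$, so it restricts to a bijection of $X\setminus\{x\}$, and this bijection commutes with the action of $G_x$. By \cref{lemma: commute bijection is id or no fixed point}, $z$ either acts trivially on $X\setminus\{x\}$ or has no fixed point there. In the first case $z=1$ by faithfulness, which is excluded. Hence every nontrivial element of $Z\coloneqq Z(G_x)$ has $x$ as its unique fixed point, and $Z$ acts semiregularly on $X\setminus\{x\}$.

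\emph{Step 2: centralizer and normalizer.} A $2$-transitive action is primitive, so $G_x$ is a maximal subgroup of $G$. Since $G_x\subseteq C_G(z)$, either $C_G(z)=G$ or $C_G(z)=G_x$. The first would put $z$ in $Z(G)$, which is trivial by \cref{lem:faithful_and_2-transitive_action_implies_center-trivial}; so $C_G(z)=G_x$. Similarly, $Z$ is characteristic in $G_x$, so $N_G(Z)\in\{G_x,G\}$. If $Z$ were normal in $G$, then \cref{lemma: normal subgrp of primitive action acts transitively} would make $Z$ transitive on $X$, contradicting $Z\subseteq G_x$. Thus $N_G(Z)=G_x$. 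Together with Step 1 this gives $Z\cap Z^g=1$ for every $g\notin G_x$, since an element of $Z\cap Z^g$ would fix both $x$ and $gx\neq x$. So $Z$ behaves like a ``Frobenius-type'' kernel inside the point stabilizers.

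\emph{Step 3: contradiction from almost simplicity (main obstacle).} By Burnside's theorem (\cref{theorem:Burnside's_theorem}) and the almost simple hypothesis, the socle $S$ is nonabelian simple and transitive but not regular. From here I would invoke the classification of finite almost simple $2$-transitive groups: $A_n$ and $S_n$; $\mathrm{PSL}(n,q)\leq G\leq \mathrm{P\Gamma L}(n,q)$ on points or hyperplanes; $\mathrm{Sp}(2m,2)$ on the two families of quadratic forms; the Ree, Suzuki and unitary groups $^2G_2(q)$, $\mathrm{Sz}(q)$, $\mathrm{PSU}(3,q)$; and finitely many sporadic cases. For each family I would check that the point stabilizer has no nontrivial central element acting fixed-point-freely on $X\setminus\{x\}$. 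The parabolic cases are handled by observing that the unipotent radical $U$ of $G_x$ acts regularly on $X\setminus\{x\}$ (for the rank-one families) or nontrivially with fixed points (for $\mathrm{PSL}(n,q)$ with $n\geq 3$). A central $z$ must then centralize the Levi complement acting on $U$, and there that action is fixed-point-free, which forces $z=1$. For $A_n$ and $S_n$ with $n\geq 4$, $G_x$ is $A_{n-1}$ or $S_{n-1}$, whose centre is trivial. The small and sporadic cases are checked directly.

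The main difficulty is exactly this case analysis. Before committing to it, I would try an elementary route to rule out Step 2's configuration. The idea is to count elements: the conjugates of $Z\setminus\{1\}$ give $\card{X}(\card{Z}-1)$ elements, each fixing exactly one point. This could be combined with the fact that $S\cap G_x$ is normal in $G_x$, and hence normalized by $Z$, to force $Z\leq S$ and then make $S$ a Frobenius group. A Frobenius group has a nilpotent normal kernel, which is impossible for nonabelian simple $S$. If this argument closes, it would avoid the classification entirely.
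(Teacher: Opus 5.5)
Your Steps 1 and 2 are correct, but they never use almost simplicity, and the whole proof rests on Step 3, which is where the gap lies.

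The elementary route cannot close as sketched. Apart from centrality, every property of $Z$ it invokes is also satisfied by the unipotent radical $U=\{t\mapsto t+b\}$ of a point stabilizer in the simple group $\mathrm{PSL}(2,q)$ acting on the projective line: each nontrivial element fixes only one point, $U\cap U^g=1$ off the stabilizer, $N_G(U)=G_x$, and $U\le S$. That group is not Frobenius. A trivial-intersection subgroup whose normalizer $G_x$ is strictly larger than itself is not a Frobenius complement. A Frobenius structure on $S$ would also need trivial two-point stabilizers, which nothing in your argument provides, and you give no reason for $Z\le S$. What must be exploited is centrality itself: for instance $C_G(z)=G_x$, so $z$ commutes with none of its other conjugates. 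The paper does this by importing Vendramin's argument, which rests on $F(G)=1$.

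The classification route is legitimate, and the paper itself mentions it as an alternative. However, your uniform treatment of the rank-one families is false as stated. Take ${}^2G_2(3)\cong\mathrm{P\Gamma L}(2,8)$ in its $2$-transitive action of degree $28$. Here $G_x\cong C_9\rtimes\Aut(C_9)$, and $U=O_3(G_x)\cong C_9\rtimes C_3$ is regular on the other $27$ points. The two-point stabilizer $G_{x,y}$ has order $2$ and centralizes a subgroup of order $3$ of $U$. So the Levi action on $U$ is not fixed-point-free, and your argument does not force $z=1$. The conclusion $Z(G_x)=1$ still holds there, but it must be checked separately, even though this case belongs to the Ree family rather than to the sporadic list. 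Your sentence on $\mathrm{PSL}(n,q)$ with $n\ge 3$ is not yet an argument either, since the inference needs $z\in U$ or regularity of $U$.

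The paper also organizes the proof differently. It first removes the $3$-transitive case by applying \cref{lem:faithful_and_2-transitive_action_implies_center-trivial} to the faithful $2$-transitive action of $G_x$ on $X\setminus\{x\}$. That disposes at once of $A_n$, $S_n$, the Mathieu groups, and the $3$-transitive groups with socle $\mathrm{PSL}(2,q)$. Only for $2$- but not $3$-transitive $G$ does the paper appeal to Vendramin's $F(G)=1$ argument or to case inspection. Building that reduction into your Step 3 would substantially shorten your case list.
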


\begin{proof}
    In the case of the action of $G$ being $3$-transitive, $G_x$ acts faithfully and $2$-transitively on $X \setminus \{x\}$. Since $|X \setminus \{x\}| \ge 3$, it follows from \cref{lem:faithful_and_2-transitive_action_implies_center-trivial} that $Z(G_x)=\{1\}$.

    Hence, we may assume that $G$ is $2$-transitive but not $3$-transitive. Since $G$ is an almost simple group, it contains a non-abelian simple socle. This implies that its Fitting subgroup is trivial, i.e., $F(G) = \{1\}$. 
    By applying the elegant conceptual argument given by Vendramin \cite[Proof of Theorem 3]{vendramin_2017_doubly_transitive_groups_and_cyclic_quandles}%
    \footnote{See also G.~Robinson's answer on the mathoverflow question asked by L.~Vendramin: \url{https://mathoverflow.net/q/184682}.}%
    , which fundamentally relies on $F(G) = \{1\}$, we can conclude that $Z(G_x) = \{1\}$.
    (Alternatively, the triviality of $Z(G_x)$ can be directly confirmed by a case-by-case inspection of the point stabilizers using the known classification list of almost simple and $2$-transitive groups (see \cite[Section 7.4]{cameron_1995_permutation_groups} for example).)
\end{proof}

\begin{comment}
\memo{This can be deleted, as is not refed anywhere.}

\begin{prop}[Dixon-Mortimer:1996, Theorem 4.7 A.]
\label{prop:Dixon-Mortimer:1996_Theorem_4-7-A}
Let $G$ be a finite primitive group with an abelian socle (which is necessarily regular). Then $G$ has degree $p^n$ for some prime $p$ and some $n \geq 1$. If $V$ is a vector space of dimension $n$ over the field $\mathbb{F}_p$ with $p$ elements, then there is a subgroup $K\leq \GL(V)$ acting irreducibly on $V$ and an isomorphism of $G$ onto $V \rtimes K$ in which a point stabilizer of $G$ maps onto $K$.
\end{prop}
\end{comment}

\subsection{Classification of Finite Fibers}
\label{subsection: classification of doubly transitive action on fibers}

Before proceeding with the classification, we prepare some lemmas.

\begin{lem}
\label{lem:symmetry_map_at_x_is_in_the_center_of_stabilizer}
    Let $Q$ be a quandle. Then for any $x\in Q$, $s_x$ is in the center $Z(\Aut(Q)_x)$ of the stabilizer subgroup of $\Aut(Q)$ at $x$.
\end{lem}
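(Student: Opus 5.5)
The claim splits into two parts: $s_x$ lies in the stabilizer $\Aut(Q)_x$, and it commutes with every element of that stabilizer. Both follow directly from the quandle axioms together with the elementary fact that automorphisms conjugate symmetry maps.

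First, by axiom~\eqref{condition:Q1} we have $s_x(x)=x$. By axiom~\eqref{condition:Q2}, $s_x$ is bijective, and by axiom~\eqref{condition:Q3} it is a quandle homomorphism. Hence $s_x\in\Aut(Q)$, and it fixes $x$, so $s_x\in\Aut(Q)_x$.

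Next, for an arbitrary $\varphi\in\Aut(Q)$ we use the identity
\[
\varphi\circ s_x\circ\varphi^{-1}=s_{\varphi(x)}.
\]
This is just the homomorphism condition of \cref{definition: quandle homomorphism} applied to $\varphi$, namely $\varphi\circ s_x=s_{\varphi(x)}\circ\varphi$, composed on the right with $\varphi^{-1}$. If moreover $\varphi\in\Aut(Q)_x$, then $\varphi(x)=x$, so $\varphi s_x\varphi^{-1}=s_x$. Thus $s_x$ commutes with every element of $\Aut(Q)_x$.

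Combining the two parts gives $s_x\in Z(\Aut(Q)_x)$. The only step needing any care is the conjugation identity, and it is immediate from the definition of a homomorphism, so I expect no real obstacle.
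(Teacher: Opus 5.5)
Your proposal is correct and follows essentially the same route as the paper: both use the homomorphism identity $\varphi\circ s_x = s_{\varphi(x)}\circ\varphi$ for $\varphi\in\Aut(Q)_x$ to get commutation. The only difference is that you also verify explicitly, via \eqref{condition:Q1}--\eqref{condition:Q3}, that $s_x\in\Aut(Q)_x$, which the paper leaves implicit.
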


\begin{proof}
    Let $\varphi\colon Q \to Q$ be an automorphism of $Q$. If $\varphi\in \Aut(Q)_x$ (i.e.~$\varphi(x)=x$), then we have $\varphi\circ s_x = s_{\varphi(x)} \circ \varphi= s_x \circ \varphi$. This shows that $s_x \in Z(\Aut(Q)_x)$.
\end{proof}

The next lemma shows that it is a rare case that the actions on fibers are $d$-transitive for $d \geq 3$.

\begin{lem}[{cf.{\cite[Proposition 5]{mccarron_2012_small_homogeneous_quandles}}, \cite[Lemma 7]{vendramin_2017_doubly_transitive_groups_and_cyclic_quandles}}]
\label{lemma: no higher transitive fiber}
    Let $f\colon P\to Q$ be a surjective quandle homomorphism, $q\in Q$, and $d\in\ZZ_{\ge 3}$. 
    If $F_q=f^{-1}(q)$ is non-trivial and has at least $4$ elements, then the action $\relInn(f) \curvearrowright F_q$ is not $d$-transitive.
\end{lem}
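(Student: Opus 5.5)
Suppose, for contradiction, that $\relInn(f)$ acts $d$-transitively on $F_q$ for some $d\ge 3$; in particular the action is $3$-transitive. The idea is to exploit the symmetries $s_x$ for $x\in F_q$. Each $s_x$ restricts to an automorphism of $F_q$: since $f(s_x(y))=s_{f(x)}(f(y))=s_q(q)=q$, it preserves the fiber. By \cref{lem:symmetry_map_at_x_is_in_the_center_of_stabilizer}, applied to $P$, $s_x$ commutes with every automorphism of $P$ fixing $x$, in particular with every element of $\relInn(f)_x$. Note that $s_x$ need not lie in $\relInn(f)$, so we work with its commuting property rather than with membership.

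Fix $x\in F_q$. By $3$-transitivity, the stabilizer $\relInn(f)_x$ acts transitively on $F_q\setminus\{x\}$; this is \cref{lemma:doubly-transitive_induces_transitive_action_of_stabilizer} with one more point fixed. The restriction $\sigma\coloneqq s_x|_{F_q}$ fixes $x$, so it preserves $X\coloneqq F_q\setminus\{x\}$. Moreover, $\sigma$ commutes with the action of $\relInn(f)_x$ on $X$. By \cref{lemma: commute bijection is id or no fixed point}, $\sigma|_X$ is therefore either the identity or fixed-point-free on $X$.

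Next, use $3$-transitivity more sharply. Since $|F_q|\ge 4$, we have $|X|\ge 3$, and the stabilizer $\relInn(f)_x$ acts $2$-transitively on $X$. Suppose $\sigma|_X\ne\id$. Pick $y\in X$ and set $z=\sigma(y)\ne y$. Choose $w\in X\setminus\{y,z\}$. By $2$-transitivity on $X$, there is $g\in\relInn(f)_{x,y}$ with $g(z)=w$. Then
\[
w=g(z)=g\sigma(y)=\sigma g(y)=\sigma(y)=z,
\]
a contradiction. This is the same centrality argument as in \cref{lem:faithful_and_2-transitive_action_implies_center-trivial}, but applied to a commuting element that need not belong to the group. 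Hence $s_x|_{F_q}=\id$ for every $x\in F_q$, so $F_q$ is a trivial subquandle. This contradicts the hypothesis that $F_q$ is non-trivial.

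The main subtlety is that $s_x$ is generally not in $\relInn(f)$, so the centre lemma cannot be quoted verbatim. We must instead use that $s_x$ centralizes the full stabilizer in $\Aut(P)$ and pass to restrictions on the fiber. The point requiring care is checking that the commutation $\sigma g=g\sigma$ holds on $F_q$ for $g\in\relInn(f)_x$. This follows directly from \cref{lem:symmetry_map_at_x_is_in_the_center_of_stabilizer}, since $\relInn(f)_x\subseteq\Aut(P)_x$. The hypothesis $|F_q|\ge 4$ is used exactly to guarantee a third point $w$ in $X$.
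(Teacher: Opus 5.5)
Your proof is correct and follows the paper's argument: both reduce to $3$-transitivity, use that $s_x$ centralizes the stabilizer of $x$ (\cref{lem:symmetry_map_at_x_is_in_the_center_of_stabilizer}), and use that this stabilizer is $2$-transitive on $F_q\setminus\{x\}$ to force $s_x\rvert_{F_q}=\id$. The only difference is packaging. The paper passes to $\Aut(F_q)_x$, which contains both $\rho_q(\relInn(f)_x)$ and $s_x\rvert_{F_q}$ and acts faithfully, and then quotes \cref{lem:faithful_and_2-transitive_action_implies_center-trivial}; you instead inline that computation for an element that merely commutes with $\relInn(f)_x$, so your middle paragraph with the fixed-point dichotomy is not actually needed.
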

\begin{proof}
    The same proof as in {\cite[Proposition 5]{mccarron_2012_small_homogeneous_quandles}} and \cite[Lemma 7]{vendramin_2017_doubly_transitive_groups_and_cyclic_quandles} applies to our relative case.
    Put $H=\relInn(f)$.
    It is sufficient to show that the action $H \curvearrowright F_q$ is not $3$-transitive.
    Assume that it is $3$-transitive.
    Then the stabilizer $H_x$ acts $2$-transitively on $F_q\setminus\{x\}$ for any $x\in F_q$. In particular, the stabilizer $\Aut(F_q)_x$ also does, as it contains $\rho_q(H_x)$.
    It follows from \cref{lem:faithful_and_2-transitive_action_implies_center-trivial} that the center $Z(\Aut(F_q)_x)$ is trivial. However, as seen in \cref{lem:symmetry_map_at_x_is_in_the_center_of_stabilizer}, $s_x$ belongs to $Z(\Aut(F_q)_x)$, and hence $s_x= \id_{F_q}$ for all $x\in F_q$, which contradicts the non-triviality of $F_q$.
\end{proof}

The condition that a quandle homomorphism is doubly transitive strongly restricts the structure of its fibers. 
First, we observe that each fiber must be either a trivial quandle or a connected quandle using an elementary combinatorial argument.

\begin{thm}
\label{theorem: each fiber is trivial or connected as a subquandle}
    Let $f\colon P \to Q$ be a doubly transitive quandle homomorphism and $F_q$ be a fiber of $f$ with at least $3$ elements.
    Then the subquandle $F_q$ of $P$ is either a trivial quandle or a connected quandle.
\end{thm}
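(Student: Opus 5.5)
Write $F=F_q$, $H=\rho_q(\relInn(f))\subseteq\Aut(F)$, and for $x\in F$ let $\sigma_x\coloneqq s_x\rvert_F\in\Inn(F)\subseteq\Aut(F)$. The plan is to show: if some $\sigma_a\neq\id$, then every $\sigma_x\neq\id$, and then $\Inn(F)$ is transitive on $F$.

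\emph{Step 1 (conjugation).} For $\varphi\in\relInn(f)$ and $x\in F$ we have $\varphi(x)\in F$, and $\varphi s_x\varphi^{-1}=s_{\varphi(x)}$. Restricting to $F$ gives $\rho_q(\varphi)\,\sigma_x\,\rho_q(\varphi)^{-1}=\sigma_{\varphi(x)}$. Since $H$ is transitive on $F$, either all $\sigma_x=\id$ or none is. In the first case $F$ is trivial, so from now on assume every $\sigma_x\neq\id$.

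\emph{Step 2 (support of $\sigma_x$).} By \cref{lem:symmetry_map_at_x_is_in_the_center_of_stabilizer}, $\sigma_x$ commutes with the stabilizer $H_x\subseteq\Aut(F)_x$. By \cref{lemma:doubly-transitive_induces_transitive_action_of_stabilizer} (this uses $\card{F}\ge3$), $H_x$ acts transitively on $F\setminus\{x\}$. Also $\sigma_x$ fixes $x$, so it preserves $F\setminus\{x\}$. Apply \cref{lemma: commute bijection is id or no fixed point} to the action $H_x\curvearrowright F\setminus\{x\}$ and the bijection $\sigma_x\rvert_{F\setminus\{x\}}$. Since $\sigma_x\neq\id$, this bijection has no fixed point. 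Hence $x$ is the unique fixed point of $\sigma_x$ on $F$.

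\emph{Step 3 (transitivity).} Take two distinct points $x,y\in F$; we want to reach $y$ from $x$ inside $\Inn(F)$. Let $O$ be the $\Inn(F)$-orbit of $x$. The main obstacle is showing $O=F$. The first thing to try is as follows. $O$ is stable under every $\sigma_z^{\pm1}$ with $z\in F$, so each such $\sigma_z$ also preserves the complement $F\setminus O$. If $z\notin O$, then $\sigma_z$ acts on $O$, and by Step 2 it fixes no point of $O$, so it moves every point of $O$. Now use double transitivity to transport configurations. The partition of $F$ into $\Inn(F)$-orbits is preserved by $H$, because $\rho_q(\varphi)\sigma_z\rho_q(\varphi)^{-1}=\sigma_{\varphi(z)}$ means $H$ normalizes $\Inn(F)$. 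So this partition is an $H$-invariant equivalence relation on $F$. Since $H$ is $2$-transitive, hence primitive, the partition is either into singletons or a single block. Singletons would force $\sigma_z(x)=x$ for all $z,x$. By Step 2 that means every $\sigma_z=\id$, which contradicts the assumption. Hence $\Inn(F)$ is transitive, i.e.\ $F$ is connected.

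In fact this primitivity argument makes Step 2's fixed-point analysis only needed to exclude the singleton partition. Even that is immediate: a singleton partition means all $\sigma_z=\id$. So the real content is that $H$ normalizes $\Inn(F)$, together with primitivity of a $2$-transitive action. I would double-check that the orbit partition of a subgroup normalized by $H$ is $H$-invariant, using the computation in \cref{lemma: normal subgrp of primitive action acts transitively}.
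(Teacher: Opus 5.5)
Your proof is correct and follows essentially the paper's argument. Both use the same dichotomy: $s_x=\id$ gives a trivial fiber by conjugating with the transitive group $H$, and otherwise a nontrivial subgroup normalized by a primitive group is transitive. The paper applies \cref{lemma: normal subgrp of primitive action acts transitively} to $\Inn(F_q)\trianglelefteq\Aut(F_q)$, while you use primitivity of $H$ directly, which is the same computation. As you noticed, the fixed-point analysis of Step~2 (which the paper also carries out) is only used to know that $s_x\neq\id$, and the abandoned first attempt in Step~3 can simply be deleted.
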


\begin{proof}
Fix an arbitrary point $x\in F_q$, put $H=\rho_q(\relInn(f))$, and let $H_x\coloneqq \{\varphi\in H \mid \varphi(x) = x\}$ be the stabilizer at $x$. Since $H\curvearrowright F_q$ is $2$-transitive, by \cref{lemma:doubly-transitive_induces_transitive_action_of_stabilizer} the action $H_x\curvearrowright F_q\setminus\{x\}$ is transitive.
%(see {\cite[Proposition 3.3]{tamaru_2013_twopoint_homogeneous_quandles_with_prime_cardinality}})

Consider the map $s_x\colon F_q\setminus\{x\}\to F_q\setminus\{x\}$, which is $H_x$-equivariant because  
\(
\varphi\circ s_x(y) = s_{\varphi(x)}(\varphi(y)) =s_x\circ \varphi(y)
\)
for all $\varphi \in H_x$ and all $y\in F_q$.
Thus, \cref{lemma: commute bijection is id or no fixed point} yields two possibilities for $s_x$:
\begin{enumerate}
    \item\label{case: s_x is identity} $s_x = \id$, or
    \item\label{case: s_x has no fixed point} $s_x$ has no fixed point on $F_q\setminus\{x\}$.
\end{enumerate}

In the case of \eqref{case: s_x is identity}, the quandle structure of $F_q$ must be trivial; indeed, for any $y\in F_q$, as the action $H\curvearrowright F_q$ is transitive, we can pick $\varphi\in H$ such that $y=\varphi(x)$.
Since 
\(
\varphi \circ s_x = s_{\varphi(x)}\circ \varphi=s_y\circ \varphi
\)
and $s_x = \id$, we have
\(
s_y = \varphi s_x\varphi^{-1} = \id.
\)
Therefore, $F_q$ is a trivial quandle.

Next, let us consider the case \eqref{case: s_x has no fixed point}.
Since $H\subseteq \Aut(F_q)$ acts $2$-transitivly on $F_q$, so is the action $\Aut(F_q)\curvearrowright F_q$, and hence it is primitive. By the assumption of \eqref{case: s_x has no fixed point}, the normal subgroup $\Inn(F_q)\triangleleft \Aut(F_q)$ has the non-trivial element $s_x$.
Thus, \cref{lemma: normal subgrp of primitive action acts transitively} shows that the action $\Inn(F_q)\curvearrowright F_q$ is transitive, which means that $F_q$ is connected.
\end{proof}

While \cref{theorem: each fiber is trivial or connected as a subquandle} provides a topological dichotomy of the fibers, we can completely determine their precise algebraic structure by applying classification results of finite permutation groups. 
Specifically, using Burnside's theorem on finite $2$-transitive groups, we show that any such fiber is an Alexander quandle.

\begin{prop}
\label{proposition: sufficient cond for alex when doubly trans containing inns}
Let $Q$ be a finite quandle. Suppose that there exists a subgroup $G \le \Aut(Q)$ containing the inner automorphism group $\Inn(Q)$ such that $G$ acts $2$-transitively on $Q$.
If $G$ has a non-trivial abelian normal subgroup, then $Q$ is isomorphic to an Alexander quandle of prime power cardinality.
\end{prop}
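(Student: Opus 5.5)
The plan is to use the non-trivial abelian normal subgroup to produce a regular elementary abelian subgroup $N\trianglelefteq G$. We then identify $Q$ with $N$ through an orbit map and read off the quandle operation as a generalized Alexander structure $\GAlex(N,\sigma)$. Since $N$ is abelian, this is an Alexander quandle $\Alex(N,\sigma)$, and $\card{Q}=\card{N}$ is a prime power.

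\emph{Step 1 (a regular elementary abelian normal subgroup).} Let $A\trianglelefteq G$ be a non-trivial abelian normal subgroup. Choose a minimal normal subgroup $N$ of $G$ with $N\subseteq A$. Such an $N$ exists because $G$ is finite, and we may take $N$ characteristically simple inside $A$.
\begin{itemize}
\item \emph{$N$ is elementary abelian.} $N$ is abelian, and a minimal normal abelian subgroup is elementary abelian: for a prime $p$ dividing $\card{N}$, the subgroup $\{n\in N\mid n^p=1\}$ is characteristic in $N$, hence normal in $G$ and non-trivial, so it equals $N$.
\item \emph{$N$ is transitive.} The action $G\curvearrowright Q$ is faithful because $G\le\Aut(Q)$, and primitive because it is $2$-transitive. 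By \cref{lemma: normal subgrp of primitive action acts transitively}, $N$ acts transitively on $Q$.
\item \emph{$N$ is regular.} An abelian transitive group acts regularly. If $n$ fixes $x$, then for any $y=m(x)$ with $m\in N$ we get $n(y)=mn(x)=y$, so $n=\id$ by faithfulness.
\end{itemize}
Hence $\card{Q}=\card{N}=p^k$. One could alternatively cite Burnside's \cref{theorem:Burnside's_theorem}, but the argument above is elementary.

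\emph{Step 2 (identification and the symmetry maps).} Fix $x\in Q$. The map $N\to Q$, $n\mapsto n(x)$, is a bijection by regularity. Since $s_x\in\Inn(Q)\subseteq G$ and $N\trianglelefteq G$, conjugation defines a group automorphism
\[
\sigma(n)\coloneqq s_x n s_x^{-1}\qquad(n\in N)
\]
of $N$. Each $n\in N$ is a quandle automorphism, so $s_{n(x)}=n s_x n^{-1}$. For $m\in N$ we therefore compute
\[
s_{n(x)}(m(x)) = n s_x n^{-1} m (x) = n\,\sigma(n^{-1}m)\, s_x(x) = \bigl(n\,\sigma(n^{-1}m)\bigr)(x).
\]
This says exactly that the bijection $N\to Q$ is a quandle isomorphism $\GAlex(N,\sigma)\cong Q$. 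Since $N$ is abelian, this is $\Alex(N,\sigma)$ with $N\cong(\ZZ/p\ZZ)^k$.

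\emph{Expected obstacle.} The only delicate point is making sure the hypotheses of \cref{lemma: normal subgrp of primitive action acts transitively} are genuinely met: faithfulness comes from $G\le\Aut(Q)$, and primitivity from $2$-transitivity. The degenerate case $\card{Q}\le 2$ is harmless, since the argument still goes through (or one checks directly that such quandles are trivial, hence Alexander). Everything after Step 1 is a direct computation using only $\Inn(Q)\subseteq G$ and the normality of $N$.
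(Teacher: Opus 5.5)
Your proof is correct. It differs from the paper's in how the regular abelian normal subgroup is obtained, which makes it a more elementary argument.

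The paper cites the structure theorem for finite primitive groups with an abelian normal subgroup (Dixon--Mortimer, Theorem 4.7A). This embeds $G=V\rtimes G_0$ into $\AGL(V)$. The paper then computes $s_x=T_x s_0 T_x^{-1}$ using translations, and the additivity of $s_0$ comes from $s_0\in G_0\le\GL(V)$.

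You instead derive only the piece that is actually needed. A minimal normal subgroup $N\subseteq A$ is elementary abelian. It is transitive by \cref{lemma: normal subgrp of primitive action acts transitively}, and regular because it is abelian and acts faithfully. You then transport the structure along the orbit map $n\mapsto n(x)$. There the Alexander automorphism $\sigma$ is conjugation by $s_x$ restricted to $N$, so it is automatically a group automorphism, and no linearity of $s_x$ has to be imported from an embedding into $\GL(V)$.

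What your route buys:
\begin{itemize}
\item It is self-contained, relying only on a lemma already proved in the paper.
\item It uses only primitivity of $G$, not full $2$-transitivity.
\item Your Step~2 computation is more general: any regular subgroup $N\le\Aut(Q)$ normalized by $s_x$ gives $Q\cong\GAlex(N,\sigma)$.
\end{itemize}

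What the paper's route buys: the citation is shorter, and it records extra structure, such as $G_0$ acting transitively on $V\setminus\{0\}$. That information is not needed for this proposition.
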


\begin{proof}
    Since $G$ acts $2$-transitively on $Q$, it is primitive. By assumption, $G$ contains a non-trivial abelian normal subgroup. A fundamental theorem in finite group theory (see \cite[Theorem 4.7A]{book_dixon_mortimer_1996_permutation_groups}) states that a finite primitive group with an abelian normal subgroup is of affine type. Thus, the set $Q$ can be identified with a vector space $V = (\ZZ/p\ZZ)^n$ over a prime field $\mathbb{F}_p$ for some prime $p$ and integer $n \ge 1$. Furthermore, $G$ embeds into the affine general linear group $\AGL(V) = V \rtimes \GL(V)$ such that $G = V \rtimes G_0$, where the point stabilizer $G_0 \le \GL(V)$ of the zero vector $0 \in V$ acts transitively on $V \setminus \{0\}$.

    Under this identification, the translation map $T_x(v) = v + x$ for any $x \in V$ belongs to $G$. Since $G$ is a subgroup of $\Aut(Q)$ containing $\Inn(Q)$ and hence $T_x$ is a quandle automorphism of $Q$, we have $T_x \circ s_0 \circ T_x^{-1} = s_{T_x(0)} = s_x$. From this, for $x, y \in V$, we can explicitly compute $s_x(y)$ as
    \[
        s_x(y) = (T_x \circ s_0 \circ T_x^{-1})(y) = s_0(y - x) + x = (1-s_0)(x) + s_0(y).
    \]
    This representation shows that $Q$ is the Alexander quandle of the abelian group $V$ and the automorphism $s_0$.
\end{proof}

\begin{thm}
\label{theorem: fiber of doubly transitive is connected Alexander quandle}
    Let $f \colon P \to Q$ be a doubly transitive quandle homomorphism. 
    Assume that the fiber $F_q = f^{-1}(q)$ at $q\in Q$ is a finite set of cardinality at least $4$. 
    Then $F_q$ is an Alexander quandle.
\end{thm}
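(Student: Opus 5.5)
We reduce to \cref{proposition: sufficient cond for alex when doubly trans containing inns}. Put $H\coloneqq\rho_q(\relInn(f))\le\Aut(F_q)$ and $G\coloneqq\langle H,\Inn(F_q)\rangle\le\Aut(F_q)$. The group $G$ acts faithfully on $F_q$, and it acts $2$-transitively because it contains $H$. By construction $G$ contains $\Inn(F_q)$. Since $\card{F_q}\ge 4\ge 3$, \cref{theorem: each fiber is trivial or connected as a subquandle} shows that $F_q$ is either trivial or connected.

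In the trivial case nothing needs to be done. A trivial quandle of cardinality $n$ is $\Alex(\ZZ/n\ZZ,\id)$, which is an Alexander quandle.

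So assume $F_q$ is non-trivial. By \cref{proposition: sufficient cond for alex when doubly trans containing inns}, it suffices to show that $G$ has a non-trivial abelian normal subgroup. Apply Burnside's theorem (\cref{theorem:Burnside's_theorem}) to the finite $2$-transitive group $G\le \mathrm{Sym}(F_q)$. If $\operatorname{soc}(G)$ is regular and elementary abelian, then it is itself the required abelian normal subgroup, and we are done. It remains to rule out the other case, where $S\coloneqq\operatorname{soc}(G)$ is a non-abelian simple group acting primitively.

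In that case we first show that $G$ is almost simple. The centralizer $C_G(S)$ is normal in $G$. If $C_G(S)$ were non-trivial, it would contain a minimal normal subgroup of $G$, which lies in the socle $S$. Hence $C_G(S)\cap S\neq 1$. But $C_G(S)\cap S=Z(S)=1$, a contradiction. Therefore $C_G(S)=1$, and conjugation embeds $G$ into $\Aut(S)$ with $S\le G\le\Aut(S)$.

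Now fix $x\in F_q$. \Cref{lem:almost_simple_center_trivial} applies because $\card{F_q}\ge 4$, so $Z(G_x)=1$. On the other hand, \cref{lem:symmetry_map_at_x_is_in_the_center_of_stabilizer} gives $s_x\in Z(\Aut(F_q)_x)$, and $s_x\in\Inn(F_q)\subseteq G$, so $s_x\in Z(G_x)$. Hence $s_x=\id$ for every $x\in F_q$, which means $F_q$ is trivial. This contradicts our assumption, so the non-abelian simple socle case cannot occur.

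The main obstacle is this last case. Its difficulty is hidden inside \cref{lem:almost_simple_center_trivial}, which relies on Vendramin's argument or on the classification of almost simple $2$-transitive groups. The only remaining step to check carefully is the passage from a simple socle to almost simplicity.
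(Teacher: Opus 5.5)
Your proof is correct and follows the paper's argument. It uses the same group $G=\langle\rho_q(\relInn(f)),\Inn(F_q)\rangle$ and Burnside's dichotomy, with \cref{proposition: sufficient cond for alex when doubly trans containing inns} in the abelian-socle case, and \cref{lem:almost_simple_center_trivial} together with $s_x\in Z(G_x)$ forcing triviality in the non-abelian simple socle case. The differences are cosmetic: your appeal to \cref{theorem: each fiber is trivial or connected as a subquandle} is unnecessary, since the non-abelian simple socle case directly gives a trivial, hence Alexander, fiber (as the paper concludes), and your $C_G(S)=1$ argument spells out the almost-simplicity step that the paper cites from Dixon--Mortimer.
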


\begin{proof}
    Fix $q \in Q$ and let $F = F_q$. By the definition of a doubly transitive quandle homomorphism, the relative inner group $\relInn(f)$ acts $2$-transitively on $F$. 
    Let us define the subgroup $G \le \Aut(F)$ by
    \[
        G = \langle \rho_q(\relInn(f)), \Inn(F) \rangle.
    \]
    Since $G$ contains $\rho_q(\relInn(f))$, the action of $G$ on $F$ is also $2$-transitive. 
    Note that, since every element $g \in G$ is a quandle automorphism of $F$, it satisfies $g \circ s_x \circ g^{-1} = s_{g(x)}$ for all $x \in F$.

    By Burnside's theorem (\cref{theorem:Burnside's_theorem}), the socle of the finite $2$-transitive group $G$, denoted by $\operatorname{soc}(G)$, is either (i) primitive and simple, or (ii) regular and elementary abelian. We consider these two cases separately.

    \textbf{Case (i):} In this case, $\operatorname{soc}(G)$ is a non-abelian simple group. A finite group with a non-abelian simple socle is almost simple (cf.~\cite[Section 4.3]{book_dixon_mortimer_1996_permutation_groups}). 
    Since $G$ is an almost simple group acting faithfully and $2$-transitively on the finite set $F$ with $|F| \ge 4$, we can apply \cref{lem:almost_simple_center_trivial} to conclude that $Z(G_x) = \{1\}$. 
    For any $g \in G_x$, we have $g(x) = x$, which implies $g \circ s_x \circ g^{-1} = s_x$. Thus $s_x \in Z(G_x)$.
    Because $s_x \in Z(G_x)$, this forces $s_x = \id_{F}$ for all $x \in F$. This means $s_x(y) = y$ for all $x, y \in F$, so $F$ is a trivial quandle. A trivial quandle is trivially an Alexander quandle (with the automorphism being the identity map).

    \textbf{Case (ii):} In this case, $G$ contains a non-trivial abelian normal subgroup. Since $F$ is a finite quandle and $G$ is a $2$-transitive group such that $\Inn(F) \leq G \leq \Aut(F)$, we can use \cref{proposition: sufficient cond for alex when doubly trans containing inns} to conclude that $F$ is an Alexander quandle.
\end{proof}

\begin{thm}
\label{theorem:classification_of_doubly_transitive_hom_with_finite_fibers}
    Let $f\colon P\to Q$ be a doubly transitive homomorphism with finite fibers.
    Then each fiber is a trivial quandle or a connected Alexander quandle of prime power cardinality.
    More precisely, a nontrivial fiber is isomorphic to $\Alex(F_{p^m}^n, L_\lambda)$, where $p$ is a prime number, $\lambda\in\FB_{p^m}^{\times}$, and $L_\lambda$ is the multiplication of $\lambda$.
\end{thm}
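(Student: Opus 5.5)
We split the fibers by cardinality. If $\card{F_q}\le 2$, then $F_q$ is trivial, since $s_x(x)=x$ and $s_x$ is a bijection. Every quandle with at most two elements is therefore trivial, and this case is done.

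Now suppose $\card{F_q}=3$. By \cref{theorem: each fiber is trivial or connected as a subquandle}, $F_q$ is either trivial or connected. The only connected quandle of order $3$ is the dihedral quandle $R_3=\Lambda_{3,-1}=\Alex(\FB_3,L_{-1})$. This follows from the fact that $\Inn(F_q)$ must be a transitive subgroup of $S_3$ generated by symmetries that are involutions or the identity, which forces each $s_x$ to be the transposition fixing $x$. So this case also falls under the statement.

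Finally, suppose $\card{F_q}\ge 4$. \Cref{theorem: fiber of doubly transitive is connected Alexander quandle} shows that $F_q$ is Alexander, and its proof gives a dichotomy: in Case (i) $F_q$ is trivial. In Case (ii), \cref{proposition: sufficient cond for alex when doubly trans containing inns} identifies $F_q$ with $\Alex(V,s_0)$, where $V=\FB_p^n$ and $G=V\rtimes G_0$ with $G_0\le \GL(V)$ transitive on $V\setminus\{0\}$. Here $s_0\in G_0$, because $s_0$ fixes $0$ and lies in $\Inn(F_q)\le G$. The cardinality is $p^n$, a prime power.

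It remains to show that a nontrivial fiber $F_q$ is connected and of the stated form. Connectedness comes from \cref{theorem: each fiber is trivial or connected as a subquandle}. For the form of $s_0$, we argue as follows. The group $\Aut(F_q)\supseteq G$ acts $2$-transitively, so $F_q$ is a doubly homogeneous connected Alexander quandle. \Cref{theorem: bonatto characterization of doubly homogeneous quandles} then gives $F_q\cong\Alex(\FB_{p^m}^n,L_\lambda)$ with $\lambda\ne1$.

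As an alternative route that avoids Bonatto's result, one can argue directly. The element $s_0$ is central in $G_0$ by \cref{lem:symmetry_map_at_x_is_in_the_center_of_stabilizer}. Since $G_0$ is transitive on $V\setminus\{0\}$, it acts irreducibly on $V$, so by Schur's lemma $E=\End_{\FB_p[G_0]}(V)$ is a division ring, hence a finite field $\FB_{p^m}$. Now $s_0$ lies in $E^\times$, so $V\cong\FB_{p^m}^{n'}$ as an $E$-vector space and $s_0=L_\lambda$ for some $\lambda\in\FB_{p^m}^\times$.

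The main obstacle is this last step, identifying $s_0$ with a scalar multiplication over a possibly larger field. We will take the Bonatto citation as the primary argument, since it is already stated earlier in the paper, and present the Schur-lemma argument as a check. Nothing further is needed: the paper's convention is that trivial fibers are listed separately, so no prime-power claim is required for them.
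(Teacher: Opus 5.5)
Your proposal is correct and follows essentially the same route as the paper: the split into $\card{F_q}\le 2$, $\card{F_q}=3$ and $\card{F_q}\ge 4$, the trivial-or-connected dichotomy of \cref{theorem: each fiber is trivial or connected as a subquandle}, the Burnside-based identification of $F_q$ with an Alexander quandle, and \cref{theorem: bonatto characterization of doubly homogeneous quandles} for the explicit form. Your Schur-lemma check is also valid, and it yields $\Alex(\FB_{p^m}^{n'},L_\lambda)$ without appealing to Bonatto: $s_0\in G_0\le\GL(V)$ commutes with the irreducible group $G_0$, so it lies in the field $\End_{\FB_p[G_0]}(V)$.
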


\begin{proof}
    Let $q\in Q$ and consider the fiber $F_q=f^{-1}(q)$ over $q$. If $\card{F_q}=1$ or $2$, then the only possible quandle structure is trivial.

    If $\card{F_q}=3$, then there are, up to isomorphism, three possible quandle structures on a three-element set. By \cref{theorem: each fiber is trivial or connected as a subquandle}, $F_q$ is either trivial or connected. Hence, if $F_q$ is nontrivial, it is necessarily isomorphic to the dihedral quandle $R_3$, which is a connected Alexander quandle (of prime power cardinality).

    Finally, suppose that $\card{F_q} \geq 4$. By combining \cref{theorem: each fiber is trivial or connected as a subquandle}, \cref{proposition: sufficient cond for alex when doubly trans containing inns}, and \cref{theorem: fiber of doubly transitive is connected Alexander quandle}, it follows that $F_q$ is a connected Alexander quandle.
    The explicit description is provided by \cref{theorem: bonatto characterization of doubly homogeneous quandles}. 
\end{proof}

\section{Examples and Counterexamples}
\label{section: examples and counterexamples}

In this section, we construct some examples of connected or non-connected quandle homomorphisms.

\subsection{Conjugacy Quandles}
\label{subsection: example cases of conjugacy quandles}

In this subsection, we consider quandle homomorphisms obtained from group surjective homomorphisms.

Taking conjugacy quandles of groups, we have a functor $\Conj \colon \Grp\to\Quandle$, which preserves surjections.
Throughout this subsection, let $\phi\colon G \twoheadrightarrow H$ be a surjective group homomorphism, and consider $f\coloneqq \Conj(\phi)\colon \Conj(G) \twoheadrightarrow \Conj(H)$.
Write $K\coloneqq \Ker(\phi)$ and $A\coloneqq \phi^{-1}(Z(H))$, where $Z(H)$ is the center of a group $H$, i.e., $Z(H)=\{x\in H \mid xy=yx \text{ for all }y\in H\}$.
Note that by surjectivity, $\phi$ preserves central elements, so it induces a group homomorphism $Z(\phi)=\phi\rvert_{Z(G)}\colon Z(G) \to Z(H)$ and the inclusion $Z(G)\subseteq A=\phi^{-1}(Z(H))$.

Recall that $\Inn(\Conj(G)) = \Inn(G) \coloneqq \{i_g \mid g\in G\}$, where $i_g$ is given by $i_g(x)=gxg^{-1}$. Also note that $\Trans(\Conj(G))=\{ i_g i_h^{-1} = i_{gh^{-1}} \mid g,h\in G \} = \{ i_g \mid g \in G \} = \Inn(G)$. It is easy to see that the induced group homomorphism
\[ f_* = \Inn(f) \colon \Inn(G) \to \Inn(H) \]
sends $i_g$ to $i_{\phi(g)}$.

\begin{prop}
\label{proposition: properties of morphism of conjugacy quandles}
    With the notations above, the following hold:
    \begin{enumerate}
        \item\label{item: conj mor inn} $\Inn(\Conj(G)) \cong G/Z(G)$.
        \item\label{item: conj mor relinn} $\relInn(f) = \{ i_g \mid g \in G \text{ such that } \phi(g) \in Z(H)\} \cong A/Z(G)$.
        \item\label{item: conj mor reltrans} $\relTrans(f)= \{i_k \mid k\in \Ker(\phi)\} \cong K /(K\cap Z(G))$.
        \item\label{item: conj mor Hf} $H_f \cong A/KZ(G) \cong Z(H)/\phi(Z(G))$.
    \end{enumerate}
\end{prop}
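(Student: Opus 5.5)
The plan is to work entirely inside $\Inn(G)$ via the surjection $G\twoheadrightarrow \Inn(G)$, $g\mapsto i_g$, whose kernel is $Z(G)$. This kernel computation gives \eqref{item: conj mor inn} at once, since $\Inn(\Conj(G))=\Inn(G)$ as recalled above. All the other groups in the statement will be images of subgroups of $G$ under this map, so each isomorphism comes from the first isomorphism theorem applied to $g\mapsto i_g$ restricted to a suitable subgroup.

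For \eqref{item: conj mor relinn}, I use that $f_*(i_g)=i_{\phi(g)}$. Hence $i_g\in\relInn(f)$ if and only if $i_{\phi(g)}=\id_H$, that is, $\phi(g)\in Z(H)$, or equivalently $g\in A$. The relative inner group is therefore the image of $A$. Since $Z(G)\subseteq A$, restricting $g\mapsto i_g$ to $A$ gives $\relInn(f)\cong A/Z(G)$.

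For \eqref{item: conj mor reltrans}, two elements $x,y\in G$ lie in the same fiber exactly when $xy^{-1}\in K$. In that case the generator $s_xs_y^{-1}=i_{xy^{-1}}$ is of the form $i_k$ with $k\in K$. Conversely, each $i_k$ with $k\in K$ arises as $s_ks_1^{-1}$, taking $x=k$ and $y=1$. The generating set is therefore exactly $\{i_k\mid k\in K\}$, which is already a subgroup because it is the image of the subgroup $K$. Its kernel is $K\cap Z(G)$, so $\relTrans(f)\cong K/(K\cap Z(G))$.

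For \eqref{item: conj mor Hf}, I use that $\relTrans(f)$ is the image of $KZ(G)$ under $A\to\relInn(f)$. This follows because $K\subseteq A$, and adding $Z(G)$ does not change the image. The kernel $Z(G)$ of $A\to\relInn(f)$ lies inside $KZ(G)$, so the third isomorphism theorem gives $H_f\cong A/KZ(G)$. Finally, $\phi$ restricts to a surjection $A\to Z(H)$, which uses the surjectivity of $\phi$. Its kernel is $K$, and the preimage of $\phi(Z(G))$ under it is $KZ(G)$, so $A/KZ(G)\cong Z(H)/\phi(Z(G))$.

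All steps are routine. The only point needing care is checking that the generating set of $\relTrans(f)$ is closed under products and inverses, so that no further generation is needed, and that the subgroup $KZ(G)$ is well defined. Both hold because $Z(G)$ is central, hence normal in $G$.
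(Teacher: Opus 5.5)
Your proposal is correct and follows essentially the same route as the paper. The steps match one by one: the kernel $Z(G)$ of $g\mapsto i_g$ for (1), the criterion $i_{\phi(g)}=\id_H$ iff $\phi(g)\in Z(H)$ for (2), the identities $s_xs_y^{-1}=i_{xy^{-1}}$ and $i_k=s_ks_1^{-1}$ for (3), and the identification $A/K\cong Z(H)$ via $\phi\rvert_A$, under which $KZ(G)/K$ corresponds to $\phi(Z(G))$, for (4). Your remark that $\relTrans(f)$ is the image of $KZ(G)$ under $A\to\relInn(f)$ only makes explicit the step the paper leaves implicit when it deduces $H_f\cong A/KZ(G)$ from (2) and (3).
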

\begin{proof}
        Since $i_g =\id_G$ is equivalent to $g\in Z(G)$, \eqref{item: conj mor inn} follows immediately.
        %(\cite[Theorem 2.4]{elhamdadi_macquarrie_restrepo_2012_automorphism_groups_of_quandles})

        \eqref{item: conj mor relinn}: It holds that $i_g\in \relInn(f)$ if and only if $\phi(gxg^{-1}) = \phi(x)$ for all $x\in G$, which is equivalent to saying that $\phi(g)$ is in the center $Z(H)$ of $H$, because $\phi$ is surjective. Therefore, we have 
        \[
        \relInn(f) = \{ i_g \mid g \in \phi^{-1}(Z(H))\} \cong A/Z(G),
        \]
        where the last isomorphism follows from \eqref{item: conj mor inn}.

        \eqref{item: conj mor reltrans}: First, we claim that $\relTrans(f) = \{i_k \mid k\in \Ker(\phi)\}$.
        The left-hand side is contained in the right-hand side, since $s_{g'}s_{g}^{-1}= i_{g'}i_g^{-1}=i_{g'g^{-1}}$. 
        As $i_k = s_ks_1^{-1}$, the equality holds.
        Thus, there is a surjective homomorphism $K\to \relTrans(f)$, $k \mapsto i_k$, and its kernel is 
        \[
        \{k\in K \mid i_k =\id\} = K\cap Z(G).
        \]
        Thus, we have $\relTrans(f)\cong K /(K\cap Z(G)) (\cong KZ(G)/ Z(G))$.

        \eqref{item: conj mor Hf}: We obtain the first isomorphism
        by \eqref{item: conj mor relinn} and \eqref{item: conj mor reltrans}. 
        For the second isomorphism, we identify the isomorphism 
        \[
        A/K \cong Z(H)
        \]
        which is induced by the surjection $\phi\rvert_A\colon A\to Z(H)$.
        Under this identification, the subgroup $KZ(G)/K \subseteq A/K$ corresponds to $\phi(Z(G))\subseteq Z(H)$.
        This completes the proof.
\end{proof}

\begin{prop}
\label{prop:connected-rigid-covering_property_for_Conj(phi)}
Under the notations in this subsection, the following hold.
    \begin{enumerate}
        \item\label{item: prop of eg of conj conn iff iso} $f$ is (strongly) connected if and only if $\phi$ is an isomorphism. 
        \item\label{item: prop of eg of conj rigidity} $f$ is rigid if and only if $\phi^{-1}(Z(H)) \subseteq Z(G)$.
        \item\label{item: prop of eg of conj covering} $f$ is covering if and only if $\phi$ is a central extension, i.e., $\Ker(\phi)\subseteq Z(G)$.
    \end{enumerate}
\end{prop}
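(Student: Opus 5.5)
All three parts will be read off \cref{proposition: properties of morphism of conjugacy quandles}, which identifies $\relInn(f)$, $\relTrans(f)$ and their orders with explicit subgroups of $G$. Throughout, $\phi$ is surjective, $K=\Ker(\phi)$ and $A=\phi^{-1}(Z(H))$.

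\textbf{Part (2).} Rigidity means $\relInn(f)$ is trivial. By \cref{proposition: properties of morphism of conjugacy quandles}~\eqref{item: conj mor relinn} this holds iff $i_g=\id$ for every $g\in A$, that is, iff $A\subseteq Z(G)$. This is exactly the stated condition.

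\textbf{Part (3).} By \cref{lemma: relTrans vanish iff covering}, $f$ is covering iff $\relTrans(f)=1$. By \eqref{item: conj mor reltrans}, $\relTrans(f)=\{i_k\mid k\in K\}$, so this is trivial iff $K\subseteq Z(G)$. One could also argue directly: $f(x)=f(y)$ means $y=xk$ with $k\in K$, and $s_x=s_{xk}$ iff $i_k=\id$.

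\textbf{Part (1).} If $\phi$ is an isomorphism, then $f$ is a quandle isomorphism and is trivially strongly connected, hence connected. Since strongly connected implies connected, it remains to show that connectedness forces $K=1$. The key observation concerns the fiber $f^{-1}(1_H)=K$. Every element of $\relInn(f)$ is a conjugation $i_g$, and conjugation fixes the identity $1_G\in K$. So $\relInn(f)$ fixes $1_G$, and transitivity on this fiber forces $K=\{1_G\}$. Hence $\phi$ is injective, and being surjective, it is an isomorphism.

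The main (mild) obstacle is noticing the fixed point $1_G$ in the fiber over $1_H$. Once that is seen, everything else is a direct translation of the earlier proposition.
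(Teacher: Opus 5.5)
Your proposal is correct and follows the paper's proof essentially step for step. In (1) you use the same fixed-point observation on the fiber $\Ker\phi$, and in (2) you read rigidity off the description of $\relInn(f)$, as the paper does. In (3) your direct check that $s_x=s_{xk}$ if and only if $i_k=\id$ is the paper's argument; routing it through $\relTrans(f)=\{i_k\mid k\in K\}$ and \cref{lemma: relTrans vanish iff covering} is an equivalent rephrasing.
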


\begin{proof}
    \eqref{item: prop of eg of conj conn iff iso}: When $\phi$ is an isomorphism, it is trivial that $f=\Conj(\phi)$ is (strongly) connected. Suppose $f$ is connected and consider the fiber $F_1 = \Ker\phi$ at the unit $e_H \in H$. As inner automorphisms fix the unit $e_G$, the transitivity of $\relInn(f)$ on $F_1$ implies $\Ker\phi = \{e_G\}$, and hence $\phi$ is bijective.
    \eqref{item: prop of eg of conj rigidity}: This follows from \eqref{item: conj mor relinn} of \cref{proposition: properties of morphism of conjugacy quandles}.
    \eqref{item: prop of eg of conj covering}: This holds, since $f(x)= f(y)$ if and only if $xy^{-1}\in K$.
\end{proof}

\begin{prop}
\label{prop: prop of ef of conj homogeneous}
With the notations above, $f$ is homogeneous if and only if $\phi$ is a central extension (and hence if and only if $f$ is covering).
\end{prop}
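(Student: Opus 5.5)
Write $f=\Conj(\phi)$, $K=\Ker(\phi)$. We prove both implications, using only \cref{proposition: properties of morphism of conjugacy quandles} and \cref{prop:connected-rigid-covering_property_for_Conj(phi)}. The idea is that a relative automorphism must fix the identity element of $G$ while preserving the fiber over $e_H$. It therefore constrains how $K$ can be permuted. In the other direction, central kernels give explicit automorphisms given by multiplication.

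\textbf{($\Leftarrow$).} Assume $K\subseteq Z(G)$. For $k\in K$, let $L_k\colon G\to G$ be the map $x\mapsto kx$. This is a quandle automorphism of $\Conj(G)$. Indeed, since $k$ is central,
\[
L_k(gxg^{-1})=kgxg^{-1}=(kg)x(kg)^{-1}\cdot k=(kg)(kx)(kg)^{-1},
\]
because $(kg)(kx)(kg)^{-1}=kg\,kx\,g^{-1}k^{-1}=kgxg^{-1}$. We also have $\phi(kx)=\phi(x)$, so $L_k\in\relAut(f)$. If $f(x)=f(y)$, then $y=kx$ with $k=yx^{-1}\in K$. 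Hence $\relAut(f)$ is transitive on every fiber, and $f$ is homogeneous. The parenthetical equivalence with ``covering'' is \cref{prop:connected-rigid-covering_property_for_Conj(phi)}~\eqref{item: prop of eg of conj covering}.

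\textbf{($\Rightarrow$).} Assume $f$ is homogeneous, and fix $k\in K$. The elements $e_G$ and $k$ lie in the fiber over $e_H$. So there is $\psi\in\Aut(\Conj(G))$ with $\phi\circ\psi=\phi$ and $\psi(e_G)=k$. Since $\psi$ is a quandle automorphism, $s_{\psi(e_G)}=\psi s_{e_G}\psi^{-1}$. But $s_{e_G}=i_{e_G}=\id$, so $i_k=s_k=\id$. Thus $k\in Z(G)$, giving $K\subseteq Z(G)$.

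The argument relies only on the fact that $s_{e_G}$ is the identity, an identity any automorphism must transport to $s_{\psi(e_G)}$. I therefore expect no real obstacle. The only point needing care is the central-multiplication computation in the $(\Leftarrow)$ direction, which is where centrality of $K$ is used.
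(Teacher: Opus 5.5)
Your proof is correct and essentially the paper's: ($\Leftarrow$) is the same central left-multiplication argument. For ($\Rightarrow$), you use $s_{\psi(e_G)}=\psi s_{e_G}\psi^{-1}$ to carry $s_{e_G}=\id$ over to $s_k=\id$, whereas the paper evaluates the equivalent identity $\alpha s_x=s_{\alpha(x)}\alpha$ at $e_G$ to show that $\alpha(e_G)$ is fixed by every $i_g$; either way, centrality of $k$ follows at once. One small correction to your overview: a relative automorphism need not fix $e_G$ (your own $L_k$ sends it to $k$), but nothing in your proof actually uses that claim.
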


\begin{proof}
    ($\Rightarrow$): Suppose that $f$ is homogeneous. 
    We first show $\relAut(f)\cdot e_G \subseteq K\cap Z(G)$.
    Let $\alpha\in\relAut(f)$. Since $f(\alpha(e_G)) = f(e_G) = \phi(e_G) = e_H$, we have $\alpha(e_G)\in K=\Ker(\phi)$. 
    For any $g\in G$, taking $x\in G$ such that $\alpha(x)=g$, we obtain
    \[ i_g(\alpha(e_G))=s_{\alpha(x)}(\alpha(e_G)) = \alpha(s_x(e_G)) = \alpha(e_G). \]
    Thus $\alpha(e_G)\in Z(G)$, and we have $\relAut(f)\cdot e_G \subseteq K\cap Z(G)$.
    
    Since $f$ is homogeneous, $\relAut(f)$ acts transitively on the fiber $K$ of $f$ over $f(e_G)=e_H$, which means $K= \relAut(f)\cdot e_G$. Therefore, we have $K = \relAut(f)\cdot e_G \subseteq Z(G)$ and $\phi$ is a central extension.

    ($\Leftarrow$): Suppose that $K\subseteq Z(G)$. 
    For each $k\in K$, we write $L_k$ for the left multiplication map of $k$.
    For every $x,y\in G$, the centrality of $k$ yields
    \[ s_{L_k(x)}(L_k(y)) = s_{kx}(ky) = kxkyx^{-1}k^{-1} = kxyx^{-1} = L_k(s_x(y)). \]
    Therefore $L_k$ is an automorphism of $\Conj(G)$. Since $k\in K=\Ker(\phi)$, we also have
    \[ f(L_k(x)) = \phi(kx) = \phi(x) = f(x) \]
    for all $x\in G$, and hence $L_k\in\relAut(f)$.

    Let $x,y\in G$ lie in the same fiber of $f$. Then $ \phi(yx^{-1}) = \phi(y)\phi(x)^{-1} = e_H$, so $yx^{-1}\in K$. Then $L_{yx^{-1}}(x) = y$. Thus, $\relAut(f)$ acts transitively on every fiber of $f$, and hence $f$ is homogeneous.
\end{proof}

\begin{cor} 
\label{corollary: homogeneous conjugacy quandles} 
The conjugacy quandle $\Conj(G)$ is homogeneous if and only if $\Conj(G)$ is trivial, and if and only if $G$ is abelian.
\end{cor}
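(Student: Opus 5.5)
The plan is to specialize \cref{prop: prop of ef of conj homogeneous} to the surjection $\phi\colon G\twoheadrightarrow\{e\}$ onto the trivial group. In that case $f=\Conj(\phi)\colon\Conj(G)\to\Conj(\{e\})=\{\ast\}$ is the unique homomorphism into the terminal quandle. As noted in the example after \cref{definition: homogeneity for morphism}, $\relAut(f)=\Aut(\Conj(G))$. Hence $f$ is homogeneous exactly when $\Conj(G)$ is a homogeneous quandle.

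By \cref{prop: prop of ef of conj homogeneous}, this happens if and only if $\phi$ is a central extension, that is, $\Ker(\phi)=G\subseteq Z(G)$. This is exactly the condition that $G$ is abelian, and it gives the equivalence between the first and third conditions.

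The equivalence between ``$\Conj(G)$ is trivial'' and ``$G$ is abelian'' is elementary. If $G$ is abelian, then $s_x(y)=xyx^{-1}=y$ for all $x,y$, as already remarked when conjugacy quandles were introduced. Conversely, if $s_x=\id$ for all $x$, then $xyx^{-1}=y$ for all $x,y$, so $G$ is abelian.

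There is essentially no obstacle here. The only point needing care is to confirm that \cref{prop: prop of ef of conj homogeneous} applies to the surjection onto the trivial group, which it does since no hypothesis beyond surjectivity is used. As an independent check, one could also rerun the ($\Rightarrow$) argument of that proposition directly: every automorphism $\alpha$ of $\Conj(G)$ sends $e_G$ into $Z(G)$. Transitivity of $\Aut(\Conj(G))$ then forces $G=Z(G)$.
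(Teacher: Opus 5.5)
Your proposal is correct and matches the paper, which obtains this corollary by specializing \cref{prop: prop of ef of conj homogeneous} to the surjection $G\twoheadrightarrow\{e\}$. In that case $\relAut(f)=\Aut(\Conj(G))$ and the central-extension condition reads $G\subseteq Z(G)$, while the equivalence between ``$\Conj(G)$ is trivial'' and ``$G$ is abelian'' is immediate.
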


\begin{eg}
\label{example: non-trivial rigid hom}
    By \cref{prop:connected-rigid-covering_property_for_Conj(phi)}, for a central extension $\phi\colon G \twoheadrightarrow H$ with $Z(H)$ trivial (e.g.\ $\pi\colon SL(2, 3) \to A_4$), we see that $\relInn(f)$ is trivial, and hence $f=\Conj(\phi)$ is rigid.
\end{eg}

\begin{eg}
\label{example: ker of trans does not characterize covering}
    Let $G=D_4 = \langle r, s \mid r^4 = s^2=1, srs = r^{-1} \rangle$, the dihedral group of order $8$. 
    Note that $Z(D_4) = \langle r^2 \rangle$ and $D_4/Z(D_4)\cong \ZZ/2\ZZ\times \ZZ/2\ZZ =D_2$, the dihedral group of order $4$. 
    Consider the surjection $\phi\colon D_4\to D_4/Z(D_4) = D_2$ and $f=\Conj(\phi)\colon P\coloneqq\Conj(D_4) \to \Conj(D_2)\eqqcolon Q$. Here $Q$ is the trivial quandle with $4$ elements.
    Since $\relTrans(f) = Z(D_4)/Z(D_4)\cap Z(D_4) = \{1\}$ by \cref{proposition: properties of morphism of conjugacy quandles}, $f$ is covering by \cref{lemma: relTrans vanish iff covering}.
    On the other hand, we have 
    \begin{align*}
        \Ker(\Trans(P) \xrightarrow{f_*} \Trans(Q)=\{\ast\})&= \Trans(P)=\Inn(P) = D_4/Z(D_4) \\
        &\cong \ZZ/2\ZZ\times \ZZ/2\ZZ \neq\{1\}.
    \end{align*}
    This example shows that the triviality of the kernel of the induced homomorphism
    $\Trans(P)\to \Trans(Q)$ is not equivalent to $f$ being covering.
\end{eg}

\begin{eg}
\label{example: composition of homogeneous is not homogeneous}
Let $Q_8$ be the quaternion group and $V_4$ be the Klein four-group. 
Consider surjections $Q_8 \twoheadrightarrow Q_8/\{\pm1\}\cong V_4 \twoheadrightarrow \{*\}$ and their conjugacy quandles.
As $Z(Q_8) = \{\pm1\}$ and $V_4$ is abelian, these surjections are homogeneous.
However, as $Q_8$ is not abelian, the composition is not homogeneous.
\end{eg}

\begin{eg}
\label{example: fiber-wise homogeneous but not homogeneous}
Let us consider the group homomorphism $\sgn\colon S_3\twoheadrightarrow\ZZ/2\ZZ$ and the induced quandle homomorphism $f\coloneqq \Conj(\sgn)$. By \cref{prop: prop of ef of conj homogeneous}, $f$ is not homogeneous, but two fibers are both homogeneous.
\end{eg}

\begin{comment}
%%%% なんかあたりまえのことしか言えなかった感じなので消す．

At the end of this subsection, we describe an explicit form of the covering factor of these morphisms.

The orbit of the action of $\relTrans(f)$ of $g\in G$ is the $K$-conjugacy class $K\cdot g\coloneqq \{kgk^{-1} \mid k\in K\}$. 
The set of all $K$-conjugacy classes is denoted by
\[
G//K\coloneqq \{ K\cdot g \mid g\in G \}.
\]
\begin{lem}\label{lemma: qdl structure of K-conjugacy classes}
    The set $G//K$ becomes a quandle with the following structure
    \[
    s_{K\cdot y}(K\cdot x) \coloneqq K\cdot(yxy^{-1}).
    \]
\end{lem}
\begin{proof}
    \todo
\end{proof}

\begin{prop}\label{proposition: covering factor of morph between conjugacy quandles}
    Under the notations in this subsection, we have
    \[
        \covfac{f}\colon \Conj(G)/\relTrans(f)\cong G//K \to H.
    \]
\end{prop}
\begin{proof}
    \todo
\end{proof}
\end{comment}

\subsection{Generalized Alexander Quandles}
\label{subsection: generalized alexander quandles}

In this subsection we discuss typical types of quandle homomorphisms between generalized Alexander quandles (see \cref{example: generalized alex and alex}). In particular, we determine their connectedness.

The construction of generalized Alexander quandles gives rise to a functor $\GAlex(-) \colon \GrpAut \to \Quandle$ from the category of groups with automorphisms.
Let $G,H$ be groups and let $\sigma\in\Aut(G)$, $\tau\in\Aut(H)$ be their automorphisms.
Then for a surjective group homomorphism $\Phi\colon G\twoheadrightarrow H$ satisfying $\Phi\circ\sigma = \tau\circ\Phi$, we have a surjective quandle homomorphism $f=\GAlex(\Phi)\colon \GAlex(G, \sigma) \to \GAlex(H, \tau)$.

Note that not every quandle homomorphism between generalized Alexander quandles can be written in this form. For example, the multiplication map $L_g\colon G \to G$, $x\mapsto gx$, is a quandle homomorphism $L_g\colon \GAlex(G,\sigma) \to \GAlex(G,\sigma)$; indeed, for $x,y \in \GAlex(G,\sigma)$, we calculate
\begin{align*}
    s_{L_g(x)}(L_g(y)) &= s_{gx}(gy) = gx\cdot \sigma((gx)^{-1}(gy)) = gx\cdot \sigma(x^{-1}g^{-1}gy) \\ 
    &= gx\cdot \sigma(x^{-1}y) = L_g(s_x(y)). 
\end{align*}
However, unless $g=e_G$, $L_g$ does not preserve the unit elements, so it cannot be written as $\GAlex(\Phi)$.

\begin{prop}
\label{proposition: hom between GAlexes from group surj is homogeneous}
With notations in this subsection, $f\coloneqq \GAlex(\Phi)$ is homogeneous.
\end{prop}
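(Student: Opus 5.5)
We plan to show directly that $\relAut(f)$ acts transitively on every fiber, using the left multiplications by elements of $K\coloneqq\Ker(\Phi)$. This mirrors the ($\Leftarrow$) direction of \cref{prop: prop of ef of conj homogeneous}, but here no centrality hypothesis is needed. The computation preceding the statement shows that, for every $g\in G$, the left multiplication $L_g\colon \GAlex(G,\sigma)\to\GAlex(G,\sigma)$, $x\mapsto gx$, is a quandle homomorphism. It is bijective with inverse $L_{g^{-1}}$, which is again a quandle homomorphism by the same computation. Hence $L_g\in\Aut(\GAlex(G,\sigma))$ for all $g\in G$.

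Next we check that $L_k\in\relAut(f)$ for $k\in K$. Since $f$ is $\Phi$ on underlying sets and $\Phi$ is a group homomorphism, we get $f(L_k(x))=\Phi(kx)=\Phi(k)\Phi(x)=\Phi(x)=f(x)$ for all $x\in G$. Thus $f\circ L_k=f$.

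For transitivity, take $x,y\in G$ in the same fiber, so that $\Phi(x)=\Phi(y)$. Then $\Phi(yx^{-1})=e_H$, so $k\coloneqq yx^{-1}\in K$ and $L_k(x)=y$. Hence $\relAut(f)$ acts transitively on each fiber. Surjectivity of $f$ follows from surjectivity of $\Phi$, so $f$ is homogeneous by \cref{definition: homogeneity for morphism}.

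There is no real obstacle here. The only point to be careful about is that the automorphisms we use need not be inner, so we cannot hope for connectedness in general; homogeneity only needs $\relAut(f)$, so the argument goes through. One might also remark, via \cref{proposition:homogeneous_iff_quotient_by_subgroup_normalized_by_Inn}, that $f$ is the quotient by the subgroup $\{L_k\mid k\in K\}\cong K$, but this is not needed.
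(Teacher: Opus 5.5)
Your proof is correct and follows the paper's argument: the paper also takes the left multiplications $L_k$ for $k\in\Ker(\Phi)$ and uses $L_{yx^{-1}}$ to send $x$ to $y$ within a fiber. You additionally check that each $L_k$ is a bijective homomorphism with $f\circ L_k=f$, i.e.\ lies in $\relAut(f)$, which the paper leaves implicit.
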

\begin{proof}
Let $K\coloneqq \Ker(\Phi)$.
We show that $L(K)\coloneqq \{L_x \mid x\in K\}\subseteq \relAut(\GAlex(G, \sigma))$ acts transitively on every fiber of $f$.
Let $x,y\in G$ lie in the same fiber of $f$. 
Then 
$\Phi(yx^{-1}) = \Phi(y)\Phi(x)^{-1} = e_H,$ 
so $yx^{-1}\in K$. 
Therefore $L_{yx^{-1}}(x) = y$. 
This proves transitivity. 
\end{proof}

We begin by computing the inner automorphism group of generalized Alexander quandles. 
The result is already known; see \cite[Lemma 2.7]{bonatto_2020_principal_and_doubly_homogeneous_quandles} and \cite[Proposition 3.8]{higashitani_kurihara_2024_generalized_alexander_quandles_of_finite_groups_and_their_characterizations}. We here include the proof to reconcile differences in notation and for completeness.

It is immediate to observe that $s_e=\sigma$ and for any $x\in \GAlex(G,\sigma)$, $s_x = L_{x\sigma(x)^{-1}} \circ \sigma$ as maps.
Hence, for $g\in G$, the automorphism $L_{g\sigma(g)^{-1}} = s_x \circ s_e^{-1}$ is in fact an inner automorphism of $\GAlex(G,\sigma)$.

Let $D(G, \sigma)\coloneqq [G,\sigma]\coloneqq  \langle g\sigma(g)^{-1} \mid g\in G\rangle$ be the subgroup of $G$ generated by the elements of the form $g\sigma(g)^{-1}$. Then the injective group homomorphism $L_{(-)}\colon G \to \Aut(\GAlex(G,\sigma))$, $g\mapsto L_g$, induces $L_{(-)}\colon D(G,\sigma) \to \Inn(\GAlex(G,\sigma))$.
We can prove the following.

\begin{prop}
\label{proposition: inner of generalized alex}
    For a group $(G,\sigma)$ with an automorphism, we have the equality
    \[
    \Inn(\GAlex(G,\sigma)) = \{ L_z\circ\sigma^n \mid z\in D(G, \sigma), n\in\ZZ \} \cong D(G,\sigma) \rtimes \langle\sigma\rangle.
    \]
\end{prop}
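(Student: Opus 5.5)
We will prove the two inclusions between $\Inn(\GAlex(G,\sigma))$ and the set $\mathcal{I}\coloneqq\{L_z\circ\sigma^n \mid z\in D(G,\sigma),\, n\in\ZZ\}$. After that we will identify the group structure of $\mathcal{I}$ as a semidirect product.

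The basic relations are $s_e=\sigma$ and $s_x=L_{x\sigma(x)^{-1}}\circ\sigma$, both observed above. In addition, we need the conjugation identity
\[
\sigma\circ L_g\circ\sigma^{-1}=L_{\sigma(g)},
\]
which holds because $\sigma(g\sigma^{-1}(y))=\sigma(g)y$.

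\textbf{Step 1: $\mathcal{I}$ is a subgroup.} First, $D(G,\sigma)$ is $\sigma$-stable, since $\sigma(g\sigma(g)^{-1})=\sigma(g)\sigma(\sigma(g))^{-1}$ is again a generator. It is also $\sigma^{-1}$-stable, because $\sigma^{-1}(g\sigma(g)^{-1})=\sigma^{-1}(g)\,g^{-1}=\bigl(g\sigma(g')^{-1}\bigr)^{-1}$-type expressions: putting $h=\sigma^{-1}(g)$, this equals $h\sigma(h)^{-1}$. Using the conjugation identity we then compute
\[
(L_z\sigma^n)(L_w\sigma^m)=L_{z\sigma^n(w)}\sigma^{n+m}, \qquad (L_z\sigma^n)^{-1}=L_{\sigma^{-n}(z^{-1})}\sigma^{-n}.
\]
Both right-hand sides lie in $\mathcal{I}$, so $\mathcal{I}$ is closed under products and inverses.

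\textbf{Step 2: $\Inn(\GAlex(G,\sigma))=\mathcal{I}$.} Every generator $s_x=L_{x\sigma(x)^{-1}}\sigma$ lies in $\mathcal{I}$, so $\Inn(\GAlex(G,\sigma))\subseteq\mathcal{I}$. Conversely, $\sigma=s_e$ is inner, and $L_{g\sigma(g)^{-1}}=s_g s_e^{-1}$ is inner. Since $L_{(-)}$ is a homomorphism, $L_z$ is inner for every $z\in D(G,\sigma)$. Hence every $L_z\sigma^n$ is inner, which gives the reverse inclusion.

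\textbf{Step 3: the semidirect product.} Define
\[
\Psi\colon D(G,\sigma)\rtimes\langle\sigma\rangle\to\mathcal{I},\qquad (z,\sigma^n)\mapsto L_z\sigma^n,
\]
where $\langle\sigma\rangle\subseteq\Aut(G)$ acts on $D(G,\sigma)$ by restriction. The product formula in Step 1 shows that $\Psi$ is a homomorphism, and it is surjective by definition of $\mathcal{I}$.

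The main point needing care is injectivity. Suppose $L_z\sigma^n=\id$. Evaluating at $e$ gives $z=L_z\sigma^n(e)=e$, since $\sigma^n(e)=e$. Then $\sigma^n=\id$ in $\Aut(G)$, so $\sigma^n$ is trivial as an element of $\langle\sigma\rangle$. Hence $\Psi$ is injective.

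For this argument it matters that $\langle\sigma\rangle$ is taken inside $\Aut(G)$ and not as an abstract copy of $\ZZ$. With that convention the kernel is trivial, and $\Psi$ is an isomorphism.
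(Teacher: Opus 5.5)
Your proof is correct and follows essentially the same route as the paper. Both arguments place the generators $s_x=L_{x\sigma(x)^{-1}}\circ\sigma$ in the set, obtain the reverse inclusion from $\sigma=s_e$ and $L_{g\sigma(g)^{-1}}=s_gs_e^{-1}$, and establish closure via the same product and inverse formulas; beyond this, you justify the $\sigma^{-1}$-stability of $D(G,\sigma)$, which the paper uses implicitly for negative exponents, and the injectivity of $(z,\sigma^n)\mapsto L_z\sigma^n$ with $\langle\sigma\rangle\subseteq\Aut(G)$, which the paper calls immediate.
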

\begin{proof}
    Put $S\coloneqq \{ L_z\circ\sigma^n \mid z\in D(G, \sigma), n\in\ZZ \}$.
    As already observed, $L_{g\sigma(g)^{-1}}$ and $\sigma$ both belong to $\Inn(\GAlex(G,\sigma))$. Hence, $S\subseteq \Inn(\GAlex(G,\sigma))$.

    Since the generators $s_x = L_{x\sigma(x)^{-1}}\circ \sigma$ of $\Inn(\GAlex(G,\sigma))$ are in $S$, it is sufficient to show that $S$ is a subgroup of $\Aut(\GAlex(G,\sigma))$.
    We remark that $D(G, \sigma)$ is $\sigma$-invariant, because
    \(
    \sigma(g\sigma(g)^{-1}) = \sigma(g)\sigma(\sigma(g))^{-1}\in D(G, \sigma). 
    \)
    For $z,w \in D(G,\sigma)$ and $n,m\in \ZZ$, we have
    \begin{align*}
        (L_z\circ\sigma^{n})\circ (L_w\circ\sigma^{m})(x) &= z\sigma^{n}(w\sigma^m(x))\\
        &= z\sigma^n(w)\sigma^{n+m}(x)\\
        &= L_{z\sigma^n(w)}\sigma^{n+m}(x)
    \end{align*}
    for all $x\in \GAlex(G,\sigma)$.
    As $x\sigma^m(y)\in D(G, \sigma)$ by the discussion above, we see that $S$ is closed under compositions.
    Also, we have
    \[
    (L_z\circ\sigma^n)^{-1}(x) = \sigma^{-n}\circ L_{z^{-1}}(x) = \sigma^{-n}(z^{-1}x)= L_{\sigma^{-n}(z^{-1})}\circ \sigma^{-n}(x)
    \]
    for all $x\in \GAlex(G,\sigma)$.
    Hence $L_z\circ\sigma^n \in S$, because $\sigma^{-n}(z^{-1}) \in D(G,\sigma)$.
    Therefore, $S\subseteq \Aut(\GAlex(G,\sigma))$ is a subgroup, which proves the claim. The last isomorphism is immediate.
\end{proof}

\begin{prop}
\label{lemma: transvection of generalized alex}
    For a group $(G,\sigma)$ with an automorphism, we have
    \[
    \Trans(\GAlex(G, \sigma)) = \{L_x \mid x\in D(G, \sigma)\} \cong D(G,\sigma).
    \]
\end{prop}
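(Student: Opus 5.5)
We prove the two inclusions between $\Trans(\GAlex(G,\sigma))$ and $L(D(G,\sigma))\coloneqq\{L_x \mid x\in D(G,\sigma)\}$, writing $D=D(G,\sigma)$.

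\emph{Inclusion $\Trans\subseteq L(D)$.} We use the explicit formula $s_x = L_{x\sigma(x)^{-1}}\circ\sigma$ noted just before \cref{proposition: inner of generalized alex}. For $x,y\in G$ we compute
\[
s_x s_y^{-1} = L_{x\sigma(x)^{-1}}\circ\sigma\circ\sigma^{-1}\circ L_{y\sigma(y)^{-1}}^{-1} = L_{x\sigma(x)^{-1}(y\sigma(y)^{-1})^{-1}}.
\]
The subscript lies in $D$, since $D$ is a subgroup containing both $x\sigma(x)^{-1}$ and $y\sigma(y)^{-1}$. So every generator of $\Trans(\GAlex(G,\sigma))$ lies in $L(D)$. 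Because $L_{(-)}\colon G\to\Aut(\GAlex(G,\sigma))$ is an injective group homomorphism, $L(D)$ is a subgroup, and it therefore contains all of $\Trans(\GAlex(G,\sigma))$.

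\emph{Inclusion $L(D)\subseteq\Trans$.} Taking $y=e$ gives $s_e=\sigma$, and hence $s_x s_e^{-1}=L_{x\sigma(x)^{-1}}$. Thus every generator $L_{g\sigma(g)^{-1}}$ of $L(D)$, viewed as the image of the generating set of $D$ under the homomorphism $L_{(-)}$, is a transvection. Since $L_{(-)}$ is a homomorphism, $L(D)$ is generated by these elements and so is contained in $\Trans(\GAlex(G,\sigma))$.

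\emph{The isomorphism.} The identification $L(D)\cong D$ follows from the injectivity of $L_{(-)}$: if $L_g=\id$, then evaluating at $e$ gives $g=e$.

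There is no real obstacle here. The only care needed is in the first inclusion, where the $\sigma$-factors cancel exactly in $s_xs_y^{-1}$. Equivalently, $\Trans$ is the kernel of the exponent map $L_z\sigma^n\mapsto n$ restricted appropriately, which is consistent with \cref{proposition: inner of generalized alex}.
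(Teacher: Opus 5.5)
Your proof is correct and follows the paper's argument: both inclusions come from the identities $s_gs_e^{-1}=L_{g\sigma(g)^{-1}}$ and $s_xs_y^{-1}=L_{x\sigma(x)^{-1}\sigma(y)y^{-1}}$, and the isomorphism comes from the injectivity of $L_{(-)}$. Your closing aside about the ``exponent map'' is unnecessary, and as stated it is only well defined with values in $\langle\sigma\rangle$, not $\ZZ$; it does not affect the proof.
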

\begin{proof}
    Put $T = \{L_x \mid x\in D(G, \sigma)\}$.
    For the equality, as $L_{g\sigma(g)^{-1}} = s_g s_e^{-1}$, $T$ is contained in the left hand side.
    Since $s_x = L_{x\sigma(x)^{-1}}\circ \sigma$, we have
    \[ s_xs_y^{-1} = L_{x\sigma(x)^{-1}}\circ\sigma\circ\sigma^{-1}\circ L_{y\sigma(y)^{-1}}^{-1} 
    = L_{x\sigma(x)^{-1}\sigma(y)y^{-1}} \in T \]
    for $x,y \in \GAlex(G,\sigma)$. Hence the converse inclusion also holds. The last isomorphism is clear.
\end{proof}

Next, using the above calculation, we investigate the connectedness of the surjective homomorphism $f=\GAlex(\Phi)\colon \GAlex(G, \sigma) \twoheadrightarrow \GAlex(H, \tau)$ where $\Phi\colon (G,\sigma)\twoheadrightarrow (H,\tau)$ is surjective.
Note that $f \circ L_g = L_{\Phi(g)} \circ f$ for any $g \in G$.
One can observe that
\[ \Inn(f) \colon \Inn(\GAlex(G,\sigma)) \twoheadrightarrow \Inn(\GAlex(H,\tau)) \]
sends $L_z \circ \sigma^n$ to $L_{\Phi(z)} \circ \tau^n$. 
Let $K=\Ker(\Phi)$ be the kernel. 

\begin{prop}
\label{proposition: relative inner of morph between generalized alex}
    We have
    \[
    \relInn(f) = \{ L_z\circ\sigma^n \mid z\in D(G,\sigma)\cap K,\> n \in \ZZ \text{ such that } \tau^n = \id_H \}.
    \]
\end{prop}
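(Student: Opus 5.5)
The plan is to combine the explicit description of $\Inn(\GAlex(G,\sigma))$ from \cref{proposition: inner of generalized alex} with the formula for $\Inn(f)$ recorded just above the statement, namely $f_*(L_z\circ\sigma^n)=L_{\Phi(z)}\circ\tau^n$. By definition, $\relInn(f)$ is the kernel of $f_*$. So every element of $\relInn(f)$ has the form $L_z\circ\sigma^n$ with $z\in D(G,\sigma)$ and $n\in\ZZ$, and it lies in the kernel exactly when $L_{\Phi(z)}\circ\tau^n=\id_H$. It therefore remains to show that this last condition is equivalent to the pair of conditions $\Phi(z)=e_H$ and $\tau^n=\id_H$.

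First I will check that the formula for $f_*$ is correct. Since $f\circ L_z=L_{\Phi(z)}\circ f$ and $f\circ\sigma=\tau\circ f$, we have $f\circ(L_z\sigma^n)=(L_{\Phi(z)}\tau^n)\circ f$. By the uniqueness in \cref{proposition: functoriality of Inn for surjections}, this gives $f_*(L_z\sigma^n)=L_{\Phi(z)}\tau^n$.

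Next comes the equivalence. The inclusion $\supseteq$ is immediate: if $z\in K$ and $\tau^n=\id_H$, then $L_{\Phi(z)}\tau^n=L_{e_H}=\id_H$. For $\subseteq$, suppose $L_{\Phi(z)}\circ\tau^n=\id_H$. Evaluating at $e_H$ and using $\tau^n(e_H)=e_H$ (since $\tau$ is a group automorphism) gives $\Phi(z)=e_H$, so $z\in K$. Then $L_{\Phi(z)}=\id_H$, and hence $\tau^n=\id_H$. Together with $z\in D(G,\sigma)$, this yields $z\in D(G,\sigma)\cap K$.

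I do not expect a genuine obstacle. The only subtle point is that the representation $L_z\circ\sigma^n$ need not be unique, because $\sigma$ may have finite order or some power $\sigma^n$ may itself be a left translation. This does not affect the argument: the kernel condition is checked on the map $L_z\sigma^n$ itself, and the evaluation-at-$e_H$ step works for any chosen representative. So the set-theoretic equality holds regardless of which representation is used.
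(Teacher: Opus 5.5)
Your proposal is correct and follows essentially the same route as the paper: write elements of $\Inn(\GAlex(G,\sigma))$ as $L_z\circ\sigma^n$, use $f_*(L_z\circ\sigma^n)=L_{\Phi(z)}\circ\tau^n$, and evaluate at $e_H$ to force $z\in K$ and then $\tau^n=\id_H$. Your check of the formula for $f_*$ via the uniqueness in \cref{proposition: functoriality of Inn for surjections}, and your remark that non-unique representations cause no problem, are both valid details that the paper leaves implicit.
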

\begin{proof}
    Put $U = \{ L_z\circ\sigma^n \mid z\in D(G,\sigma)\cap K,\> n \in \ZZ \text{ such that } \tau^n = \id_H \}$.
    The inclusion $\relInn(f)\supseteq U$ is clear.
    For the converse, take any $L_z\circ\sigma^n\in\Inn(\GAlex(G, \sigma))$.
    Since 
    \(
    f_*(L_z\circ\sigma^n) = L_{\Phi(z)}\circ \tau^n, 
    \)
    we have
    \[
    f_*(L_z\circ\sigma^n)(e_H) = L_{\Phi(z)}\circ \tau^n(e_H) = L_{\Phi(z)}(e_H) = \Phi(z).
    \]
    Thus, for $L_z\circ\sigma^n$ to belong to $\relInn(f)$, it is necessary that 
    $z\in K = \Ker\Phi$. Then $f_*(L_z\circ\sigma^n) = \tau^n$ implies the other condition.
\end{proof}

If we write $C\coloneqq \{ \sigma^m\in\langle \sigma \rangle \mid m\in \ZZ \text{ such that } \tau^m = \id_H \}$, \cref{proposition: relative inner of morph between generalized alex} shows
\[
    \relInn(f)\cong (D(G,\sigma)\cap K) \rtimes C.
\]

\begin{prop}
\label{proposition: connectedness for morph between generalized alexes}
    With the notations above, $f$ is connected if and only if $K= \Ker\Phi\subseteq D(G, \sigma)$.
\end{prop}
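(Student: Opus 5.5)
The plan is to compute the $\relInn(f)$-orbits on the fibers directly from the explicit description in \cref{proposition: relative inner of morph between generalized alex}. First I describe the fibers: for $x,y\in G$ we have $f(x)=f(y)$ iff $\Phi(yx^{-1})=e_H$ iff $yx^{-1}\in K$. So the fibers of $f$ are the cosets $Kx$ (equal to $xK$, as $K$ is normal), and the fiber over $e_H$ is $K$ itself.

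For the necessity direction, I suppose $f$ is connected and look at the fiber $K=f^{-1}(e_H)$, which contains $e_G$. By \cref{proposition: relative inner of morph between generalized alex}, every element of $\relInn(f)$ has the form $L_z\circ\sigma^n$ with $z\in D(G,\sigma)\cap K$. Since $\sigma^n(e_G)=e_G$, we get $(L_z\circ\sigma^n)(e_G)=z$. Hence the $\relInn(f)$-orbit of $e_G$ is exactly $D(G,\sigma)\cap K$. Transitivity on the fiber $K$ then forces $K=D(G,\sigma)\cap K$, that is, $K\subseteq D(G,\sigma)$.

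For the sufficiency direction, I assume $K\subseteq D(G,\sigma)$. Then for every $k\in K$ the map $L_k=L_k\circ\sigma^0$ lies in $\relInn(f)$ by the same proposition, taking $n=0$ so that $\tau^0=\id_H$. Now let $x,y$ lie in a common fiber. Then $yx^{-1}\in K$, and $L_{yx^{-1}}(x)=y$. So $\relInn(f)$ acts transitively on every fiber; $f$ is surjective because $\Phi$ is, and hence $f$ is connected.

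No step looks like a real obstacle, since the proof is a direct computation from \cref{proposition: relative inner of morph between generalized alex}. The one point needing care is the necessity direction: the argument must use the specific fiber through the unit element, because elements of the form $\sigma^n$ can move points in other fibers, so only at $e_G$ does the orbit reduce cleanly to $D(G,\sigma)\cap K$.
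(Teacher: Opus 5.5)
Your proposal is correct and follows the paper's proof essentially step for step. For necessity you identify the $\relInn(f)$-orbit of $e_G$ in the fiber $K$ as $D(G,\sigma)\cap K$. For sufficiency you use that $L_{yx^{-1}}\in\relInn(f)$ whenever $x,y$ lie in a common fiber and $K\subseteq D(G,\sigma)$.
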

\begin{proof}
    The fiber of $e_H$ is $K$ and the orbit of $e_G$ of the action of $\relInn(f)$ is $D(G, \sigma)\cap K$.
    Thus, if $f$ is connected, then $K=K\cap D(G, \sigma)$, which is equivalent to saying that $K\subseteq D(G, \sigma)$.

    For the converse, take two elements $x,y$ in the same fiber $Kg=f^{-1}(f(g))$.
    If $K\subseteq D(G,\sigma)$, then $L_{yx^{-1}}$ is in $\relInn(f)$, as $yx^{-1}\in K$. Since $L_{yx^{-1}}(x) = yx^{-1}x = y$, this shows that $f$ is connected.
\end{proof}

Before proceeding with the calculation of a relative transvection group, we introduce a notation:
\[
N(K,\sigma)\coloneqq \langle gk\sigma(k)^{-1}g^{-1} \mid g\in G, k\in K=\Ker\Phi \rangle,
\]
which is the normal closure of $D(K,\sigma)=\langle k\sigma(k)^{-1} \mid k \in K \rangle$.

\begin{prop}
\label{proposition: relative transvection of generalized alex}
    With the notations above, we have
    \[
    \relTrans(f) = \{ L_u \mid u\in N(K,\sigma)\} \cong N(K,\sigma).
    \]
    In particular, $f$ is strongly connected if and only if $N(K, \sigma) = K$.
\end{prop}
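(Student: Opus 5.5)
The plan is to compute the generators of $\relTrans(f)$ explicitly, as in the proof of \cref{lemma: transvection of generalized alex}, and then identify the subgroup they generate.

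\textbf{Generators.} Take $x,y\in G$ with $f(x)=f(y)$, i.e.\ $y=xk$ for some $k\in K$. Write $k'\coloneqq x^{-1}y\in K$; since $K$ is normal, $k'$ ranges over all of $K$. From $s_x=L_{x\sigma(x)^{-1}}\circ\sigma$ we get
\[
s_ys_x^{-1}=L_{y\sigma(y)^{-1}\sigma(x)x^{-1}} .
\]
With $y=xk'$ the subscript becomes
\[
xk'\sigma(k')^{-1}\sigma(x)^{-1}\sigma(x)x^{-1}=x\,k'\sigma(k')^{-1}x^{-1}.
\]
Hence the generating set of $\relTrans(f)$ is exactly $\{L_{gk\sigma(k)^{-1}g^{-1}}\mid g\in G,\ k\in K\}$. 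The swapped generators $s_xs_y^{-1}$ are just the inverses of these.

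\textbf{Identifying the group.} The map $L_{(-)}\colon G\to\Aut(\GAlex(G,\sigma))$ is an injective group homomorphism. Therefore the subgroup generated by these $L$'s equals $L_{(-)}$ applied to the subgroup of $G$ generated by the elements $gk\sigma(k)^{-1}g^{-1}$, which is $N(K,\sigma)$ by definition. This gives $\relTrans(f)=\{L_u\mid u\in N(K,\sigma)\}\cong N(K,\sigma)$. Each generator is a conjugate of $k\sigma(k)^{-1}$, so $N(K,\sigma)$ is indeed the normal closure of $D(K,\sigma)$. It also lies in $K$, because $K$ is $\sigma$-invariant, as $\Phi\sigma=\tau\Phi$.

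\textbf{Strong connectedness.} The fibers of $f$ are the left cosets $xK$, equivalently $Kx$, since $K$ is normal. The group $\relTrans(f)$ acts by left multiplication by $N(K,\sigma)\subseteq K$, so the orbit of $x$ is $N(K,\sigma)x$. This orbit equals the fiber $Kx$ if and only if $N(K,\sigma)=K$, which gives the final claim.

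The only delicate point is the bookkeeping in the generator computation, namely the order of factors coming from $s_x^{-1}=\sigma^{-1}\circ L_{x\sigma(x)^{-1}}^{-1}$. I expect this to be routine.
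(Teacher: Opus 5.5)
Your proposal is correct and takes the same route as the paper: you compute $s_ys_x^{-1}=L_{xk\sigma(k)^{-1}x^{-1}}$ for $y=xk$ with $k\in K$, then identify the generated subgroup through the injective homomorphism $g\mapsto L_g$. You are slightly more explicit than the paper in two places: you note that every generator $gk\sigma(k)^{-1}g^{-1}$ of $N(K,\sigma)$ actually arises (from the pair $(g,gk)$), and you write out the comparison of the orbit $N(K,\sigma)x$ with the fiber $Kx$ for the strong-connectedness claim; your appeal to normality of $K$ to justify that $k'$ ranges over all of $K$ is unnecessary but harmless.
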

\begin{proof}
    As seen in \cref{lemma: transvection of generalized alex}, we have the equality $s_xs_y^{-1} = L_{x\sigma(x)^{-1}\sigma(y)y^{-1}}$.
    If $x,y\in \GAlex(G, \sigma)$ are in the same fiber of $f$, then there is $k\in K$ such that $x=yk$. Then
    \begin{align*}
        x\sigma(x)^{-1}\sigma(y)y^{-1}
        &= (yk)\sigma(yk)^{-1}\sigma(y)y^{-1} \\
        &= yk\sigma(k)^{-1}\sigma(y)^{-1}\sigma(y)y^{-1} \\
        &= yk\sigma(k)^{-1}y^{-1} \in N(K,\sigma). 
    \end{align*}
    Thus we have $\relTrans(f) = \{ L_u \mid u\in N(K,\sigma)\}$. The last isomorphism is clear.
\end{proof}

\begin{cor}
The group $H_f$ is isomorphic to $(D(G, \sigma)\cap K)/N(K,\sigma) \rtimes C$.
\end{cor}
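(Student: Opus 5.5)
The plan is to compute $H_f=\relInn(f)/\relTrans(f)$ from the explicit descriptions already obtained. By \cref{proposition: relative inner of morph between generalized alex},
\[
\relInn(f)=\{L_z\circ\sigma^n \mid z\in D(G,\sigma)\cap K,\ \sigma^n\in C\},
\]
and by \cref{proposition: relative transvection of generalized alex}, $\relTrans(f)=\{L_u\mid u\in N(K,\sigma)\}$. The composition rule $(L_z\sigma^n)(L_w\sigma^m)=L_{z\sigma^n(w)}\sigma^{n+m}$ from the proof of \cref{proposition: inner of generalized alex} gives the semidirect product $\relInn(f)\cong (D(G,\sigma)\cap K)\rtimes C$. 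Here $C$ acts through $\sigma$; note $D(G,\sigma)$, $K$ and $N(K,\sigma)$ are all $\sigma$-invariant.

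\textbf{Step 1 (containment and normality).} I first check that $N(K,\sigma)\subseteq D(G,\sigma)\cap K$. The generators $gk\sigma(k)^{-1}g^{-1}$ lie in $K$ since $K$ is normal and $\sigma$-stable, because $\Phi\sigma=\tau\Phi$. They also lie in $D(G,\sigma)$, since
\[
gk\sigma(k)^{-1}g^{-1}=(gk)\sigma(gk)^{-1}\cdot\bigl(g\sigma(g)^{-1}\bigr)^{-1}.
\]
Moreover $N(K,\sigma)$ is normal in $G$ by construction and is $\sigma$-invariant. Hence $N(K,\sigma)\rtimes\{1\}$ is a normal subgroup of $(D(G,\sigma)\cap K)\rtimes C$, as \cref{proposition: relTrans is normal in relInn} already guarantees.

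\textbf{Step 2 (quotient).} Modding a semidirect product $A\rtimes C$ by a normal subgroup $B\subseteq A$ that is $C$-invariant gives $(A/B)\rtimes C$. To see this, consider the map $L_z\sigma^n\mapsto ([z],\sigma^n)$. It is a homomorphism because the multiplication rule descends, and its kernel is exactly $\{L_u\mid u\in N(K,\sigma)\}$. This yields
\[
H_f\cong \bigl((D(G,\sigma)\cap K)/N(K,\sigma)\bigr)\rtimes C.
\]

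\textbf{Main obstacle.} The only delicate point is that the parametrization $L_z\circ\sigma^n$ must be unique, so that the identification with the semidirect product is genuine. Evaluating at $e_G$ recovers $z$, and then $\sigma^n$ is determined as a map; so the map is well defined, with $C$ regarded as a subgroup of $\langle\sigma\rangle\subseteq\Aut(G)$.

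Since $H_f$ is abelian by \cref{prop:H_f_is_contained_in_the_center_of_Inn/relTrans}, the action of $C$ on the quotient is automatically trivial. The product is therefore in fact direct, but the statement only asks for the semidirect form.
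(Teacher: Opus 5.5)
Your proof is correct and matches the paper, which gives no separate argument and reads the corollary off from $\relInn(f)\cong (D(G,\sigma)\cap K)\rtimes C$ and $\relTrans(f)=\{L_u\mid u\in N(K,\sigma)\}$ by passing to the quotient of the semidirect product, exactly as you do, with the uniqueness of $L_z\circ\sigma^n$ and the $\sigma$-invariance of $N(K,\sigma)$ properly checked. Your closing remark is also right: since $H_f$ is abelian, $C$ acts trivially on the quotient, so the product is in fact direct.
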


\begin{prop}
\label{prop:rigid-covering_property_for_GAlex(Phi)}
Under the notations in this subsection, the following hold.
    \begin{enumerate}
        \item\label{item: prop of eg of GAlex rigidity} $f$ is rigid if and only if both $C$ and $D(G,\sigma)\cap K$ are trivial.
        \item\label{item:necessary_condition_for_GAlex_rigidity} If $f$ is rigid, then $\Phi^{-1}(\Fix(H,\tau)) \subseteq \Fix(G,\sigma)$ (in fact, the equality holds).
        \item\label{item: prop of eg of GAlex covering} $f$ is covering if and only if $\Ker\Phi \subseteq \Fix(G,\sigma)$, i.e., $\sigma \rvert_{\Ker\Phi}$ is the identity.
    \end{enumerate}
\end{prop}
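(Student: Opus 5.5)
Each of the three items is proved directly from the explicit descriptions already obtained in this subsection: \cref{proposition: relative inner of morph between generalized alex} for $\relInn(f)$ and \cref{proposition: relative transvection of generalized alex} for $\relTrans(f)$, together with the characterizations \cref{definition: rigidity of morphism} (rigid iff $\relInn(f)=1$) and \cref{lemma: relTrans vanish iff covering} (covering iff $\relTrans(f)=1$).

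For \eqref{item: prop of eg of GAlex rigidity}, use $\relInn(f)=\{L_z\circ\sigma^n \mid z\in D(G,\sigma)\cap K,\ \tau^n=\id_H\}$. This set contains $L_z$ for every $z\in D(G,\sigma)\cap K$ (take $n=0$) and every element of $C$ (take $z=e_G$). Hence $\relInn(f)=1$ forces both $D(G,\sigma)\cap K$ and $C$ to be trivial. Here we use that $g\mapsto L_g$ is injective and that $C$ is a set of maps, so triviality means $\sigma^n=\id_G$. Conversely, if both are trivial, every element has the form $L_{e}\circ\sigma^n$ with $\sigma^n\in C=\{\id\}$, so $\relInn(f)=1$.

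For \eqref{item:necessary_condition_for_GAlex_rigidity}, let $g\in\Phi^{-1}(\Fix(H,\tau))$ and consider $z=g\sigma(g)^{-1}$. By definition $z\in D(G,\sigma)$. Moreover $\Phi(z)=\Phi(g)\tau(\Phi(g))^{-1}=e_H$, so $z\in K$. Rigidity and item \eqref{item: prop of eg of GAlex rigidity} then give $z=e_G$, i.e.\ $\sigma(g)=g$. For the parenthetical equality, note that $\Phi\circ\sigma=\tau\circ\Phi$ always gives $\Phi(\Fix(G,\sigma))\subseteq\Fix(H,\tau)$, hence $\Fix(G,\sigma)\subseteq\Phi^{-1}(\Fix(H,\tau))$.

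For \eqref{item: prop of eg of GAlex covering}, covering is equivalent to $N(K,\sigma)=\{e_G\}$, because $u\mapsto L_u$ is injective. Since $N(K,\sigma)$ is the normal closure of the elements $k\sigma(k)^{-1}$ with $k\in K$, it is trivial exactly when $k\sigma(k)^{-1}=e$ for all $k\in K$, i.e.\ $K\subseteq\Fix(G,\sigma)$.

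There is no real obstacle. The only point needing care is in \eqref{item: prop of eg of GAlex rigidity}: one must read the triviality of $C$ as $\sigma^n=\id_G$ whenever $\tau^n=\id_H$, and keep the distinction between $C$ and the exponents $n$ that represent its elements.
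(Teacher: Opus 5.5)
Your proposal is correct and follows the paper's proof essentially step for step. Item (1) is read off the explicit description of $\relInn(f)$, item (2) goes through $g\sigma(g)^{-1}\in D(G,\sigma)\cap\Ker\Phi$, and item (3) uses $\relTrans(f)\cong N(K,\sigma)$. Your only addition is a justification of the reverse inclusion $\Fix(G,\sigma)\subseteq\Phi^{-1}(\Fix(H,\tau))$, which the paper states without proof.
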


\begin{proof}
    Remark that $g \in \Fix(G,\sigma)=\{x\in G \mid \sigma(x)=x \}$ is equivalent to $g\sigma(g)^{-1}=e$.

    \eqref{item: prop of eg of GAlex rigidity}: It follows from \cref{proposition: relative inner of morph between generalized alex}.

    \eqref{item:necessary_condition_for_GAlex_rigidity}: If $f$ is rigid, then $D(G,\sigma)\cap \Ker\Phi=\{e\}$.
    To deduce the desired inclusion, we observe that $g\in \Phi^{-1}(\Fix(H,\tau))$ if and only if $\Phi(g\sigma(g)^{-1}) = \Phi(g)\tau(\Phi(g))^{-1}=e$, which is equivalent to $g\sigma(g)^{-1} \in D(G,\sigma)\cap\Ker\Phi$.
    Hence, if $g\in \Phi^{-1}(\Fix(H,\tau))$, the rigidity forces $g\sigma(g)^{-1}=e$, which means $g \in \Fix(G,\sigma)$.
    
    \eqref{item: prop of eg of GAlex covering}: By \cref{proposition: relative transvection of generalized alex}, $f$ is covering if and only if $N(K,\sigma)=\{e\}$, or equivalently, $D(K,\sigma)=\{e\}$. The latter condition can be rewritten as $\Ker\Phi\subseteq \Fix(G,\sigma)$.
\end{proof}

\begin{eg}
    If $\tau$ has an infinite order, then $C$ is trivial, and hence the relative inner automorphism group is $\relInn(f) = \{L_z\mid z\in D(G, \sigma)\cap K \}$.
    In particular, we have
    \[
    H_f \cong (D(G, \sigma)\cap K)/N(K,\sigma).
    \]
\end{eg}

At the end of this subsection, we describe an explicit form of the covering factor of these homomorphisms.

\begin{eg}\label{example: covering factor of hom between generalized alexes}
    Under the notation of this subsection, we identify the covering factor of $f=\GAlex(\Phi)\colon P\coloneqq \GAlex(G, \sigma) \twoheadrightarrow \GAlex(H, \tau)\eqqcolon Q$ with the following homomorphism:
    \[
        \covfac{f}\colon \GAlex(G/N,\overline{\sigma})\longrightarrow Q, 
    \]
    where $N\coloneqq N(K,\sigma)$.
    
    First note that $N$ is a normal subgroup of $G$ and is $\sigma$-invariant by the definition.
    Thus, there is an induced automorphism 
    \[
    \overline{\sigma}\colon G/N \to G/N\quad gN\mapsto \sigma(g)N.
    \]
    As $\relTrans(f)= \{L_u \mid u\in N\}\cong N$ by \cref{proposition: relative transvection of generalized alex}, the $\relTrans(f)$-orbit of $g\in G$ is $Ng=gN$. 
    Consequently, the map (between sets)
    \[
    \alpha\colon P/\relTrans(f) \to \GAlex(G/N, \overline{\sigma}), \qquad [g]\mapsto gN
    \]
    is well-defined and bijective.
    We claim that the map $\alpha$ is a quandle homomorphism.
    For $x, y\in G$, we have
    \[
    \alpha(s_{[x]}([y])) = x\sigma(x^{-1}y)N = (xN)\overline{\sigma}((xN)^{-1}(yN)) = s_{\alpha([x])}(\alpha([y])).
    \]
    Therefore, $\alpha$ is indeed a quandle isomorphism.

    Thus, $f$ factors as $P= \GAlex(G, \sigma)\xrightarrow[]{\pi'} \GAlex(G/N, \overline{\sigma}) \xrightarrow[]{\overline{\Phi}} \GAlex(H, \tau) = Q$, where the second homomorphism is induced by the natural surjection $\overline{\Phi}\colon G/N \to H$ and it is covering by the definition of $\relTrans(f)$.
    Under the isomorphism $\alpha$,
    \[
    \covfac{f}\cong \overline{\Phi}\colon\GAlex(G/N, \overline{\sigma}) \longrightarrow Q.
    \]
\end{eg}

\subsection{Linear Alexander Quandles}
\label{subsection: linear alexander quandles in examples section}

Recall from \cref{example: linear Alexander quandle} that a linear Alexander quandle $\Lambda_{n,a}$ is a quandle whose underlying set is $\ZZ/n\ZZ$ and whose quandle structure is given by $s_x(y)= (1-a)x + ay$, where $\gcd(a,n)=1$.
By translating \cref{proposition: inner of generalized alex} and \cref{lemma: transvection of generalized alex} in the case of linear Alexander quandles, we have:

\begin{lem}
\label{lemma:Inn_and_Trans_of_linear_Alexander}
    For a linear Alexander quandle $\Lambda_{n, a}$, we have 
    \begin{align*}
        \Inn(\Lambda_{n, a}) &= \{x\mapsto a^k x + b \mid b\in (1-a)\Lambda_{n, a}\}\text{, and} \\
        \Trans(\Lambda_{n,a}) &= \{x\mapsto x+b \mid b\in(1-a)\Lambda_{n,a}\}\cong \Image(1-a).
    \end{align*}
\end{lem}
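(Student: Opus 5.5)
The plan is to specialize \cref{proposition: inner of generalized alex} and \cref{lemma: transvection of generalized alex} to $G=\ZZ/n\ZZ$ with $\sigma=L_a$, the multiplication by $a$. Since $\gcd(a,n)=1$, $L_a$ is a group automorphism, and $\Lambda_{n,a}=\GAlex(\ZZ/n\ZZ,L_a)$. Indeed, the generalized Alexander structure is $s_x(y)=x+a(y-x)=(1-a)x+ay$, which matches \cref{example: linear Alexander quandle}.

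The first step is to identify $D(G,\sigma)$. Written additively, it is the subgroup generated by the elements $g-\sigma(g)=(1-a)g$ for $g\in\ZZ/n\ZZ$. This set is already a subgroup, namely the image of the endomorphism $1-a$, so $D(G,\sigma)=(1-a)\Lambda_{n,a}=\Image(1-a)$.

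Next, the left translation $L_z$ in the paper's notation becomes the additive translation $x\mapsto x+z$, and $\sigma^k$ becomes $x\mapsto a^kx$. Hence $L_b\circ\sigma^k$ is the map $x\mapsto a^kx+b$. Substituting into \cref{proposition: inner of generalized alex} gives
\[
\Inn(\Lambda_{n,a})=\{x\mapsto a^kx+b \mid b\in(1-a)\Lambda_{n,a},\ k\in\ZZ\}.
\]
Substituting into \cref{lemma: transvection of generalized alex} gives
\[
\Trans(\Lambda_{n,a})=\{x\mapsto x+b \mid b\in(1-a)\Lambda_{n,a}\}.
\]
The isomorphism $\Trans(\Lambda_{n,a})\cong\Image(1-a)$ comes from $b\mapsto(x\mapsto x+b)$, which is injective because a translation by $b\neq0$ moves the point $0$.

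There is no genuine obstacle here. The only point needing care is the translation from multiplicative to additive notation: $g\sigma(g)^{-1}$ becomes $g-ag$, and the set $\{(1-a)g\}$ is closed under the group operation, so it is the generated subgroup itself.
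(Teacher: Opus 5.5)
Your proposal is correct and follows the paper's route exactly: the paper obtains this lemma by specializing \cref{proposition: inner of generalized alex} and \cref{lemma: transvection of generalized alex} to $G=\ZZ/n\ZZ$ with $\sigma=L_a$. Your identification $D(G,\sigma)=\Image(1-a)$ and your translation of $L_b\circ\sigma^k$ into $x\mapsto a^kx+b$ are precisely the details that specialization requires.
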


Thus, by \cref{lemma: connectedness is also determined by trans}, we obtain an equivalent condition for the connectedness of linear Alexander quandles:

\begin{prop}[{\cite[Corollary 7.2]{hulpke_stanovsky_vojtv_2016_connected_quandles_and_transitive_groups}}]\label{proposition: connectedness of lin alex}
    A linear Alexander quandle $\Lambda_{n,a}$ is connected if and only if $1-a$ is surjective, which is equivalent to $\gcd(1-a, n)=1$.
\end{prop}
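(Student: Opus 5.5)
The statement follows from \cref{lemma: connectedness is also determined by trans} once \cref{lemma:Inn_and_Trans_of_linear_Alexander} has identified $\Trans(\Lambda_{n,a})$ explicitly. I will show that $\Lambda_{n,a}$ is connected exactly when the translation group $\Trans(\Lambda_{n,a})$ is transitive, and then translate that condition into arithmetic in $\ZZ/n\ZZ$.

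By \cref{lemma:Inn_and_Trans_of_linear_Alexander}, $\Trans(\Lambda_{n,a})=\{x\mapsto x+b \mid b\in (1-a)\ZZ/n\ZZ\}$. The orbit of $0$ under this group is the subgroup $B\coloneqq \Image(1-a)\subseteq \ZZ/n\ZZ$. The orbit of an arbitrary $y$ is the coset $y+B$. Hence $\Trans(\Lambda_{n,a})$ acts transitively if and only if $B=\ZZ/n\ZZ$, that is, if and only if multiplication by $1-a$ on $\ZZ/n\ZZ$ is surjective. By \cref{lemma: connectedness is also determined by trans}, this is exactly the condition for $\Lambda_{n,a}$ to be connected.

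For the second equivalence, $(1-a)\ZZ/n\ZZ$ is the image of $d\ZZ$ in $\ZZ/n\ZZ$, where $d=\gcd(1-a,n)$. This subgroup has index $d$. It is therefore all of $\ZZ/n\ZZ$ if and only if $d=1$. Equivalently, since $\ZZ/n\ZZ$ is finite, multiplication by $1-a$ is surjective iff it is injective iff $1-a$ is a unit modulo $n$.

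None of these steps is a real obstacle. The only point needing care is to use $\Trans$ rather than $\Inn$: the latter includes the maps $x\mapsto a^kx+b$, whose orbits are harder to describe directly, and this is exactly why \cref{lemma: connectedness is also determined by trans} is invoked.
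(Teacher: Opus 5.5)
Your proof is correct and follows the same route the paper indicates. Like the paper, you read off $\Trans(\Lambda_{n,a})$ from \cref{lemma:Inn_and_Trans_of_linear_Alexander} as the group of translations by $\Image(1-a)$ and then apply \cref{lemma: connectedness is also determined by trans}; your index computation for the $\gcd$ condition is the standard one that the paper leaves implicit.
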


The surjective homomorphisms arising from the construction in
\cref{subsection: generalized alexander quandles}
take the following form in the linear case.
Let \(p=nq\), and consider linear Alexander quandles
\(\Lambda_{p,a}\) and \(\Lambda_{q,c}\).
Any surjective homomorphism between their underlying additive groups is of the form
\[
    \Phi_u\colon \ZZ/p\ZZ \longrightarrow \ZZ/q\ZZ,
    \qquad
    x\longmapsto ux \pmod q,
\]
where \(\gcd(u,q)=1\).
The compatibility of \(\Phi_u\) with the automorphisms
\(x\mapsto ax\) and \(x\mapsto cx\) is equivalent to
\[
    a\equiv c\pmod q.
\]
After composing \(\Phi_u\) with the quandle automorphism
\(x\mapsto u^{-1}x\) of \(\Lambda_{q,c}\), we may assume that \(u=1\).
Moreover, if \(a\equiv c\pmod q\), then
\(\Lambda_{q,c}=\Lambda_{q,a}\).
Thus, up to an isomorphism of the target, it suffices to consider the
reduction homomorphism below. We determine when this homomorphism is connected.

\begin{prop}\label{example: relinn of morph between lin alexes}
    Let $P = \Lambda_{p,a}$ and $Q = \Lambda_{q,a}$ be linear Alexander quandles where $p,q,a$ are integers such that $p = nq$ for an $n\in\ZZ_{>0}$ and $a$ is coprime to $p,q$.
    Consider the quandle homomorphism $f \colon P\to Q$ defined by $f(x) = x \pmod q$.
    Then, $f$ is connected if and only if $\gcd(1-a, p)$ divides $q$.
\end{prop}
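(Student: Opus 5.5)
The plan is to specialize \cref{proposition: connectedness for morph between generalized alexes} to the linear case. Here $f=\GAlex(\Phi)$ for the reduction map $\Phi\colon \ZZ/p\ZZ\to\ZZ/q\ZZ$, with $\sigma=L_a$ on $\ZZ/p\ZZ$ and $\tau=L_a$ on $\ZZ/q\ZZ$. This $\Phi$ is surjective and commutes with multiplication by $a$, so that proposition applies. It says $f$ is connected if and only if $K=\Ker\Phi\subseteq D(G,\sigma)$. So the task reduces to computing the two subgroups $K$ and $D(G,\sigma)$ of the cyclic group $G=\ZZ/p\ZZ$ and comparing them.

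First, $K=q\ZZ/p\ZZ$, the unique subgroup of $\ZZ/p\ZZ$ of order $n=p/q$. Second, since $G$ is abelian, the generators $g\sigma(g)^{-1}$ of $D(G,\sigma)$ are $g-ag=(1-a)g$. Hence $D(G,\sigma)=\Image(1-a)=(1-a)\ZZ/p\ZZ$, in agreement with \cref{lemma:Inn_and_Trans_of_linear_Alexander}. This equals $d\ZZ/p\ZZ$, where $d=\gcd(1-a,p)$. The reason is that in $\ZZ/p\ZZ$ the subgroup generated by the class of an integer $m$ is the subgroup generated by $\gcd(m,p)$.

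Finally, I compare the two subgroups. In the cyclic group $\ZZ/p\ZZ$, for divisors $d,q$ of $p$ we have $q\ZZ/p\ZZ\subseteq d\ZZ/p\ZZ$ exactly when $d\mid q$. Indeed, the subgroup $e\ZZ/p\ZZ$ has order $p/e$, and these subgroups are ordered by inclusion according to divisibility. Combining this with the previous step shows that $K\subseteq D(G,\sigma)$ holds if and only if $\gcd(1-a,p)$ divides $q$, which is the claim.

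No step is a real obstacle. The only point needing care is the identification $(1-a)\ZZ/p\ZZ=\gcd(1-a,p)\ZZ/p\ZZ$, which follows from Bézout. I will also note that the hypothesis that $a$ is coprime to $p$ is what makes $\sigma$ an automorphism, as \cref{proposition: connectedness for morph between generalized alexes} requires.
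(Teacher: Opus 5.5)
Your proof is correct and is essentially the paper's argument. The paper recomputes $\relInn(f)$ in coordinates, as the maps $x\mapsto a^kx+b$ with $\ord_q(a)\mid k$ and $b\in(1-a)P\cap qP$, and compares the orbit $x+((1-a)P\cap qP)$ with the fiber $x+qP$. This gives the same inclusion $qP\subseteq(1-a)P$ that you obtain by specializing the criterion $\Ker\Phi\subseteq D(G,\sigma)$ to $G=\ZZ/p\ZZ$. Your version avoids redoing that computation, and your divisibility comparison in $\ZZ/p\ZZ$ spells out the final equivalence that the paper states without detail.
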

    \begin{proof}
    Let us determine $\relInn(f)$.
    Any $\varphi\in\Inn(P)$ can be written as $\varphi(x) = a^kx+b$ for some $k \in \ZZ$ and $b\in(1-a)P$ (see \cref{lemma:Inn_and_Trans_of_linear_Alexander}).
    Thus, $\varphi\in\relInn(f)$ if and only if $f_{*}(\varphi)= \id_Q$, and if and only if 
    \begin{align*}
        \begin{cases}
            a^k \equiv 1 &\pmod q, \text{ and }\\
            %(1-a)b\equiv 
            b \equiv 0 &\pmod q.
        \end{cases}
    \end{align*}
    Thus, we have
    \begin{align}
        \relInn(f) = \{ x\mapsto a^kx+b \mid \ord_{q}(a)\text{ divides } k \text{ and } q\text{ divides } b, b\in(1-a)P\}. 
        \label{equation: relInn of Alexes}
    \end{align}
    %In such a case, the homomorphism $f$ is connected if and only if $\gcd(1-a, p)$ divides $q$.
    %Indeed, 
    The fiber $F_x$ of $x\in Q$ is
    \[
        F_x = \{ x, x+q,x+2q, \ldots, x+(n-1)q\} \subseteq \ZZ/p\ZZ, 
    \]
    and the orbit is
    \[
    \relInn(f)\cdot x = \{  x+ r\mid r\in(1-a)P\cap qP \}.
    \]
    Thus, the orbit coincides with $F_x$ if and only if $qP\subseteq (1-a)P$, which is equivalent to saying that $\gcd(1-a, p)$ divides $q$.
    This completes the proof.
\end{proof}

\begin{prop}
\label{proposition: strongly connectedness between lin alex}
With the same setting in \cref{example: relinn of morph between lin alexes}, $f$ is strongly connected if and only if $\gcd(1-a, n) = 1$.
\end{prop}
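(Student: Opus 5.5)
The plan is to reduce the statement to \cref{proposition: relative transvection of generalized alex} by viewing $f$ as $\GAlex(\Phi)$. Here $G=\ZZ/p\ZZ$ with $\sigma=L_a$, $H=\ZZ/q\ZZ$ with $\tau=L_a$, and $\Phi$ is reduction mod $q$. This is legitimate because $a\equiv a\pmod q$, so $\Phi$ is a morphism in $\GrpAut$.

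The kernel is $K=\Ker\Phi=q\ZZ/p\ZZ$, a cyclic group of order $n$ generated by the class of $q$. Since $G$ is abelian, the normal closure $N(K,\sigma)$ coincides with $D(K,\sigma)=\langle k-ak\mid k\in K\rangle=(1-a)K$. By \cref{proposition: relative transvection of generalized alex}, $f$ is strongly connected if and only if $N(K,\sigma)=K$, that is, if and only if $(1-a)K=K$.

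As a sanity check independent of that proposition, one can compute directly. For $x,y$ in the same fiber we have $s_xs_y^{-1}(z)=z+(1-a)(x-y)$ with $x-y\in K$. Hence $\relTrans(f)$ is exactly the group of translations by elements of $(1-a)K$, and the $\relTrans(f)$-orbit of $z$ is $z+(1-a)K$, while the fiber through $z$ is $z+K$.

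It remains to decide when multiplication by $1-a$ is surjective on $K$. Under the isomorphism $K\cong\ZZ/n\ZZ$, $mq\mapsto m$, multiplication by $1-a$ on $K$ corresponds to multiplication by $1-a$ on $\ZZ/n\ZZ$. The latter is surjective, equivalently bijective since the group is finite, exactly when $\gcd(1-a,n)=1$. This gives the equivalence.

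I expect no real obstacle here. The only points needing care are the identification $N(K,\sigma)=(1-a)K$ in the abelian case, and the fact that orbits of the translation group $(1-a)K$ equal the fibers precisely when $(1-a)K=K$.
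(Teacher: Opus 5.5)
Your proposal is correct and essentially matches the paper: its proof is your ``sanity check'' computation. It shows that $s_ys_z^{-1}$ is translation by $(1-a)(y-z)$, so $\relTrans(f)$ is the group of translations by $(1-a)qP$, and strong connectedness reduces to $(1-a)qP=qP$, i.e.\ $\gcd(1-a,n)=1$. Routing the argument through the generalized Alexander proposition with $N(K,\sigma)=(1-a)K$ is an equivalent repackaging of the same calculation.
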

\begin{proof}
    When $f(y) = f(z)$, it can be written as $y=z+mq$ for some $m\in\ZZ$.
    As $s_z^{-1}(x) = (1-a^{-1})z + a^{-1}x$, we have 
    \begin{align*}
    s_ys_z^{-1}(x) &= (1-a)y + a((1-a^{-1})z + a^{-1}x) \\
    &= x + (1-a)(y-z)\\
    &= x+ (1-a)mq.
    \end{align*}
    Thus, we have $\relTrans(f) = \{x \mapsto x+ t \mid t\in (1-a)qP\}$, and therefore, $f$ is strongly connected if and only if $(1-a)qP = qP$, which is equivalent to the condition $\gcd(1-a,n)=1$.
\end{proof}

With these results in hand, we present several more concrete examples.

\begin{eg}
\label{example: pullback of connected is not necessarily connected}
Let $f\colon R_4\to R_2$ be the surjection between dihedral quandles.
By \cref{example: relinn of morph between lin alexes}, $f$ is a connected homomorphism.

Consider the pullback of $f$ along itself, denoted by $f'$. Then $f'$ is not connected: we can see that $f'_*\colon \Inn(R_4\times_{R_2}R_4) \twoheadrightarrow \Inn(R_4)$ is an isomorphism, and hence $\relInn(f')$ is trivial, while every fiber of $f'$ has two elements.
\end{eg}

\begin{eg}
\label{example: connected morphism whose fiber is trivial}
    There exists a connected homomorphism with a non-connected fiber.
    In the settings of \cref{example: relinn of morph between lin alexes}, 
    suppose that $q\ge 2$, $n=q$, $p=q^2$, and $a=1+q$. Note that $P$ and $Q$ are not connected, since $\gcd(1-a,q) = \gcd(1-a, p)=q\neq 1$.

    We claim that the homomorphism $f$ is connected.
    As $Q$ is the trivial quandle with $q$ points and \cref{lemma:Inn_and_Trans_of_linear_Alexander}, 
    \[
    (1-a)P = q\ZZ/q^2\ZZ = \Trans(P) \subset \Inn(P) = \relInn(f).
    \]
    Moreover, each fiber is a coset of $q\ZZ/q^2\ZZ$ and $(1-a)P$ acts on it as $x\mapsto x+kq$. Therefore, the action of $\relInn(f)$ on each fiber is transitive. 

    Every fiber of $f$ is a trivial quandle as a subquandle of $P$; indeed, for elements $x, y\in f^{-1}(c)$ in the same fiber, we have
    \[  s_x(y) = (1-a)x + ay = -qx +(1+q)y = q(y-x) + y = y \]
    in $q\ZZ/q^2\ZZ$.
    So, no fiber of $f$ is connected.
\end{eg}

\begin{eg}\label{example: eg of relInnmodTrans neq 0 by using Alexes}
    Under the settings of \cref{example: relinn of morph between lin alexes} and 
    the assumption $\gcd(1-a,q) = \gcd(1-a, p)=1$.
    Thus, we have $\relTrans(f)\cong \ZZ/n\ZZ$.
    Moreover, the quotient $\relInnmodTrans_f=\relInn(f)/\relTrans(f)$ is a cyclic group of order 
    \[
    \lvert\relInnmodTrans_f\rvert = \frac{\ord_p(a)}{\ord_q(a)}.
    \]
\end{eg}

\begin{eg}
    In \cref{example: eg of relInnmodTrans neq 0 by using Alexes}, set $p = 21$, $q=7$, $a = 2$.
    As $\displaystyle \lvert\relInnmodTrans_f\rvert=\frac{\ord_{21}(2)}{\ord_7(2)} = \frac{6}{3} =2$, 
    $\relInnmodTrans_f \cong \ZZ/2\ZZ$ is nontrivial.
    Moreover, every fiber is isomorphic to $\Lambda_{3,2}$, which is connected. 
    Since every fiber is connected, $f$ is strongly connected by \cref{proposition:strongly_connected_can_be_verified_fiberwise}. Thus, a strongly connected homomorphism may have non-trivial $H_f$. In particular, strong connectedness does not imply that $\relInn(f)$ is $\Inn(P)$-perfect.
\end{eg}

\begin{eg}\label{example: fiber-wise doubly trans and not doubly trans}
    There exists a surjective homomorphism that has doubly transitive fibers (viewed as subquandles of the domain) but that is not doubly transitive in the sense of \cref{definition: doubly transitive quandle homomorphism}.

    Let $f\colon P\to Q$ be the morphism for the case of $(q,n,a) = (3, 5, 2)$ in \cref{example: relinn of morph between lin alexes}.
    As the fiber 
    \[
    F_i = f^{-1}(i) = \{i, i+3, i+6, i+9, i+12\}
    \]
    is isomorphic to the linear Alexander quandle $\Lambda_{5, 2}$, it is a doubly transitive quandle (see \cite[Theorem 4.3]{wada_2015_twopoint_homogeneous_quandles_with_cardinality_of_prime_power}).
    We have already calculated the relative inner automorphism group:
    \[
    \relInn(f) = \{x\mapsto 4^kx+3b \mid k\in\ZZ, b\in\ZZ/5\ZZ\}.
    \]
    It acts on each fiber as $x\mapsto \pm x+b$, which is not doubly transitive; the stabilizer $(\rho_0(\relInn(f)))_0$ at $0$ is $\{\pm 1\}$ and its orbits on $(\ZZ/5\ZZ)\setminus\{0\}$ are $\{1, 4\}$ and $\{2, 3\}$. Hence, $f$ is not doubly transitive in view of \cref{lemma:doubly-transitive_induces_transitive_action_of_stabilizer}. 
\end{eg}

    In \cref{section: doubly transitive homomorphisms}, we have proved that the fibers of a doubly transitive quandle homomorphism are severely restricted: each fiber is either a trivial quandle or a connected Alexander quandle of prime-power order. The following examples show that both possibilities actually occur: the fibers can be trivial, and they can also be connected Alexander quandles.
    
\begin{eg}
\label{example: doubly trans morph and triv and connected fiber}
    Let $Q = \{1, \ldots, m\}$ be the trivial quandle with $m$ elements.
    Let $p$ be a prime number, $k$ a positive integer, and $\FB\coloneqq \FB_{p^k}$.

    Consider a quandle $P$ whose underlying set is $P\coloneqq Q\times \FB$ and whose quandle structure is given by 
    \[
        s_{(i,a)}(j, b) = (j, \lambda_ib+(1-\lambda_i)a), 
    \]
    where $\lambda_i\in\FB^{\times}$ for $i\in Q$.%
    \footnote{This quandle is an example of a quandle module over the trivial quandle $Q$.}
    Moreover, we assume that $\langle\lambda_i \mid i\in Q\rangle = \FB^{\times}$ as a group.

    We check here that $P$ is actually a quandle.
    As each $s_{(i, a)}$ preserves fibers and behaves as an Alexander quandle, $s_{(i, a)}$ is a bijection.
    The idempotency $s_{(i, a)}(i, a) = (i, a)$ follows easily.
    For $(i, a), (j, b), (l,c) \in P$, we have
    \begin{align*}
        s_{(i,a)}\circ s_{(j, b)}(l, c) &= s_{(i, a)}(l, \lambda_j c+(1-\lambda_j)b)\\
        &= (l, \lambda_i\lambda_jc+\lambda_i(1-\lambda_j)b +(1-\lambda_i)a)\\
        &=(l, \lambda_j(\lambda_ic+(1-\lambda_i)a)+(1-\lambda_j)(\lambda_ib+(1-\lambda_i)a))\\
        &=s_{(j, \lambda_ib+(1-\lambda_i)a)} (l, \lambda_ic+(1-\lambda_i)a)\\
        &=s_{s_{(i,a)}(j, b)}\circ s_{(i, a)}(l, c).
    \end{align*}

    Consider the map $f\colon P \to Q$ defined by $(i,a) \mapsto i$. It is a quandle homomorphism.
    \begin{claim*}
        The quandle homomorphism $f$ is a doubly transitive homomorphism.
    \end{claim*}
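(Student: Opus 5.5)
Since $Q$ is trivial, $\Inn(Q)=\{\id\}$, so $\relInn(f)=\Inn(P)$. The plan is therefore to show directly that $\Inn(P)$ restricted to each fiber $\{j\}\times\FB$ acts $2$-transitively, which amounts to showing that this restriction contains the full affine group $\AGL_1(\FB)=\{b\mapsto \mu b+t \mid \mu\in\FB^\times,\ t\in\FB\}$. That group is sharply $2$-transitive on $\FB$, because for $b_1\neq b_2$ and $c_1\neq c_2$ the unique $\mu=(c_1-c_2)/(b_1-b_2)$ together with $t=c_1-\mu b_1$ works.

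The key observation is that each generator $s_{(i,a)}$ acts on every fiber $\{j\}\times\FB$ by the same affine map $b\mapsto\lambda_i b+(1-\lambda_i)a$, independent of $j$. Hence every element of $\Inn(P)$ has the form $(j,b)\mapsto(j,\mu b+t)$ with $(\mu,t)$ independent of $j$. This identifies $\Inn(P)$ with the subgroup $G\le\AGL_1(\FB)$ generated by the maps $b\mapsto\lambda_i b+(1-\lambda_i)a$, and makes $\rho_j$ injective onto $G$ for every $j$.

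It remains to show $G=\AGL_1(\FB)$.

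\begin{enumerate}
\item The linear part of $s_{(i,a)}$ is $\lambda_i$, so the image of $G\to\FB^\times$ is $\langle\lambda_i\rangle=\FB^\times$ by hypothesis.
\item For translations, $s_{(i,a)}s_{(i,0)}^{-1}$ is the translation $b\mapsto b+(1-\lambda_i)a$.
\item Since the $\lambda_i$ generate $\FB^\times$ and $\FB^\times$ is nontrivial once $\card{\FB}>2$, some $\lambda_i\neq1$. Letting $a$ range over $\FB$ then produces every translation.
\item Combining the translations with the elements $s_{(i,0)}\colon b\mapsto\lambda_i b$, which generate all scalings, gives $G=\AGL_1(\FB)$.
\end{enumerate}

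The edge case $\FB=\FB_2$ forces $\lambda_i=1$ for all $i$. Then $G$ is trivial, while the fibers have two points, so $2$-transitivity fails. This case must either be excluded or handled separately, and I expect it to be the main point requiring care. For $p^k\ge3$ the argument above goes through.

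Finally, the fiber $\{i\}\times\FB$ is, as a subquandle, isomorphic to the structure given by $s_{(i,a)}(i,b)=(i,\lambda_i b+(1-\lambda_i)a)$. Here the parameter $\lambda_i$ depends on the acting point, so to realize a prescribed $\Alex(\FB,L_c)$ one arranges $\lambda_i=c$ on the chosen fiber while the other $\lambda_j$ still generate $\FB^\times$, for instance by taking $m=2$ with $\lambda_1=c$ and $\lambda_2$ primitive. This also covers the trivial fiber ($c=1$) and the connected ones ($c\neq1$).
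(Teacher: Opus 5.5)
Your proposal is correct and follows essentially the same route as the paper. The paper also realizes the scaling $x\mapsto\alpha x$ as a word in the $s_{(i,0)}^{\pm1}$ and the translation $x\mapsto x+\beta$ as $s_{(j,(1-\lambda_j)^{-1}\beta)}\circ s_{(j,0)}^{-1}$ with $\lambda_j\neq 1$, then composes them for the given pairs instead of phrasing the result as $\rho_i(\Inn(P))=\AGL_1(\FB)$. Your caveat about $\FB=\FB_2$ is right and exposes a gap in the paper's own argument: its assertion that some $\lambda_j\neq1$ exists fails there, and the claim itself is false, since $P$ is then trivial and no element of $\Inn(P)$ swaps the two points of a fiber. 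So that case must be excluded rather than handled separately.
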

    \begin{proof}
        The fiber of $i\in Q$ is $F_i \coloneqq f^{-1}(i) \cong\Alex(\FB_{p^k},L_{\lambda_{i}})$, where $L_{\lambda_{i}}$ is the multiplication operator by $\lambda_i$.
        As $Q$ is a trivial quandle, we have $\relInn(f) = \Inn(P)$. Thus, it is enough to show that the action $\Inn(P)\curvearrowright F_i$ is $2$-transitive.
        Take any $(i, a_1), (i, a_2), (i, b_1), (i, b_2) \in F_i$ such that $a_1\neq a_2$, $b_1\neq b_2$.
        Put
        \[
        \alpha\coloneqq \frac{b_2 - b_1}{a_2-a_1}\in\FB^{\times}, \qquad \beta\coloneqq b_1 - \alpha a_1.
        \]
        As $\{\lambda_i\}_{i\in Q}$ generates the multiplicative group $\FB^{\times}$, $\alpha$ has a representation of the form 
        \[
        \alpha = \lambda_{i_1}^{\varepsilon_1}\cdots\lambda_{i_r}^{\varepsilon_r}, 
        \]
        where $\varepsilon_i\in\{1, -1\}$.
        Let $\psi\coloneqq s_{(i_1, 0)}^{\varepsilon_1}\circ\cdots\circ s_{(i_r, 0)}^{\varepsilon_r} \in \Inn(P)$. 
        Note that, for $(n,x)\in P$, $\psi(n,x) = (n,\alpha x)$ because $s_{(i, 0)}(n,x) = (n,\lambda_{i}x)$.
        
        Fix $j\in Q$ such that $\lambda_j \neq 1$; such $j$ exists because of the assumption $\langle\lambda_i \mid i\in Q\rangle = \FB^{\times}$.
        Put $\varphi\coloneqq s_{\left(j, (1-\lambda_j)^{-1}\beta\right)}\circ s_{(j, 0)}^{-1} \in \Inn(P)$.
        Then, it acts on each fiber as 
        \begin{align*}
            \varphi(n, x) &= s_{\left(j, (1-\lambda_j)^{-1}\beta\right)}(n, \lambda_j^{-1}x)\\
            &= \left(n, \lambda_j\lambda_j^{-1}x + (1-\lambda_j)(1-\lambda_j)^{-1}\beta\right)\\
            &= (n, x+\beta).
        \end{align*}
        With these notations, let $\Phi\coloneqq \varphi\circ\psi$. Then we have
        \[ \Phi(i, a_1) = (i, \alpha a_1+\beta) = (i, \alpha a_1 +b_1-\alpha a_1) = (i, b_1), \]
        and
        \[ \Phi(i, a_2) = (i, \alpha a_2+\beta) = \left(i, \frac{b_2-b_1}{a_2-a_1}a_2 + b_1 -\frac{b_2-b_1}{a_2-a_1}a_1\right) = (i, b_2). \]
        This proves that $f$ is doubly transitive.
    \end{proof}
\end{eg}

\begin{eg}
    In \cref{example: doubly trans morph and triv and connected fiber}, set $\FB=\ZZ/7\ZZ$, $m=3$, $\lambda_1 = -1$, $\lambda_2 = 2$, and $\lambda_3 = 1$.
    As $\langle -1, 2, 1\rangle = \FB_7^{\times}$, the homomorphism $f\colon P\to Q$ is doubly transitive.
    While the fibers $F_{1}=\Lambda_{7,-1}$ and $F_2=\Lambda_{7,2}$ are connected but not doubly transitive quandles as subquandles of $P$ by \cite[Theorem 4.3]{wada_2015_twopoint_homogeneous_quandles_with_cardinality_of_prime_power}, the quandle structure of $F_3=\Lambda_{7,1}$ is trivial.
\end{eg}

\bibliographystyle{amsalpha}
\bibliography{converted_bibtex_quandles_tyoshida}

\end{document}